\providecommand{\customgenericname}{}
\newcommand{\newcustomtheorem}[2]{%
  \newenvironment{#1}[1]
  {%
   \renewcommand\customgenericname{#2}%
   \renewcommand\theinnercustomgeneric{##1}%
   \innercustomgeneric
  }
  {\endinnercustomgeneric}
}
\newtheorem{theorem}{Theorem}[section]
\newtheorem{corollary}[theorem]{Corollary}
\newtheorem{lemma}[theorem]{Lemma}
\newtheorem{proposition}[theorem]{Proposition}
\theoremstyle{definition}
\newtheorem{definition}[theorem]{Definition}
\newtheorem{remark}[theorem]{Remark}
\newtheorem{conjecture}[theorem]{Conjecture}
\numberwithin{equation}{section}
\renewcommand\d{\textnormal{d}}
\newcommand{\Mod}[1]{\ (\mathrm{mod}\ #1)}
\newcommand{\ra}{\rightarrow}
\newcommand{\bk}{\backslash}
\newcommand{\mc}{\mathcal}
\newcommand{\mb}{\mathbb}
\renewcommand{\ss}{\substack}
\newcommand{\e}{\varepsilon}
\newcommand{\mbf}{\boldsymbol}
\begin{document}

\title{On Elliott's conjecture and applications}

\author
{Oleksiy Klurman}
\address{School of Mathematics,
University of Bristol, Woodland Road, Bristol,  UK}
\email{lklurman@gmail.com}

\author{Alexander P. Mangerel}
\address{Department of Mathematical Sciences, Durham University, Upper Mountjoy Campus, Stockton Road, Durham, UK}
\email{smangerel@gmail.com}

\author{Joni Ter\"{a}v\"{a}inen}
\address{Department of Mathematics and Statistics, University of Turku, 20014 Turku, Finland}
\email{joni.p.teravainen@gmail.com}

\begin{abstract} 
   Let $f:\mathbb{N}\to \mathbb{D}$ be a multiplicative function. Under the merely  necessary assumption that $f$ is non-pretentious (in the sense of Granville and Soundararajan), we show that for any pair of distinct integer shifts $h_1,h_2$ the two-point correlation
         $$\frac{1}{x}\sum_{n\leq x}{f(n+h_1)\overline{f}(n+h_2)}$$
         tends to $0$ along a set of $x\in\mathbb{N}$ of full upper logarithmic density.
We also show that the same result holds for the $k$-point correlations
$$\frac{1}{x}\sum_{n\leq x}{f(n+h_1)\cdots f(n+h_k)}$$
if $k$ is odd and  $f$ is a real-valued non-pretentious function. Previously, the vanishing of correlations was known only under stronger non-pretentiousness hypotheses on $f$ by the works of Tao, and Tao and the third author.
      We derive several applications, including:
       \begin{itemize}
       \item A classification of $\pm 1$-valued completely multiplicative functions that omit a length four sign pattern, solving a 1974 conjecture of R.H. Hudson. 
       \item A proof that a class of ``Liouville-like" functions satisfies the unweighted Elliott conjecture of all orders, solving a problem of de la Rue.
       \item Constructing examples of multiplicative $f:\mathbb{N}\to \{-1,0,1\}$ with a given (unique) Furstenberg system,  answering a question of Lema\'nczyk.
       \item A density version of the Erd\H{o}s discrepancy theorem of Tao.
       \end{itemize}
\end{abstract}

\maketitle

\section{Introduction}

Let $f_1,\ldots, f_k:\mathbb{N}\to \mathbb{D}$ be multiplicative functions, where $\mathbb{D}$ denotes the closed unit disc of the complex plane. We shall study the problem of determining when the correlation averages
\begin{align}\label{eq0}
 \frac{1}{x}\sum_{n\leq x}f_1(n+h_1)\cdots f_k(n+h_k) \end{align}
converge to $0$ along a  subsequence (which is \emph{dense} with respect to some notion of density), with $h_1,\ldots, h_k\in \mathbb{N}$ being fixed, distinct integer shifts. It is well known that the behavior of the correlation averages~\eqref{eq0} depends crucially on whether the functions $f_j$ are \emph{pretentious} or not.

\begin{definition}[Pretentious and non-pretentious functions] \label{def:Pret}
 Let $f:\mathbb{N}\to \mathbb{D}$ be a multiplicative function. We say that $f$ is \emph{pretentious} if there exist a  Dirichlet character $\chi$ and a real number $t$ such that
 \begin{align*}
\mathbb{D}(f,\chi(n)n^{it};\infty)<\infty, 
\end{align*}
where for sequences $g_1,g_2 : \mathbb{N} \rightarrow \mathbb{D}$,
\begin{align}\label{eq:pret}
\mathbb{D}(g_1,g_2;x):= \left(\sum_{p\leq x}\frac{1-\textnormal{Re}(g_1(p)\overline{g_2(p)})}{p}\right)^{1/2}    
\end{align}
is the pretentious distance of Granville and Soundararajan~\cite{GS}. If $f$ is not pretentious, we say that it is non-pretentious\footnote{Non-pretentiousness is equivalent to the concept of aperiodicity sometimes used in connection with multiplicative functions. One calls $f$ \emph{aperiodic} if $f$ has mean $0$ on every infinite arithmetic progression. See~\cite[Th\'eor\`eme 1]{delange1983} for the equivalence of these notions.}. 
\end{definition}

A deep and influential conjecture of Elliott~\cite{elliott-book} states that if $f_1,\ldots, f_k:\mathbb{N}\to \mathbb{D}$ are multiplicative functions with $f_1$ non-pretentious, then the correlation averages~\eqref{eq0} tend to $0$ as $x\to \infty$. Elliott's conjecture includes as a special case Chowla's conjecture~\cite{chowla}, which is the case where $f_1=\cdots=f_k=\mu$, with $\mu$ the M\"obius function. 

The original formulation of Elliott's conjecture turns out to be technically false, with a counterexample constructed by Matom\"aki, Radziwi\l{}\l{} and Tao~\cite[Appendix B]{MRT}. The conjecture can be corrected to a form believed to be true by strengthening the hypothesis on the distances $\mathbb{D}(f_1,\chi(n)n^{it},x)$; see Conjecture~\ref{conj_elliott_Tao} below (we pose as Conjecture~\ref{conj_elliott:modified} another possible way to correct Elliott's conjecture without strengthening the hypothesis on $f_1$). 

Tao~\cite{tao-chowla}, and Tao and the third author~\cite{tt-duke} considered a logarithmically averaged version of Elliott's conjecture in the cases $k=2$ and $k\geq 3$, respectively. These results were strengthened by the same authors in~\cite[Corollaries 1.8 and 1.13]{tt-ant} to cover asymptotic averages on a dense sequence of scales.

\begin{customthm}{A}[\cite{tt-ant}]\label{thm_A}
Let $k\geq 2$, and let $h_1,\ldots, h_k\in \mathbb{N}$ be distinct. Let $f_1,\ldots, f_k:\mathbb{N}\to \mathbb{D}$ be multiplicative functions. 
\begin{enumerate}
    \item Suppose that $f_1$ satisfies
    \begin{align}\label{eq28}
     \inf_{|t|\leq x}\mathbb{D}(f_1,\chi(n)n^{it};x)\xrightarrow[x\to \infty]{}\infty  
    \end{align}
    for every Dirichlet character $\chi$.
    Then there exists a set $\mathcal{X}\subset \mathbb{N}$ with\footnote{By $\delta_{\log}(A)=\lim_{X\to \infty}\frac{1}{\log X}\sum_{n\leq X}1_{A}(n)/n$ we denote the logarithmic density of a set $A$, and by $\delta_{\log \log}(A)=\lim_{X\to \infty}\frac{1}{\log \log X}\sum_{2\leq n\leq X}1_{A}(n)/(n\log n)$ we denote its double-logarithmic density, provided these limits exist. The upper densities $\delta^{+}_{\log}$, $\delta^+_{\log \log}$ are defined with $\limsup$ in place of $\lim$, and the lower densities $\delta^{-}_{\log}$, $\delta^-_{\log \log}$ are defined with $\liminf$ in place of $\lim$. For a set $\mathcal{X}=\{x_1,x_2,\ldots\}\subset \mathbb{N}$ with $x_1<x_2<\cdots$ and $F:\mathbb{N}\to \mathbb{C}$, we define $\displaystyle\lim_{\substack{x\to \infty\\x\in \mathcal{X}}}F(x):=\lim_{j\to \infty}F(x_j)$.} $\delta_{\log}(\mathcal{X})=1$ such that we have 
    \begin{align*}
     \lim_{\substack{x\to \infty\\x\in \mathcal{X}}}\frac{1}{x}\sum_{n\leq x}f_1(n+h_1)f_2(n+h_2)=0. 
    \end{align*}
\item    Suppose that the product $f_1\cdots f_k$ satisfies
    \begin{align}\label{eq29}
\limsup_{x\to \infty} \frac{1}{\log \log x}\mathbb{D}(f_1\cdots f_k,\chi(n)n^{it};x)^2>0
    \end{align}
    for every Dirichlet character $\chi$ and real number $t$. 
    Then there exists a set $\mathcal{X}\subset \mathbb{N}$ with $\delta_{\log}(\mathcal{X})=1$ such that we have 
    \begin{align*}
     \lim_{\substack{x\to \infty\\x\in \mathcal{X}}}\frac{1}{x}\sum_{n\leq x}f_1(n+h_1)\cdots f_k(n+h_k)=0. 
    \end{align*}
\end{enumerate}
\end{customthm}

Theorem~\ref{thm_A} covers most of the known cases in the literature where the correlation averages~\eqref{eq0} are known to tend to $0$ along a dense subsequence. See however also the work of Gomilko, Lema\'nczyk and de la Rue~\cite{lemanczyk-MRT} for a proof that functions in the MRT class (a set of functions including the Matom\"aki--Radziwi\l{}\l{}--Tao counterexample) have vanishing autocorrelations along a subsequence. We also mention a result of the first author~\cite{klurman} which complements Theorem~\ref{thm_A} by giving a formula for the limit of the correlation averages~\eqref{eq0} in the case where $f_1,\ldots, f_k$ are all pretentious (the limit is ``typically'' nonzero, but in some cases it may vanish). 

Theorem~\ref{thm_A} and the other aforementioned results do not cover all bounded multiplicative functions even in the case of $2$-point correlations. Our first main result bridges this gap.
\begin{theorem}[Two-point Elliott conjecture at almost all scales]\label{thm_elliott_2point}
Let $f_1,f_2:\mathbb{N}\to \mathbb{D}$ be multiplicative functions. Suppose that $f_1$ is non-pretentious. Then there exists a set $\mathcal{X}\subset \mathbb{N}$ with $\delta_{\log}^{+}(\mathcal{X})=1$ such that for any distinct $h_1,h_2\in \mathbb{N}$ we have
\begin{align*}
\lim_{\substack{x\to \infty\\x\in \mathcal{X}}}\frac{1}{x}\sum_{n\leq x}f_1(n+h_1)f_2(n+h_k)=0.    
\end{align*}    
\end{theorem}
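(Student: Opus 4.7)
The plan is to combine the logarithmically averaged two-point Elliott theorem of Tao \cite{tao-chowla}---which, under our non-pretentiousness hypothesis on $f_1$, yields
\[
\lim_{X\to\infty}\frac{1}{\log X}\sum_{n\leq X}\frac{f_1(n+h_1) f_2(n+h_2)}{n}=0
\]
---with a scale-selection argument that promotes this logarithmically averaged vanishing to natural-average vanishing on a set $\mathcal{X}\subset\mathbb{N}$ of upper logarithmic density one. The main step is a dichotomy on how well $f_1$ is ``locally pretentious'' at each scale $x$.

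If the stronger condition $\inf_{|t|\leq x}\mathbb{D}(f_1,\chi(n)n^{it};x)\to\infty$ of Theorem~\ref{thm_A}(1) holds for every $\chi$, that result applies directly and yields the full conclusion (indeed with the stronger $\delta_{\log}(\mathcal{X})=1$). The new case to handle is when this strengthened hypothesis fails, i.e., there exist a character $\chi$, a sequence of scales $x_k\to\infty$, and frequencies $|t_k|\leq x_k$ such that $\mathbb{D}(f_1,\chi(n)n^{it_k};x_k)$ stays bounded. At such scales I would apply the Hal\'asz--Granville--Soundararajan structure theorem to write $f_1(n)\approx \chi(n)n^{it_k}g_k(n)$ (on average), where $g_k$ is a multiplicative function with bounded partial Dirichlet series at $s=1$; substituting into the two-point correlation reduces matters to controlling
\[
\frac{1}{x_k}\sum_{n\leq x_k}\chi(n+h_1)(n+h_1)^{it_k}\,g_k(n+h_1)\,f_2(n+h_2).
\]
The crucial point is that because $\mathbb{D}(f_1,\chi(n)n^{it};\infty)=\infty$ for every $\chi,t$, the local pretender frequencies $(\chi,t_k)$ must vary with $k$ in a non-trivial way---any would-be ``stable'' pretender along a set of scales of positive lower logarithmic density would contradict global non-pretentiousness. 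Exploiting this variation, together with Tao's logarithmic two-point theorem and Matom\"aki--Radziwi\l{}\l{} short-interval techniques, should produce cancellation for a dense sub-collection of the bad scales as well.

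Uniformity over the countably many shift pairs $(h_1,h_2)$ is then obtained by standard diagonalisation, since a countable union of sets of lower logarithmic density zero remains of lower logarithmic density zero. The main obstacle is the case of bounded local pretentious distance: ruling out the possibility that $(\chi,t_k)$ vary only mildly with $k$ yet still yield a persistent non-negligible contribution to the correlation. Overcoming this should require a careful quantitative tracking of how $t_k$ evolves with $x_k$, possibly combined with the entropy-decrement method of \cite{tao-chowla} and refinements thereof.
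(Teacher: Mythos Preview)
Your dichotomy---split into ``strong aperiodicity holds'' versus ``strong aperiodicity fails''---is the right overall structure and matches the paper. However, both branches of your argument have genuine gaps.

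First, your opening assertion that Tao's logarithmic two-point theorem yields
\[
\lim_{X\to\infty}\frac{1}{\log X}\sum_{n\leq X}\frac{f_1(n+h_1)f_2(n+h_2)}{n}=0
\]
under the \emph{mere} non-pretentiousness of $f_1$ is not correct. Tao's theorem requires the stronger hypothesis~\eqref{eq28}, i.e., $\inf_{|t|\leq x}\mathbb{D}(f_1,\chi(n)n^{it};x)\to\infty$ for every $\chi$. The MRT-type examples (cf.\ Proposition~\ref{prop_MRT}) show that for merely non-pretentious $f_1$ the logarithmic average need not vanish. So you cannot use the logarithmic statement as a black box in Case~2.

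Second, and more seriously, your treatment of Case~2 rests on the intuition that the local pretenders $(\chi,t_k)$ ``must vary with $k$ in a non-trivial way.'' The paper's key observation (Lemma~\ref{le_rigidity}) is the \emph{opposite}: if the scales $x_k$ at which $\mathbb{D}(f_1,\chi_k(n)n^{it_k};x_k)$ stays small form a polynomially dense sequence ($x_{k+1}\leq x_k^{O(1)}$), then the pretenders stabilise---there is a single fixed pair $(\chi,t)$ with $\mathbb{D}(f_1,\chi(n)n^{it};x)$ growing, but only like $o((\log\log x)^{1/2})$. This is precisely the ``moderately non-pretentious'' regime. In the paper's dichotomy, if the set $\mathcal{Y}$ where strong aperiodicity holds is finite, then \emph{every} large scale has a local pretender, so rigidity applies and $f_1$ (and likewise $f_2$) is moderately non-pretentious or pretentious. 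The correlation is then controlled not by exploiting variation of pretenders, but by the pretentious-theory machinery of Theorem~\ref{thm_correlation_conclusion}: one shows (via a telescoping argument, Lemma~\ref{le_pretentious_conclusion}) that on a set of scales of full double-logarithmic density the \emph{truncated} distance $\mathbb{D}(f_1,\chi(n)n^{it};x^{\varepsilon},x)$ is $o(1)$, while the full distance $\mathbb{D}(f_1,\chi(n)n^{it};x^{\varepsilon})\to\infty$; Theorem~\ref{thm_correlation_conclusion} then converts this directly into vanishing of the Ces\`aro correlation. Your Hal\'asz-decomposition-plus-entropy-decrement sketch does not supply this, and your own admission that ``ruling out the possibility that $(\chi,t_k)$ vary only mildly'' is ``the main obstacle'' is exactly right---that is the entire content of Case~2, and it is what Theorems~\ref{thm_correlation_conclusion} and~\ref{thm_main} are built to handle.
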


This result settles the two-point case of Problem 7.3 of the 2018 AIM conference on Sarnak's conjecture~\cite{AIM} in a strong form. This problem asks if the non-pretentiousness of $f$ is enough for the two-point correlations of $f$ and $\overline{f}$ to tend to $0$ along a subsequence. We note that a logarithmically averaged version of this conjecture was solved very recently by Frantzikinakis, Lema\'nczyk and de la Rue in~\cite[Proposition C.1]{flr}. Thus, Theorem~\ref{thm_elliott_2point} strengthens~\cite[Proposition C.1]{flr}. 

Theorem~\ref{thm_elliott_2point} is essentially optimal. The condition that $f_1$ is non-pretentious is clearly necessary and the condition $\delta^{+}_{\log}(\mathcal{X})=1$ cannot be strengthened to $\delta_{\log}(\mathcal{X})=1$, or even to $\delta_{\log \log}(\mathcal{X})=1$ (which is weaker) as the following result shows.

\begin{proposition}\label{prop_MRT}
There exists a non-pretentious multiplicative function $f:\mathbb{N}\to \mathbb{D}$  and a set $\mathcal{X}\subset \mathbb{N}$ with $\delta_{\log \log}^{+}(\mathcal{X})=1$ such that
\begin{align*}
\lim_{\substack{x\to \infty\\x\in \mathcal{X}}}\left|\frac{1}{x}\sum_{n\leq x}f(n)\overline{f}(n+1)\right|= 1. 
\end{align*}
\end{proposition}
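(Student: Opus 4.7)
The result refines the Matom\"aki--Radziwi\l{}\l{}--Tao construction from~\cite[Appendix B]{MRT}, which produced a non-pretentious completely multiplicative function whose two-point correlations fail to vanish along a sparse sequence of scales. Our goal is to arrange the parameters so that (i) the correlation has magnitude $\ge 1-\delta$ (rather than merely bounded away from $0$), and (ii) the set of ``bad'' scales has upper double-logarithmic density $1$.

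\textbf{Construction.} Fix $\delta>0$. Choose an extremely rapidly growing sequence $H_0=1<H_1<H_2<\cdots$ satisfying $\log\log H_{j-1}=o(\log\log H_j)$ (for instance $H_j=\exp(\exp(2^j))$), and a slowly diverging sequence $(t_j)_j$ of real numbers with $t_j\to\infty$ but with consecutive differences $|t_{j+1}-t_j|$ carefully tuned. Define $f:\mathbb{N}\to\mathbb{D}$ by complete multiplicativity with $f(p):=p^{it_j}$ for every prime $p\in(H_{j-1},H_j]$. \emph{Non-pretentiousness} follows by a standard argument: for every Dirichlet character $\chi$ and real $t$, since $t_j\to\infty$ we have $|t_j-t|\ge 1$ for all $j\ge j_0(t,\chi)$, and a Vinogradov-type cancellation estimate gives
\begin{align*}
\sum_{p\in(H_{j-1},H_j]}\frac{1-\Re(p^{i(t_j-t)}\overline{\chi(p)})}{p}\ge c(\log\log H_j-\log\log H_{j-1}),
\end{align*}
so summing over $j\ge j_0$ shows $\mathbb{D}(f,\chi(n)n^{it};\infty)=\infty$. \emph{Density of the bad set:} set $\mathcal{X}:=\bigcup_j I_j$ with $I_j:=[H_{j-1}^A,H_j]$ for a large constant $A=A(\delta)$; since $A\log\log H_{j-1}=o(\log\log H_j)$, a direct computation yields $\delta^+_{\log\log}(\mathcal{X})=1$.

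\textbf{Correlation estimate on $I_j$.} For $x\in I_j$, all prime factors of $n\le x$ lie in $[2,H_j]$. Writing $f(n)=n^{it_j}g(n)$, where $g$ is the completely multiplicative function with $g(p)=1$ for $p\in(H_{j-1},H_j]$ and $g(p)=p^{i(t_{j'(p)}-t_j)}$ for smaller primes (with $j'(p)$ the index such that $p\in(H_{j'(p)-1},H_{j'(p)}]$), we obtain
\begin{align*}
\frac{1}{x}\sum_{n\le x}f(n)\overline{f(n+1)}=\frac{1}{x}\sum_{n\le x}\bigl(\tfrac{n}{n+1}\bigr)^{it_j}g(n)\overline{g(n+1)}.
\end{align*}
The factor $(n/(n+1))^{it_j}=1+O(|t_j|/x)=1+o(1)$ uniformly on $I_j$ provided $|t_j|$ grows sufficiently slowly. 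The sequence $(t_j)$ is chosen inductively so that $\mathbb{D}(g,1;x)^2=\mathbb{D}(f,n^{it_j};x)^2\le \delta$ uniformly for $x\in I_j$; invoking the formula of the first author~\cite{klurman} for two-point correlations of pretentious multiplicative functions then yields
\begin{align*}
\frac{1}{x}\sum_{n\le x}g(n)\overline{g(n+1)}=L_g+o(1),\qquad |L_g|\ge 1-O(\mathbb{D}(g,1;x)^2)\ge 1-O(\delta),
\end{align*}
via the convergent Euler product expression for $L_g$ whose logarithm is controlled by $\mathbb{D}(g,1;x)^2$.

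\textbf{Main obstacle.} The technical crux is reconciling $t_j\to\infty$ (required for global non-pretentiousness) with $\mathbb{D}(f,n^{it_j};x)^2\le\delta$ uniformly on $I_j$ (required for the correlation estimate). This is achieved by choosing $(t_j)$ so that each consecutive increment $|t_{j+1}-t_j|$ is small enough---in terms of $\log H_j$---that its contribution to the pretentious distance remains controlled at every intermediate scale, yet the partial sums still diverge. This delicate accounting, coupled with the rapid growth of $(H_j)_j$, is the heart of the proof.
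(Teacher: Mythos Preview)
Your proposal contains a genuine obstruction that the ``delicate accounting'' you allude to cannot resolve. You require simultaneously that (a) $t_j\to\infty$, and (b) $\mathbb{D}(f,n^{it_j};x)^2\le\delta$ for all $x\in I_j$. Condition~(b) forces the contribution from every earlier block $(H_{j'-1},H_{j'}]$, $j'<j$, to be small; but for primes $p$ in such a block, $f(p)p^{-it_j}=p^{i(t_{j'}-t_j)}$, and the contribution
\[
\sum_{H_{j'-1}<p\le H_{j'}}\frac{1-\cos\bigl((t_{j'}-t_j)\log p\bigr)}{p}
\]
is $\gg \log\log H_{j'}-\log\log H_{j'-1}$ once $|t_{j'}-t_j|\ge 1$. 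Now if $t_j\to\infty$ then for every fixed $j'$ there are infinitely many $j$ with $|t_{j'}-t_j|\ge 1$, and summing the above over all such $j'<j$ produces a quantity $\gg \log\log H_{J}$ for some $J\to\infty$. Your proposed remedy---keeping each increment $|t_{j+1}-t_j|$ small in terms of $\log H_j$---only helps if the cumulative difference $|t_{j'}-t_j|\le\sum_{k=j'}^{j-1}|t_{k+1}-t_k|$ stays $\ll 1/\log H_{j'}$ for all $j>j'$; but that forces the tail sums $\sum_{k\ge j'}|t_{k+1}-t_k|$ to tend to $0$, hence $\sum_k|t_{k+1}-t_k|<\infty$, contradicting~(a).

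The paper's construction resolves this not by making the differences $t_{j+1}-t_j$ small, but by making them \emph{enormous} and exploiting equidistribution. One chooses the new exponent $s_{m+1}$ (playing the role of your $t_{j+1}$) so large that, by the asymptotic equidistribution of $(p^{it})_{p\le t_m}$ on the torus $\prod_{p\le t_m}S^1$, one has $|p^{is_{m+1}}-f(p)|\le t_m^{-2}$ for \emph{every} prime $p\le t_m$ already defined. This gives the pointwise approximation $f(n)=n^{is_{m+1}}+O(1/t_m)$ on the relevant range of $n$, from which the correlation estimate $f(n)\overline{f(n+1)}=1+O(1/t_m)$ follows by a one-line Taylor expansion, with no need for the pretentious-correlation formula. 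The mechanism is Diophantine (Kronecker-type approximation on a finite-dimensional torus), not analytic (control of increments), and it is precisely this idea that your proposal is missing.
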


 Our second main result (which is a consequence of the more general Theorem~\ref{thm_elliott_higher} below) shows that the odd order cases of Elliott's conjecture hold for any non-pretentious multiplicative function $f:\mathbb{N}\to [-1,1]$ along a dense sequence of scales.

\begin{corollary}[Odd order Elliott conjecture at almost all scales for real-valued functions]\label{cor_odd} Let $k\geq 1$ be an odd integer. Let $f:\mathbb{N}\to [-1,1]$ be a multiplicative function. Suppose that $\mathbb{D}(f,\chi;\infty)=\infty$ for any real Dirichlet character $\chi$. Then there exists a set $\mathcal{X}\subset \mathbb{N}$ with $\delta^{+}_{\log \log}(\mathcal{X})=1$ such that for any distinct $h_1,\ldots, h_k\in \mathbb{N}$ we have
\begin{align*}
\lim_{\substack{x\to \infty\\x\in \mathcal{X}}}\frac{1}{x}\sum_{n\leq x}f(n+h_1)\cdots f(n+h_k)=0.    
\end{align*}
\end{corollary}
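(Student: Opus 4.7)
The plan is to deduce the corollary directly from the more general Theorem~\ref{thm_elliott_higher} (which, according to the excerpt, establishes the $k$-point statement under the qualitative assumption that the product $f_1 \cdots f_k$ is non-pretentious). Setting $f_1 = \cdots = f_k = f$, the product is $f^k$, so the task reduces to showing that $f$ real-valued and non-pretentious to real Dirichlet characters forces $f^k$ to be non-pretentious in the sense required by that theorem. That both $f$ and $f^k$ are real-valued is the crucial structural feature.

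I would first reduce non-pretentiousness of any real-valued $g : \mathbb{N} \to [-1,1]$ to the real-character, $t = 0$ case: if $\mathbb{D}(g, \chi(n) n^{it}; \infty) < \infty$ for some Dirichlet character $\chi$ and real $t$, then taking complex conjugates (which leaves $g$ unchanged) and applying the triangle inequality for $\mathbb{D}$ yields $\mathbb{D}(1, \chi^2(n) n^{2it}; \infty) < \infty$, which by standard Mertens/PNT-type estimates forces $\chi^2$ to be principal and $t = 0$. Hence it suffices to verify $\mathbb{D}(f^k, \psi; \infty) = \infty$ for every real Dirichlet character $\psi$. Next, I would establish the elementary inequality
\[ 1 - y \leq 2(1 - y^k) \qquad (y \in [-1, 1], \ k \geq 1 \text{ odd}), \]
obtained by treating $y \in [0,1]$ (where $y^k \leq y$ gives $1 - y^k \geq 1 - y$) and $y \in [-1, 0]$ (where $1 - y \leq 2 \leq 2(1 - y^k)$) separately. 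Applied with $y = f(p)\psi(p) \in [-1, 1]$ at primes $p$ not dividing the modulus of $\psi$, and using that $\psi(p)^k = \psi(p)$ for such $p$ (as $\psi$ is real and $k$ is odd), so that $y^k = f(p)^k \psi(p)$, this yields
\[ \mathbb{D}(f, \psi; x)^2 \leq 2\,\mathbb{D}(f^k, \psi; x)^2 + O(1) \]
uniformly in $x$. Therefore $\mathbb{D}(f, \chi; \infty) = \infty$ for every real $\chi$ propagates to $\mathbb{D}(f^k, \chi; \infty) = \infty$, and combined with the first step, $f^k$ satisfies the qualitative non-pretentiousness hypothesis of Theorem~\ref{thm_elliott_higher}.

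Applying that theorem to $(f, \ldots, f)$ then produces the set $\mathcal{X}$ with $\delta^{+}_{\log \log}(\mathcal{X}) = 1$ and the claimed vanishing of the correlations. I do not anticipate substantial obstacles in this deduction; the genuine analytic work is packaged inside Theorem~\ref{thm_elliott_higher} itself. The only minor subtlety is ensuring that $\mathcal{X}$ can be chosen independently of the shift tuple $(h_1, \ldots, h_k)$, which is either a built-in feature of Theorem~\ref{thm_elliott_higher} or is obtained by a standard diagonal argument over the countable family of admissible integer tuples.
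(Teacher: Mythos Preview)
Your approach is correct and matches the paper's: apply Theorem~\ref{thm_elliott_higher} with $e_1=\cdots=e_k=1$ and $d=2$ (so that $[-1,1]$ is the convex hull of the $d$th roots of unity and $(d,k)=1$ as $k$ is odd), after noting via your conjugation-and-triangle-inequality argument that a real-valued $f$ with $\mathbb{D}(f,\chi;\infty)=\infty$ for all real $\chi$ is automatically non-pretentious to all characters. One small clarification: the hypothesis of Theorem~\ref{thm_elliott_higher} is $\mathbb{D}(f^{e_1},\chi;\infty)=\infty$, which here is a condition on $f$ itself rather than on the product $f^k$, so your elementary inequality establishing $\mathbb{D}(f^k,\psi;\infty)=\infty$ is correct but superfluous for the deduction.
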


\subsection{Further main results}

Our next main result interpolates between pretentious functions covered by~\cite{klurman} and the class of functions covered by Theorem~\ref{thm_A} by showing that functions that are non-pretentious but not too strongly so have vanishing autocorrelations of \emph{any} order along a dense sequence of scales. The class of functions to which our next main theorem applies are what we call \emph{moderately non-pretentious} functions. 

\begin{definition}[Moderately non-pretentious functions] We say that a multiplicative function $f:\mathbb{N}\to \mathbb{D}$ is \emph{moderately non-pretentious} if $f$ is non-pretentious and there exists a constant $A\geq 1$ such that 
\begin{align*}
\lim_{X\to \infty}\frac{1}{\log \log X}\inf_{|t|\leq X^{A}}\,\min_{\substack{\chi\pmod q\\q\leq (\log X)^{A}}}\mathbb{D}(f,\chi(n)n^{it};X)^2=0.  
\end{align*}
\end{definition}

This class of functions is clearly closed under complex conjugation and under multiplication by a twisted Dirichlet character $\xi(n)n^{iu}$. Moreover, by the pretentious triangle inequality~\eqref{eq:triangle} we see that it is also closed under products. In view of Theorem~\ref{thm_A}, important examples to keep in mind are those non-pretentious $f$ for which $\mathbb{D}(f,\chi(n)n^{it};X)^2=o(\log \log X)$ for some fixed $\chi,t$ or $\inf_{|t|\leq X}\mathbb{D}(f,\chi(n)n^{it};X)=O(1)$ for some fixed $\chi$.\footnote{Note that our notion of moderate non-pretentious is somewhat disjoint from the notion of strong aperiodicity, i.e.~\eqref{eq28}; neither condition implies the other.} 

\begin{theorem}[Moderately non-pretentious functions satisfy Elliott's conjecture at almost all scales]\label{thm_main} Let $k\geq 1$. Let $f_1,\ldots, f_k:\mathbb{N}\to \mathbb{D}$ be multiplicative functions, with $f_1$ moderately non-pretentious and each of $f_2,\ldots, f_k$ either moderately non-pretentious or pretentious. Then there exists a set $\mathcal{X}\subset \mathbb{N}$, depending only on the set $\{f_1,\ldots, f_k\}$, with $\delta^{+}_{\log \log}(\mathcal{X})=1$ such that for any distinct $h_1,\ldots, h_k\in \mathbb{N}$ we have
\begin{align*}
\lim_{\substack{x\to \infty\\x\in \mathcal{X}}}\frac{1}{x}\sum_{n\leq x}f_1(n+h_1)\cdots f_k(n+h_k)=0.    
\end{align*}   
\end{theorem}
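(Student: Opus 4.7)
The plan is to combine a scale-by-scale pretentious reduction with Tao's entropy-decrement machinery, exploiting the fact that the moderate non-pretentiousness of $f_1$ manifests precisely on a sequence of scales of full upper double-logarithmic density---matching exactly the target conclusion. First I would extract such a sequence $X_j\to\infty$ along which, for each moderately non-pretentious $f_i$, there exist a Dirichlet character $\chi_{i,j}$ of modulus $q_{i,j}\le(\log X_j)^A$ and a real $|t_{i,j}|\le X_j^A$ with $\mathbb{D}(f_i,\chi_{i,j}(n)n^{it_{i,j}};X_j)^2=o(\log\log X_j)$. A pigeonhole/compactness step, together with the fact that $\mathbb{D}(\cdot,\chi(n)n^{it};X)$ varies slowly in $t$ at scale $X$, lets me replace $(\chi_{i,j},t_{i,j})$ by $(\chi_i,t_{i,j}')$ with $\chi_i$ fixed and $t_{i,j}'$ lying in a bounded set modulo $2\pi/\log X_j$ (the scale at which $n^{it}$ is essentially constant on $[1,X_j]$). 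The resulting subsequence retains full upper double-logarithmic density and only depends on the \emph{set} $\{f_1,\dots,f_k\}$.

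Next, I set $\tilde f_{i,j}(n):=f_i(n)\overline{\chi_i(n)}n^{-it_{i,j}'}$, giving a $1$-bounded multiplicative function that satisfies $\mathbb{D}(\tilde f_{i,j},1;X_j)^2=o(\log\log X_j)$ for moderately non-pretentious $f_i$ (and remains uniformly pretentious to $1$ if $f_i$ is pretentious). Expanding $\prod_i f_i(n+h_i)$ and absorbing the characters $\chi_i(n+h_i)$ by splitting into residues modulo $Q:=\mathrm{lcm}_i q_i$, and the factors $(n+h_i)^{it_{i,j}'}$ by the Matom\"aki--Radziwi\l{}\l{} short-interval method on intervals where these factors are essentially Lipschitz-constant, reduces matters to showing that
$$\frac{1}{x}\sum_{n\le x}\tilde f_{1,j}(n+h_1)\cdots\tilde f_{k,j}(n+h_k)\longrightarrow 0\qquad\text{along }x=X_j.$$

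At this point each $\tilde f_{i,j}$ is pretentiously close to $1$ on $[1,X_j]$, so a quantitative pretentious correlation formula---extending the formula of~\cite{klurman} by combining Hal\'asz-type mean value bounds, Shiu-type upper bounds, and Theorem~\ref{thm_A}---approximates the above correlation by the product $\prod_i\bigl(\tfrac1{X_j}\sum_{n\le X_j}\tilde f_{i,j}(n)\bigr)$ up to $o(1)$. Because $f_1$ is \emph{globally} non-pretentious, the mean value $\tfrac{1}{X_j}\sum_n \tilde f_{1,j}(n)$ must tend to $0$ along a further subsequence of $X_j$ of full upper double-logarithmic density: otherwise, Hal\'asz's theorem would force $f_1$ to pretend to $\chi_1(n)n^{it_\infty}$ for any accumulation point $t_\infty$ of the $t_{1,j}'$, contradicting the non-pretentiousness of $f_1$. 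This delivers the desired conclusion.

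The hard part is the quantitative correlation formula in the last paragraph. The hypothesis only gives $\mathbb{D}(\tilde f_{i,j},1;X_j)^2=o(\log\log X_j)$, which is \emph{not} small enough to produce pointwise Hal\'asz decay; one must combine it with Tao's entropy decrement argument run in the double-logarithmic setting to recover cancellation on average, so as to replace the multi-shift correlation with a product of one-point mean values. A secondary technical nuisance is making the short-interval Lipschitz reduction in Step 2 uniform over bounded shifts, so that the set $\mathcal{X}$ can be chosen independently of $h_1,\dots,h_k$.
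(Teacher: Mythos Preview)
Your proposal has a genuine gap at the step you yourself flag as ``the hard part'': the claim that the correlation $\tfrac{1}{X_j}\sum_{n\le X_j}\tilde f_{1,j}(n+h_1)\cdots\tilde f_{k,j}(n+h_k)$ is approximated by the product $\prod_i\bigl(\tfrac{1}{X_j}\sum_{n\le X_j}\tilde f_{i,j}(n)\bigr)$ up to $o(1)$. This factorization is \emph{not} a consequence of $\mathbb{D}(\tilde f_{i,j},1;X_j)^2=o(\log\log X_j)$: the formula in~\cite{klurman} needs the much stronger input $\mathbb{D}(f_j,\chi_j;\log x,x)=o(1)$, and even then the limit is a singular-series expression, not a product of individual mean values. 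Invoking Tao's entropy-decrement argument does not repair this, since entropy decrement is a two-point mechanism (it produces averages over a prime variable $p$ that one then kills via the circle method and Matom\"aki--Radziwi\l{}\l{}); there is no known way to run it for general $k$-point correlations, which is precisely why Theorem~\ref{thm_A}(2) has the extra hypothesis on $f_1\cdots f_k$. Your fallback appeal to Theorem~\ref{thm_A} is circular here, since that theorem's hypotheses are exactly what moderate non-pretentiousness fails to supply.

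The paper proceeds quite differently, and the key idea you are missing is to work with the \emph{truncated} pretentious distance $\mathbb{D}(f_j,\chi_j(n)n^{it_j};x^{\varepsilon},x)$ rather than the full one. First, a rigidity lemma (Lemma~\ref{le_rigidity}) upgrades your vague ``pigeonhole/compactness'' step: if $f$ has small distance to \emph{some} $\chi_j(n)n^{it_j}$ along a sequence $x_{n+1}\le x_n^{O(1)}$, then the $\chi_j$ and $t_j$ must in fact stabilize to a single fixed pair $(\chi,t)$, and one gets $\mathbb{D}(f,\chi(n)n^{it};x)^2\le\eta_1(x)\log\log x$ for \emph{all} large $x$. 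Second, a telescoping argument in double-logarithmic density (Lemma~\ref{le_pretentious_conclusion}) converts this into $\mathbb{D}(f,\chi(n)n^{it};x^{\eta(x)},x)\le\eta(x)$ on a set $\mathcal{X}$ of full $\delta_{\log\log}$-density. Third, and crucially, a direct sieve computation (Theorem~\ref{thm_correlation_conclusion}, proved via the fundamental lemma) shows that once the truncated distances are $\le\varepsilon$ and the untruncated distance of $f_1$ is $\ge 1/\varepsilon$ (the latter guaranteed by non-pretentiousness), the $k$-point correlation is $O(\varepsilon(\log(1/\varepsilon))^{1/2})$---no factorization, no entropy decrement, no appeal to Theorem~\ref{thm_A}. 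This last step is where the real work lies, and it is what replaces the unproved correlation formula in your outline.
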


We can use Theorem~\ref{thm_main} to prove that the condition~\eqref{eq29} on $f_1\cdots f_k$ in Theorem~\ref{thm_A}(2) can be relaxed to just $f_1\cdots f_k$ being non-pretentious in the case where $f_j$ are powers of the same multiplicative function taking values in (the convex hull of) the roots of unity of a certain order. 

\begin{theorem}[Higher order Elliott conjecture for functions with non-pretentious powers]\label{thm_elliott_higher}
Let $k\geq 1$ and $e_1,\ldots, e_k\geq 1$ be integers.  Let $d\geq 1$ be an integer with $(d,e_1+\cdots +e_k)=1$, and let $f:\mathbb{N}\to \mathbb{D}$ be a multiplicative function taking values in the convex hull of the $d$th roots of unity. Suppose also that $\mathbb{D}(f^{e_1},\chi;\infty)=\infty$ for any Dirichlet character $\chi$.  Then there exists a set $\mathcal{X}\subset \mathbb{N}$ with $\delta_{\log \log}^{+}(\mathcal{X})=1$ such that for any fixed, distinct $h_1,\ldots, h_k\in \mathbb{N}$ we have
\begin{align*}
\lim_{\substack{x\to \infty\\x\in \mathcal{X}}}\frac{1}{x}\sum_{n\leq x}f(n+h_1)^{e_1}\cdots f(n+h_k)^{e_k}=0.  
\end{align*}
\end{theorem}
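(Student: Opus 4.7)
The plan is to reduce the statement to either Theorem~\ref{thm_A}(2) or Theorem~\ref{thm_main} via a case split on the product $f^{s}$, where $s:=e_1+\cdots+e_k$.

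The first step is to upgrade the hypothesis on $f^{e_1}$ from non-pretentiousness against Dirichlet characters (without archimedean twist) to non-pretentiousness against every twist $\chi(n)n^{it}$. Writing $d':=d/(d,e_1)$ and using that $f(p)$ is a $d$th root of unity (or zero) at primes, one has $(f^{e_1})^{d'}(p)\in\{0,1\}$. If the vanishing primes of $f$ already have divergent reciprocal sum, non-pretentiousness is automatic from their contribution alone to $\mathbb{D}(f^{e_1},\cdot;\infty)$. Otherwise, supposing $\mathbb{D}(f^{e_1},\chi(n)n^{it};\infty)<\infty$, the elementary inequality $\mathbb{D}(g^{a},h^{a};X)\leq a\,\mathbb{D}(g,h;X)$ (valid when $|g(p)|,|h(p)|\in\{0,1\}$) applied with $a=d'$ yields $\mathbb{D}(1,\chi^{d'}(n)n^{id't};\infty)<\infty$. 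By the prime number theorem in arithmetic progressions, this forces $\chi^{d'}$ to be principal and $t=0$, so $\chi$ has order dividing $d'$ and $\mathbb{D}(f^{e_1},\chi;\infty)<\infty$, contradicting the hypothesis.

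I then split into two cases. In \emph{Case A}, $f^{s}$ satisfies the strong aperiodicity condition~\eqref{eq29} for every Dirichlet character $\chi$ and every real $t$; then Theorem~\ref{thm_A}(2) applied to $(f^{e_1},\ldots,f^{e_k})$, whose product equals $f^{s}$, yields the claim with $\delta_{\log}(\mathcal{X})=1$, which implies $\delta^{+}_{\log \log}(\mathcal{X})=1$. In \emph{Case B}, there exist a fixed Dirichlet character $\chi_0$ and a real $t_0$ with $\mathbb{D}(f^{s},\chi_0(n)n^{it_0};X)^{2}=o(\log \log X)$. Since $(d,s)=1$, for each $j\in\{1,\ldots,k\}$ I pick $a_j\in\{1,\ldots,d\}$ with $a_js\equiv e_j\pmod{d}$; then $f^{e_j}(p)=f^{s}(p)^{a_j}$ at every prime (both sides vanish when $f(p)=0$, and on nonvanishing primes the identity uses $f(p)^{d}=1$). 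The elementary power inequality yields $\mathbb{D}(f^{e_j},\chi_0^{a_j}(n)n^{ia_jt_0};X)\leq a_j\,\mathbb{D}(f^{s},\chi_0(n)n^{it_0};X)$, hence $\mathbb{D}(f^{e_j},\chi_0^{a_j}(n)n^{ia_jt_0};X)^{2}=o(\log \log X)$ for every $j$. Since $\chi_0^{a_j}$ and $a_jt_0$ are fixed, these twists lie in the parameter ranges $|t|\leq X^A$, $q\leq (\log X)^A$ in the definition of moderately non-pretentious for any $A\geq 1$ and all sufficiently large $X$. Thus every $f^{e_j}$ is pretentious or moderately non-pretentious, and $f^{e_1}$ in particular is moderately non-pretentious by the first step, so Theorem~\ref{thm_main} applied to $(f^{e_1},\ldots,f^{e_k})$ delivers the conclusion.

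The main obstacle is the clean execution of the first step: the root-of-unity structure of $f$ must be leveraged to rule out any archimedean twist $n^{it}$ with $t\neq 0$. Primes at which $f$ vanishes contribute to every pretentious distance but not to the power identity $(f^{e_1})^{d'}\equiv 1$, so they have to be dealt with through the dichotomy on the size of $\sum_{p:f(p)=0}p^{-1}$, and the prime number theorem input is needed to kill the archimedean parameter $t$ after reduction to a constant. Once this is in place, the case split on $f^{s}$ aligns the problem precisely with Theorem~\ref{thm_A}(2) and Theorem~\ref{thm_main}.
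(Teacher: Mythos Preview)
Your overall strategy---the case split on whether $f^{s}$ (with $s=e_1+\cdots+e_k$) satisfies~\eqref{eq29}, followed by Theorem~\ref{thm_A}(2) in Case A and Theorem~\ref{thm_main} in Case B---matches the paper's. But there is a genuine gap: the hypothesis is that $f$ takes values in the \emph{convex hull} of the $d$th roots of unity, not merely in the set of $d$th roots of unity together with $0$. Your claim that ``$f(p)$ is a $d$th root of unity (or zero) at primes'' is a misreading, and both the assertion $(f^{e_1})^{d'}(p)\in\{0,1\}$ in your first step and the identity $f^{e_j}(p)=f^{s}(p)^{a_j}$ in Case B (which you yourself note ``uses $f(p)^{d}=1$'') fail whenever $f(p)$ lies strictly inside the polygon.

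The paper repairs both points with convex-hull geometry. For the first step (ruling out an archimedean twist), one raises a hypothetical relation $\mathbb{D}(f^{e_1},\chi'(n) n^{iu};\infty)<\infty$ to the power $d\ell'$ (with $\ell'$ the order of $\chi'$) to obtain $\mathbb{D}(f^{de_1\ell'},n^{id\ell' u};\infty)<\infty$; a compactness argument on the hull shows $|f(p)^{de_1\ell'}+1|\geq\eta>0$ uniformly (the only points of the hull on the unit circle are the vertices, where $z^d=1$), and for $u\neq 0$ the primes with $p^{id\ell' u}$ near $-1$ then contribute a divergent sum, a contradiction. For Case B, the paper first reduces via Delange's theorem to $\sum_{|f(p)|\leq 1-c}1/p<\infty$ for every $c>0$; under this, the hull geometry yields $\mathbb{D}(f^{bd},1;x)^2=o(\log\log x)$, and B\'ezout ($am=1+bd$) together with the triangle inequality then gives $\mathbb{D}(f,\chi^a;x)^2=o(\log\log x)$, hence $\mathbb{D}(f^{e_j},\chi^{ae_j};x)^2=o(\log\log x)$ for each $j$. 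Your route via $f^{sa_j}$ could also be completed after the Delange reduction by proving $\mathbb{D}(f^{e_j},f^{sa_j};x)^2=o(\log\log x)$, but that bound again needs the geometric input you omitted.
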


\section{Applications}

Previous results on correlations of bounded multiplicative functions have had many applications, including Tao's solution of the Erd\H{o}s discrepancy problem~\cite{tao-edp}, and a proof of the logarithmic form of Sarnak's conjecture for uniquely ergodic systems by Frantzikinakis and Host~\cite{fh-annals}. See also~\cite{f-erg-descrete},~\cite{fh-IMRN},~\cite{lemanczyk-MRT},~\cite{RAP},~\cite{flr} for some works on classifying Furstenberg systems of multiplicative functions. Results on correlations have also played an important role in establishing various ``rigidity theorems" for multiplicative functions, see e.g.~\cite{klurman},~\cite{klurman-mangerel-rigidity},~\cite{ant-km},~\cite{tams-kmpt},~\cite{f-descrete},~\cite{aymone},  and results on sign patterns in multiplicative sequences, see e.g.~\cite{MRT-sign},~\cite{tao-chowla},~\cite{tt-duke},~\cite{km-effective},~\cite{fh-annals},~\cite{tt-FMS},~\cite{mcnamara},~\cite{MRTTZ}.

The new results from the preceding section enable progress on several  problems related to correlations, sign patterns and ``rigidity phenomena" for multiplicative functions, where e.g. the lack of a version of the three-point Elliott conjecture with weaker hypotheses was previously a problem. Further consequences of these results will be explored in future works. Here we demonstrate their applicability in solving several open problems. We remark that for some of our applications it is crucial that the set $\mathcal{X}\subset \mathbb{N}$ in our main theorems has full upper density in the sense $\delta_{\log}^{+}(\mathcal{X})=1$ (rather than just $\mathcal{X}$ being infinite).

\subsection{Chowla property for ``Liouville-like'' functions}

Using the proof ideas of Theorem~\ref{thm_main}, we can settle in a strong form a problem posed by de la Rue at the AIM conference on Sarnak's conjecture (Problem 7.2 in the list~\cite{AIM}). This problem asked if there is a set $\mathcal{A}=\{a_1,a_2,\ldots\}\subset \mathbb{N}$ with $\sum_{j\geq 1}1/a_j=\infty$ such that the ``Liouville-like'' function
\begin{align*}
 f(n)=(-1)^{|\{j\in \mathbb{N}:\,\, a_j\mid n\}|}   
\end{align*}
satisfies the \emph{Chowla property},
i.e. the autocorrelations of $f$ of any order tend to $0$ (or in other words Elliott's conjecture holds with $f_1=\cdots =f_k=f$ for any $k\geq 1$).
In what follows, for a subset $\mathcal{P}$ of the primes $\mathbb{P}$, $\omega_{\mathcal{P}}(n)$ denotes the number of distinct prime divisors of $n$ from $\mathcal{P}$ and $\Omega_{\mathcal{P}}(n)$ denotes the number of  prime divisors of $n$ from $\mathcal{P}$ counted with multiplicities. 

\begin{theorem}[Elliott's conjecture for Liouville-like functions involving sparse sets]\label{thm_omega} Let $f(n)=(-1)^{\omega_{\mathcal{P}}(n)}$ or $f(n)=(-1)^{\Omega_{\mathcal{P}}(n)}$, where $\mathcal{P}\subset \mathbb{P}$ has relative density\footnote{The relative density of a subset $\mathcal{P}\subset \mathbb{P}$ within $\mathbb{P}$ is defined as $\lim_{x\to \infty}|\mathcal{P}\cap [1,x]|/|\mathbb{P}\cap [1,x]|$.} $0$ in $\mathbb{P}$ and $\sum_{p\in \mathcal{P}}\frac{1}{p}=\infty$. Then, for any  $k\geq 1$ and $a_1,\ldots, a_k,h_1,\ldots, h_k\in \mathbb{N}$ with $a_ih_j\neq a_jh_i$ whenever $i\neq j$, we have 
  \begin{align*}
  \lim_{x\to \infty}\frac{1}{x}\sum_{n\leq x}f(a_1n+h_1)\cdots f(a_kn+h_k)=0.
  \end{align*}
\end{theorem}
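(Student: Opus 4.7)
The plan is to verify that $f$ is moderately non-pretentious, apply (the proof of) Theorem~\ref{thm_main} to obtain vanishing along a dense subsequence of scales, and exploit the sparsity of $\mathcal{P}$ to upgrade the conclusion to a full limit.

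First, I would verify that $f$ is moderately non-pretentious. Since $\mathcal{P}$ has relative density $0$ in $\mathbb{P}$, partial summation combined with the prime number theorem gives
\[
\sum_{\substack{p \in \mathcal{P} \\ p \leq X}} \frac{1}{p} = o(\log \log X).
\]
Since $f(p) = -1$ for $p \in \mathcal{P}$ and $f(p) = +1$ for $p \in \mathbb{P}\setminus\mathcal{P}$, this implies
\[
\mathbb{D}(f, 1; X)^2 = 2 \sum_{\substack{p \in \mathcal{P} \\ p \leq X}} \frac{1}{p} = o(\log \log X),
\]
so taking $\chi = 1$ and $t = 0$ in the definition of moderate non-pretentiousness verifies this property. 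The hypothesis $\sum_{p \in \mathcal{P}} 1/p = \infty$ yields $\mathbb{D}(f, 1; \infty) = \infty$, and for $(\chi, t) \neq (1, 0)$ the pretentious triangle inequality combined with the classical divergence of $\mathbb{D}(1, \chi(n) n^{it}; X)$ gives $\mathbb{D}(f, \chi(n) n^{it}; \infty) = \infty$; hence $f$ is non-pretentious.

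Second, I would apply (the proof of) Theorem~\ref{thm_main} in the dilated setting $a_i n + h_i$. The assumption $a_i h_j \neq a_j h_i$ for $i \neq j$ ensures the affine forms are pairwise non-proportional, and the Hal\'asz--Montgomery--Selberg ingredients underlying Theorem~\ref{thm_main} accommodate dilations after splitting $n$ into residue classes modulo $L := \text{lcm}(a_1, \ldots, a_k)$. This yields a set $\mathcal{X}$ with $\delta^+_{\log\log}(\mathcal{X}) = 1$ along which the correlation vanishes.

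The main obstacle is upgrading this subsequential convergence to a full limit. I would exploit the factorization $f = f_Y \cdot g_Y$, where $f_Y(n) := (-1)^{\omega_{\mathcal{P}_Y}(n)}$ with $\mathcal{P}_Y := \mathcal{P} \cap [1, Y]$ is periodic of period $M_Y := \prod_{p \in \mathcal{P}_Y} p$, and $g_Y(n) := (-1)^{\omega_{\mathcal{P} \setminus \mathcal{P}_Y}(n)}$ captures the primes of $\mathcal{P}$ exceeding $Y$ (with an analogous decomposition when $\omega_{\mathcal{P}}$ is replaced by $\Omega_{\mathcal{P}}$). Expanding the correlation via the finite Fourier basis on $\mathbb{Z}/M_Y\mathbb{Z}$ reduces matters to bounding
\[
\frac{1}{x} \sum_{\substack{n \leq x \\ n \equiv c \pmod{M_Y}}} \prod_{i=1}^{k} g_Y(a_i n + h_i)
\]
uniformly in residues $c$. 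Since $\mathbb{D}(g_Y, 1; X)^2 = 2 \sum_{p \in \mathcal{P},\, Y < p \leq X} 1/p$ tends to infinity as $Y \to \infty$ while remaining $o(\log \log X)$, the function $g_Y$ becomes progressively more non-pretentious with $Y$. Applying the quantitative form of the proof of Theorem~\ref{thm_main} to $g_Y$ and choosing $Y = Y(x) \to \infty$ slowly enough with $x$ should force vanishing at \emph{every} scale $x$, not just along $\mathcal{X}$. The principal technical difficulty lies in tracking the dependence on $Y$ in the Hal\'asz--Montgomery--Selberg estimates precisely enough to execute this diagonal limit.
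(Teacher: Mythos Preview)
Your detour through Theorem~\ref{thm_main} and the subsequent ``upgrading'' via the factorization $f = f_Y g_Y$ is unnecessary, and the vague invocation of ``the quantitative form of the proof of Theorem~\ref{thm_main}'' in the final step is where the argument is incomplete. The quantitative engine behind Theorem~\ref{thm_main} is Theorem~\ref{thm_correlation_conclusion}, and once you identify it, you should apply it directly to $f$ rather than to $g_Y$.

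The paper's proof is a one-step application of Theorem~\ref{thm_correlation_conclusion}: since $\mathcal{P}$ has relative density $0$, partial summation gives for each fixed $\varepsilon>0$
\[
\mathbb{D}(f,1;x^{\varepsilon},x)^2 = 2\sum_{\substack{p\in\mathcal{P}\\ x^{\varepsilon}<p\leq x}}\frac{1}{p} = o(1)
\]
as $x\to\infty$, while $\sum_{p\in\mathcal{P}}1/p=\infty$ forces $\mathbb{D}(f,1;x^{\varepsilon})\to\infty$. Thus both hypotheses of Theorem~\ref{thm_correlation_conclusion} (with $\chi_j=1$, $t_j=0$) hold at \emph{every} large $x$, not merely on a dense subset, and the conclusion is immediate. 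Your first paragraph already computes $\mathbb{D}(f,1;X)^2 = o(\log\log X)$, but the sharper and more useful fact is that the \emph{truncated} distance on $[x^{\varepsilon},x]$ is $o(1)$; this is what bypasses the almost-all-scales restriction entirely. Your factorization $f=f_Yg_Y$ is essentially rediscovering the mechanism inside Proposition~\ref{prop_pretentious_correlation2}, but with uncontrolled dependence on $M_Y$ and no clean statement to cite at the end.
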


To the best of our knowledge, this result gives the first construction of a multiplicative function $f:\mathbb{N}\to [-1,1]$ whose autocorrelations of all orders vanish (for random $f$, however, one can show that the autocorrelations vanish almost surely).

By a general implication from Chowla-type statements to Sarnak-type statements, we deduce that these Liouville-like functions are orthogonal to all deterministic sequences. In what follows, we say that a sequence $a:\mathbb{N}\to \mathbb{D}$ is \emph{deterministic} if there exists a topological dynamical system $(X,T)$ of zero entropy, a continuous function $F:X\to \mathbb{C}$ and a point $y\in X$ such that $a(n)=F(T^n y)$ for all $n\geq 1$.

\begin{corollary}[A Sarnak-type conjecture for Liouville-like functions involving sparse sets]\label{cor:sarnak} Let $f(n)=(-1)^{\omega_{\mathcal{P}}(n)}$ or $f(n)=(-1)^{\Omega_{\mathcal{P}}(n)}$, where $\mathcal{P}\subset \mathbb{P}$ has relative density $0$ in $\mathbb{P}$ and $\sum_{p\in \mathcal{P}}\frac{1}{p}=\infty$. Let $a:\mathbb{N}\to \mathbb{D}$ be any deterministic sequence. Then
\begin{align*}
\lim_{x\to \infty}\frac{1}{x}\sum_{n\leq x}f(n)a(n)=0.    
\end{align*}
\end{corollary}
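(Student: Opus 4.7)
The plan is to derive this corollary from Theorem~\ref{thm_omega} by the standard ``Chowla implies Sarnak'' implication via Furstenberg joinings. Setting $a_1 = \cdots = a_k = 1$ in Theorem~\ref{thm_omega} (so the constraint $a_i h_j \neq a_j h_i$ just becomes $h_i \neq h_j$), one obtains, for every $k \geq 1$ and every tuple of distinct shifts $h_1, \ldots, h_k \in \mathbb{N}$, the Chowla-type vanishing
\[
\lim_{x \to \infty}\frac{1}{x}\sum_{n \leq x} f(n + h_1) \cdots f(n + h_k) = 0.
\]
Since $f$ takes values in $\{-1,+1\}$, this set of moment identities is precisely the set of moments of a fair Bernoulli process, so $f$ is Ces\`aro-generic for the Bernoulli $(1/2, 1/2)$-measure $\mu_B$ on $\{-1,+1\}^{\mathbb{Z}}$ under the shift $\sigma$.

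Now let $a(n) = F(T^n y)$ be a deterministic sequence with $(X, T)$ of zero topological entropy. Consider the product system $(X \times \{-1,+1\}^{\mathbb{Z}},\, T \times \sigma)$ and pass to any subsequence $N_k \to \infty$ along which the empirical measures $\frac{1}{N_k}\sum_{n \leq N_k}\delta_{(T^n y,\, \sigma^n f)}$ converge weak-$\ast$ to some $(T\times\sigma)$-invariant probability measure $\rho$ on $X \times \{-1,+1\}^{\mathbb{Z}}$. By the preceding paragraph, the $\{-1,+1\}^{\mathbb{Z}}$-marginal of $\rho$ equals $\mu_B$; by the variational principle, the $X$-marginal $\rho_X$ has zero measure-theoretic entropy. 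Furstenberg's classical disjointness theorem then forces $\rho$ to be the product of its two marginals. Testing $\rho$ against the continuous function $F(x)\cdot x_0$ (where $x_0$ denotes the zeroth-coordinate map on $\{-1,+1\}^{\mathbb{Z}}$), and using $\int x_0\, d\mu_B = 0$, yields
\[
\lim_{k \to \infty}\frac{1}{N_k}\sum_{n \leq N_k} F(T^n y)\, f(n) \;=\; \Bigl(\int_X F\, d\rho_X\Bigr)\Bigl(\int x_0\, d\mu_B\Bigr) \;=\; 0.
\]
Since the subsequence $N_k$ was arbitrary, this upgrades to full Ces\`aro convergence, which is the conclusion of Corollary~\ref{cor:sarnak}.

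The only substantive input is Theorem~\ref{thm_omega}; the rest is soft ergodic-theoretic machinery. The only point of care is that convergence of the autocorrelations of $f$ is needed along the full sequence of scales (not merely along some subsequence), so as to force \emph{every} subsequential Furstenberg limit of $(y,f)$ to have $\mu_B$ as its $\{-1,+1\}^{\mathbb{Z}}$-marginal; this is exactly what Theorem~\ref{thm_omega} supplies, so no additional obstacle arises.
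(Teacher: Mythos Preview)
Your proof is correct and follows essentially the same route as the paper: both deduce the corollary from Theorem~\ref{thm_omega} via the standard ``Chowla implies Sarnak'' implication. The paper simply cites this implication as a black box (\cite[Theorem 4.10]{HKPLR}), whereas you spell out the argument via Furstenberg disjointness of Bernoulli systems from zero-entropy systems; this is exactly what the cited theorem encodes.
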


\subsection{Hudson's conjecture}
Motivated by the question of the longest consecutive runs of quadratic residues, we say that a completely multiplicative function $f: \mb{N} \ra \{-1,+1\}$ has \emph{length} $\ell$ if $\ell$ is the maximal number of consecutive $+1$'s that occur in the sequence $(f(n))_{n \geq 1}$. That is, $f$ has length $\ell$ if there exists $m \in \mb{N}$ such that 
$$
(f(m),f(m+1),\ldots,f(m+\ell-1)) = (+1,+1,\ldots,+1),
$$
but for any $n \in \mb{N}$,
$$
(f(n),f(n+1),\ldots,f(n+\ell)) \neq (+1,\ldots,+1).
$$
We denote by $F_{2,\ell}$ the set of all completely multiplicative $f: \mb{N} \ra \{-1,1\}$ of length $\ell$. In 1962,
D.H. Lehmer and E. Lehmer~\cite{Lehmer} noted (and later Mills conjectured~\cite{Mills}) that  
$F_{2,2}$ consists of two functions, $f_3^+$ and $f_3^-$, defined by complete multiplicativity as follows: for $p \neq 3$ we set $f_3^+(p) = f_3^-(p) = \left(\frac{p}{3}\right)$, and for $p = 3$ we set $f_3^+(3) = -f_3^-(3) = +1$. That is, $f_3^{\pm}$ are defined by taking the Legendre symbol modulo $3$ and \emph{modifying} the value at its conductor, $3$, so that it takes values in $\{-1,+1\}.$ This conjecture was later proved by Schur~\cite{Schur}.

Almost fifty years ago, Hudson~\cite{hudson} considered the much more difficult classification problem for $F_{2,3}$, i.e., completely multiplicative $\pm 1$-valued functions with length $3.$ He conjectured the following:
\begin{conjecture}[Hudson, 1974]\label{4pattern}
    
The set $F_{2,3}$ consists of precisely $13$ functions, namely $F_{2,3} = S \cup T$, where
$$
S := \{f_p^{+} : p \in \{5,7,11,13,53\}\} \cup \{f_p^{-} : p \in \{5,7,11,13,53\}\},
$$
$f_p^{\pm}$ are modified Legendre symbols ($f_p^{\pm}(p')=\left(\frac{p'}{p}\right)$ for $p'\neq p$ and $f_p^{\pm}(p)=\pm 1$), and $T = \{g_1,g_2,g_3\}$, where the completely multiplicative functions $g_i$ are defined as
\begin{alignat*}{2}
&g_1(p) = g_2(p) = (-1)^{\tfrac{p-1}{2}} \text{ if $p \neq 2$}, \quad &&g_1(2) = -g_2(2) = +1 \\
&g_3(p) = +1 \text{ if $p \neq 2$}, \quad &&g_3(2) = -1 .
\end{alignat*}
\end{conjecture}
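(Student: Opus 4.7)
The plan is to first show, using the new correlation estimates, that any $f \in F_{2,3}$ must be pretentious, and then to classify the pretentious $\pm 1$-valued completely multiplicative functions of length exactly $3$.

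\textbf{Stage 1: any $f \in F_{2,3}$ is pretentious.} Suppose for contradiction that a length-$3$ function $f:\N\to\{-1,+1\}$ is non-pretentious, so that $\mathbb{D}(f,\chi;\infty)=\infty$ for every real Dirichlet character $\chi$. The length condition asserts that no four consecutive values of $f$ equal $+1$, whence $\prod_{i=0}^{3}(1+f(n+i))=0$ for every $n\geq 1$. Summing over $n\leq x$ and expanding the product,
\begin{align*}
0 = \sum_{S\subseteq \{0,1,2,3\}}\,\sum_{n\leq x}\prod_{i\in S}f(n+i).
\end{align*}
The $S=\emptyset$ term contributes $x$. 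By Hal\'asz's theorem the $|S|=1$ terms contribute $o(x)$ unconditionally; by Theorem~\ref{thm_elliott_2point} and Corollary~\ref{cor_odd} (with $k=3$), there is a set $\mc{X}\subseteq \N$ with $\delta_{\log\log}^{+}(\mc{X})=1$ on which all $|S|\in\{2,3\}$ correlations vanish simultaneously (using that $\delta_{\log}^{+}=1$ is stronger than $\delta_{\log\log}^{+}=1$ to extract a common subsequence). Therefore along $\mc{X}$,
\begin{align*}
\frac{1}{x}\sum_{n\leq x}f(n)f(n+1)f(n+2)f(n+3) \longrightarrow -1.
\end{align*}
Running the same identity shifted by one and multiplying the two relations, and using $f(n)^2\equiv 1$, forces $f(n)f(n+4)=1$ for all but $o(x)$ of $n\leq x$. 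Consequently $\frac{1}{x}\sum_{n\leq x}f(n)f(n+4)\to 1$ along $\mc{X}$, which contradicts Theorem~\ref{thm_elliott_2point} applied with $(h_1,h_2)=(1,5)$.

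\textbf{Stage 2: classification of pretentious $f\in F_{2,3}$.} By Stage 1, $f$ pretends to some real Dirichlet character $\chi$ modulo $q$, i.e.\ $\sum_p (1-f(p)\chi(p))/p<\infty$. Using rigidity theorems for $\pm 1$-valued completely multiplicative functions (in the spirit of the earlier works of the first two authors on rigidity of multiplicative functions), one upgrades this summability to the sharper statement $f(p)=\chi(p)$ at \emph{every} prime $p\nmid q$: an isolated sign flip $f(p)=-\chi(p)$ at a large prime can be propagated through arithmetic progressions by complete multiplicativity to force the occurrence of a $(+,+,+,+)$ pattern somewhere, violating the length-$3$ hypothesis. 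Consequently $f$ is a modified Kronecker symbol, parametrized by a fundamental discriminant $D$ and a choice of sign at each prime dividing $D$. For $|D|$ sufficiently large, Burgess-type bounds on incomplete character sums guarantee the existence of an interval of length $4$ consisting entirely of quadratic residues (and this is robust under any of the $2^{\omega(D)}$ sign modifications at conductor primes), ruling out length $3$. The finitely many remaining discriminants are checked by direct computation, yielding exactly the $13$ functions in $S\cup T$.

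\textbf{Main obstacle.} Stage 1 is the conceptually new contribution: the new two-point and odd-order correlation results at almost all scales are precisely what is needed to pass from arbitrary non-pretentious $\pm 1$ multiplicative functions (where Theorem~A previously required strong aperiodicity) to the pretentious regime. The hardest technical component is the effective character-sum step in Stage 2 that excludes all but an explicit short list of discriminants, together with the finite case check that verifies the somewhat mysterious terminal case $p=53$, beyond which no modified Legendre symbol of odd prime conductor has length exactly $3$.
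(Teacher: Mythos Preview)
Your Stage~1 contains a genuine gap. You need a single set $\mc{X}$ along which all two-point and all three-point correlations of $f$ vanish simultaneously, but Theorem~\ref{thm_elliott_2point} and Corollary~\ref{cor_odd} each produce their own set, and two sets of \emph{upper} density $1$ can have empty intersection; your parenthetical ``$\delta_{\log}^{+}=1$ is stronger than $\delta_{\log\log}^{+}=1$'' is backwards (since $\delta_{\log\log}^{+}\le \delta_{\log}^{+}$) and in any case upper densities are not subadditive on complements. The paper fixes this by a case split you omit: if $f$ is moderately non-pretentious, Theorem~\ref{thm_main} applied with $f_j\in\{f,1\}$ yields a \emph{single} set (depending only on $\{f,1\}$) on which all correlations of order $\le 4$ vanish; otherwise Theorem~\ref{thm_A}(2) supplies the three-point set with \emph{full} logarithmic density $\delta_{\log}=1$, which does intersect any upper-density-$1$ set in an upper-density-$1$ set. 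Without this split your deduction that the four-point correlation tends to $-1$, and hence that $\tfrac1x\sum_{n\le x} f(n)f(n+4)\to 1$, is unjustified. (Once the split is made, your endgame---shifting by one and multiplying to force $f(n)f(n+4)=1$ for almost all $n$, contradicting the two-point estimate at shift $4$---is valid and is a nice alternative to the paper's route via Lemma~\ref{le_upper_bound_correlation}, which instead bounds the four-point correlation by $\tfrac12$ using only two-point information and reads off density $\ge \tfrac{1}{32}$ for the $(+1,+1,+1,+1)$ pattern.)

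Your Stage~2 is too sketchy and misidentifies the main tools. The paper does not use Burgess; it sums the identity $\prod_{j=1}^4(1+f(36a+j))=0$ over a \emph{complete} set of residues $a\pmod{p_0}$ and applies the Weil bound to obtain $p_0<197$ (Proposition~\ref{prop:redToBddMod}). Before that, two nontrivial reductions you omit are needed: Proposition~\ref{prop:redtoModChar} (the ``rotation trick'') shows that any prime $p\nmid 6q$ with $f(p)\ne\chi(p)$ can be exploited to manufacture a $(+1,+1,+1,+1)$ pattern, so $f(p)=\chi(p)$ for all $p\nmid 6q$; and Lemma~\ref{le:ProductsNotF23} rules out composite conductors by showing that a product of modifications of characters of coprime moduli always admits such a pattern. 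Only then is the modulus reduced to a prime (or $4,8$) below $197$, after which a computer check via the criterion of Lemma~\ref{lem:forCompVer}, together with hand verification of a few exceptional cases, yields exactly $S\cup T$. Your appeal to ``rigidity theorems'' and ``$2^{\omega(D)}$ modifications'' does not capture these steps.
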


We resolve Conjecture~\ref{4pattern} in a strong form.
\begin{theorem}[Hudson's conjecture, density version]\label{thm_hudson} Hudson's conjecture is true, i.e., $F_{2,3} = S \cup T$. Moreover, if $f: \mathbb{N} \to \{-1,+1\}$ is a completely multiplicative function for which $f \notin F_{2,3}$ then
\begin{align*}
\limsup_{x\to \infty}\frac{1}{x}{|\{n\leq x:\,\, f(n+1)=f(n+2)=f(n+3)=f(n+4)=+1\}|}>0.    
\end{align*}
\end{theorem}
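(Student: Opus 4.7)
The classification part is checked directly: each of the $13$ explicit functions in $S\cup T$ has length exactly $3$. For the density statement, up to the two Schur functions $f_3^{\pm}$ (which are in $F_{2,2}$), the task reduces to showing that any completely multiplicative $f:\mathbb N\to\{\pm 1\}$ with no length-$4$ run of $+1$'s must lie in $F_{2,3}=S\cup T$. The proof splits on whether $f$ is pretentious or not.

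In the non-pretentious case, I argue by contradiction, assuming
$$\limsup_{x\to\infty}\tfrac{1}{x}\#\{n\leq x:\,f(n+i)=+1,\ i=1,\ldots,4\}=0.$$
The basic identity
$$\#\{n\leq x:\,f(n+i)=+1,\ i=1,\ldots,4\}=\tfrac{1}{16}\sum_{n\leq x}\prod_{i=1}^{4}(1+f(n+i))$$
expands into a linear combination of $k$-point correlations of $f$ for $0\leq k\leq 4$. By Hal\'asz's theorem the $1$-point means vanish; by Theorem~\ref{thm_elliott_2point} applied to $f_1=f_2=f$, all $2$-point correlations of $f$ vanish along some set $\mathcal{X}_1$ with $\delta_{\log}^+(\mathcal{X}_1)=1$; and by Corollary~\ref{cor_odd}, all $3$-point correlations of $f$ vanish along some set $\mathcal{X}_2$ with $\delta_{\log\log}^+(\mathcal{X}_2)=1$. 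Working along a common subsequence $\mathcal{X}\subseteq \mathcal{X}_1\cap\mathcal{X}_2$ of positive $\delta_{\log\log}^+$-density, the identity reduces to
$$\tfrac{16}{x}\#\{n\leq x:\,f(n+i)=+1,\ i=1,\ldots,4\}=1+o(1)+\tfrac{1}{x}\sum_{n\leq x}f(n+1)f(n+2)f(n+3)f(n+4)\qquad(x\in\mathcal{X}).$$
The assumption then forces $f(n+1)f(n+2)f(n+3)f(n+4)=-1$ for a $(1-o(1))$-fraction of $n\leq x$. Comparing this relation at $n$ and at $n+1$ gives $f(n+1)=f(n+5)$ on a density-one set, so that $\tfrac{1}{x}\sum_{n\leq x}f(n)f(n+4)\to 1$ along $\mathcal{X}$. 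But Theorem~\ref{thm_elliott_2point} with $h_1=0,\,h_2=4$ makes this shift-$4$ correlation tend to $0$ along $\mathcal{X}_1\supseteq\mathcal{X}$, the desired contradiction.

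In the pretentious case, $f$ must pretend to a real primitive Dirichlet character $\chi\pmod q$ (real-valuedness forces Hal\'asz's frequency to be $t=0$). The pretentious rigidity theory for $\pm 1$-valued completely multiplicative functions, developed in the works of the first two authors cited in the introduction, implies $\sum_{p:\,f(p)\neq\chi(p)}\tfrac{1}{p}<\infty$, so $f=\chi\cdot g$ where $g$ is completely multiplicative with $g(p)=-1$ on only a thin set of primes. I would then enumerate candidates by $q$ and by the finite modification data at the exceptional primes: for small $q$, direct computation in the spirit of Schur's treatment of $F_{2,2}$ identifies $g_3$ (from trivial $\chi$), $g_1,g_2$ (from $\chi_{-4}$), and the ten modified Legendre symbols $f_p^{\pm}$ for $p\in\{5,7,11,13,53\}$, these being precisely the odd prime moduli admitting no run of four consecutive quadratic residues; for larger $q$, effective rigidity combined with classical bounds on runs of consecutive quadratic residues (in the spirit of Burgess) excludes length $\leq 3$.

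The principal conceptual obstacle is the four-point correlation in the non-pretentious step: since $f^4\equiv 1$ is pretentious, neither Theorem~\ref{thm_A} nor Theorem~\ref{thm_elliott_higher} applies directly to it, and the shift-trick converting it back to a shift-$4$ two-point correlation is the essential new ingredient. The principal obstacle in the pretentious case is effectively bounding $q$ so as to reduce the classification to a finite explicit check, which calls on rigidity together with classical inputs on consecutive residues. A minor bookkeeping point, to be handled by a diagonal intersection, is ensuring that $\mathcal{X}_1$ and $\mathcal{X}_2$ admit a common subsequence of positive $\delta_{\log\log}^+$-density on which all the required correlations of $f$ vanish simultaneously.
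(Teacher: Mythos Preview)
Your non-pretentious argument has the right core idea (the shift trick reducing the four-point correlation to a shift-$4$ two-point correlation is exactly what the paper uses, in the form of Lemma~\ref{le_upper_bound_correlation}), but the ``minor bookkeeping point'' you flag is in fact a genuine gap. Sets $\mathcal{X}_1,\mathcal{X}_2$ with $\delta_{\log}^+(\mathcal{X}_1)=1$ and $\delta_{\log\log}^+(\mathcal{X}_2)=1$ can have empty intersection (upper densities do not intersect well), so no ``diagonal intersection'' will save you. The paper fixes this by splitting into two cases. If $f$ is \emph{not} moderately non-pretentious, then Theorem~\ref{thm_A}(2) gives the $3$-point correlations vanishing on a set of \emph{full} logarithmic density $\delta_{\log}(\mathcal{X}_2)=1$, which does intersect $\mathcal{X}_1$ in a set of $\delta_{\log}^+$-density $1$. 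If $f$ \emph{is} moderately non-pretentious, the paper invokes Theorem~\ref{thm_main} with $k=5$ and $f_j\in\{f,1\}$, crucially using that the set $\mathcal{X}$ there depends only on $\{f,1\}$ and not on the tuple, so a \emph{single} set with $\delta_{\log\log}^+(\mathcal{X})=1$ works for all correlation orders simultaneously. Your black-box use of Corollary~\ref{cor_odd} does not give you either of these refinements.

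Your pretentious sketch is far too vague and diverges substantially from what is actually needed. The paper does not use Burgess or imported rigidity results. Instead it proceeds in four concrete steps: (i) the ``rotation trick'' (Proposition~\ref{prop:redtoModChar}) shows that if $f\in F_{2,3}$ is pretentious to $\chi\pmod q$ then $f(p)=\chi(p)$ for \emph{every} $p\nmid 6q$, not merely a thin exceptional set --- this is proved by hand via explicit congruence constructions; (ii) a product argument (Lemma~\ref{le:ProductsNotF23}) forces the conductor to be prime or $4$ or $8$; (iii) an explicit character-sum inequality together with the Weil bound (Proposition~\ref{prop:redToBddMod}(2)) gives $q<197$; (iv) a finite computer check (Lemma~\ref{lem:forCompVer} and the attached Mathematica notebook) disposes of the remaining cases. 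You also omit Proposition~\ref{prop_pretentious_density}, which is what upgrades a single occurrence of the pattern to positive lower density in the pretentious case.
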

In principle, our methods allow us to characterize all completely multiplicative functions $f:\mathbb{N}\to \{-1,+1\}$ that omit a given sign pattern of length $\le 4$ (see also related work of Hildebrand~\cite{Hildcons} which studies sign patterns of length $\le 3$). Unlike in the case of Conjecture~\ref{4pattern},
for some of these patterns infinitely many exceptional functions exist. For example, for any fixed prime $p_0$, the completely multiplicative function satisfying $f(p_0)=-1$ and $f(p)=+1$ for all $p \neq p_0$, avoids the sign pattern $\{-1,-1,+1,+1\}$.
\subsection{A density version of the Erd\H{o}s discrepancy theorem}

The next application relates to the celebrated Erd\H{o}s discrepancy theorem of Tao~\cite{tao-edp} that settled an old conjecture of Erd\H{o}s~\cite{erdos-edp}. The Erd\H{o}s discrepancy theorem tells us, in particular, that if $f:\mathbb{N}\to \{-1,+1\}$ is completely multiplicative, then  the partial sums
\begin{align*}
 \sum_{n\leq x}f(n)   
\end{align*}
are unbounded. It is a natural question to ask if one can quantify the size of the set of $x$ along which the partial sums are unbounded. 
Our next theorem establishes such a density version.

\begin{theorem}[Density version of the Erd\H{o}s discrepancy theorem]\label{thm_edp}
Let $f:\mathbb{N}\to \{-1,+1\}$ be completely multiplicative. Let $M\geq 1$, and let
\begin{align*}
\mathcal{X}_M:=\Bigg\{x\in \mathbb{N}:\,\, \Bigg|\sum_{n\leq x}f(n)\Bigg|\geq M\Bigg\}.    
\end{align*}
\begin{enumerate}
    \item If $f$ is non-pretentious, then $\delta_{\log }^{+}(\mathcal{X}_M)>0$.
    
    \item If $f$ is pretentious, then\footnote{By $d(A)=\lim_{X\to \infty}\frac{1}{X}\sum_{n\leq X}1_A(n)$ we denote the asymptotic density of a set $A$. The upper density $d^{+}$ is defined with $\limsup$ in place of $\lim$ and the lower density $d^{-}$ is defined with $\liminf$ in place of $\lim$.} $d^{-}(\mathcal{X}_M)>0$.
\end{enumerate}
\end{theorem}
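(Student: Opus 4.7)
The plan is to compare the second moment $T_H(x) := \frac{1}{x}\sum_{n\leq x}(S(n+H)-S(n))^{2}$, with $S(y):=\sum_{m\leq y}f(m)$, against the trivial upper bound coming from the hypothesis $|S(n)|<M$ for most $n$. Expanding the square and using $f(n)^{2}=1$ yields
\begin{equation*}
T_H(x) = H + 2\sum_{h=1}^{H-1}(H-h)\,\frac{1}{x}\sum_{n\leq x}f(n)f(n+h) + O(H^{3}/x),
\end{equation*}
so two-point correlations of $f$ control $T_H(x)$. Conversely, if $|S(n)|<M$ for all but a proportion $\eps$ of $n\leq x$, then $(S(n+H)-S(n))^{2}\leq 4M^{2}$ outside a set of density $O(\eps)$, giving $T_H(x)\leq 4M^{2}+O(\eps H^{2})$. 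Matching these against each other and taking $H$ much larger than $M^{2}$ yields a contradiction.

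For Part~(1), assume for contradiction that $\delta_{\log}^{+}(\mathcal{X}_M)=0$, i.e.\ $\sum_{n\leq X,\,n\in\mathcal{X}_M}1/n = o(\log X)$. A standard partial summation argument converts this into the statement that for every $\eps>0$ the set $\mathcal{Z}_\eps:=\{x:|\mathcal{X}_M\cap[1,x]|\leq\eps x\}$ has full lower logarithmic density. Applying Theorem~\ref{thm_elliott_2point} with $f_1=f_2=f$ furnishes a set $\mathcal{X}\subseteq\mathbb{N}$ of upper logarithmic density $1$ along which every two-point correlation of $f$ with distinct shifts vanishes. Then $\mathcal{X}\cap\mathcal{Z}_\eps$ still has upper logarithmic density $1$, and for $x\to\infty$ along this intersection the displayed expansion gives $T_H(x)\to H$ for each fixed $H$, while the partition of $n\leq x$ into those with $n,n+H\notin \mathcal{X}_M$ and its complement (of cardinality $\leq 3\eps x$ for $x$ large) yields $T_H(x)\leq 4M^{2}+3\eps H^{2}$. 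Taking $\eps=1/(12H^{2})$ and $H=\lceil 4M^{2}+1\rceil$ gives the contradiction $H\leq 4M^{2}+1/4+o(1)$.

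For Part~(2), I would split on whether the mean value $M(f):=\lim_{x\to\infty}S(x)/x$, which exists for real completely multiplicative $f$ by Wirsing's theorem, vanishes. If $M(f)\neq 0$, then $|S(x)|\geq|M(f)|x/2\geq M$ for all sufficiently large $x$, and $d^{-}(\mathcal{X}_M)=1$. Otherwise $f$ pretends to a non-principal real Dirichlet character, and the formula of~\cite{klurman} for correlations of pretentious multiplicative functions asserts that each $C_h:=\lim_{x\to\infty}\frac{1}{x}\sum_{n\leq x}f(n)f(n+h)$ exists as a full limit. Consequently $L_H:=\lim_{x\to\infty}T_H(x)=H+2\sum_{h=1}^{H-1}(H-h)C_h$ also exists. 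If $d^{-}(\mathcal{X}_M)=0$, choose $X_k\to\infty$ with $|\mathcal{X}_M\cap[1,X_k]|/X_k\to 0$; running the partition argument above along $\{X_k\}$ forces $L_H\leq 4M^{2}$ for every $H$, which is to be contradicted by the key lemma that $L_H\to\infty$ as $H\to\infty$.

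The main obstacle is establishing this key lemma in the mean-zero pretentious case. Spectrally, $L_H=\int_{\mathbb{T}}K_H(\theta)\,d\mu_f(\theta)$, where $K_H(\theta)=\sin^{2}(\pi H\theta)/\sin^{2}(\pi\theta)$ and $\mu_f$ is the spectral measure of $f$; the assumption $M(f)=0$ says exactly that $\mu_f(\{0\})=0$, so growth of $L_H$ cannot be driven by mass at~$0$. I would prove the lemma by using the factorization $f=\chi\cdot\tilde{f}$ on integers coprime to the conductor $q$ of the real character $\chi$ to which $f$ pretends, where $\tilde{f}$ is completely multiplicative, $\pm 1$-valued and pretends to $1$, so that $M(\tilde{f})\neq 0$. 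Halász-type asymptotics for the partial sums $\sum_{n\leq x,\,n\equiv a\,(q)}\tilde{f}(n)$, combined with the oscillation of $\chi$, yield an explicit lower-order main term for $S(x)$ whose magnitude grows with $x$, and this growth of $|S(x)|$ in turn forces $L_H$ to diverge.
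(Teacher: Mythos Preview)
Your Part~(1) is correct and is essentially the same argument as the paper's: both compute the second moment of $S(n+H)-S(n)$, expand into two-point correlations, and invoke Theorem~\ref{thm_elliott_2point} to kill the off-diagonal along a set of full upper logarithmic density. The only cosmetic difference is that the paper works with logarithmically weighted averages throughout, whereas you pass to Ces\`aro averages after first extracting the set $\mathcal{Z}_\eps$ via partial summation; both routes are fine.

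Your Part~(2), however, has a genuine gap, and the paper's argument is quite different from yours. You reduce to the ``key lemma'' that $L_H=\lim_{x\to\infty}T_H(x)\to\infty$ as $H\to\infty$. But observe that $L_H\to\infty$ already implies that $S(x)$ is unbounded (since $|S(n+H)-S(n)|\le 2\sup_m|S(m)|$), so your key lemma is at least as strong as the pretentious case of the Erd\H{o}s discrepancy theorem itself. Your sketch for proving it --- factor $f=\chi\tilde f$, use Hal\'asz asymptotics on $\tilde f$ in progressions, then exploit oscillation of $\chi$ --- does not work as stated: when $\tilde f$ is close to $1$, the main terms $\sum_a\chi(a)\cdot(\text{leading term})$ cancel by orthogonality, and the entire difficulty resides in the lower-order terms. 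Extracting growth from those is precisely the hard content of the Borwein--Choi--Coons-type analysis in Tao's proof of EDP, and your one-line description does not supply it.

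The paper sidesteps all of this by using Tao's Erd\H{o}s discrepancy theorem as a black box: pick a single $H$ with $|S(H)|\ge 2M$, then use the almost-periodicity of pretentious $\pm1$-valued multiplicative functions (Proposition~\ref{prop_pretentious_density}) to find a set of positive lower asymptotic density of $n$ with $f(n+j)=f(j)$ for all $1\le j\le H$, whence $|S(n+H)-S(n)|=|S(H)|\ge 2M$, forcing $|S(n)|\ge M$ or $|S(n+H)|\ge M$. This is both shorter and avoids reproving any part of EDP.
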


In particular, in any case, we have $\delta_{\log}^{+}(\mathcal{X}_M)>0$.

\subsection{Furstenberg systems of multiplicative functions}
An important object that has played a pivotal role in recent advances towards the M\"obius disjointness conjecture of Sarnak~\cite{Sar-dis} is the notion of
 Furstenberg systems. To facilitate our discussion, we introduce several definitions. Given any sequence $\boldsymbol{\eta}:\mathbb{N}\to \mathbb{D}$, we interpret $\boldsymbol{\eta}$ as an infinite word on $\mathbb{D}^{\mathbb{Z}}$ (by extending $\boldsymbol{\eta}$ for instance as $0$ to the nonpositive integers). Let $T$ be the left-shift on $\mathbb{D}^{\mathbb{Z}}$, that is, $T(\boldsymbol{y})(j)=\boldsymbol{y}(j+1)$ for $\boldsymbol{y}\in \mathbb{D}^{\mathbb{Z}}$. Equip $\mathbb{D}^{\mathbb{Z}}$ with the product topology, and let $\mathcal{B}$ be the Borel $\sigma$-algebra of $\mathbb{D}^{\mathbb{Z}}$.

We say that the measure-preserving system $(\mathbb{D}^{\mathbb{Z}},\mathcal{B},T,\nu)$ with $\nu$ a $T$-invariant Borel probability measure on $\mathbb{D}^{\mathbb{Z}}$ is a \emph{Furstenberg system} of $\boldsymbol{\eta}$ (and $\nu$ is a \emph{Furstenberg measure} of $\boldsymbol{\eta}$) if there exists an infinite sequence $(x_j)_{j\geq 1}$ tending to infinity such that
\begin{align}\label{eq:furst}
\lim_{j\to \infty}\frac{1}{x_j}\sum_{n\leq x_j}h(T^n\boldsymbol{\eta})=\int_{\mathbb{D}^{\mathbb{Z}}} h\, d\nu\,\, \textnormal{ for all continuous } h:\mathbb{D}^{\mathbb{Z}}\to \mathbb{C}.
\end{align}
It is not difficult to see (using compactness of the space of Borel probability measures on $\mathbb{D}^{\mathbb{Z}}$ in the weak*-topology) that any sequence $\boldsymbol{\eta}$ has at least one Furstenberg system and that (by the Riesz representation theorem) the Furstenberg system is unique if and only if the limit on the left-hand side of~\eqref{eq:furst} exists with $x_j=j$. Moreover, it is not hard to show that the Furstenberg measure $\nu$ is always $T$-invariant and supported on $\overline{\{T^n\boldsymbol{\eta}:\,\, n\in \mathbb{Z}\}}$. For more on Furstenberg systems of multiplicative functions, see e.g.~\cite{fh-IMRN},~\cite{lemanczyk-MRT}. 

We say that a measure-preserving system $(X^{\mathbb{Z}},\mathcal{B},T,\nu)$ is a \emph{Bernoulli system} if $\nu=\kappa^{\otimes \mathbb{Z}}$ for some measure $\kappa$ on $X$.

At the AIM workshop on Sarnak's conjecture~\cite{AIM}, Lema\'nczyk asked for a 
 construction of a (deterministic) multiplicative function taking values in $\{-1,+1\}$ and having as its unique Furstenberg measure the Bernoulli measure $(1/2,1/2)^{\mathbb{Z}}$, assigning mass $1/2$ to each of $\pm 1$. We note that all functions satisfying Theorem~\ref{thm_omega} provide corresponding examples and deduce this from the following more general result.

\begin{theorem}[Constructing multiplicative functions with a given Furstenberg system]\label{thm:furst} Let $g:\mathbb{N}\to \{0,1\}$ be a pretentious multiplicative function. Let $\mathcal{S}=(\{0,1\}^{\mathbb{Z}},\mathcal{B},T,\nu)$ be its (necessarily unique) Furstenberg system. Then there is (an explicit) multiplicative function $f:\mathbb{N}\to \{-1,0,+1\}$ whose unique Furstenberg system is isomorphic to the direct product of $\mathcal{S}$ and a Bernoulli system. In fact, one can take $f=\tilde{f}g$, where $\tilde{f}$ is any of the functions in Theorem~\ref{thm_omega}.  
\end{theorem}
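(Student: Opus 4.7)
The plan is to take $f(n):=g(n)h(n)$ where $h$ is any of the Liouville-like functions from Theorem~\ref{thm_omega} (say $h(n)=(-1)^{\Omega_{\mathcal{P}}(n)}$). Since $g$ takes values in $\{0,1\}$ and $h$ in $\{-1,+1\}$, and both are multiplicative, $f$ is a multiplicative function with values in $\{-1,0,+1\}$. The candidate isomorphism is the shift-equivariant coordinate-wise multiplication map
\[
\Phi:\{0,1\}^{\mathbb{Z}}\times\{-1,+1\}^{\mathbb{Z}}\to\{-1,0,+1\}^{\mathbb{Z}},\qquad \Phi(\mathbf{g},\mathbf{h})(n):=\mathbf{g}(n)\mathbf{h}(n),
\]
and the candidate Furstenberg measure of $f$ is $\Phi_\ast(\nu\otimes\kappa^{\otimes\mathbb{Z}})$ with $\kappa$ the uniform measure on $\{-1,+1\}$.

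To verify this, it suffices by density of cylinder indicators to show that for every distinct $h_1,\dots,h_k\in\mathbb{N}$ and every pattern $\underline{\epsilon}\in\{-1,0,+1\}^k$,
\[
\lim_{x\to\infty}\frac{1}{x}\bigl|\{n\leq x:\,f(n+h_j)=\epsilon_j\;\forall j\}\bigr|=\Phi_\ast(\nu\otimes\kappa^{\otimes\mathbb{Z}})(C_{\underline{\epsilon}}).
\]
Using the identities $1_{f(m)=0}=1_{g(m)=0}$ and $1_{f(m)=\pm 1}=1_{g(m)=1}(1\pm h(m))/2$, the cylinder indicator expands as a $\mathbb{Q}$-linear combination of terms
\[
\phi_{\underline{\epsilon}'}(n)\prod_{j\in D}h(n+h_j),\qquad \phi_{\underline{\epsilon}'}(n):=\prod_{j=1}^{k}1_{g(n+h_j)=\epsilon'_j},
\]
with $\underline{\epsilon}'\in\{0,1\}^k$ encoding the support pattern of $\underline{\epsilon}$ and $D\subseteq\{j:\epsilon'_j=1\}$. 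The $D=\emptyset$ contributions converge by uniqueness of $\mathcal{S}$ to $\nu(C_{\underline{\epsilon}'})$, and resumming the expansion reproduces $\Phi_\ast(\nu\otimes\kappa^{\otimes\mathbb{Z}})(C_{\underline{\epsilon}})$. The whole task thus reduces to showing, for every nonempty $D$,
\[
\frac{1}{x}\sum_{n\leq x}\phi_{\underline{\epsilon}'}(n)\prod_{j\in D}h(n+h_j)\xrightarrow[x\to\infty]{}0.
\]

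To establish this vanishing, I would approximate $\phi_{\underline{\epsilon}'}$ by a periodic function and then invoke Theorem~\ref{thm_omega}. Since $g$ is pretentious and $\{0,1\}$-valued with unique Furstenberg system $\mathcal{S}$, the pretentious theory (cf.~\cite{klurman}) forces $\mathcal{S}$ to be equicontinuous, essentially a rotation on a procyclic group determined by the modulus of the character that $g$ pretends to be; in particular, for any $\varepsilon>0$ there exist $q=q(\varepsilon)$ and a $q$-periodic function $\psi_\varepsilon:\mathbb{Z}\to[0,1]$ with $\limsup_{x\to\infty}\frac{1}{x}\sum_{n\leq x}|\phi_{\underline{\epsilon}'}(n)-\psi_\varepsilon(n)|<\varepsilon$. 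Writing $n=qm+r$ and summing over residue classes modulo $q$, the problem reduces to
\[
\frac{1}{x/q}\sum_{m\leq x/q}\prod_{j\in D}h(qm+r+h_j)\to 0,
\]
which follows directly from Theorem~\ref{thm_omega} applied with $a_j=q$ and $b_j=r+h_j$ (the distinctness $h_i\neq h_j$ yields $a_ib_j-a_jb_i=q(h_j-h_i)\neq 0$). Sending $\varepsilon\to 0$ completes the vanishing. The principal technical obstacle is the periodic approximation of the cylinder functions of $g$, which requires the full structural picture of pretentious $\{0,1\}$-valued multiplicative functions; once this is in hand, the remaining steps---expansion of the cylinder indicator, splitting into arithmetic progressions, and application of Theorem~\ref{thm_omega}---amount to combinatorial bookkeeping.
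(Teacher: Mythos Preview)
Your computation of the Furstenberg measure of $f=gh$ follows essentially the same route as the paper: expand cylinder indicators using $1_{f(m)=\pm 1}=1_{g(m)=1}(1\pm h(m))/2$, approximate the $g$-cylinders by periodic functions (the paper obtains this from the rational almost periodicity of $g$ via Daboussi--Delange rather than~\cite{klurman}), split into residue classes, and invoke Theorem~\ref{thm_omega}. This part is correct and matches the paper.

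There is, however, a genuine gap in the isomorphism step. Your map $\Phi$ is \emph{not} a measure-theoretic isomorphism in general: wherever $\mathbf{g}(n)=0$ the value $\mathbf{h}(n)$ is erased by the coordinate-wise product, so $\Phi$ is only a factor map from $(\{0,1\}^{\mathbb{Z}}\times\{-1,+1\}^{\mathbb{Z}},\nu\otimes\kappa^{\otimes\mathbb{Z}})$ onto the Furstenberg system of $f$. It is injective only when $\nu$-almost every sequence has no zeros, i.e.\ essentially when $g\equiv 1$. Hence the fact that the Furstenberg measure of $f$ equals $\Phi_\ast(\nu\otimes\kappa^{\otimes\mathbb{Z}})$ does not by itself yield an isomorphism with a direct product; in particular, the Bernoulli factor promised by the theorem need not be the full $(\tfrac12,\tfrac12)$-shift.

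The paper closes this gap by an extra ergodic-theoretic argument: it observes that the Furstenberg system of $f$ sits as an intermediate factor between the product system $\mathcal{S}\times B(1/2)$ (via $\Phi$) and $\mathcal{S}$ (via the coordinate-wise squaring map $\mathbf{x}\mapsto\mathbf{x}^2$). Since $\mathcal{S}\times B(1/2)$ is a relatively Bernoulli extension of $\mathcal{S}$, Thouvenot's theorem forces the intermediate system to be relatively Bernoulli over $\mathcal{S}$ as well, hence isomorphic to $\mathcal{S}$ times \emph{some} Bernoulli system. You need an argument of this kind to finish.
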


\begin{remark} Specializing to $g=\mu^2,$ Theorem~\ref{thm:furst} provides an example of $f$ whose Furstenberg system is isomorphic to the direct product of the Mirsky system (corresponding to the Mirsky measure; see e.g.~\cite{HKPLR}) and a Bernoulli system. Chowla's conjecture is equivalent to the statement that this holds for $f=\mu$. 
\end{remark}

Frantzikinakis and Host~\cite[Conjecture 1]{fh-annals} conjectured that the Furstenberg system of any multiplicative function $f:\mathbb{N}\to [-1,1]$ is ergodic and isomorphic to the direct product of an ergodic odometer (an inverse limit of periodic systems) and a Bernoulli system\footnote{They defined Furstenberg systems in terms of logarithmic averages, but the same should be true for Furstenberg systems defined in terms of Ces\`aro averages as well.}. Theorem~\ref{thm:furst} goes in the other direction: it shows with an explicit construction the existence of a non-pretentious multiplicative function whose Furstenberg system is up to isomorphism the direct product of the Furstenberg system of a pretentious multiplicative function (which is necessarily an ergodic odometer by~\cite[Theorem 1.7]{RAP}) and a Bernoulli system. We conjecture that these systems are a complete characterization of Furstenberg systems of non-pretentious multiplicative functions taking values in $\{-1,0,+1\}$. 

\begin{conjecture}[Classification of Furstenberg systems]\label{conj:furst} Let $f:\mathbb{N}\to \{-1,0,+1\}$ be a non-pretentious multiplicative function. Then it has a unique Furstenberg system, which is isomorphic to the direct product of the Furstenberg system of the function $|f|$ with a Bernoulli system. 
\end{conjecture}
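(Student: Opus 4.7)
The plan is to factor $f(n) = \sigma(n) g(n)$, where $g := f^2 = \mathbf{1}_{f(n)\neq 0}$ is multiplicative with values in $\{0,1\}$ and $\sigma$ is the multiplicative $\{-1,+1\}$-valued function defined by $\sigma(p^k) = f(p^k)$ when $f(p^k)\neq 0$ and $\sigma(p^k) = 1$ otherwise. Since $g$ is pretentious by hypothesis of the conjecture, by~\cite[Theorem 1.7]{RAP} its (unique) Furstenberg system $\mathcal{S}$ is an ergodic odometer, and by the pretentious triangle inequality $\sigma$ inherits non-pretentiousness from $f$. The goal is then to show that $\sigma$ contributes a Bernoulli factor that is relatively independent of $\mathcal{S}$ in the joint Furstenberg system of $(f,g)$.

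By Stone--Weierstrass and the Riesz representation theorem, identifying the Furstenberg measure reduces to computing, for each $k \geq 1$, each tuple of distinct shifts $h_1,\ldots,h_k\in\mathbb{N}$, and each pattern $\epsilon\in\{-1,0,+1\}^k$, the Ces\`aro average
\[
\frac{1}{x}\sum_{n\leq x}\prod_{i=1}^{k}\mathbf{1}_{f(n+h_i)=\epsilon_i}.
\]
Expanding via $\mathbf{1}_{f=0} = 1-g$ and $\mathbf{1}_{f=\pm 1} = g(1\pm\sigma)/2$ reduces the problem to mixed correlations of the form $\tfrac{1}{x}\sum_{n}\prod_{i\in I} g(n+h_i)\prod_{j\in J} \sigma(n+h_j)g(n+h_j)$, where $I$ and $J$ are disjoint. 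The predicted product measure requires these to vanish when $|J|$ is odd, and to factor as a pure $g$-correlation (governed by the pretentious formula of~\cite{klurman}) times a Bernoulli pairing factor when $|J|$ is even. The odd case is exactly the setting of Theorem~\ref{thm_elliott_higher} applied to $\sigma$ with $d=2$ and $\sum_i e_i = |J|$ odd, so we obtain vanishing along a set $\mathcal{X}\subset\mathbb{N}$ of upper log-log density one. The even case would require analogous input on even-order correlations of $\sigma$ that is not directly available here, and the subsequences $\mathcal{X}$ depend on the tuple $(k,h,\epsilon)$.

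The main obstacle is \emph{uniqueness} of the Furstenberg system: the conjecture demands convergence of the natural Ces\`aro averages for \emph{all} of the above correlations simultaneously along $x\to\infty$, not merely along subsequences of full upper log or log-log density. Merging the subsequences produced by Theorems~\ref{thm_main} and~\ref{thm_elliott_higher} over all tuples is already delicate, and upgrading $\delta_{\log\log}^+$-convergence to full natural-density convergence is essentially the unweighted higher-order Elliott conjecture for $\{-1,0,+1\}$-valued non-pretentious functions, which remains open. A plausible two-step attack is: first use the main theorems of this paper to show that \emph{every} Furstenberg system of $f$ is isomorphic to a relatively independent joining of $\mathcal{S}$ with some factor in which the ``$\sigma$-coordinates'' are pairwise orthogonal; and second invoke rigidity (e.g.\ disjointness of ergodic odometers from Bernoulli shifts, together with Host--Kra-type structure theory) to force the factor to be Bernoulli and the joining to be the direct product, independently of the subsequence. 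This second step, in particular, seems to require genuinely new ideas beyond the subsequential, correlation-based methods of the current paper.
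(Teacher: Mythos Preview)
The statement you were asked to prove is a \emph{conjecture}, not a theorem: the paper does not prove Conjecture~\ref{conj:furst} unconditionally, and indeed explicitly poses it as open. What the paper does establish is Proposition~\ref{prop:conj}, which shows that Conjecture~\ref{conj:furst} follows from Tao's asymptotic Elliott conjecture (Conjecture~\ref{conj_elliott_Tao}). So there is no ``paper's own proof'' to compare against; your proposal should be read as an attempt at an unconditional proof that the paper does not claim to have.

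With that in mind, your write-up is not a proof but a correct diagnosis of why the problem is hard. The factorization $f=\sigma g$ with $g=f^2$ and the reduction of the Furstenberg-measure computation to mixed correlations $\tfrac{1}{x}\sum_n\prod_{i\in I}g(n+h_i)\prod_{j\in J}\sigma g(n+h_j)$ is exactly the route the paper takes both in Theorem~\ref{thm:furst2} (for the special Liouville-like $\sigma$'s) and in the conditional argument for Proposition~\ref{prop:conj}. You correctly identify the two obstructions to making this unconditional: (i) the even-$|J|$ correlations are not covered by the odd-order results of this paper, and (ii) even where the paper's theorems apply, they only give convergence along sets of full upper $\log$ or $\log\log$ density, not along all $x$, so uniqueness of the Furstenberg system is out of reach. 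Your conclusion that upgrading to full Ces\`aro convergence ``is essentially the unweighted higher-order Elliott conjecture'' is precisely why the paper only derives Conjecture~\ref{conj:furst} conditionally on Conjecture~\ref{conj_elliott_Tao}.

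One small comparison worth making: in the paper's conditional proof of Proposition~\ref{prop:conj}, the even/odd dichotomy on $|J|$ is bypassed entirely. Assuming Conjecture~\ref{conj_elliott_Tao}, the paper shows directly (via Lemma~\ref{le:real}, using that $\sigma$ is real-valued and non-pretentious) that \emph{every} nonempty $J$ gives a vanishing correlation at all scales, not just odd $J$ along subsequences. This is a cleaner route than your proposed two-step attack via relatively independent joinings and disjointness arguments, but of course it leans on the full strength of an open conjecture. Your outline of a possible unconditional attack via ergodic-theoretic rigidity is reasonable speculation, but, as you say, requires ideas not in the paper.
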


We support Conjecture~\ref{conj:furst} by showing that it is implied by Conjecture~\ref{conj_elliott_Tao}, an alternative form of Elliott's conjecture due to Tao~\cite{tao-blog} and to be discussed below.

\begin{proposition}\label{prop:conj} Conjecture~\ref{conj_elliott_Tao} implies Conjecture~\ref{conj:furst}. 
\end{proposition}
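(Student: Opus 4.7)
The plan is to match all joint Ces\`aro correlations of $f$ with those predicted by the product system $\mathcal{S}_g\otimes \mathrm{Bernoulli}$, where $g:=|f|$ and $\mathcal{S}_g$ is the Furstenberg system of $g$; matching all correlations then simultaneously gives uniqueness of the Furstenberg system $\mathcal{S}_f$ of $f$ (by the Riesz representation theorem applied to cylinder functions) and its isomorphism with the claimed product.

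The first step is to decompose $f=g\cdot s$, where $s:\mathbb{N}\to\{-1,+1\}$ is the completely multiplicative function with $s(p):=f(p)/|f(p)|$ on primes $p$ with $f(p)\neq 0$ and $s(p):=+1$ otherwise. Using $f^2=g$, a Stone--Weierstrass argument reduces the task to showing, for each $k\geq 1$, distinct $h_1,\ldots,h_k\in\mathbb{N}$, and disjoint $A,B\subseteq\{h_1,\ldots,h_k\}$, that
\begin{align*}
\frac{1}{N}\sum_{n\leq N}\prod_{i\in A}f(n+h_i)\prod_{j\in B}g(n+h_j)=\frac{1}{N}\sum_{n\leq N}\prod_{h\in A\cup B}g(n+h)\prod_{i\in A}s(n+h_i)
\end{align*}
tends to the value predicted by the product model, namely the $g$-correlation at shifts $A\cup B$ if $A=\emptyset$, and $0$ if $|A|\geq 1$ (the latter being forced by independence of the Bernoulli coordinates).

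If $\sum_{p:\,f(p)=0}1/p=\infty$, then $f$ vanishes on a density-one set and everything is trivial, so assume this sum is finite; in particular, $g$ is pretentious to $1$. The case $A=\emptyset$ then follows from the pretentious correlation theory developed by the first author in~\cite{klurman}, delivering the correct $g$-correlation. For $|A|\geq 1$, I would apply Conjecture~\ref{conj_elliott_Tao} to the right-hand side of the display, viewed as a correlation of the multiplicative functions $g$ (at the shifts in $A\cup B$) and $s$ (at the shifts in $A$). By the pretentious triangle inequality~\eqref{eq:triangle} together with the assumption $\sum_{p:\,f(p)=0}1/p<\infty$, the distances $\mathbb{D}(f,\chi(n)n^{it};x)$ and $\mathbb{D}(s,\chi(n)n^{it};x)$ differ by a bounded amount in $x$, so the strong non-pretentiousness hypothesis of Conjecture~\ref{conj_elliott_Tao} transfers from $f$ to $s$. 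The conjecture then yields the required vanishing, matching the predicted value.

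The main obstacle will be to verify that Conjecture~\ref{conj_elliott_Tao}, as formulated with a strengthened non-pretentiousness hypothesis on a single factor, is flexible enough to apply in this mixed setting where the correlation involves both a strongly non-pretentious factor ($s$ at shifts in $A$) and pretentious factors ($g$ at shifts in $A\cup B$); the triangle-inequality transfer above is the crucial technical input. Once all joint correlations have been matched, the finite-dimensional marginals of $\mathcal{S}_f$ coincide with those of $\mathcal{S}_g\otimes\mathrm{Bernoulli}$, giving uniqueness of $\mathcal{S}_f$ and the claimed isomorphism (the Bernoulli coordinate being recoverable as $\mathrm{sign}(f(n))$ on the support of $|f|$, and declared iid-uniform on its complement).
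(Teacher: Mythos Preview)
Your overall strategy---reduce to computing mixed correlations $\prod_{i\in A}f(n+h_i)\prod_{j\in B}g(n+h_j)$ with $g=f^2$, then invoke Conjecture~\ref{conj_elliott_Tao} when $A\neq\emptyset$---is correct and in fact cleaner than the route the paper takes. The paper first passes to the sign function $f'$, then proves the Chowla property~\eqref{eq:correlate7} for $f'$ with \emph{general dilations} $a_j$ (requiring the splitting $f'=f_1f_2$ into a completely multiplicative part, rational almost periodicity of $f_1$, and a Dirichlet-character expansion of congruence conditions). You bypass all of this: since $f$ itself is real-valued and non-pretentious, Lemma~\ref{le:real} gives the uniform lower bound on $\mathbb{D}(f,\chi(n)n^{it};x)$ needed in Conjecture~\ref{conj_elliott_Tao}, and one applies the conjecture directly with $f_1=f$ at a shift in $A$ and $f_j\in\{f,g\}$ elsewhere. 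You should make this appeal to Lemma~\ref{le:real} explicit; your phrasing ``the strong non-pretentiousness hypothesis transfers from $f$ to $s$'' presumes $f$ already satisfies it, which is precisely the content of that lemma for real-valued functions.

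There are two genuine gaps. First, your factorization $f=g\cdot s$ with $s$ \emph{completely} multiplicative is false for merely multiplicative $f$: the functions $f$ and $gs$ agree at primes but can differ at prime powers (take $f(p)=1$, $f(p^2)=-1$). Your displayed identity is therefore wrong as stated. Fortunately the detour through $s$ is entirely unnecessary---just apply Conjecture~\ref{conj_elliott_Tao} to the left-hand side directly, as above.

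Second, and more seriously, matching all joint correlations establishes uniqueness of the Furstenberg measure $\nu_f$ and the formula $\nu_f(C)=2^{-r}\nu_g(C^2)$ on cylinders, but it does \emph{not} yield the isomorphism with $\mathcal{S}_g\otimes\mathrm{Bernoulli}$. The natural map $(\{0,1\}\times\{-1,+1\})^{\mathbb{Z}}\to\{-1,0,+1\}^{\mathbb{Z}}$, $(a,b)\mapsto a\cdot b$, is a factor map but is not injective on the $0$-coordinates, so your parenthetical about ``declaring iid-uniform on the complement'' does not define a measurable inverse. The paper handles this via Thouvenot's relative-Bernoulli theorem (the unnamed Proposition following Theorem~\ref{thm:furst2}): the product system is relatively Bernoulli over $\mathcal{S}_g$, hence so is the intermediate factor $\mathcal{S}_f$, which forces $\mathcal{S}_f\cong\mathcal{S}_g\otimes\mathrm{Bernoulli}$. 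You need to invoke this (or an equivalent ergodic-theoretic input) to complete the argument.
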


Let us remark that for general complex-valued multiplicative functions $f:\mathbb{N}\to \mathbb{D}$ even determining the right conjectural statement about the structure of their Furstenberg systems does not seem easy, in light for example of the work~\cite{lemanczyk-MRT}, where the authors construct multiplicative functions whose Furstenberg systems have as their ergodic components affine extensions of irrational  rotations.

We conclude by mentioning one other direct application of our results to Furstenberg systems of multiplicative functions. Very recently,  Frantzikinakis, Lema\'nczyk and de la Rue~\cite[Theorem 2.7]{flr} proved that if $f:\mathbb{N}\to\mathbb{D}$ is any pretentious multiplicative function,  all of its Furstenberg systems for logarithmic or Ces\`aro averages have  has rational discrete spectrum. For the case of logarithmic averages, they also proved the converse implication in~\cite[Appendix C]{flr}. By following
the arguments in~\cite[Appendix C]{flr} verbatim but using our Theorem~\ref{thm_elliott_2point} in place of~\cite[Proposition C.1]{flr}, we can extend the converse direction to the case of Ces\`aro averages (the same observation is also made in~\cite{flr}). 

We shall explore some further ergodic-theoretic consequences in a future work.

\subsection{Alternative to Elliott's conjecture for non-pretentious functions}

Our Theorems~\ref{thm_elliott_2point} and~\ref{thm_main} give some indication that Elliott's original formulation of his conjecture may be \emph{almost} right, in the sense that whenever $f_1$ is non-pretentious the correlation average~\eqref{eq0} tends to $0$ along a sequence of scales that is dense with respect to the upper logarithmic density. We pose this as the following conjecture.

\begin{conjecture}[A modified Elliott conjecture for non-pretentious functions]\label{conj_elliott:modified} Let $k\geq 1$, and let $f_1,\ldots, f_k:\mathbb{N}\to \mathbb{D}$ be multiplicative functions with $f_1$ non-pretentious. Then there exists a set $\mathcal{X}\subset \mathbb{N}$, depending only on $ f_1$, with $\delta^{+}_{\log}(\mathcal{X})=1$ such that for any distinct $h_1,\ldots, h_k\in \mathbb{N}$ we have
\begin{align*}
 \lim_{\substack{\substack{x\to \infty}\\x\in \mathcal{X}}} \frac{1}{x}\sum_{n\leq x}f_1(n+h_1)\cdots f_k(n+h_k)=0.  
\end{align*}
\end{conjecture}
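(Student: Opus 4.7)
\emph{Proof proposal.} The plan is to define $\mathcal{X}$ to be the set provided by Theorem~\ref{thm_elliott_2point} applied to the pair $(f_1,\overline{f_1})$, so that $\delta^{+}_{\log}(\mathcal{X})=1$ and
\[
\frac{1}{x}\sum_{n\leq x} f_1(n+h)\overline{f_1}(n+h')\longrightarrow 0
\]
along $\mathcal{X}$ for every distinct $h,h'\in \mathbb{N}$. Crucially, this set depends only on $f_1$. The task then is to show that along this same $\mathcal{X}$, all higher-order correlations in~\eqref{eq0} vanish for arbitrary multiplicative $f_2,\ldots,f_k$.

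The first step would be a dynamical reduction. Along any subsequence $(x_j)\subset \mathcal{X}$ tending to infinity, one passes to a further subsequence to obtain a Furstenberg system $\nu$ for the tuple $(f_1,\ldots,f_k)$. The two-point vanishing secured above translates into the statement that the coordinate random variable $F_1$ in the limit system is orthogonal to all of its nontrivial shifts. Following the program of Frantzikinakis--Host~\cite{fh-annals}, one then aims to show that such an $F_1$ must live in a factor isomorphic to the direct product of an ergodic odometer and a Bernoulli system, and in particular is orthogonal to the $\sigma$-algebra generated by the coordinates of $f_2,\ldots,f_k$ together with all nontrivial shifts of $f_1$. This ergodic orthogonality is exactly the vanishing of the $k$-point integral $\int F_1(x)\,F_2(T^{h_2}x)\cdots F_k(T^{h_k}x)\,d\nu(x)$.

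The second step would be to transfer the ergodic-theoretic vanishing back into a combinatorial statement on correlation sums, via an entropy decrement argument in the spirit of Tao's work on the two-point logarithmic Chowla conjecture, combined with a Matom\"aki--Radziwi\l{}\l{}-type short interval input. The key conceptual point is that the bootstrap from two to $k$ points should be \emph{automatic} at the level of Furstenberg systems, so that no further restriction of $\mathcal{X}$ is needed beyond what the two-point control already supplies.

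The main obstacle is precisely the requirement that $\mathcal{X}$ depend only on $f_1$. In Theorem~\ref{thm_main} the set $\mathcal{X}$ genuinely depends on the full tuple, because its construction uses Fourier/pretentious uniformity of the product $f_1\cdots f_k$ rather than of $f_1$ alone. To obtain a single $\mathcal{X}$ that simultaneously handles \emph{all} possible choices of $f_2,\ldots,f_k$ one would need an unconditional inverse theorem for Gowers $U^k$-norms of bounded multiplicative functions at almost all scales (a multiplicative analogue of the Host--Kra/Green--Tao--Ziegler theory), of which presently only the $U^2$ case is accessible through two-point estimates. In this sense Conjecture~\ref{conj_elliott:modified} appears to demand a significant strengthening of the Fourier uniformity conjecture of Matom\"aki--Radziwi\l{}\l{}--Tao, and a full proof is likely to go hand in hand with establishing such uniformity results unconditionally along scales of full upper logarithmic density.
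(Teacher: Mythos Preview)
The statement you are attempting to prove is a \emph{conjecture} in the paper, not a theorem; the paper does not give an unconditional proof of it. What the paper does prove is Proposition~\ref{prop_elliott_implication}: Conjecture~\ref{conj_elliott_Tao} (Tao's asymptotic Elliott conjecture) implies Conjecture~\ref{conj_elliott:modified}. So the only sensible comparison is between your heuristic plan and the paper's \emph{conditional} argument.

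The paper's conditional route is quite different from yours and does not pass through Furstenberg systems or two-point bootstrapping at all. It argues directly: by Conjecture~\ref{conj_elliott_Tao}, the $k$-point correlation at scale $x$ is small unless $f_1$ is close (in pretentious distance, with $|t|\leq x^{B(x)}$ and conductor $\leq B(x)$) to some twisted character at scale $x$. The set $\mathcal{E}$ of such ``bad'' scales is then shown, via the rigidity Lemma~\ref{le_rigidity}, to satisfy $\delta_{\log}^{-}(\mathcal{E})=0$: if $\mathcal{E}$ contained a subsequence $x_n+1\leq x_{n+1}\leq x_n^a$, Lemma~\ref{le_rigidity} would force $f_1$ to be close to a \emph{fixed} twisted character at all scales, contradicting non-pretentiousness. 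One then takes $\mathcal{X}=\mathbb{N}\setminus\mathcal{E}$, which depends only on $f_1$. This is short and completely avoids ergodic theory.

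Your proposal is honest about its own incompleteness, and you correctly locate the obstruction: bootstrapping from two-point vanishing along $\mathcal{X}$ to $k$-point vanishing along the \emph{same} $\mathcal{X}$, uniformly over all $f_2,\ldots,f_k$, would amount to an unconditional higher Gowers-uniformity statement for $f_1$ at almost all scales. That is at least as deep as Conjecture~\ref{conj_elliott_Tao} itself (indeed, the $U^k$-uniformity conjecture of Matom\"aki--Radziwi\l{}\l{}--Tao is essentially equivalent to it in the relevant regime), so your plan does not circumvent the conjectural input --- it repackages it. The Frantzikinakis--Host structure theorem you invoke applies to \emph{logarithmic} Furstenberg systems and already presupposes control on higher correlations, so it cannot be used as a black box here to upgrade two-point input.

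In short: there is no proof in the paper to compare against, only a conditional implication; the paper's implication proceeds by the rigidity lemma rather than dynamics; and your proposal, while a reasonable research outline, does not reduce the difficulty below that of Conjecture~\ref{conj_elliott_Tao}.
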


We also pose a generalized Sarnak conjecture for multiplicative functions in this fashion.

\begin{conjecture}[A generalized Sarnak conjecture for multiplicative functions]\label{conj_sarnak} Let $f:\mathbb{N}\to \mathbb{D}$ be a non-pretentious multiplicative function. Then there exists a set $\mathcal{X}\subset \mathbb{N}$ with $\delta_{\log}^{+}(\mathcal{X})=1$ such that for any deterministic sequence $a:\mathbb{N}\to \mathbb{D}$ we have
\begin{align*}
\lim_{\substack{x\to \infty\\x\in \mathcal{X}}}\frac{1}{x}\sum_{n\leq x}f(n)a(n)=0.     
\end{align*}
\end{conjecture}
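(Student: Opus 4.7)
The plan is to derive Conjecture~\ref{conj_sarnak} conditionally on Conjecture~\ref{conj_elliott:modified}, by adapting the classical ``Chowla implies Sarnak'' implication to the setting of a dense upper-logarithmic-density set of scales $\mathcal{X}$. First, I would invoke Conjecture~\ref{conj_elliott:modified} for each $k\geq 1$ and each choice of $f_2,\ldots,f_k\in\{f,\overline{f}\}$, with $f_1=f$. Since the set $\mathcal{X}$ there depends only on $f_1=f$, the \emph{same} set $\mathcal{X}$ works for every such tuple simultaneously, and along $\mathcal{X}$ all mixed autocorrelations of $f$ and $\overline{f}$ of every order and every choice of distinct shifts vanish.

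Second, I would analyse the Furstenberg systems of $f$ arising along subsequences of $\mathcal{X}$. By weak-$*$ compactness, any sequence of scales in $\mathcal{X}$ has a sub-subsequence along which $f$ generates a $T$-invariant Borel probability measure $\nu$ on $\mathbb{D}^{\mathbb{Z}}$, and step one forces every joint moment of the form $\int F_{h_1}\cdots F_{h_k}\,d\nu$ (with $F_h(\boldsymbol{y}):=\boldsymbol{y}(h)$, the $h_j$ distinct, and with any complex conjugations inserted) to vanish. Following the ergodic-theoretic analysis of~\cite{fh-annals} and~\cite{lemanczyk-MRT}, and using multiplicativity of $f$ in the same way as in the proofs of Theorems~\ref{thm:furst} and~\ref{thm_main} above, this moment vanishing should force $\nu$ to be a relatively independent joining of a Bernoulli factor over the Furstenberg system of the pretentious ``envelope'' of $f$, in line with Conjecture~\ref{conj:furst}.

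Third, I would deduce the orthogonality with $a$. Writing $a(n)=F(S^n y)$ for some zero-entropy topological system $(Y,S)$ and continuous $F:Y\to\mathbb{D}$, the Furstenberg correspondence principle lets one realise every subsequential limit of $\frac{1}{x}\sum_{n\leq x}f(n)a(n)$ along $\mathcal{X}$ as an integral $\int F_0(\boldsymbol{y})F(z)\,d\lambda(\boldsymbol{y},z)$ over a $T\times S$-invariant joining $\lambda$ of $\nu$ with an $S$-invariant measure on $Y$. The Bernoulli factor of $\nu$ is disjoint from every zero-entropy system by the Sinai/Pinsker theorem, and the odometer factor from step two is orthogonal to any continuous function on $(Y,S)$ by a standard character decomposition. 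Hence $\lambda$ must be the product joining on the relevant factors, making the integral vanish, and giving $\frac{1}{x}\sum_{n\leq x}f(n)a(n)\to 0$ along $\mathcal{X}$.

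The main obstacle is step one: Conjecture~\ref{conj_elliott:modified} is open for a general non-pretentious $f$, and Theorem~\ref{thm_elliott_2point} by itself supplies only two-point information along $\mathcal{X}$, which is insufficient to pin down the Furstenberg structure needed in step two. An attractive unconditional alternative would be a hybrid K\'atai/Bourgain--Sarnak--Ziegler argument requiring only the vanishing of \emph{multiplicative-shift} correlations $\frac{1}{x}\sum_{n\leq x}f(pn)\overline{f(qn)}$ along $\mathcal{X}$ for many prime pairs $p\neq q$; converting the additive-shift vanishing of Theorem~\ref{thm_elliott_2point} into multiplicative-shift vanishing along a common $\mathcal{X}$, uniformly in enough primes for the BSZ machinery to apply, appears to be the key technical hurdle.
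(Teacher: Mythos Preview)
Your overall framing---deriving Conjecture~\ref{conj_sarnak} conditionally from Conjecture~\ref{conj_elliott:modified}---matches the paper, which proves exactly this implication in Proposition~\ref{prop_elliott_implication}. However, your route through Furstenberg systems has a genuine gap, and the paper's argument is both simpler and avoids it.

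The gap is in your step two. Knowing that $\int F_{h_1}^{(\epsilon_1)}\cdots F_{h_k}^{(\epsilon_k)}\,d\nu=0$ for all \emph{distinct} $h_j$ and all choices of conjugations does not by itself force $\nu$ to be a relatively independent extension of a Bernoulli factor over an odometer. For $\{-1,0,+1\}$-valued $f$ that structural conclusion is precisely Conjecture~\ref{conj:furst}, which the paper derives from Conjecture~\ref{conj_elliott_Tao} (not from Conjecture~\ref{conj_elliott:modified}), and only after additional work. For general $\mathbb{D}$-valued $f$ the paper explicitly remarks that even the correct conjectural structure of the Furstenberg systems is unclear (citing~\cite{lemanczyk-MRT}). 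So you are assuming something at least as hard as what you are trying to prove. Your step three then inherits this problem; the disjointness argument needs the Bernoulli factor to actually be there.

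The paper instead follows Sarnak's elementary combinatorial argument verbatim. One writes $|\mathbb{E}_{n\leq x}f(n)a(n)|\leq \mathbb{E}_{n\leq x}|\mathbb{E}_{j\leq H}f(n+j)a(n+j)|+o(1)$, uses zero entropy of $a$ to cover $\{(a(n+1),\ldots,a(n+H))\}_n$ by $\exp(\varepsilon^{10}H)$ balls, applies Chebyshev with a high ($2m$th) moment, expands, and uses Conjecture~\ref{conj_elliott:modified} along $\mathcal{X}$ to kill every term in which some shift occurs with multiplicity one. A direct count of the surviving ``repeated'' tuples then closes the argument. This uses only the correlation vanishing you obtain in your step one and no ergodic structure theory at all; in particular it works uniformly for $\mathbb{D}$-valued $f$. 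Your BSZ/K\'atai alternative is a separate (and, as you note, unresolved) project and is not needed for the conditional implication.
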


The following corrected asymptotic version of Elliott's conjecture was formulated by Tao in~\cite{tao-blog}.

\begin{conjecture}[An asymptotic Elliott conjecture]\label{conj_elliott_Tao} Let $k\geq 1$ and $h_1,\ldots, h_k\in \mathbb{N}$ be fixed and distinct.  Let $x\geq A\geq 2$, and let $f_1,\ldots, f_k:\mathbb{N}\to \mathbb{D}$ be multiplicative functions such that  
\begin{align*}
\inf_{|t|\leq Ax^{k-1}}\mathbb{D}(f_1,\chi(n)n^{it};x)\geq A
\end{align*}
for every Dirichlet character $\chi$ of modulus $\leq A$.
Then we have
\begin{align*}
\left|\frac{1}{x}\sum_{n\leq x}f_1(n+h_1)\cdots f_k(n+h_k)\right|=o_{A\to \infty}(1)+o_{x\to \infty}(1).  
\end{align*}
    
\end{conjecture}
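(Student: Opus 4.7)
The plan is to follow the blueprint set up in Tao's proof of the logarithmic two-point Elliott conjecture~\cite{tao-chowla} and its higher-order extensions~\cite{tt-duke, tt-ant}, upgraded to Ces\`aro averages at every sufficiently large scale $x$. Conceptually, the strengthened hypothesis $\inf_{|t|\leq Ax^{k-1}}\mathbb{D}(f_1,\chi(n)n^{it};x)\geq A$ for $\chi$ of modulus at most $A$ is exactly what is needed to rule out the Matom\"aki--Radziwi\l{}\l{}--Tao counterexample: it forces $f_1$ to be quantitatively far from all twisted characters whose Archimedean frequency $|t|$ reaches into the dilation range $x^{k-1}$, which is precisely the range that enters when one dilates the correlation by primes $p\leq x$ and iterates this $k-1$ times to handle $k$ shifts.

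First I would invoke the Matom\"aki--Radziwi\l{}\l{} theorem to replace the Ces\`aro average on $[1,x]$ by an average over short intervals of length $H=H(A)$ growing slowly in $A$. The hypothesis on the pretentious distance then lets one apply the Hal\'asz--Montgomery--Granville--Soundararajan mean-value estimate to control the twisted mean of $f_1$ on each such window uniformly in $t$ up to $Ax^{k-1}$. Second, I would run an entropy decrement argument in the style of~\cite{tao-chowla, tt-duke}: expanding each $f_j(n+h_j)$ via $f_j(p)f_j((n+h_j)/p)$ for primes $p\mid n+h_j$ and averaging using Tur\'an--Kubilius, one obtains an approximate identity equating the $x$-scale correlation with a dilated correlation at a related scale. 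Iterating this step produces \emph{approximate dilation invariance} of the correlation under multiplication of the variable by a bounded product of primes.

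The main obstacle, and the reason the conjecture remains open, is that this entropy / van der Corput mechanism in its current form yields Ces\`aro control only at a dense sequence of scales, as in Theorems~\ref{thm_elliott_2point} and~\ref{thm_main}, rather than at \emph{every} sufficiently large scale $x$ as the conjecture requires. Overcoming this would demand either a stronger quantitative entropy inequality giving a uniform rate of decrement in $x$, or a compactness argument --- exploiting the quantitative pretentiousness hypothesis to dominate every Furstenberg system of $f_1$ arising at scale $x$ --- that rules out bad scales. Once approximate dilation invariance is established at every scale $x$, one would conclude by a Fourier/Gowers-type argument combining the dilation invariance with the non-pretentiousness assumption, in the spirit of~\cite{tt-duke,tt-ant}, to force the correlation to be $o_{A\to\infty}(1)+o_{x\to\infty}(1)$.
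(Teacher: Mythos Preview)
The statement you are attempting to prove is labeled \emph{Conjecture}~\ref{conj_elliott_Tao} in the paper, and for good reason: the paper does not prove it, nor does it claim to. It is stated as Tao's asymptotic formulation of the corrected Elliott conjecture, and the paper only uses it \emph{conditionally} (in Proposition~\ref{prop_elliott_implication} and Proposition~\ref{prop:conj}) as a hypothesis from which other statements are deduced. There is therefore no proof in the paper to compare your proposal against.

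Your write-up is not really a proof but a roadmap, and you are candid about this: you explicitly identify the main obstruction --- that the entropy decrement machinery of~\cite{tao-chowla,tt-duke,tt-ant} only delivers Ces\`aro control along a dense set of scales rather than at every large $x$ --- and you do not claim to overcome it. That diagnosis is accurate and is exactly why the statement remains conjectural. The ingredients you list (short-interval Matom\"aki--Radziwi\l{}\l{}, Hal\'asz-type bounds under the quantitative non-pretentiousness hypothesis, entropy decrement and approximate dilation invariance, Gowers-uniformity arguments) are indeed the tools one would expect to deploy, but as you note, no one currently knows how to upgrade the ``almost all scales'' conclusion to ``all scales'' in this setting. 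So your proposal should be read as an informed discussion of the state of the art, not as a proof; if submitted as a proof it would be incomplete at precisely the point you flag.
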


We show that Conjecture~\ref{conj_elliott_Tao}, which has a more restrictive hypothesis, implies Conjecture~\ref{conj_elliott:modified}, which uses Elliott's original hypothesis (but involves almost all scales).

\begin{proposition}\label{prop_elliott_implication}
Conjecture~\ref{conj_elliott_Tao} implies Conjecture~\ref{conj_elliott:modified}, and Conjecture~\ref{conj_elliott:modified} implies Conjecture~\ref{conj_sarnak}.    
\end{proposition}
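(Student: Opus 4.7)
The plan is to establish the two implications separately.

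For Conjecture~\ref{conj_elliott_Tao} $\Rightarrow$ Conjecture~\ref{conj_elliott:modified}, fix $k\geq 1$ and a non-pretentious $f_1:\mathbb{N}\to\mathbb{D}$. For each $A \geq 2$ define the set of good scales
\begin{align*}
    \mathcal{X}_A := \Bigl\{x \geq A : \inf_{|t|\leq A x^{k-1}} \mathbb{D}\bigl(f_1, \chi n^{it}; x\bigr) \geq A \text{ for every Dirichlet character } \chi \bmod q \leq A\Bigr\},
\end{align*}
so that on $\mathcal{X}_A$ the hypothesis of Conjecture~\ref{conj_elliott_Tao} is satisfied and the correlation in question is $o_{A\to\infty}(1) + o_{x\to\infty}(1)$. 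The task thus reduces to producing a single set $\mathcal{X}$ with $\delta_{\log}^{+}(\mathcal{X})=1$ that is eventually contained in every $\mathcal{X}_A$; a diagonalization over $A$ then yields the desired convergence along $\mathcal{X}$.

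The crucial ingredient is the density estimate $\delta_{\log}^{+}(\mathcal{X}_A^c)\to 0$ as $A\to\infty$. For $x\in\mathcal{X}_A^c$, choose a witness $(\chi(x),t(x))$ with $\mathbb{D}(f_1,\chi(x)n^{it(x)};x)<A$; by the pretentious triangle inequality~\eqref{eq:triangle} and monotonicity of $\mathbb{D}$ in the scale, any two such witnesses for scales $x\leq y$ both in $\mathcal{X}_A^c$ satisfy
\begin{align*}
    \mathbb{D}\bigl(\chi(x)n^{it(x)},\chi(y)n^{it(y)};x\bigr) < 2A.
\end{align*}
Using the standard asymptotic $\mathbb{D}(1,n^{iu};x)^2\asymp\log(1+|u|\log x)$ and the divergence of $\mathbb{D}(\chi',\cdot;x)$ for non-principal Dirichlet $\chi'$ (ultimately descending from zero-free regions for Dirichlet $L$-functions), one shows that the pair $(\chi(x),t(x))$ is essentially unique at each scale up to small perturbation and varies only slowly with $x$. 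Non-pretentiousness of $f_1$ then prevents this trajectory from stabilizing, and a dyadic analysis of how fast $(\chi(x),t(x))$ must migrate across scales bounds the contribution of $\mathcal{X}_A^c$ to logarithmic averages by a quantity vanishing with $A$. A diagonalization along a sequence $A_j\to\infty$ then produces the required $\mathcal{X}$.

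For Conjecture~\ref{conj_elliott:modified} $\Rightarrow$ Conjecture~\ref{conj_sarnak}, fix a non-pretentious $f:\mathbb{N}\to\mathbb{D}$ and let $\mathcal{X}$ be the set of upper logarithmic density $1$ furnished by Conjecture~\ref{conj_elliott:modified}. Applying that conjecture with $f_1=f$ and each of $f_2,\ldots,f_k$ chosen from $\{f,\bar f\}$ yields vanishing of all mixed shifted correlations of $f$ and $\bar f$ along $\mathcal{X}$. Now pass to any subsequence of $\mathcal{X}$ along which a Furstenberg system of $f$ exists; the resulting invariant measure on $\mathbb{D}^{\mathbb{Z}}$ has vanishing joint moments in any non-trivial combination of shifted coordinate projections and their conjugates, and standard arguments identify the associated system up to isomorphism as a Bernoulli shift. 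Bernoulli systems are disjoint from every zero-entropy topological dynamical system by the Sinai/Furstenberg disjointness theorem, so the deterministic sequence $a(n)=F(T^n y)$ is orthogonal to $f$ along this subsequence; since every subsequence of $\mathcal{X}$ reaches the same conclusion, $\frac{1}{x}\sum_{n\leq x}f(n)a(n)\to 0$ along $\mathcal{X}$.

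The main technical obstacle lies in the density bound $\delta_{\log}^{+}(\mathcal{X}_A^c)\to 0$ in the first implication: one must delicately control how the pretentious target $(\chi(x),t(x))$ for $f_1$ can migrate with the scale while the archimedean window $|t|\leq Ax^{k-1}$ grows polynomially in $x$. This is a quantitative refinement of the non-pretentiousness hypothesis and parallels the analysis underpinning Theorem~\ref{thm_elliott_2point}. The second implication is largely a packaging of the by-now standard Chowla-implies-Sarnak machinery, specialized to the subsequence $\mathcal{X}$.
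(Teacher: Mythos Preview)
Your first implication has a genuine gap: the claim $\delta_{\log}^{+}(\mathcal{X}_A^c)\to 0$ is false in general. The MRT-type function from Proposition~\ref{prop_MRT} already witnesses this: for that $f$ one has $\mathbb{D}(f,n^{is_{m+1}};x)=o(1)$ for all $x\in[(\log t_{m+1})^4,t_{m+1}]$, with $|s_{m+1}|=\log t_{m+1}\leq x^{1/4}\leq Ax^{k-1}$ for $k\geq 2$. Hence each such interval lies in $\mathcal{X}_A^c$, and since its logarithmic density up to $t_{m+1}$ is $1-o(1)$, one gets $\delta_{\log}^{+}(\mathcal{X}_A^c)=1$ for every fixed $A$. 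What \emph{is} true (and is what the paper proves via Lemma~\ref{le_rigidity}) is the weaker statement $\delta_{\log}^{-}(\mathcal{X}_A^c)=0$, equivalently $\delta_{\log}^{+}(\mathcal{X}_A)=1$. But from this, a naive diagonalization does not immediately produce a single $\mathcal{X}$ with $\delta_{\log}^{+}(\mathcal{X})=1$ eventually contained in every $\mathcal{X}_A$: nested sets each of upper logarithmic density $1$ need not admit such a diagonal. The paper sidesteps this by defining a \emph{single} bad set $\mathcal{E}$ using a threshold $B(x)\to\infty$ growing slowly enough (in particular $B(x)\leq M(x)^{1/2}$, where $M(x)$ comes from non-pretentiousness), and then applying Lemma~\ref{le_rigidity} directly to show that $\delta_{\log}^{-}(\mathcal{E})>0$ would force $\mathbb{D}(f_1,\chi n^{it};x)\ll B(x)$ for fixed $\chi,t$ and all $x$, contradicting the choice of $M$. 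Your ``migration of $(\chi(x),t(x))$'' heuristic is pointing at the right phenomenon, but the density you are trying to control is the wrong one, and the packaging via a single slowly-growing threshold is essential.

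For the second implication, your Furstenberg-system route is a legitimate alternative to the paper's direct combinatorial argument (Sarnak's moment method), but your sketch is too quick. Vanishing of correlations of $f$ and $\bar f$ at distinct shifts does \emph{not} by itself identify the Furstenberg system as Bernoulli for general $f:\mathbb{N}\to\mathbb{D}$: you also need the ``diagonal'' moments (products of $f^{a}\bar f^{a}=|f|^{2a}$) to factor, and these involve pretentious functions not covered by Conjecture~\ref{conj_elliott:modified}. The paper's approach avoids this entirely: it bounds $\bigl|\mathbb{E}_{n\leq x}f(n)a(n)\bigr|$ by a high-moment Chebyshev argument in which only those terms survive where every shift occurs with multiplicity $\geq 2$, and the singleton terms are killed by~\eqref{eq:sarnak4} (applied with $f_1=f$; the $\bar f$ case follows by conjugating). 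This needs nothing beyond Conjecture~\ref{conj_elliott:modified} and the low-complexity covering of $(a(n+1),\ldots,a(n+H))$.
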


Thus, if Conjecture~\ref{conj_elliott_Tao} holds, then Elliott's original conjecture holds at almost all scales in the sense of Conjecture~\ref{conj_elliott:modified}, and so does the Sarnak-type Conjecture~\ref{conj_sarnak}.

\subsection{Organization of the paper.} The proofs of our main correlation results crucially rely on the interplay between different scales. In Section~\ref{cor_pret} we prove an important technical result, Theorem~\ref{thm_correlation_conclusion} (see also Proposition~\ref{prop_pretentious_correlation}), where the correlations of $f_1,\ldots, f_k$ are estimated in terms of the truncated pretentious distances $\max_{j\leq k}\mathbb{D}(f_j,\chi_j(n)n^{it_j};x^{\varepsilon},x)$. 

In Section~\ref{distance_pret}, we prove a key  Lemma~\ref{le_rigidity}, which says that if we can control pretentious distance at an infinite sequence of scales $(x_n)$, and $x_n$ does not grow too rapidly ($x_n+1\leq x_{n+1}\leq x_n^{O(1)}$), then we can control it at all scales. 

In Section~\ref{proof_cor1}, we use the results of the two previous sections  to prove Theorem~\ref{thm_main}.

Section~\ref{proof_cor2} is devoted to the proofs of the other correlation results. Here, to prove Theorem~\ref{thm_elliott_2point}, we prove a strengthening of Theorem~\ref{thm_A}(1) (Theorem~\ref{thm_elliott_2point_stronger}) and combine it with our Theorem~\ref{thm_main} to conclude. To prove Theorem~\ref{thm_elliott_higher}, we use Theorem~\ref{thm_A}(2) and Theorem~\ref{thm_main}, and to prove Theorem~\ref{thm_omega} we use Theorem~\ref{thm_correlation_conclusion}.

Section~\ref{proof_furst} is devoted to constructing Furstenberg systems (proof of Theorem~\ref{thm:furst}) using Theorem~\ref{thm_omega} and Proposition~\ref{prop:furst} as inputs.

In Section~\ref{proof_huds}, we use Theorem~\ref{thm_main} together with an upper bound  bound for $4$-point correlations (Lemma~\ref{le_upper_bound_correlation}), the ``rotation trick" for multiplicative functions from~\cite{tams-kmpt}, and various combinatorial arguments to resolve Hudson's conjecture (Theorem~\ref{thm_hudson}).

Finally, in Section~\ref{proof_edp} we prove the quantitative version of the Erd\H{o}s discrepancy theorem (Theorem~\ref{thm_edp}) using Theorem~\ref{thm_elliott_2point} in the non-pretentious case and Proposition~\ref{prop_pretentious_density} in the pretentious case.

\subsection{Acknowledgments}

The first author was supported by an FRG grant from the Heilbronn Institute for Mathematical Research. The third author was supported by a Titchmarsh Fellowship, Academy of Finland grant no. 340098, a von Neumann Fellowship (NSF grant \texttt{DMS-1926686}), and funding from European Union's Horizon
Europe research and innovation programme under Marie Sk\l{}odowska-Curie grant agreement No
101058904. 

We thank Nikos Frantzikinakis and Mariusz Lema\'nczyk for many helpful discussions.

\section{Notation and some preliminaries}\label{sec:densities}

\subsection{Arithmetic functions}

If $a_1,\ldots, a_k$ are integers, by $(a_1,\ldots, a_k)$ we denote their greatest common divisor and by $[a_1,\ldots, a_k]$ their least common multiple. If $a,b\in \mathbb{N}$, we write $a\mid b^{\infty}$ as a shorthand for the statement that $a\mid b^{\ell}$ for some $\ell\geq 1$. The notation $p^{\ell}\mid \mid n$ for integers $n,\ell$ and prime $p$ stands for $p^{\ell}\mid n$ and $p^{\ell+1}\nmid n$. We denote by $\varphi$ the Euler totient function. The notation $\left(\frac{a}{m}\right)$ stands for the Jacobi symbol of $a$ modulo $m$, when $m \in \mb{N}$ is odd. 
If $\mathcal{P}$ is a subset of the primes $\mathbb{P}$, $\Omega_{\mathcal{P}}(n)$ and $\omega_{\mathcal{P}}(n)$ are the number of prime factors of $n$ from $\mathcal{P}$ counted with and without multiplicities, respectively. Also let $\omega:=\omega_{\mathbb{P}}$. Finally, we use the standard abbreviation $e(t)=e^{2\pi i t}$ for $t \in \mathbb{R}$.  

\subsection{Averaging notation}
Let $f: A\to\mathbb{C}$ be a function defined on a non-empty set $A.$ We define averaging operators by
$$\mathbb{E}_{n\in A}f(n):=\frac{\sum_{n\in A}f(n)}{\sum_{n\in A}1},$$
and 
$$\mathbb{E}^{\log}_{n\in A}f(n):=\frac{\sum_{n\in A}\frac{f(n)}{n}}{\sum_{n\in A}\frac{1}{n}},$$
where in the latter case we assume that $A\subset \mathbb{N}$.

\subsection{Asymptotic notation}

As usual, we use $A=O(B)$ or $A\ll B$ or $B\gg A$ to denote that $|A|\leq C|B|$ for some absolute constant $C$. If we instead denote $A=O_{c}(B)$ or $A\ll_c B$, then the implied constant $C$ may depend on $c$. We also write $A=o_{x\to \infty}(B)$ if $|A|\leq C(x)|B|$ for some function $C(x)$ tending to $0$ as $x\to \infty$. If the underlying variable $x$ is clear from context, we just write $A=o(B)$.  

\subsection{The pretentious distance}

The pretentious distance, defined in~\eqref{eq:pret}, can be generalized to a truncated pretentious distance as follows: for $f,g : \mathbb{N} \ra \mathbb{D}$ and $x \geq y \geq 1$,
\begin{align*}
\mathbb{D}(f,g;y,x):=\left(\sum_{y\leq p\leq x}\frac{1-\textnormal{Re}(f(p)\overline{g(p)})}{p}\right)^{1/2}.   \end{align*}
We will frequently use the triangle inequalities for the pretentious distance:
\begin{align}\label{eq:triangle}\begin{split}
\mathbb{D}(f_1,f_3;y,x)&\leq \mathbb{D}(f_1,f_2;y,x)+\mathbb{D}(f_2,f_3;y,x),\\
\mathbb{D}(f_1f_2,g_1g_2;y,x) &\leq \mathbb{D}(f_1,g_1;y,x) + \mathbb{D}(f_2,g_2;y,x)
\end{split}
\end{align}
for $f_i,g_i : \mathbb{N} \rightarrow \mathbb{D}$ (see e.g.~\cite[Lemma 3.1]{GS} and~\cite{GS-zeta}).

\subsection{Densities}

Define the \emph{asymptotic density}, \emph{logarithmic density} and \emph{double-logarithmic density} of a set $A\subset \mathbb{N}$ as
\begin{align*}
d(A):&=\lim_{x\to \infty}\frac{1}{x}\sum_{n\in A\cap [1,x]}1,\\
\delta_{\log}(A):&=\lim_{x\to \infty}\frac{1}{\log x}\sum_{n\in A\cap [1,x]}\frac{1}{n},\\
\delta_{\log \log}(A):&=\lim_{x\to \infty}\frac{1}{\log \log x}\sum_{n\in A\cap [2,x]}\frac{1}{n\log n}.     
\end{align*}
respectively, whenever the limits exists.

Define the upper densities $d^{+},\delta_{\log}^{+},\delta_{\log \log}^{+}$ similarly, but with $\limsup$ in place of $\lim$. Define the lower densities $d^{-},\delta_{\log}^{-},\delta_{\log \log}^{-}$ also similarly, but with $\liminf$ in place of $\lim$.  

For an infinite set $A=\{a_1,a_2,a_3\ldots\}\subset \mathbb{N}$ with $a_1<a_2<\cdots$, and a sequence $f:\mathbb{N}\to \mathbb{C}$, define the \emph{limit of $f$ along $A$} as
\begin{align}\label{eq:lim}
 \lim_{\substack{a\to \infty\\a\in A}}f(a):=\lim_{j\to \infty}f(a_{j}),   
\end{align}
whenever the limit exists. We also define $\limsup_{\substack{a\to \infty\\a\in A}}f(a)$ and $\liminf_{\substack{a\to \infty\\a\in A}}f(a)$ similarly by replacing in~\eqref{eq:lim} $\lim$ with $\limsup$ and $\liminf$, respectively. We collect the following simple, standard facts to be used without further mention in the following sections.

\begin{lemma}[Basic properties of densities] Let $\nu\in \{d,\delta_{\log},\delta_{\log \log}\}$.
\begin{enumerate}
    \item For any set $A\subset \mathbb{N}$, we have
    \begin{align*}
    d^{-}(A)\leq \delta_{\log}^{-}(A)\leq \delta_{\log \log}^{-}(A)\leq \delta_{\log \log}^{+}(A)\leq \delta_{\log}^{+}(A)\leq d^{+}(A).     
    \end{align*}

    \item For any $A\subset \mathbb{N}$, we have $\nu^{-}(A)+\nu^{+}(\mathbb{N}\setminus A)= 1$.
    
    \item  Let $k\geq 1$, and let $A_1,\ldots, A_k\subset \mathbb{N}$ satisfy $\nu(A_i)=1$ for $i\leq k$. Then $\nu(A_1\cap \cdots \cap A_k)=1$. 
    
    \item Let $h\in \mathbb{N}$. Then $\nu^{-}(A+h)=\nu^{-}(A)$ and $\nu^{+}(A+h)=\nu^{+}(A)$.
\end{enumerate}
\end{lemma}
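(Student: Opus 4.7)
The plan is to handle the four parts in order, with parts (1) and (2) providing the tools for the rest.

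For part (1), I would use Abel summation to pass between the ordinary counting function and the logarithmic sum. Writing $N(t):=|A\cap[1,t]|$ and $L(t):=\sum_{n\in A\cap [2,t]}\tfrac{1}{n\log n}$, partial summation gives identities of the shape
$$\sum_{n\in A\cap [1,x]}\frac{1}{n}=\frac{N(x)}{x}+\int_1^x\frac{N(t)}{t^2}\,dt,\qquad L(x)=\frac{1}{\log x}\sum_{n\in A\cap [2,x]}\frac{1}{n}+\int_2^x\frac{1}{t(\log t)^2}\sum_{n\in A\cap [2,t]}\frac{1}{n}\,dt.$$
Dividing the first by $\log x$ and noting that $\int_1^x t^{-1}\,dt=\log x$, a standard $\limsup$/$\liminf$ argument (bounding $N(t)/t$ by $d^+(A)+\varepsilon$ for $t$ large) yields $\delta^+_{\log}(A)\leq d^+(A)$ and the mirror inequality for lower densities. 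The analogous calculation with the second identity, combined with $\int_2^x(t\log^2 t)^{-1}\,dt=\log\log x+O(1)$, gives $\delta^+_{\log\log}(A)\leq \delta^+_{\log}(A)$ and its lower-density counterpart. The trivial inequality $\nu^{-}\leq \nu^{+}$ finishes the chain.

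For part (2), I would use the identity $\sum_{n\in A\cap[1,x]}w(n)+\sum_{n\in (\mathbb{N}\setminus A)\cap[1,x]}w(n)=\sum_{n\leq x}w(n)$ with $w(n)\in\{1,1/n,1/(n\log n)\}$ and divide by the appropriate normalization ($x$, $\log x$, or $\log\log x$), which tends to $1$ after summing $w$. The elementary identity $\liminf_x(a(x)+b(x))+\limsup_x(-b(x))\leq \liminf_x a(x)$ combined with its reverse (and $\limsup_x(-b(x))=-\liminf_x b(x)$) yields $\nu^{-}(A)+\nu^{+}(\mathbb{N}\setminus A)=1$ in each of the three cases.

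Part (3) is immediate from part (2) by finite subadditivity of upper densities, which itself follows from the pointwise inequality $1_{A\cup B}\leq 1_A+1_B$: if $\nu(A_i)=1$ for $i\leq k$, then $\nu^+(\mathbb{N}\setminus A_i)=0$ for each $i$, hence $\nu^+\bigl(\bigcup_i(\mathbb{N}\setminus A_i)\bigr)=0$, and another application of (2) gives $\nu\bigl(\bigcap_i A_i\bigr)=1$.

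For part (4), I would simply compare the weighted sums over $A+h$ and over $A$ term-by-term. Writing $n=m+h$ in the sum over $A+h$ gives $\sum_{m\in A\cap[1-h,x-h]}w(m+h)$, and the discrepancy between $w(m+h)$ and $w(m)$ (together with boundary terms from $[1-h,0]$ and $(x-h,x]$) is $O_h(1/x)$, $O_h(\log x/x \cdot \text{something negligible})$, etc.~after normalization. I expect the main (only) subtlety to appear here: for the logarithmic and double-logarithmic densities one needs to check that $\sum_{n\leq x}|w(n+h)-w(n)|$ is negligible compared to the normalizing factor, which is straightforward since $w(n+h)-w(n)\ll_h w(n)/n$. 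In every case the difference between the normalized sums for $A$ and $A+h$ tends to $0$, so the $\limsup$ and $\liminf$ coincide. The whole lemma is thus mostly a matter of carefully applying partial summation; the key technical step is the Abel-summation computation in part (1).
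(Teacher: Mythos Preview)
Your proposal is correct and follows essentially the same approach as the paper: partial summation for part (1), the complement identity $1_A = 1 - 1_{\mathbb{N}\setminus A}$ for part (2), the pointwise inequality $1_{A_1\cap\cdots\cap A_k}\geq 1 - \sum_i 1_{\mathbb{N}\setminus A_i}$ (equivalently, subadditivity of $\nu^+$) for part (3), and approximate translation invariance of the weighted averages for part (4). The paper's proof is terser but covers exactly the same ground.
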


\begin{proof}
 All the pairwise inequalities in part (1) follow from partial summation, except the trivial inequality $\delta_{\log \log}^{-}(A)\leq \delta_{\log \log}^{+}(A)$.  
 
 Part (2) follows from averaging the identity
 $$1_{A}(n)=1-1_{\mathbb{N}\setminus{A}}(n)$$
with weights corresponding to $\nu$ and using the identity
$$\liminf_{x\to \infty}(1-a(x))=1-\limsup_{x\to \infty}a(x),$$
valid for any function $a(x)$.

Part (3) follows from averaging the inequality $$1_{A_1\cap\cdots \cap A_k}(n)\geq 1-(1_{\mathbb{N}\setminus A_1}(n)+\cdots +1_{\mathbb{N}\setminus A_k}(n))$$
 with weights corresponding to $\nu$ and noting that $\nu(A)=1$ implies $\nu(\mathbb{N}\setminus A)=0$ by part (2).
 
 Part (4) follows from the approximate translation invariance of asymptotic, logarithmic and doubly logarithmic averages. 
\end{proof}

\section{Correlations via truncated distance}\label{cor_pret}

In this section, we establish an upper bound for the correlations 
\begin{align}\label{eq:pretentious12}\frac{1}{x}\sum_{n\leq x}f_1(a_1n+h_1)\cdots f_k(a_kn+h_k)   
\end{align}
in terms of the truncated pretentious distances $D_j(x,\chi_j,t_j):=\mathbb{D}(f_j,\chi_j(n)n^{it_j};x^{\varepsilon},x)$. In particular, if $f_1,\ldots, f_k$ are non-pretentious and $\max_{1\leq j\leq k}D_j(x,\chi_j,t_j)=o(1)$ for some $\chi_j$, $t_j$, then our upper bound implies that~\eqref{eq:pretentious12} converges to $0$ as $x\to \infty$.   

\begin{theorem}[Correlations of multiplicative functions having small truncated pretentious distance]\label{thm_correlation_conclusion}
 Let $k\geq 1$ and $a_1,\ldots, a_k,h_1,\ldots, h_k\in \mathbb{N}$ be fixed with $a_ih_j\neq a_jh_i$ whenever $i\neq j$.  Also let Dirichlet characters $\chi_1,\ldots, \chi_k$ and real numbers $t_1,\ldots, t_k$ be fixed. Then for any  $x\geq 3$, $\varepsilon \in (1/(\log \log x),1/2)$ and any multiplicative functions $f_1,\ldots, f_k:\mathbb{N}\to \mathbb{D}$ satisfying
 \begin{align*}
\max_{1\leq j\leq k}\mathbb{D}(f_j,\chi_j(n)n^{it_j};x^{\varepsilon},x)\leq \varepsilon\quad \textnormal{ and } \quad \max_{1\leq j\leq k}\mathbb{D}(f_j,\chi_j(n)n^{it_j};x)\geq 1/\varepsilon
 \end{align*}
 we have
 \begin{align}\label{eq:correlK}
 \left|\frac{1}{x}\sum_{n\leq x}f_1(a_1n+h_1)\cdots f_k(a_kn+h_k)\right|\ll \left(\log \frac{1}{\varepsilon}\right)^{1/2}\varepsilon.   
 \end{align}   
\end{theorem}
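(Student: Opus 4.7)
My plan combines a Ramar\'e-type identity for one distinguished index $f_{j^\ast}$ with Cauchy--Schwarz against the truncated pretentious distance, with Hal\'asz's theorem controlling the surviving main term. The target bound $\varepsilon\sqrt{\log(1/\varepsilon)}$ matches, up to constants, the Cauchy--Schwarz estimate
\begin{equation*}
\sum_{x^\varepsilon \leq p \leq Q} \frac{|f_{j^\ast}(p) - \chi_{j^\ast}(p) p^{it_{j^\ast}}|}{p} \;\leq\; \sqrt{2}\biggl(\sum_{x^\varepsilon \leq p \leq Q} \frac{1}{p}\biggr)^{\!1/2} \mathbb{D}(f_{j^\ast}, \chi_{j^\ast} n^{it_{j^\ast}}; x^\varepsilon, Q) \;\ll\; \varepsilon\sqrt{\log(1/\varepsilon)},
\end{equation*}
valid for $Q := x^c$ with a small constant $c = c(k) > 0$, where I choose $j^\ast$ to be an index attaining the maximum in the second hypothesis, so that $\mathbb{D}(f_{j^\ast}, \chi_{j^\ast} n^{it_{j^\ast}}; x^\varepsilon) \geq 1/\varepsilon$. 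Throughout set $P := x^\varepsilon$.

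The plan is now to use the multiplicativity of $f_{j^\ast}$: whenever $p \| a_{j^\ast} n + h_{j^\ast}$ one has $f_{j^\ast}(a_{j^\ast} n + h_{j^\ast}) = f_{j^\ast}(p) f_{j^\ast}((a_{j^\ast} n + h_{j^\ast})/p)$. Weighting this by $\log p$, summing over $p \in [P, Q]$, and invoking a Tur\'an--Kubilius-style estimate (with a suitable choice of weight) to replace the total weight by its typical value and a standard sieve bound to dispose of $p^2$-divisibility events, one then substitutes $\chi_{j^\ast}(p) p^{it_{j^\ast}}$ for $f_{j^\ast}(p)$; the replacement error is bounded by the Cauchy--Schwarz display above (after absorbing $\log Q/\log(Q/P) = O(1)$ into the implied constant). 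After a change of variables $m := (a_{j^\ast} n + h_{j^\ast})/p$ (in the model case $a_{j^\ast} = 1$; the general case is entirely analogous by a rescaling), the problem reduces to bounding
\begin{equation*}
S_1 \;:=\; \frac{1}{\log(Q/P)\cdot x} \sum_{P \leq p \leq Q} \chi_{j^\ast}(p) p^{it_{j^\ast}} \log p \!\! \sum_{\substack{m \leq x/p \\ (m,p)=1}} \!\! f_{j^\ast}(m) \prod_{i \neq j^\ast} f_i(a_i p m + h'_i),
\end{equation*}
where the shifts $h'_i := h_i - a_i h_{j^\ast}$ are non-zero precisely by the distinctness hypothesis $a_i h_j \neq a_j h_i$.

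The main obstacle is showing $|S_1| \ll \varepsilon\sqrt{\log(1/\varepsilon)}$. Since $\mathbb{D}(f_{j^\ast}, \chi_{j^\ast} n^{it_{j^\ast}}; x)^2 \geq \mathbb{D}(f_{j^\ast}, \chi_{j^\ast} n^{it_{j^\ast}}; x^\varepsilon)^2 \geq 1/\varepsilon^2$, Hal\'asz's theorem would give a bound of size $O(e^{-c/\varepsilon^2})$ for the mean value of $f_{j^\ast}$ against its twist at scale $\asymp x/p$, far smaller than the target. The real difficulty is the non-multiplicative weight $G_p(m) := \prod_{i \neq j^\ast} f_i(a_i p m + h'_i)$ multiplying $f_{j^\ast}(m)$ in $S_1$, which precludes a direct application of Hal\'asz. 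I would address this by one of two routes: \emph{(i)} an induction on $k$, iterating the Ramar\'e identity on each of the remaining factors $f_i$ ($i \neq j^\ast$) to strip them away one at a time, with each iteration contributing at most a further $O(\varepsilon\sqrt{\log(1/\varepsilon)})$ via the same Cauchy--Schwarz mechanism, so that after $k-1$ iterations only a pure mean value of $f_{j^\ast}$ twisted by $\chi_{j^\ast} n^{it_{j^\ast}}$ remains; or \emph{(ii)} a bilinear Cauchy--Schwarz in the $(p,m)$ variables combined with orthogonality for the character twist $\chi_{j^\ast}(p) p^{it_{j^\ast}}$, which decouples $f_{j^\ast}(m)$ from $G_p(m)$ and leaves a diagonal short-shift correlation $\sum_m f_{j^\ast}(m) \overline{f_{j^\ast}(m')}$ amenable to Hal\'asz via the total distance hypothesis. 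Either route, combined with the prime-replacement step, yields the asserted bound.
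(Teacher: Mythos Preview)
Your Cauchy--Schwarz step correctly accounts for the $\varepsilon\sqrt{\log(1/\varepsilon)}$ term, and you are right that the hypothesis $\mathbb{D}(f_{j^\ast},\chi_{j^\ast}n^{it_{j^\ast}};x^\varepsilon)\geq 1/\varepsilon$ should eventually feed into a Hal\'asz-type bound. The problem is that neither of your proposed routes for $S_1$ actually accesses that hypothesis.

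Route (i) does not ``strip away'' factors. A single Ramar\'e extraction on $f_i$ replaces $f_i(a_i p m+h_i')$ by $f_i(q)\,f_i((a_i p m + h_i')/q)$ for one prime $q$; after substituting $f_i(q)\to\chi_i(q)q^{it_i}$ you still carry $f_i$ at the reduced argument. After $k$ iterations you still have a $k$-point correlation, just at scale $x/(p_1\cdots p_k)$, and you are back where you started. Concretely: if $f_{j^\ast}$ \emph{were} exactly $\chi_{j^\ast}(n)n^{it_{j^\ast}}$ on primes in $[P,Q]$, your replacement error vanishes and $S_1$ equals the original correlation, so you have proved nothing.

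Route (ii) throws away precisely the wrong function. A Cauchy--Schwarz in $m$ (or in $p$) that leaves ``a diagonal short-shift correlation $\sum_m f_{j^\ast}(m)\overline{f_{j^\ast}(m')}$'' has bounded $|f_{j^\ast}|$ trivially inside the weight, and the resulting off-diagonal terms are correlations of the $f_i$ for $i\neq j^\ast$, for which you have \emph{no} lower bound on the distance to a twisted character. You cannot invoke Hal\'asz for those.

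The paper's approach is structurally different. Rather than extracting a single prime, one replaces $f_j(p)$ by $\chi_j(p)p^{it_j}$ for \emph{all} primes $p\in(x^\varepsilon,x]$ simultaneously (after reducing to $\chi_j=1$, $t_j=0$ this means setting $\tilde f_j(p)=1$ there). The replacement error is controlled, via the same Cauchy--Schwarz you wrote, by $\max_j\mathbb{D}(f_j,\chi_j n^{it_j};x^\varepsilon,x)\cdot\sqrt{\log(1/\varepsilon)}$. The main term is then a correlation of $x^\varepsilon$-smooth multiplicative functions $\tilde f_1,\ldots,\tilde f_k$; the fundamental lemma of the sieve evaluates the inner sifted count, and the whole expression collapses to an Euler product over primes $p\leq x^\varepsilon$. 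That Euler product is bounded by $\exp\bigl(-\max_j\mathbb{D}(f_j,\chi_j n^{it_j};x^\varepsilon)^2\bigr)$ directly, which is where the $\geq 1/\varepsilon$ hypothesis finally enters. The point is that you need the full Euler-product factorisation over small primes to see the small-prime distance; a one-prime Ramar\'e identity never reaches it.
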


\begin{remark}
In~\cite{klurman}, the first author gave a formula for computing the limit as $x\to \infty$ of~\eqref{eq:pretentious12} 
where $f_1,\ldots, f_k$ are pretentious functions (the formula was written explicitly for $k=2$, but it works for larger $k$ as well, and with the linear forms $a_jn+h_j$ replaced by polynomials). The error term in this formula depends on $\mathbb{D}(f_j,\chi_j(n)n^{it_j};\log x,x)$. For the proofs of our main theorems, however, it is crucial to work with the ``shorter'' pretentious distance $\mathbb{D}(f_j,\chi_j(n)n^{it_j};x^{\varepsilon},x)$ in Theorem~\ref{thm_correlation_conclusion}. We are able to work with this shorter pretentious distance by in particular using the fundamental lemma of the sieve and by carefully analyzing various error terms.  
\end{remark}

Theorem~\ref{thm_correlation_conclusion} follows immediately from the following proposition.

\begin{proposition}\label{prop_pretentious_correlation} Let $k\geq 1$ and $a_1,\ldots, a_k,h_1,\ldots, h_k\in \mathbb{N}$ be fixed with $a_ih_j\neq a_jh_i$ whenever $i\neq j$.   Also let Dirichlet characters $\chi_1,\ldots, \chi_k$ and real numbers $t_1,\ldots, t_k$ be fixed. Then for any $x\geq 3$,  $\varepsilon\in (1/(\log \log x),1/2)$ and any multiplicative functions $f_1,\ldots, f_k:\mathbb{N}\to \mathbb{D}$ we have
\begin{align}\label{eq:pretentious8}\begin{split}
&\left|\frac{1}{x}\sum_{n\leq x}f_1(a_1n+h_1)\cdots f_k(a_kn+h_k)\right|\\
&\ll \left(\log \frac{1}{\varepsilon}\right)^{1/2}\max_{j\leq k}\mathbb{D}(f_j,\chi_j(n)n^{it_j};x^{\varepsilon},x)+\exp\left(-\max_{j\leq k}\mathbb{D}(f_j,\chi_j(n)n^{it_j};x^{\varepsilon})^2\right)+\exp\left(-\frac{1}{8k^2\varepsilon}\right). 
\end{split}
\end{align}
\end{proposition}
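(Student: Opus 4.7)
The plan will be to adapt the framework of Tao and of Tao--Ter\"av\"ainen for logarithmically averaged correlations of multiplicative functions, replacing the non-pretentiousness hypothesis by the quantitative input provided by the truncated pretentious distances. The starting point is to decompose each argument $a_jn+h_j$ into its $x^\varepsilon$-smooth and $x^\varepsilon$-rough parts via $f_j(a_jn+h_j)=f_j(m_j^{\flat})f_j(m_j^{\sharp})$. By the fundamental lemma of the sieve, the set of $n\leq x$ for which some $a_jn+h_j$ has no prime factor exceeding $x^\varepsilon$ (i.e., some $m_j^{\sharp}=1$) has density at most $O(\exp(-1/(8k^2\varepsilon)))$, which accounts for the third error term in the bound.

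On the remaining set of $n$, each $a_jn+h_j$ admits a prime factor in $(x^\varepsilon,x]$. I would introduce such a prime via the identity $\log m = \sum_{p^a \| m}a\log p$, normalised by $\log x$, then swap order of summation and substitute $a_jn+h_j=pm$; this absorbs one $f_j$-factor into $f_j(p)\cdot f_j((a_jn+h_j)/p)$ (up to coprimality corrections). Replacing $f_j(p)$ by $\chi_j(p)p^{it_j}$ at this stage costs, via Cauchy--Schwarz applied to the prime sum weighted by $\log p/p$, an error of size
\[
\ll \left(\sum_{x^\varepsilon<p\leq x}\frac{\log p}{p}\right)^{1/2} \mathbb{D}(f_j,\chi_j(n)n^{it_j};x^\varepsilon,x) \;\ll\; (\log(1/\varepsilon))^{1/2}\,\mathbb{D}(f_j,\chi_j(n)n^{it_j};x^\varepsilon,x),
\]
which is exactly the first error term in the bound.

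After these replacements, the remaining sum is a shifted correlation in which the rough-part behaviour of each $f_j$ has been fully replaced by the pretentious model $\chi_j(n)n^{it_j}$. Absorbing the character/Archimedean twists into the surviving factor reduces the problem to a mean-value estimate for a twisted multiplicative function on a scale at most $x^\varepsilon$, which is bounded by Hal\'asz's theorem and yields $\exp(-\mathbb{D}(f_j,\chi_j(n)n^{it_j};x^\varepsilon)^2)$, the second error term.

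The principal obstacle will be the combinatorial bookkeeping across the indices $j$: the substitutions $a_jn+h_j=pm$ introduce coprimality and residue-class conditions that must be controlled uniformly, and the non-degeneracy condition $a_ih_j\neq a_jh_i$ must be invoked to prevent diagonal coincidences among the shifted arguments once the substitution has been carried out. A secondary difficulty is calibrating the Hal\'asz step so that it produces the clean bound $\exp(-\mathbb{D}(\cdot\,;x^\varepsilon)^2)$ without an extraneous $1/\sqrt{\log x^\varepsilon}$ factor, which requires careful choice of the uniformity parameter $|t|$ and of the scale on which Hal\'asz is applied, and judicious use of the restriction imposed by the sieve truncation above.
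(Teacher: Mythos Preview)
Your plan has a genuine structural gap in the passage from Step~3 to Step~4. Introducing a single large prime $p\mid a_jn+h_j$ via the identity $\log m=\sum_{p^a\| m}a\log p$ and replacing $f_j(p)$ by $\chi_j(p)p^{it_j}$ only alters one prime factor of one argument; the surviving sum is still a full $k$-point correlation involving $f_i(a_in+h_i)$ for $i\neq j$ and $f_j((a_jn+h_j)/p)$, where $(a_jn+h_j)/p$ can be as large as $x^{1-\varepsilon}$ and may itself have many prime factors exceeding $x^{\varepsilon}$. This does not reduce to a one-variable mean value on scale $x^{\varepsilon}$, so the appeal to Hal\'asz to produce $\exp(-\mathbb{D}(f_j,\chi_j(n)n^{it_j};x^{\varepsilon})^2)$ is unjustified. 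There is also a quantitative slip in your Cauchy--Schwarz step: with weight $\log p/p$ one has $\sum_{x^{\varepsilon}<p\le x}\frac{\log p}{p}\asymp \log x$, not $\log(1/\varepsilon)$, so the error you record would be of size $(\log x)^{1/2}\,\mathbb{D}(\cdot)$, which is far too large.

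The paper's argument is quite different and avoids both issues. Rather than extracting a single prime, it replaces each $f_j$ at \emph{all} primes $p>x^{\varepsilon}$ simultaneously by~$1$ (after first removing the twists $t_j$ by partial summation and the characters $\chi_j$ by splitting into residue classes). The cost of this wholesale replacement is $\sum_{x^{\varepsilon}<p\le x}\frac{|1-f_j(p)|}{p}$, and Cauchy--Schwarz against the weight $1/p$ (so that $\sum_{x^{\varepsilon}<p\le x}1/p\asymp\log(1/\varepsilon)$) gives exactly the first error term. The remaining sum involves only the $x^{\varepsilon}$-smooth parts of the $f_j$; expanding over divisors $d_j\mid P(x^{\varepsilon})$ and applying the fundamental lemma of the sieve to the inner counts yields an Euler product that is bounded directly by $\exp(-\max_j\mathbb{D}(f_j,1;x^{\varepsilon})^2)$, with no Hal\'asz step. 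The $\exp(-1/(8k^2\varepsilon))$ term arises from truncating the $d_j$ via smooth-number bounds, not from the sieve density of rough-free $n$.
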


Proposition~\ref{prop_pretentious_correlation} will be deduced from the following proposition, which is uniform in the linear forms but requires some further assumptions. 

\begin{proposition}\label{prop_pretentious_correlation2} Let integers $k\geq 1$ and $A\geq 2$ be fixed.  Let $x\geq 3$,  $\varepsilon\in (1/(\log \log x),1/2)$ and $a_1,\ldots, a_k$, $h_1,\ldots, h_k\in \mathbb{N}$ with $a_ih_j\neq a_jh_i$ whenever $i\neq j$. Suppose that  $(a_j,h_j)=1$, $a_j,h_j\leq (\log x)^{1/2}$, and $a_j\mid A^{\infty}$ for all $j$. Then for any multiplicative functions $f_1,\ldots, f_k:\mathbb{N}\to \mathbb{D}$ satisfying $f_j(p^{\ell})=0$ for all $p\mid \prod_{1\leq i<j\leq k}(a_ih_j-a_jh_i)$ and $\ell\geq 1$ we have
\begin{align*}
&\left|\frac{1}{x}\sum_{n\leq x}f_1(a_1n+h_1)\cdots f_k(a_kn+h_k)\right|\\
&\ll  \left(\log\frac{1}{\varepsilon}\right)^{1/2}\max_{j\leq k}\mathbb{D}(f_j,1;x^{\varepsilon},x)+\exp\left(-\max_{j\leq k}\mathbb{D}(f_j,1;x^{\varepsilon})^2\right)+\exp\left(-\frac{1}{2\varepsilon}\right).  
\end{align*}
    
\end{proposition}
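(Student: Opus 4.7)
The plan is to prove the bound via a Turán--Kubilius reduction combined with Halász's theorem, in the spirit of the Matomäki--Radziwiłł--Tao approach. The key idea is to ``drag out'' one factor, say $f_{j_0}(a_{j_0}n+h_{j_0})$ for an appropriately chosen index $j_0$, by expressing it as a weighted average over its medium-sized prime divisors in $(x^\varepsilon, x^{1/(2k)}]$, and then swap the order of summation to reduce to shorter, more tractable sums.

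First I would apply the Turán--Kubilius inequality to the additive function $\omega_{(x^\varepsilon, x^{1/(2k)}]}(a_{j_0}n+h_{j_0})$. This shows that, outside an exceptional set of $n\leq x$ of density $\ll \exp(-c/\varepsilon)$ (which will produce the third error $\exp(-1/(2\varepsilon))$), the count $W(n):=|\{p \in (x^\varepsilon, x^{1/(2k)}] : p \| a_{j_0}n+h_{j_0}\}|$ is close to its typical value $(1/(2k))\log(1/\varepsilon)$. Contributions from primes with $p^2 \mid a_{j_0}n+h_{j_0}$ are handled by the fundamental lemma of the sieve; the hypotheses $a_j,h_j \leq (\log x)^{1/2}$ together with the vanishing of $f_j$ on primes dividing $\prod_{i<j}(a_ih_j-a_jh_i)$ are tailored to keep this sieve estimate clean.

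Next, on the good set I would use the Ramaré-type identity
\[
f_{j_0}(a_{j_0}n+h_{j_0})\cdot W(n) = \sum_{\substack{x^\varepsilon < p \leq x^{1/(2k)} \\ p \| a_{j_0}n+h_{j_0}}} f_{j_0}(p)\,f_{j_0}\!\bigl((a_{j_0}n+h_{j_0})/p\bigr),
\]
substitute it into the correlation sum, approximate $1/W(n)$ by a constant, and swap the order of summation. Under the change of variables $a_{j_0}n+h_{j_0}=pm$, each other form $a_in+h_i$ ($i \neq j_0$) becomes a new linear form $a_i'm+h_i'(p)$ in $m$; the hypothesis $a_{j_0} \mid A^\infty$ controls the new coefficients, and the non-degeneracy $a_ih_j \neq a_jh_i$ keeps the forms in general position uniformly in $p$. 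The sum thereby reduces to
\[
\frac{2k}{\log(1/\varepsilon)}\sum_{x^\varepsilon < p \leq x^{1/(2k)}} \frac{f_{j_0}(p)}{p}\sum_{m \leq x/(a_{j_0}p)} f_{j_0}(m)\prod_{i\neq j_0}f_i(a_i'm+h_i'(p)) + \textnormal{error}.
\]
Splitting $f_{j_0}(p) = 1+(f_{j_0}(p)-1)$, the second piece is controlled by Cauchy--Schwarz in $p$ together with the elementary inequality $\sum_{x^\varepsilon < p \leq x^{1/(2k)}}|f_{j_0}(p)-1|^2/p \ll \mathbb{D}(f_{j_0},1;x^\varepsilon,x)^2$ and the trivial bound on the inner sum, producing the first term on the right-hand side (choosing $j_0$ so that $\mathbb{D}(f_{j_0},1;x^\varepsilon,x)=\max_j \mathbb{D}(f_j,1;x^\varepsilon,x)$). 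For the first piece (where $f_{j_0}(p)$ is replaced by $1$), after pulling out the harmonic $p$-sum the remaining inner average is handled by Halász--Montgomery--Tenenbaum applied to $f_{j_0}(m)$ at scale $x/p$, which yields the $\exp(-\mathbb{D}(f_{j_0},1;x^\varepsilon)^2)$ decay.

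The main obstacle I expect is carrying out the change of variables $n \mapsto m$ uniformly in $p$, so that the resulting inner correlation genuinely remains an average of $k$ bounded multiplicative functions over linear forms with controlled, general-position parameters and so that the smoothing error from replacing $1/W(n)$ by a constant is absorbable. The technical hypotheses of the proposition ($a_j,h_j \leq (\log x)^{1/2}$, $a_j \mid A^\infty$, and vanishing of $f_j$ on the small bad primes) are precisely what allow this bookkeeping to go through without incurring additional losses from sieve terms at small primes or from degenerate congruences among the new linear forms.
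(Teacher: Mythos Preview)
Your approach is quite different from the paper's, but it has two gaps that I do not see how to close.

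\textbf{The Tur\'an--Kubilius step.} The exceptional set where $W(n)$ deviates from its mean $\mu \asymp \log(1/\varepsilon)$ does not have density $\ll\exp(-c/\varepsilon)$. Tur\'an--Kubilius via Chebyshev gives density $\ll 1/\lambda^2$ for $|W(n)-\mu|>\lambda\mu^{1/2}$; large deviations give at best $\exp(-c\mu)=\varepsilon^{c}$; the $L^2$ version (replacing $1/W(n)$ by $1/\mu$ and using Cauchy--Schwarz) gives an error $\asymp \mu^{-1/2}=(\log(1/\varepsilon))^{-1/2}$. None of these is absorbed by the target bound in the regime where $\mathbb{D}(f_{j_0},1;x^\varepsilon,x)=0$ and $\mathbb{D}(f_{j_0},1;x^\varepsilon)^2\gg \log(1/\varepsilon)$, where the right-hand side can be as small as a high power of $\varepsilon$ (or even $(\log x)^{-1/2}$ when $\varepsilon\approx 1/\log\log x$).

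\textbf{The Hal\'asz step.} After the change of variables $a_{j_0}n+h_{j_0}=pm$, the inner sum is still a $k$-point correlation
\[
\sum_{m\leq x/(a_{j_0}p)} f_{j_0}(m)\prod_{i\neq j_0} f_i(a_i'm+h_i'(p)).
\]
Hal\'asz--Montgomery--Tenenbaum bounds $\sum_m f(m)$, not $\sum_m f(m)g(m)$ for an arbitrary bounded weight $g$; the cancellation in $f_{j_0}$ does not survive multiplication by $\prod_{i\neq j_0}f_i(\cdots)$. The MRT/Tao machinery you invoke pairs the Ramar\'e identity with \emph{bilinear} (circle-method or large-sieve) estimates after a genuine average in $p$, not with Hal\'asz, and it yields $o(1)$ under uniform non-pretentiousness hypotheses rather than the specific $\exp(-\mathbb{D}(f_{j_0},1;x^\varepsilon)^2)$ decay needed here.

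The paper's route is entirely different and avoids both obstacles. It replaces each $f_j$ by $\widetilde f_j$, equal to $f_j$ on primes $p\leq y:=x^\varepsilon$ and to $1$ on primes $p>y$; the approximation error is exactly $(\log(1/\varepsilon))^{1/2}\max_j\mathbb{D}(f_j,1;x^\varepsilon,x)$ via Cauchy--Schwarz and Mertens. Since $\widetilde f_j(n)$ depends only on the $y$-smooth part of $n$, the main term expands as a sum over $d_1,\ldots,d_k\mid P(y)$ of sifted counts $\Sigma(x;d_1,\ldots,d_k)$, which are evaluated by the fundamental lemma of the sieve (this is where the hypotheses $a_j,h_j\leq(\log x)^{1/2}$, $a_j\mid A^\infty$, and $f_j(p^\ell)=0$ on bad primes are used to control the local densities). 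The resulting Euler product is bounded by $\exp(-\max_j\mathbb{D}(f_j,1;x^\varepsilon)^2)$ via a short direct computation (Lemma~\ref{le_eulerproduct}); the $\exp(-1/(2\varepsilon))$ term comes from the sieve error and from truncating the $d_j$-sums using smooth-number bounds.
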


\subsection{Proof of Proposition~\ref{prop_pretentious_correlation}}

We shall first show how Proposition~\ref{prop_pretentious_correlation2} implies Proposition~\ref{prop_pretentious_correlation}, and then in the next subsection we prove Proposition~\ref{prop_pretentious_correlation2}.  

\begin{proof}[Proof of Proposition~\ref{prop_pretentious_correlation} assuming Proposition~\ref{prop_pretentious_correlation2}] \ \\
\textbf{Step 1: reduction to the case $t_j=0$ for all $j\leq k$.} Suppose that Proposition~\ref{prop_pretentious_correlation} has been proved under the additional assumption that $t_j=0$ for all $j$, and with $\exp(-1/(4k^2\varepsilon))$ in place of $\exp(-1/(8k^2\varepsilon))$ in~\eqref{eq:pretentious8}, and consider the general case. We may assume that $\varepsilon>0$ is small enough in terms of all fixed quantities, since otherwise the claim is trivial. Note also that since $\varepsilon\geq 1/\log \log x$ we may absorb any error terms of size $O(1/(\log x)^{1/(4k^2)})$ to an error term of $O(\exp(-1/(4k^2\varepsilon)))$. 

Let $t=t_1+\cdots +t_k$, and let $f_j'(n)=f_j(n)n^{-it_j}$. By the mean value theorem, we have $(y+h)^{iu}=y^{iu}+O(h|u|/y)$ for any $h,u,y\geq 1$, so
\begin{align}\label{eq:pretentious1}
\frac{1}{x}\sum_{n\leq x}\prod_{j=1}^k f_j(a_jn+h_1)= \prod_{j=1}^k a_j^{it_j}\cdot \frac{1}{x}\sum_{n\leq x}n^{it}\prod_{j=1}^kf_j'(a_jn+h_j)+O\left(\frac{\log x}{x}\right). \end{align}
The error term above is certainly admissible. By partial summation, the  main term on the right-hand side of~\eqref{eq:pretentious1} is
\begin{align}\label{eq:pretentious2}
\ll (1+|t|)\max_{\sqrt{x} \leq y\leq x}\left|\frac{1}{y}\sum_{n\leq y}\prod_{j=1}^kf_j'(a_jn+h_j)\right| + |t|\frac{\log x}{\sqrt{x}}.    
\end{align}
Let $y\in [x^{1/2},x]$ be a point where the maximum is attained. 
Let $\varepsilon'\in [\varepsilon,2\varepsilon]$ be such that $y^{\varepsilon'}=x^{\varepsilon}$. Applying the case of the theorem where $t_j=0$ (with the slight improvement mentioned above), we obtain
\begin{align*}
&\left|\frac{1}{y}\sum_{n\leq y}\prod_{j=1}^kf_j'(a_jn+h_j)\right|\\
&\ll \left(\log \frac{1}{\varepsilon'}\right)^{1/2}\max_{j\leq k}\mathbb{D}(f_j',\chi_j(n);x^{\varepsilon},y)^2+\exp\left(-\max_{j\leq k}\mathbb{D}(f_j',\chi_j(n);x^{\varepsilon})^2\right)+\exp\left(-\frac{1}{4k^2\varepsilon'}\right).
\end{align*}
This gives the desired bound for the left-hand side of~\eqref{eq:pretentious1}, since $\varepsilon'\in [\varepsilon,2\varepsilon]$,  $y\leq x$, and $\mathbb{D}(f_j',\chi_j(n);x_1,x_2)=\mathbb{D}(f_j,\chi_j(n)n^{it_j};x_1,x_2)$ for any $x_1,x_2\geq 1$. Hence for the rest of the proof we may assume that $t_j=0$ for all $j\leq k$, and we seek to prove~\eqref{eq:pretentious8} with $\exp(-1/(4k^2\varepsilon))$ in place of $\exp(-1/(8k^2\varepsilon))$.

\textbf{Step 2: Reduction to Proposition~\ref{prop_pretentious_correlation2}.} In what follows, let the modulus of $\chi_j$ be $q_j$, and let $Q=q_1\cdots q_k$. Let 
$$\Delta=a_1\cdots a_k\prod_{1\leq i<j\leq k}(a_ih_j-a_jh_i)$$
and $A=Q\Delta$. Define new multiplicative functions $\widetilde{f_j}$ on the prime powers by
\begin{align*}
\widetilde{f_j}(p^{\ell})=\begin{cases}
f_j(p^{\ell}),\quad &p\nmid A,\\
0,\quad &p\mid A.
\end{cases}    
\end{align*}
Then writing
$$f_j(n)=\sum_{\substack{e\mid n\\ e\mid A^{\infty}}}f_j(e)\widetilde{f_j}(n/e)$$
we see that
\begin{align}\label{eq1}
&\frac{1}{x}\sum_{n\leq x}\prod_{j=1}^kf_j(a_jn+h_j)=\sum_{e_1,\ldots, e_k\mid A^{\infty}}\frac{f_1(e_1)\cdots f_k(e_k)}{[e_1,\ldots,e_k]}S(x;e_1,\ldots, e_k), 
\end{align}
where
\begin{align*}
S(x;e_1,\ldots, e_k)&:=\frac{[e_1,\ldots,e_k]}{x}\sum_{\substack{n\leq x\\e_j\mid a_jn+h_j\, \forall\, j\leq k}}\widetilde{f_1}\left(\frac{a_1n+h_1}{e_1}\right)\cdots \widetilde{f_k}\left(\frac{a_kn+h_k}{e_k}\right).
\end{align*}

Let us first estimate the contribution to~\eqref{eq1} from the case where one of the $e_j$ is large.  Note that we have $|S(x;e_1,\ldots, e_k)|\ll [e_1,\ldots, e_k]/e_j$ for all $j\leq k$, so $|S(x;e_1,\ldots, e_k)|\ll [e_1,\ldots, e_k]/(e_1\cdots e_k)^{1/k}$. Hence, the contribution to~\eqref{eq1} of those tuples with one of the $e_j$ (say $e_1$) being $\geq z$, for any $z\geq 1$, is crudely
\begin{align}\label{eq:pretentious4}\begin{split}
\ll \sum_{\substack{e_1,\ldots, e_k\mid A^{\infty}\\e_1\geq z}}\frac{1}{(e_1\cdots e_k)^{1/k}}&\leq \frac{1}{z^{9/(10k)}}\sum_{e_1,\ldots, e_k\mid A^{\infty}}\frac{1}{(e_1\cdots e_k)^{1/(10k)}}\\
&=\frac{1}{z^{9/(10k)}}\left(\prod_{p\mid A}\left(1+\frac{1}{p^{1/(10k)}}+\frac{1}{p^{2/(10k)}}+\frac{1}{p^{3/(10k)}}+\cdots \right)\right)^k\\
&\ll \frac{1}{z^{9/(10k)}}. 
\end{split}
\end{align}
Hence, we may truncate the sums over $e_j$ in~\eqref{eq1} to $e_j\leq (\log x)^{1/(3k)}$ with admissible error. 

Consider now the contribution to~\eqref{eq1} from the terms with $e_1,\ldots, e_k\leq (\log x)^{1/(3k)}$. If the system of $k$ congruences $a_jn+h_j\equiv 0\Mod{e_j}$, $1\leq j\leq k$ is solvable, it has a unique solution of the form $n\equiv b\pmod M$, where $1\leq b\leq M$ and  $$M=M(e_1,\ldots, e_k):=[e_1,\ldots, e_k]\leq (\log x)^{1/3}.$$
Hence we have
\begin{align}\label{eq:pretentious5}
S(x;e_1,\ldots, e_k)=\frac{1}{x/M}\sum_{m\leq x/M}\prod_{j=1}^k\widetilde{f_j}\left(\frac{a_j(Mm+b)+h_j}{e_j}\right)+O\left(\frac{1}{x/M}\right).    
\end{align}

Since $\widetilde{f}_j(p^{\ell})=0$ for $p\mid A$ and since $Q\mid A$, we can now write $\widetilde{f}_j=\chi_jg_j$ for some multiplicative functions $g_j:\mathbb{N}\to \mathbb{D}$ also satisfying $g_j(p^{\ell}) = 0$ whenever $p\mid A$. Splitting the sum in~\eqref{eq:pretentious5} into residue classes, we now obtain
\begin{align}\label{eq:pretentious7}\begin{split}
&S(x;e_1,\ldots, e_k)\\
&=\sum_{1\leq u\leq Q}\prod_{j=1}^k\chi_j\left(\frac{a_j(Mu+b)+h_j}{e_j}\right) \frac{1}{x/M}\sum_{\substack{m\leq x/M\\m\equiv u\Mod{Q}}}\prod_{j=1}^kg_j\left(\frac{a_j(Mm+b)+h_j}{e_j}\right)+O\left(\frac{1}{x/M}\right).
\end{split}
\end{align}

 We can write the inner expression in~\eqref{eq:pretentious7} as
\begin{align}\label{eq:pretentious9}
\frac{1}{x/M}\sum_{n\leq x/(MQ)}\prod_{j=1}^kg_j\left(\frac{a_j(M(Qn+u)+b)+h_j}{e_j}\right)+O\left(\frac{1}{x/M}\right).
\end{align}
The error term here is $O(x^{-1/2})$, since $M\leq (\log x)^{1/3}\ll x^{1/2}$, and this gives a negligible total contribution to~\eqref{eq1}
by using~\eqref{eq:pretentious4} with $z=1$.
Also by~\eqref{eq:pretentious4} with $z=1$, it now suffices to show that the main term in~\eqref{eq:pretentious9} is bounded by the right-hand side of~\eqref{eq:pretentious8} (with $\exp(-1/(4k^2\varepsilon))$ in place of $\exp(-1/(8k^2\varepsilon))$).

Write 
$$\frac{a_j(M(Qn+u)+b)+h_j}{e_j}=K_jn+B_j.$$
We may assume that
\begin{align}\label{eq:pretentious15}
 (K_j,B_j,A)=1\,\, \forall j\leq k,   
\end{align}
since otherwise  noting that $K_j\mid A^{\infty}$ the sum in~\eqref{eq:pretentious9} is $0$ by the fact that $g_j(p^{\ell})=0$ for $p\mid A,\ell\geq 1$. We now claim that
\begin{align}\label{eq:pretentious14}\begin{split}(K_j,B_j)=1\,\,\forall j\leq k\\
p\mid  \prod_{1\leq i_1<i_2\leq k}(K_{i_1}B_{i_2}-K_{i_2}B_{i_1})\implies p\mid A.\end{split}\end{align}
To verify the first claim in~\eqref{eq:pretentious14}, note that if $p\mid K_j=\frac{a_jMQ}{e_j}$, then $p\mid a_jMQ\mid A^{\infty}$, so $p\mid A$. Hence, $(K_j,B_j)=(K_j,B_j,A^{\ell})$ for some $\ell$, but this implies $(K_j,B_j)=1$ by~\eqref{eq:pretentious15}. To verify the second claim in~\eqref{eq:pretentious14}, note that 
\begin{align*}
K_{i_1}B_{i_2}-K_{i_2}B_{i_1}=MQ\frac{a_{i_1}h_{i_2}-a_{i_2}h_{i_1}}{e_{i_1}e_{i_2}},
\end{align*}
so if $p\mid K_{i_1}B_{i_2}-K_{i_2}B_{i_1}$, we must have $p\mid MQ(a_{i_1}h_{i_2}-a_{i_2}h_{i_1}) \mid A^\infty$, and so $p|A$. 

 Setting $X=x/(MQ)\in [x^{1/2},x]$, the main term in~\eqref{eq:pretentious9} can now be rewritten as
\begin{align*}
\frac{1}{QX}\sum_{n\leq X}\prod_{j=1}^k g_j(K_jn+B_j).
\end{align*}
Note that $K_j\mid A^{\infty}$ and  $K_j,B_j\leq \frac{1}{10}(\log X)^{1/2}$, say, for all large $X$ (since $M\ll (\log X)^{1/3}$). Note also that by~\eqref{eq:pretentious14} we have $(K_j,B_j)=1$ for all $j\leq k$ and $g_j(p^{\ell})=0$ for all $j\leq k$ whenever $p\mid \prod_{1\leq i_1<i_2\leq k}(K_{i_1}B_{i_2}-K_{i_2}B_{i_1})$, $\ell\geq 1$. Letting $\varepsilon'\in [\varepsilon,2\varepsilon]$ satisfy $X^{\varepsilon'}=x^{\varepsilon}$, we can apply Proposition~\ref{prop_pretentious_correlation2} to bound the main term in~\eqref{eq:pretentious9} by 
\begin{align*}
\ll \left(\log \frac{1}{\varepsilon'}\right)^{1/2}\max_{j\leq k}\mathbb{D}(f_j,\chi_j;x^{\varepsilon},X)^2+\exp\left(-\max_{j\leq k}\mathbb{D}(f_j,\chi_j;x^{\varepsilon})^2\right)+\exp\left(-\frac{1}{2\varepsilon'}\right).    
\end{align*}
Since $X\leq x$, and $\varepsilon'\in [\varepsilon,2\varepsilon]$, this completes the proof.
\end{proof}

\subsection{Proof of Proposition~\ref{prop_pretentious_correlation2}}

We begin with the following lemma.

\begin{lemma}\label{le_eulerproduct} Let integers $k\geq 1$ and $A\geq 1$ be fixed. Let $y\geq 3$, $P(y)=\prod_{p\leq y}p$, and let $f_1,\ldots, f_k:\mathbb{N}\to \mathbb{D}$ be multiplicative functions. Suppose that $(a_p)_p$ is a complex-valued sequence with $a_p=O(1)$. Then 
\begin{align}\label{eq:pretentious22}
\prod_{k<p\leq y}\left(1-\frac{k}{p}\right)\Bigg|\sum_{\substack{d_1,\ldots, d_k\mid P(y)\\(d_i,d_j)=1\,\forall\, i\neq j\\(d_j,A)=1\, \forall \, j\leq k}}\frac{f_1(d_1)\cdots f_k(d_k)}{d_1\cdots d_k}\prod_{j=1}^k\prod_{p\mid d_j}\left(1+\frac{a_p}{p}\right)\Bigg|\ll \exp\left(-\max_{j\leq k}\mathbb{D}(f_j,1;y)^2\right).
\end{align}\end{lemma}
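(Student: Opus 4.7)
The plan is to exploit the coprimality conditions $(d_i, d_j) = 1$ for $i \neq j$ together with the squarefreeness forced by $d_j \mid P(y)$, which together mean that each prime $p \leq y$ divides at most one of the $d_j$. Combined with multiplicativity of the $f_j$, the sum inside the absolute value in~\eqref{eq:pretentious22} therefore factorizes as an Euler product over primes $p \leq y$ with $p \nmid A$: at each such $p$, either $p$ divides none of the $d_j$ (contributing $1$) or $p$ divides exactly one $d_j$ (contributing $f_j(p)(1 + a_p/p)/p$, using that $d_j$ is squarefree so $f_j(d_j)/d_j = \prod_{p \mid d_j} f_j(p)/p$). Hence the sum equals
\[
S \;=\; \prod_{\substack{p \leq y \\ p \nmid A}}\left(1 + \sum_{j=1}^k \frac{f_j(p)(1 + a_p/p)}{p}\right).
\]

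Next I would apply the elementary inequality $|1 + w| \leq \exp(\textnormal{Re}(w) + |w|^2/2)$, which follows from $|1+w|^2 = 1 + 2\textnormal{Re}(w) + |w|^2 \leq \exp(2\textnormal{Re}(w) + |w|^2)$, to each Euler factor with
\[
w_p := \frac{1 + a_p/p}{p}\sum_{j=1}^k f_j(p).
\]
Since $|w_p| \leq k(1 + O(1/p))/p$ and $k$ is fixed, the quadratic contribution satisfies $\sum_p |w_p|^2 \ll k^2 = O(1)$. Moreover, $(1 + a_p/p)/p = 1/p + O(1/p^2)$, and the $O(1/p^2)$ piece sums to $O(1)$ as well. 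We therefore obtain
\[
|S| \;\leq\; \exp\left(\sum_{\substack{p \leq y \\ p \nmid A}} \frac{1}{p}\sum_{j=1}^k \textnormal{Re}\, f_j(p) \;+\; O(1)\right).
\]

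Finally, I would pair this with the Mertens-type bound
\[
\prod_{k < p \leq y}\left(1 - \frac{k}{p}\right) \;\leq\; \exp\!\left(-k \sum_{k < p \leq y} \frac{1}{p} + O(1)\right),
\]
noting that the finitely many primes $p \leq k$ or $p \mid A$ (which appear on one side of the product but not the other) alter everything by only a bounded constant depending on the fixed parameters $k$ and $A$. Combining the two bounds, the left-hand side of~\eqref{eq:pretentious22} is at most
\[
\exp\left(\sum_{\substack{p \leq y \\ p \nmid A}} \frac{1}{p}\left(\sum_{j=1}^k \textnormal{Re}\, f_j(p) - k\right) + O(1)\right) \;=\; \exp\left(-\sum_{j=1}^k \mathbb{D}(f_j, 1; y)^2 + O(1)\right),
\]
which is $\ll \exp(-\max_{j \leq k} \mathbb{D}(f_j, 1; y)^2)$, as claimed. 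The computation is essentially a routine Euler-product manipulation; there is no serious obstacle, only bookkeeping of the various $O(1)$ losses (from the small primes $p \leq k$, primes dividing the fixed integer $A$, and the quadratic corrections $|w_p|^2$).
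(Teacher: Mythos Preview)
Your proof is correct and takes essentially the same approach as the paper's: both factor the sum as an Euler product over primes $p\leq y$ with $p\nmid A$, absorb the $(1+a_p/p)$ factor as a $1+O(1/p^2)$ correction, combine with $\prod_{k<p\leq y}(1-k/p)$, and then use $|1+w|\leq \exp(\textnormal{Re}(w)+O(|w|^2))$ to arrive at $\exp\bigl(-\sum_j\mathbb{D}(f_j,1;y)^2+O(1)\bigr)\leq \exp\bigl(-\max_j\mathbb{D}(f_j,1;y)^2+O(1)\bigr)$. The only cosmetic difference is that the paper merges the $(1-k/p)$ factor into the Euler product first (obtaining factors $1+\sum_j(f_j(p)-1)/p$) before exponentiating, whereas you exponentiate first and then add the Mertens contribution; the bookkeeping is otherwise identical.
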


\begin{proof} By the multiplicativity of the $f_j$, the left-hand side of~\eqref{eq:pretentious22} is 
\begin{align*}
&=\prod_{k<p\leq y}\left(1-\frac{k}{p}\right)\Bigg|\prod_{\substack{p_1,\ldots, p_k\leq y\\p_i\neq p_j\,\forall\, i\neq j\\(p_j,A)=1\, \forall\, j\leq k}}\left(1+\sum_{j=1}^k \frac{f_j(p)}{p}+O\left(\frac{1}{p^2}\right)\right)\Bigg|\\
&= \prod_{p\leq y}\left|1+\sum_{i=1}^k\left(\frac{f_j(p)-1}{p}+O\left(\frac{1}{p^2}\right)\right)\right|\\
&=  \prod_{p\leq y}\left|\exp\left(\sum_{j=1}^k\frac{f_j(p)-1}{p}+O\left(\frac{1}{p^2}\right)\right)\right|\\
&=\exp\left(\textnormal{Re}\left(\sum_{p\leq y}\left(\sum_{j=1}^k\frac{f_j(p)-1}{p}+O\left(\frac{1}{p^2}\right)\right)\right)\right),
\end{align*}
and exchanging the order of summation and estimating the sum over $j$ by the maximum of the terms, this becomes $\ll \exp(-\max_{j\leq k}\mathbb{D}(f_j,1;y)^2)$.   
\end{proof}

\begin{proof}[Proof of Proposition~\ref{prop_pretentious_correlation2}] Set
\begin{align*}
\eta:=\left(\log \frac{1}{\varepsilon}\right)^{1/2}\max_{j\leq k}\mathbb{D}(f_j,\chi_j;x^{\varepsilon},x)+\exp\left(-\max_{j\leq k}\mathbb{D}(f_j,\chi_j;x^{\varepsilon})^2\right)+\exp\left(-\frac{1}{2\varepsilon}\right).    
\end{align*}
We may assume that $\eta$ is small enough in terms of all fixed quantities, as otherwise the claim is trivial. 

Let $y=x^{\varepsilon}$. Define multiplicative functions $\widetilde{f}_j$ by
\begin{align*}
\widetilde{f}_j(p^{\ell})=\begin{cases}1,\quad \quad \quad p> y\\f_j(p^{\ell}),\quad p\leq y.\end{cases}    
\end{align*}
for $\ell\in \mathbb{N}$. Also 
let $P(y)=\prod_{p\leq y}p$. 

For complex numbers $z_1,\ldots, z_m,w_1,\ldots, w_m$ with $|z_i|,|w_i|\leq 1$, we have the inequality
\begin{align}\label{eq:zw}
|z_1\cdots z_m-w_1\cdots w_m|\leq \sum_{j=1}^m |z_j-w_j|,
\end{align}
which is easily proved by induction on $m$. Hence for any $j\leq k$ we have \begin{align}\label{eq:fjaj}
|f_j(a_jn+h_j) - \tilde{f}_j(a_jn+h_j)| \leq \sum_{\substack{p^{\ell}\mid \mid a_jn+h_j \\ p > y}} |1-f_j(p^{\ell})|
\end{align}
Applying~\eqref{eq:zw} and~\eqref{eq:fjaj}, we find
\begin{align}\label{eq:approximation}\begin{split}
&\sum_{n\leq x}f_1(a_1n+h_1)\cdots f_k(a_kn+h_k)=\sum_{n\leq x}\widetilde{f}_1(a_1n+h_1)\cdots \widetilde{f}_k(a_kn+h_k)\\
&+O\left(\sum_{n\leq x}\sum_{\substack{p^{\ell}\mid \mid a_jn+h_j \\ p > y}} |1-f_j(p^{\ell})|\right) 
\end{split}
\end{align}
for some $j\leq k$.

Consider the error term in~\eqref{eq:approximation}. Since $a_jn+h_j \leq 2x(\log x)^{1/2}$ and the number of $n\leq X$ that are divisible by $p^2$ for some $p> y$ is by the union bound $\ll \sum_{p> y}\frac{X}{p^2}\ll \frac{X}{y}$, and furthermore as any $n\leq z$ has $\ll \log z$ prime factors, the error term in~\eqref{eq:approximation} is 
\begin{align}\label{eq:pretentious10}
\sum_{n\leq x}\sum_{\substack{p \mid a_jn+h_j \\ p > y}} |1-f_j(p)|+O\left(\frac{x(\log x)^{3/2}}{y}\right).
\end{align}
Since $y=x^{\varepsilon}\geq x^{1/\log \log x}$, the error term here is admissible. 
Exchanging the order of summation, evaluating the $n$ sum, and using the prime number theorem, we see that the main term in~\eqref{eq:pretentious10} is
\begin{align}\label{eq:pretentious13}
\leq \sum_{y< p\leq a_jx+h_j}\left(x\frac{|1-f_j(p)|}{p}+1\right)\ll x\sum_{y< p\leq a_jx+h_j}\frac{|1-f_j(p)|}{p}+\frac{a_jx+h_j}{\log(a_jx+h_j)}.
\end{align}
By the assumption $a_j,h_j\leq (\log x)^{1/2}$,  the second term on the right of~\eqref{eq:pretentious13} is $\ll x/(\log x)^{1/2}$, which is dominated by $O(\exp(-1/(2\varepsilon)))$ for $\varepsilon\geq 1/(\log \log x)$. Moreover, the first term on the right of~\eqref{eq:pretentious13} can be truncated to $p\leq x$ at the cost of an acceptable error of $\ll x(\log \log x)/(\log x)$ by Mertens' theorem.

Now, by the Cauchy--Schwarz inequality and Mertens' theorem,~\eqref{eq:pretentious13} is
\begin{align*}
&\ll x\left(\sum_{y< p\leq x}\frac{|1-f_j(p)|^2}{p}\right)^{1/2}\left(\sum_{y< p\leq x}\frac{1}{p}\right)^{1/2}+\frac{x}{(\log x)^{1/2}}\\
&\ll x \left(\sum_{y< p\leq x}\frac{2-2\textnormal{Re}(f_j(p))}{p}\right)^{1/2}\left(\log\frac{1}{\varepsilon}\right)^{1/2}+\frac{x}{(\log x)^{1/2}}\\
&\ll x \left(\log\frac{1}{\varepsilon}\right)^{1/2}\max_{j\leq k}\mathbb{D}(f_j,1;x^{\varepsilon},x)+\frac{x}{(\log x)^{1/2}}\ll \eta x,
\end{align*}
recalling that $\varepsilon\geq 1/(\log \log x)$.

Now we are left with estimating the main term on the right of~\eqref{eq:approximation}. This can be expanded out as
\begin{align}\label{eq:S}
\sum_{d_1,\ldots, d_k\mid P(y)}f_1(d_1)\cdots f_k(d_k)\sum_{\substack{n\leq x\\d_j\mid a_jn+h_j\,\forall j\leq k\\((a_jn+h_j)/d_j,P(y))=1}}1=:\sum_{d_1,\ldots, d_k\mid P(y)}f_1(d_1)\cdots f_k(d_k)\Sigma(x;d_1,\ldots, d_k).    \end{align}
Since $f_j(p^{\ell})=0$ whenever $p\mid \prod_{1\leq i_1<i_2\leq k}(a_{i_1}h_{i_2}-a_{i_2}h_{i_1})$, we have $f(d_1)\cdots f(d_k)\Sigma(x;d_1,\ldots, d_k)=0$ unless $(d_i,d_j)=1$ for all $i\neq j$. 

We claim that we can truncate the sum over $d_j$ in~\eqref{eq:S} to $d_j\leq x^{1/(4k)}$ for all $j\leq k$. To see this, note that the contribution of the terms with e.g. $d_1>x^{1/(4k)}$ to $\Sigma(x;d_1,\ldots, d_k)$ is 
\begin{align}\label{eq:pretentious17}
 x\sum_{\substack{d_1,\ldots, d_k\mid P(y)\\d_1>x^{1/(4k)}}}\sum_{n\leq x}\prod_{j=1}^k 1_{d_j\mid a_jn+h_j}1_{(\frac{a_jn+h_j}{d_j},P(y))=1} &\ll  x\sum_{\substack{d_1\mid P(y)\\d_1>x^{1/(4k)}}}\sum_{n\leq x}1_{d_1\mid a_1n+h_1}1_{(\frac{a_1n+h_1}{d_1},P(y))=1}\nonumber\\
 &\ll x\sum_{\substack{e_1\leq (a_1x+h_1)/x^{1/(4k)}\\(e_1,P(y))=1}}\sum_{\substack{d_1\leq (a_1x+h_1)/e_1\\d_1e_1\equiv h_1\pmod{a_1}\\d_1\mid P(y)}}1, \end{align}
using the fact that 
$$\sum_{d\mid P(y)}1_{d\mid m}1_{(m/d,P(y))=1}=1$$ 
for every $m\geq 1$.
By an estimate for smooth numbers in arithmetic progressions~\cite{friedlander-smooth}, for any $z\in [y^{10},y^{(\log y)^{1/10}}]$ and any coprime $a,q$ with $1\leq a\leq q\leq y$ we have 
\begin{align*}
 \sum_{\substack{d\leq z\\d\equiv a\pmod q\\d\mid P(y)}}1\ll \rho\left(\frac{\log z}{\log y}-\frac{\log q}{\log y}-4\right)\frac{z}{q},   
\end{align*}
where $\rho(\cdot)$ is the Dickman function. By~\cite[(1.7)]{hildebrand-tenenbaum},  we have $\rho(u)\ll u^{-u/2}$ for $u\geq 1$. Note that, by Selberg's sieve, for any $Y\geq y$ the number of $e\in [1,Y]$ with $(e,P(y))=1$ is $\ll Y/(\log y)$, so~\eqref{eq:pretentious17} becomes
\begin{align*}
 &\ll x\left(\frac{1}{4k\varepsilon}-5\right)^{-1/(8k\varepsilon)+5/2}\sum_{\substack{e_1\leq (a_1x+h_1)/x^{1/(4k)}\\(e_1,P(y))=1}}\frac{x}{e_1} \ll x\left(\frac{1}{4k\varepsilon}-5\right)^{-1/(8k\varepsilon)+5/2}\frac{\log x}{\log y}\\
 &\ll x\exp\left(-\frac{1}{2\varepsilon}\right),
\end{align*}
say. 

We conclude that~\eqref{eq:S} is up to acceptable error 
\begin{align}\label{eq:pretentious12b}
 \sum_{\substack{d_1,\ldots, d_k\mid P(y)\\d_1,\ldots, d_k\leq x^{1/(4k)}\\(d_i,d_j)=1\,\forall i\neq j}} f_1(d_1)\cdots f_k(d_k)\Sigma(x;d_1,\ldots, d_k).   
\end{align}
Now, by the Chinese remainder theorem we can write
\begin{align*}
\Sigma(x;d_1,\ldots, d_k)=\sum_{\substack{n\leq x/(d_1\cdots d_k)\\(a_j\frac{d_1\cdots d_k}{d_j}n+\frac{a_jb+h_j}{d_j},P(y))=1\,\forall j\leq k}}1+O(x^{1/2}),    
\end{align*}
where $b=b(d_1,\ldots, d_k)\in [0,(d_1\cdots d_k))\cap \mathbb{Z}$ is the unique solution to 
\begin{align*}
a_jb+h_j\equiv 0\pmod{d_j}\quad \forall j\leq k.
\end{align*}
Since $d_j\leq x^{1/(4k)}$ for all $j\leq k$ in~\eqref{eq:pretentious12b}, by the fundamental lemma of sieve theory~\cite[Fundamental lemma 6.3]{iwaniec-kowalski} we have
\begin{align}\label{eq:pretentious18}
\Sigma(x;d_1,\ldots, d_k)=\left(1+O\left(\exp\left(-\frac{1}{2\varepsilon}\right)\right)\right)\frac{x}{d_1\cdots d_k}\prod_{p\leq y}\left(1-\rho(p;d_1,\ldots, d_k)\right)+O(x^{1/2}),    
\end{align}
where
\begin{align*}
\rho(p;d_1,\ldots, d_k):=\frac{|\{w\in \mathbb{F}_p:\,\, \prod_{j=1}^k \left(a_j\frac{d_1\cdots d_k}{d_j}w+\frac{a_jb+h_j}{d_j}\right)=0\}|}{p}.    
\end{align*}

Let $p$ be any prime coprime to $A$. We claim that
\begin{align}\label{eq:pretentious16}
 \rho(p;d_1,\ldots, d_k)=\begin{cases} \frac{k}{p},\,\,\,\, p\nmid d_1\cdots d_k\\
 \frac{1}{p},\,\,\,\, p\mid d_1\cdots d_k.
 \end{cases} 
\end{align}
Suppose first that $p\nmid d_1\cdots d_k$.  Then $p\mid a_i\frac{d_1\cdots d_k}{d_i}w+\frac{a_ib+h_i}{d_i}$ for $i\in \{i_1,i_2\}$ with $i_1\neq i_2$ implies $p\mid \frac{d_1\cdots d_k}{d_{i_1}d_{i_2}}(a_{i_1}h_{i_2}-a_{i_2}h_{i_1})$, so $p\mid A$, a contradiction. This leads to $k$ distinct solutions $w \in \mathbb{F}_p$. 

Suppose next that $p\mid d_1\cdots d_k$. Then as $(d_i,d_j)=1$ for $i\neq j$, it must be that $p$ divides exactly one of the $d_i$, say $p\mid d_1$. But then 
\begin{align*}
\prod_{j=1}^k \left(a_j\frac{d_1\cdots d_k}{d_j}w+\frac{a_jb+h_j}{d_j}\right)\equiv  \left(a_1d_2\cdots d_kw+\frac{a_1b+h_1}{d_1}\right) \prod_{j=2}^k \frac{a_jb+h_j}{d_j} \pmod p,    
\end{align*}
and this linear congruence has either $1$ or $p$ solutions. If it has $p$ solutions, then for some $2\leq j\leq k$ we have $p\mid a_jb+h_j$. But since $p\mid d_1$ we also have $p\mid a_1b+h_1$, so $p\mid a_1h_j-a_jh_1\mid A$. We conclude that~\eqref{eq:pretentious16} holds. 

Now by~\eqref{eq:pretentious16} and~\eqref{eq:pretentious18} we can write~\eqref{eq:pretentious12b} as
\begin{align}\label{eq:pretentious19}\begin{split}
& x\sum_{\substack{d_1,\ldots, d_k\mid P(y)\\d_1,\ldots, d_k\leq x^{1/(4k)}\\(d_i,d_j)=1\,\forall\, i\neq j\\(d_j,A)=1\, \forall \, j\leq k}}\frac{f_1(d_1)\cdots f_k(d_k)}{d_1\cdots d_k}\prod_{j=1}^k\prod_{p\mid d_j}\left(1-\frac{1}{p}\right)\cdot \prod_{\substack{p\leq y\\ p\nmid A\\ p\nmid d_1\cdots d_k}}\left(1-\frac{k}{p}\right)\\
&\quad +O\Bigg(x\exp\left(-\frac{1}{2\varepsilon}\right)\sum_{\substack{d_1,\ldots, d_k\mid P(y)\\d_1,\ldots, d_k\leq x^{1/(4k)}\\(d_i,d_j)=1\,\forall\, i\neq j\\(d_j,A)=1\, \forall \, j\leq k}}\frac{1}{d_1\cdots d_k}\prod_{j=1}^k\prod_{p\mid d_j}\left(1-\frac{1}{p}\right)\cdot\prod_{\substack{p\leq y\\p\nmid A\\p\nmid d_1\cdots d_k}}\left(1-\frac{k}{p}\right)+x^{3/4}\Bigg).
\end{split}
\end{align}
The first error term in~\eqref{eq:pretentious19} is by Lemma~\ref{le_eulerproduct} (with $f_j=1$)
\begin{align*}
&\ll x\exp\left(-\frac{1}{2\varepsilon}\right).
\end{align*}

We then estimate the main term in~\eqref{eq:pretentious19}. Completing the sums over $d_j$ and using Mertens's theorem, we see that the main term is
\begin{align}\label{eq:pretentious20}\begin{split}
&x\sum_{\substack{d_1,\ldots, d_k\mid P(y)\\(d_i,d_j)=1\,\forall\, i\neq j\\(d_j,A)=1\, \forall \, j\leq k}}\frac{f_1(d_1)\cdots f_k(d_k)}{d_1\cdots d_k}\prod_{j=1}^k\prod_{p\mid d_j}\left(1-\frac{1}{p}\right)\left(1-\frac{k}{p}\right)^{-1}\prod_{\substack{p\leq y\\p\nmid A}}\left(1-\frac{k}{p}\right)\\
&+O\Bigg(x\sum_{\substack{d_1,\ldots, d_k\mid P(y)\\ d_1>x^{1/(4k)}\\(d_i,d_j)=1\,\forall\, i\neq j\\(d_j,A)=1\, \forall \, j\leq k}}\frac{1}{d_1\cdots d_k}\prod_{j=1}^k\prod_{p\mid d_j}\left(1-\frac{1}{p}\right)\left(1-\frac{k}{p}\right)^{-1}(\log y)^{-k}\Bigg).
\end{split}
\end{align}
We then bound the error term in~\eqref{eq:pretentious20}. By Mertens's theorem and the fact that the density of $y$-smooth numbers in any interval $[D,2D]$ with $D\geq y$ is $\ll \exp(-\frac{1}{2}\frac{\log D}{\log y}\log \frac{\log D}{\log y})+\exp(-(\log D)^{1/20})$ by~\cite[Theorem 1.1 and (1.7)]{hildebrand-tenenbaum}, the error term is
\begin{align*}
 &\ll x(\log y)^{-k}\left(\sum_{\substack{d\mid P(y)\\d> x^{1/(4k)}}}\frac{1}{d}\right)\left(\sum_{d\mid P(y)}\frac{1}{d}\prod_{\substack{p\mid d\\p>k}}\left(1-\frac{k}{p}\right)^{-1}\right)^{k-1}\\
 &\ll x(\log y)^{-k}\left(\sum_{\substack{d\mid P(y)\\d> x^{1/(4k)}}}\frac{1}{d}\right)\left(\prod_{p\leq y}\left(1+\frac{1}{p}+O\left(\frac{1}{p^2}\right)\right)\right)^{k-1}\\
 &\ll x(\log y)^{-1}\sum_{\substack{D\geq x^{1/(4k)}/2\\ D=2^{\ell},\ell\in \mathbb{N}}}\left(\exp\left(-\frac{1}{2}\frac{\log D}{\log y}\log \frac{\log D}{\log y}\right)+\exp(-(\log D)^{1/20})\right)\\
 &\ll x\exp\left(-\frac{1}{2\varepsilon}\right),
\end{align*}
say.

We are left with the main term in~\eqref{eq:pretentious20}, which by Lemma~\ref{le_eulerproduct} is
\begin{align*}
\ll x\exp\left(-\max_{j\leq k}\mathbb{D}(f_j,1;x^{\varepsilon})\right),  
\end{align*}
completing the proof.
\end{proof}

\section{Lemmas on pretentious distance}\label{distance_pret}

 In this section, we present some lemmas on the pretentious distance, with the aim to prove an iterative Lemma~\ref{le_rigidity}. The first lemma is a standard bound on the pretentious distance between twisted Dirichlet characters and the constant function $1$.

\begin{lemma}\label{le_pretentious_distance}
Let $A\geq 1$ be fixed. Let $X\geq 10$ and let $\chi$ be a Dirichlet character of modulus $q\leq (\log X)^{A}$. Then  we have
\begin{align*}
\inf_{\delta(\chi)\leq |u|\leq X^A}\mathbb{D}(\chi(n)n^{iu},1;X)^2\geq \left(\frac{1}{4}-o(1)\right) \log \log X,   
\end{align*}
where $\delta(\chi)=0$ if $\chi$ is non-principal and $\delta(\chi)=1$ if $\chi$ is principal. Additionally for $0\leq |u|\leq 1$ and $\chi\pmod q$ principal we have
\begin{align*}
\mathbb{D}(\chi(n)n^{iu},1;X)^2=
\log(1+|u|\log X)+O(\log \log(3\omega(q))).
\end{align*}   
\end{lemma}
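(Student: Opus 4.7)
The plan is to prove Part~2 first. Expanding $\mathbb{D}(\chi n^{iu},1;X)^2 = \sum_{p\leq X}(1-\Re(\chi(p)p^{iu}))/p$, I would separate the contribution of primes dividing $q$ (which sum to $O(\log\log(3\omega(q)))$ by the standard bound on $\sum_{p\mid q}1/p$) from those coprime to $q$, where $\chi(p)=1$ and the summand reduces to $(1-\cos(u\log p))/p$. For this remaining prime sum, partial summation with the prime number theorem produces the continuous approximation $\int_{\log 2}^{\log X}(1-\cos(uv))/v\,dv$; after the change of variable $w=uv$, this becomes $\int_{|u|\log 2}^{|u|\log X}(1-\cos w)/w\,dw$. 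Evaluating via the classical identity $\int_0^T(1-\cos w)/w\,dw=\log T+\gamma-\textnormal{Ci}(T)$ together with $\textnormal{Ci}(T)=O(\min(1,1/T))$ and $1-\cos w=O(w^2)$ as $w\to 0$ yields $\log(|u|\log X)+O(1)$ when $|u|\log X\geq 1$ and $O((|u|\log X)^2)$ when $|u|\log X\leq 1$; both regimes unify to $\log(1+|u|\log X)+O(1)$, delivering Part~2.

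For Part~1, I would split into the cases of principal and non-principal $\chi$. When $\chi$ is principal, the constraint $|u|\geq\delta(\chi)=1$ permits extending the analysis of Part~2: the cosine integral $\int_{|u|\log 2}^{|u|\log X}\cos w/w\,dw$ is $O(1)$ since both limits are bounded away from $0$, giving $\sum_{p\leq X}(1-\cos(u\log p))/p=\log\log X+O(1)$. Hence $\mathbb{D}(\chi n^{iu},1;X)^2 = \log\log X+O(\log\log(3\omega(q)))=(1-o(1))\log\log X$, using $\omega(q)\ll\log q/\log\log q$ and $q\leq(\log X)^A$ to conclude that $\log\log(3\omega(q))=o(\log\log X)$.

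When $\chi$ is non-principal, I would use the Mertens-type identity $\sum_{p\leq X}\chi(p)p^{iu}/p = \log L(1+1/\log X+iu,\chi)+O(1)$, valid uniformly for the range of interest by contour shifting into the classical Landau zero-free region of $L(s,\chi)$. Combined with Mertens' theorem, this yields $\mathbb{D}(\chi n^{iu},1;X)^2 = \log\log X - \Re\log L(1+1/\log X+iu,\chi)+O(1)$, reducing the task to bounding $\log|L|$. For $|u|$ of bounded size, the convexity estimate $|L(1+it,\chi)|\ll\log(q(|t|+2))$ together with $q\leq(\log X)^A$ gives $\log|L|=O(\log\log\log X)=o(\log\log X)$, hence $\mathbb{D}^2\geq(1-o(1))\log\log X$. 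For $|u|$ growing (up to $X^A$), I would invoke the Vinogradov--Korobov bound $|L(1+it,\chi)|\ll(\log(q(|t|+2)))^{2/3+o(1)}$, giving $\log|L|\leq(2/3+o(1))\log\log X$ and thus $\mathbb{D}^2\geq(1/3-o(1))\log\log X\geq(1/4-o(1))\log\log X$.

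The main obstacle is justifying the Mertens-type identity and applying the $L$-function bounds uniformly over the joint range $q\leq(\log X)^A$ and $|u|\leq X^A$, which requires careful contour analysis together with the strength of Vinogradov--Korobov. The constant $1/4$ is not tight under this approach---one actually obtains $1/3-o(1)$---but it matches the natural bound that would also follow from the halving inequality $\mathbb{D}(\chi n^{iu},1;X)^2\geq\tfrac14\mathbb{D}(\chi^2 n^{2iu},1;X)^2$ combined with Part~2 applied to $\chi^2 n^{2iu}$ in the case when $\chi^2$ is principal.
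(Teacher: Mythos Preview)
Your approach is sound in outline and essentially unpacks what the paper obtains by citation: the paper simply invokes~\cite[Lemma~7.9]{kmt-shortAP} for non-principal $\chi$ and~\cite[Lemma~C.1]{MRT} for principal $\chi$ with $|u|\geq 1$, together with PNT and partial summation for Part~2. Your Part~2 matches this exactly, and your treatment of the non-principal half of Part~1 via the Mertens-type identity and Vinogradov--Korobov bounds on $L(1+it,\chi)$ is the standard direct route and is correct.

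There is, however, a gap in the principal case of Part~1. You attempt to extend the partial-summation argument of Part~2 to the full range $1\leq |u|\leq X^A$, asserting that $\sum_{p\leq X}(1-\cos(u\log p))/p=\log\log X+O(1)$. But partial summation with the prime number theorem produces an error term that scales with $|u|$: integrating by parts against $\theta(t)-t$, the derivative of $\cos(u\log t)/(t\log t)$ carries a factor $|u|/t^2$, and even with the Vinogradov--Korobov error $\theta(t)-t\ll t\exp(-c(\log t)^{3/5})$ the resulting bound is $O(|u|)$, which is useless once $|u|$ is a power of $X$. In fact your $O(1)$ claim is equivalent to $\Re\log\zeta(1+1/\log X+iu)=O(1)$ uniformly for $1\leq |u|\leq X^A$, which is not known and likely false. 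The fix is immediate: apply to the principal case the same Mertens-type identity you already use for non-principal $\chi$, obtaining $\mathbb{D}(n^{iu},1;X)^2=\log\log X-\Re\log\zeta(1+1/\log X+iu)+O(1)$; the Vinogradov--Korobov bound $|\zeta(1+it)|\ll(\log|t|)^{2/3}$ then yields $\geq(1/3-o(1))\log\log X$, which is more than enough.
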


\begin{proof}
 Suppose first that $\chi$ is non-principal. We have
 \begin{align*}
  \inf_{|u|\leq X^A}\mathbb{D}(\chi(n)n^{iu},1;X)^2= \inf_{|u|\leq X^A}\mathbb{D}(\chi(n)n^{iu},1;X^{A})^2-O(1)\geq \left(\frac{1}{4}-o(1)\right)\log \log X   
 \end{align*}
 by~\cite[Lemma 7.9]{kmt-shortAP}. Suppose then that $\chi \pmod q$ is principal. Note that by Mertens's theorem
 \begin{align}\label{eq:omega1}
 \sum_{p\mid q}\frac{1}{p}\leq \sum_{p\leq \omega(q)}\frac{1}{p}\ll \log \log(3\omega(q))    
 \end{align}
  Then for $u\in [-X^{A}, X^{A}]$ we have
 \begin{align*}
 \mathbb{D}(\chi(n)n^{iu},1;X)^2&=\mathbb{D}(n^{iu},1;X)^2 -O\left(\sum_{p\mid q}\frac{1}{p}\right)\\
 &=\mathbb{D}(n^{iu},1;X)^2 -O(\log \log(3\omega(q))).  \end{align*}
 Now for $1\leq |u|\leq X^{A}$ the claim follows from~\cite[Lemma C.1]{MRT}, and if instead $|u|\leq 1$, the claim follows from the prime number theorem and partial summation. 
\end{proof}

For later use, we need a lemma on the pretentious distance of real-valued multiplicative functions.

\begin{lemma}\label{le:real} Let $f:\mathbb{N}\to [-1,1]$ be a multiplicative function such that $\mathbb{D}(f,\chi;\infty)=\infty$ for all real Dirichlet characters $\chi$. Then for any $A\geq 1$ we have
\begin{align*}
\inf_{|t|\leq x^{A}}\,\, \min_{\substack{\chi\pmod q\\q\leq A}}\mathbb{D}(f,\chi(n)n^{it};x)\xrightarrow[x\to \infty]{}\infty.    
\end{align*}
\end{lemma}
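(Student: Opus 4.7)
The plan is to argue by contradiction. Suppose the infimum does not tend to infinity: there exist $C>0$ and sequences $x_n\to\infty$, Dirichlet characters $\chi_n$ of modulus $q_n\le A$, and real numbers $|t_n|\le x_n^A$ with $\mathbb{D}(f,\chi_n(n)n^{it_n};x_n)\le C$. Since only finitely many Dirichlet characters of modulus $\le A$ exist, I would pass to a subsequence and fix $\chi_n=\chi$ of some modulus $q\le A$.

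The key leverage from the real-valuedness of $f$ is the identity $\mathbb{D}(f,g;x)=\mathbb{D}(f,\overline{g};x)$, which holds because $\Re(f(p)\overline{g(p)})=f(p)\Re(g(p))=\Re(f(p)g(p))$. Applying this with $g(n)=\chi(n)n^{it_n}$ and combining with the multiplicative triangle inequality~\eqref{eq:triangle}, one obtains
$$\mathbb{D}(\chi^2(n)n^{2it_n},1;x_n)\le \mathbb{D}(f,\chi(n)n^{it_n};x_n)+\mathbb{D}(f,\overline{\chi}(n)n^{-it_n};x_n)\le 2C.$$

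I would then invoke Lemma~\ref{le_pretentious_distance} applied to $\chi^2$ (whose modulus divides $q^2\le A^2$) at the frequency $2t_n$. If $\chi$ is complex, then $\chi^2$ is non-principal and the first part of the lemma gives $\mathbb{D}(\chi^2(n)n^{2it_n},1;x_n)^2\ge(\tfrac14-o(1))\log\log x_n$, contradicting the $2C$ bound for large $n$. Hence $\chi$ must be real; $\chi^2$ is then principal, and the first part of the lemma again excludes the regime $|2t_n|\ge 1$. The second part of the lemma (with $u=2t_n$) then upgrades the estimate $\mathbb{D}(\chi^2(n)n^{2it_n},1;x_n)\le 2C$ to $\log(1+|2t_n|\log x_n)\ll_{A,C}1$, so that $|t_n|\ll_{A,C}1/\log x_n$.

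To conclude, the sharp bound $|t_n|\log x_n=O_{A,C}(1)$ implies, again by the second part of Lemma~\ref{le_pretentious_distance} together with the trivial estimate of $\sum_{p\mid q}1/p$, that $\mathbb{D}(\chi(n)n^{it_n},\chi;x_n)^2=\mathbb{D}(n^{it_n},1;x_n)^2+O_A(1)=O_{A,C}(1)$. A final application of the triangle inequality yields $\mathbb{D}(f,\chi;x_n)=O_{A,C}(1)$ uniformly in $n$, which contradicts the hypothesis $\mathbb{D}(f,\chi;\infty)=\infty$ for the real character $\chi$. The step that requires the most care is deducing the sharp conclusion $|t_n|=O(1/\log x_n)$ from the mere boundedness of $\mathbb{D}(\chi^2(n)n^{2it_n},1;x_n)$: anything weaker than an $O(1/\log x_n)$ bound on $t_n$ would allow the final triangle inequality to lose an unbounded factor of $\log\log x_n$, and it is precisely this sharp input that Lemma~\ref{le_pretentious_distance} provides.
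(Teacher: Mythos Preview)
Your argument is correct. The paper's own proof is a one-line citation to \cite[Lemma C.1]{MRT}, so your route is genuinely different in presentation: you work entirely inside the paper by reducing to Lemma~\ref{le_pretentious_distance}. The key idea you use---that real-valuedness of $f$ gives $\mathbb{D}(f,\chi(n)n^{it};x)=\mathbb{D}(f,\overline{\chi}(n)n^{-it};x)$, whence the triangle inequality bounds $\mathbb{D}(\chi^2(n)n^{2it},1;x)$---is exactly the standard device behind \cite[Lemma C.1]{MRT} itself, so the analytic content is the same; you have simply unpacked the black box. What your version buys is self-containment within the paper's framework (Lemma~\ref{le_pretentious_distance} is already stated and proved here), at the cost of a longer write-up. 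One small remark: when you invoke the first part of Lemma~\ref{le_pretentious_distance} for $\chi^2$ at frequency $2t_n$, note that $|2t_n|\le 2x_n^{A}\le x_n^{A'}$ for any $A'>A$ and large $x_n$, so the lemma applies with a slightly larger exponent; this is harmless but worth saying.
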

\begin{proof}
 This follows directly from~\cite[Lemma C.1]{MRT}.   
\end{proof}

Our next crucial iterative lemma says that a multiplicative function cannot pretend to be different characters at an infinite sequence of scales $(x_n)$, if $x_n$ does not grow too rapidly ($x_n+1\leq x_{n+1}\leq x_n^{O(1)}$).

\begin{lemma}\label{le_rigidity}
Let $A\geq 1$ be fixed, and let $f:\mathbb{N}\to \mathbb{D}$ be multiplicative. Let $F:\mathbb{N}\to \mathbb{R}_{\geq 1}$ be an increasing function with $\log \log \log \log X\leq F(X)\leq \frac{1}{10}(\log \log X)^{1/2}$ and $F(X^{A})^2\leq F(X)^2+1$ for all large enough $X$. Let $(x_n)$ be a sequence of real numbers in $[10,\infty)$ with $x_n+1\leq x_{n+1}\leq x_n^{A}$ for all $n\geq 1$.  Suppose that for each $j\in \mathbb{N}$ there exists a Dirichlet character $\chi_j$ of modulus $q_j\leq (\log x_j)^{A}$ and a real number $t_j\in [-x_j^{A},x_j^{A}]$ such that $$\mathbb{D}(f,\chi_j(n)n^{it_j};x_j)\leq F(x_j).$$
Then there exists a primitive Dirichlet character $\chi$ and a real number $t$ such that 
$$\mathbb{D}(f,\chi(n)n^{it};x)\leq 4F(x)$$ 
for all large enough $x$.

Additionally, if $\omega(q_j)$ is bounded, then the conclusion holds without the assumption $F(X)\geq \log \log \log \log X$.  
\end{lemma}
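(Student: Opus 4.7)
My plan is to iteratively argue that the approximating pairs $(\chi_j, t_j)$ stabilize as $j \to \infty$, so that a suitable limit $(\chi, t)$ satisfies the desired bound at every large scale. The key leverage comes from the pretentious triangle inequality~\eqref{eq:triangle} together with the slow-growth condition $x_{n+1} \le x_n^A$, which allow me to compare the hypothesis at consecutive scales via Lemma~\ref{le_pretentious_distance}.

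The first step is to show that the primitive characters inducing the $\chi_j$ stabilize. By the triangle inequality and monotonicity of the truncated distance in its upper endpoint,
\begin{align*}
\mathbb{D}\bigl(\chi_j\overline{\chi_{j+1}}(n) n^{i(t_j-t_{j+1})}, 1; x_j\bigr)
&\le \mathbb{D}(f,\chi_j(n)n^{it_j}; x_j) + \mathbb{D}(f,\chi_{j+1}(n)n^{it_{j+1}}; x_j) \\
&\le F(x_j) + F(x_{j+1}) \le \tfrac{1}{5}(1+o(1))(\log\log x_j)^{1/2},
\end{align*}
using $x_{j+1} \le x_j^A$ and $F \le \tfrac{1}{10}(\log\log)^{1/2}$. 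By the first part of Lemma~\ref{le_pretentious_distance}, if $\chi_j\overline{\chi_{j+1}}$ were non-principal, or if $|t_j - t_{j+1}| \ge 1$, the left-hand side would be at least $(\tfrac12 - o(1))(\log\log x_j)^{1/2}$ (applying the lemma at scale $x_j$, or at $x_{j+1}$ to accommodate values $|t_j-t_{j+1}|\in[x_j^A, x_{j+1}^A]$). Hence for all large $j$ the primitive characters of the $\chi_j$ agree; call the common primitive character $\chi$, and note $|t_j-t_{j+1}| < 1$.

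Second, applying the second part of Lemma~\ref{le_pretentious_distance} to the now-principal character $\chi_j\overline{\chi_{j+1}}$ modulo $[q_j, q_{j+1}]$ yields
$$
\log\bigl(1 + |t_j - t_{j+1}| \log x_j\bigr) \le 4 F(x_{j+1})^2 + O(\log\log(3\omega([q_j, q_{j+1}]))).
$$
From $q_j \le (\log x_j)^A$ I have $\omega([q_j,q_{j+1}]) \ll \log\log x_{j+1}/\log\log\log x_{j+1}$, so the error term is $O(\log\log\log\log x_{j+1})$; under the hypothesis $F(X) \ge \log\log\log\log X$ this is absorbed by $O(F(x_{j+1})^2)$, while in the alternative case $\omega(q_j)=O(1)$ the error is simply $O(1)$. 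Hence $|t_j - t_{j+1}| \le e^{O(F(x_{j+1})^2)}/\log x_j$, which by the upper bound on $F$ is $o(1/(\log x_j)^{1/2})$.

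Finally, I define $t$ as a limit point of $(t_j)$, obtained by iterating the same reasoning to general pairs $(j,k)$ with $j<k$ in order to keep $(t_j)$ bounded for large $j$ and then invoking Bolzano--Weierstrass. To verify the conclusion for an arbitrary large $x$, I pick $j$ with $x_j \le x \le x_{j+1}$ and split
$$
\mathbb{D}(f,\chi(n)n^{it};x)^2 = \mathbb{D}(f,\chi(n)n^{it};x_j)^2 + \mathbb{D}(f,\chi(n)n^{it};x_j,x)^2.
$$
The first summand I bound by comparison with $(\chi_j, t_j)$, using the hypothesis at scale $x_j$ together with the proximity of $t$ to $t_j$. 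The second summand I bound by comparison with $(\chi_{j+1}, t_{j+1})$: a contribution $\le F(x_{j+1})$ from the hypothesis at $x_{j+1}$, plus small contributions from $\mathbb{D}(\chi_{j+1}, \chi; x_j, x)$ (controlled by primes dividing $q_{j+1}$) and $\mathbb{D}(n^{it_{j+1}}, n^{it}; x_j, x)$ (controlled by $|t - t_{j+1}|$). Since $F(x_{j+1})/F(x) = 1 + o(1)$ under either permissible shape of $F$, combining everything yields $\mathbb{D}(f, \chi(n) n^{it}; x) \le 4 F(x)$ for all sufficiently large $x$. The principal obstacle is controlling the intermediate-range contribution $\mathbb{D}(n^{it_{j+1}}, n^{it}; x_j, x)$, which demands that $t$ approximate $t_{j+1}$ with precision of order $F(x_{j+1})/\log x_{j+1}$; this is where the careful choice of limit point in defining $t$ is crucial, and where the factor of $4$ (rather than, say, $2$) in the conclusion provides the necessary slack.
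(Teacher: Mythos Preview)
Your outline follows the same broad strategy as the paper but omits a crucial preliminary step: passing to a subsequence of $(x_n)$ along which $x_{n+1}\ge x_n^{100}$ (with $A$ adjusted accordingly). Without this thinning, the consecutive bounds $|t_j-t_{j+1}|\le e^{O(F(x_{j+1})^2)}/\log x_j=o((\log x_j)^{-1/2})$ need not be summable, since the hypothesis $x_{j+1}\ge x_j+1$ allows $x_j$ to grow only linearly; hence $(t_j)$ is neither visibly Cauchy nor visibly bounded. Your proposed workaround---``iterating the same reasoning to general pairs $(j,k)$''---does not yield boundedness: comparing at scale $x_j$ with the data at scale $x_k$ produces a bound involving $F(x_k)$, which can be as large as $\tfrac{1}{10}(\log\log x_k)^{1/2}$ and thus gives no uniform control as $k\to\infty$. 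Even granting boundedness, a mere Bolzano--Weierstrass limit point $t$ gives $|t-t_j|$ small only along a subsequence, whereas your interpolation step needs $|t-t_{j+1}|\lesssim e^{O(F(x_{j+1})^2)}/\log x_{j+1}$ for \emph{every} large $j$. The paper sidesteps all of this: once $x_{m+1}\ge x_m^{100}$, the telescoped differences decay geometrically, $(t_m)$ is genuinely Cauchy, and the limit $t$ satisfies the required approximation at every index.

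There is also a secondary issue in your final step. The claim $F(x_{j+1})/F(x)=1+o(1)$ is not implied by the hypotheses: $F$ can jump from near its lower envelope $\log\log\log\log X$ to near its upper envelope $\tfrac{1}{10}(\log\log X)^{1/2}$ between $x$ and $x_{j+1}$, so bounding the range $(x_j,x]$ via $(\chi_{j+1},t_{j+1})$ may cost you an uncontrolled factor $F(x_{j+1})/F(x)$. The paper avoids this by instead estimating $\mathbb{D}(f,\chi(n)n^{it};x)^2$ for $x\in[x_m,x_{m+1}]$ via the already-established bound at $x_{m-1}$ (which involves $F(x_m)\le F(x)$) together with $\sum_{x_{m-1}\le p\le x_{m+1}}1/p=O(1)$ from Mertens' theorem---a step that again relies on the thinned subsequence.
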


\begin{proof}
In what follows, let $\omega_{\max}(m):=\max_{1\leq j\leq m}\omega(q_{j})$.
By passing to a subsequence of $(x_n)$ if necessary (and adjusting $A$), we may assume that $x_{n+1}\geq x_n^{100}$ for all $n\geq 1$. Also by passing to a subsequence, we may assume that $x_1$ is large enough. 

By assumption, we have
\begin{align*}
\mathbb{D}(f,\chi_{m+1}(n)n^{it_{m+1}};x_{m+1})^2&\leq F(x_{m+1})^2\\
\mathbb{D}(f,\chi_m(n) n^{it_{m}};x_{m+1})^2&\leq F(x_m)^2+O\left(\sum_{x_m<p\leq x_{m+1}}\frac{1}{p}\right) \leq F(x_m)^2+O(1).
\end{align*}
Applying the triangle inequality for the pretentious distance and the fact that $F$ is increasing, this implies that
\begin{align*}
 \mathbb{D}(\chi_m(n) n^{it_m},\chi_{m+1}(n)n^{it_{m+1}};x_{m+1})^2\leq 4F(x_{m+1})^2+O(1). 
\end{align*}
Since $\chi_{m+1}\overline{\chi_{m}}(n)$ is unimodular for all primes not dividing $q_mq_{m+1}$, by~\eqref{eq:omega1} we deduce that
\begin{align}\label{eq5}\begin{split}
\mathbb{D}(\chi_{m+1}\overline{\chi_m}(n) n^{i(t_{m+1}-t_m)},1;x_m)^2&\leq 4F(x_{m+1})^2+O(1)+O\left(\sum_{p\mid q_mq_{m+1}}\frac{1}{p}\right)\\
&=4F(x_{m+1})^2+O(\log \log(3\omega_{\max}(m+1))). 
\end{split}
\end{align}
Applying Lemma~\ref{le_pretentious_distance} to~\eqref{eq5} (and recalling that $F(X)\leq \frac{1}{10}(\log \log X)^{1/2}$ for all large enough $X$), we conclude that for all large enough $m$ the character $\chi_{m+1}\overline{\chi_m}$ is principal, and additionally
\begin{align}\label{eq7}
 |t_{m+1}-t_m|\ll \frac{\exp(4F(x_{m+1})^2+O(\log \log(3\omega_{\max}(m+1))))}{\log x_{m+1}}.   
\end{align}

Note that by our assumption on $F$ and the fact that $\omega_{\max}(m+1)\ll \log \log x_{m+1}$ we have  
\begin{align*}
 \exp(4F(x_{m+1})^2+O(\log \log(3\omega_{\max}(m+1)))))\ll (\log x_m)^{1/10},   
\end{align*}
so
\begin{align}\label{eq7b}
 |t_{m+1}-t_m|\ll \frac{1}{(\log x_m)^{9/10}}.   
\end{align}
By telescoping~\eqref{eq7b}, and using the fact that $x_{n+1}\geq x_n^{100}$, we see that for any $j\geq 1$ we have 
\begin{align}\label{eq:7b}
 |t_{m+j}-t_m|\ll \sum_{i=1}^j\frac{1}{(\log x_{m+i})^{9/10}} \ll \frac{1}{(\log x_{m+1})^{9/10}}.   
\end{align}
Hence, the sequence $(t_n)$ is a Cauchy sequence, so it has some limit $t\in \mathbb{R}$. By~\eqref{eq7}, $t$  satisfies
\begin{align*}
 |t-t_m|&\ll \sum_{j\geq 1}\frac{\exp(4F(x_{m+j})^2+O(\log \log(3\omega_{\max}(m+j)))))}{\log x_{m+j}}\\
 &\ll \frac{\exp(4F(x_{m+1})^2+O(\log \log(3\omega_{\max}(m+1)))))}{\log x_{m+1}},   
\end{align*}
where the second estimate comes from the assumption  $F(X^{A})^2\leq F(X)^2+1$, which implies that the summands decay exponentially in $j$. 

From the triangle inequality for the pretentious distance and Lemma~\ref{le_pretentious_distance}, we now conclude that
\begin{align}\label{eq6}\begin{split}
\mathbb{D}(f,\chi_m(n)n^{it};x_m)&\leq \mathbb{D}(f,\chi_m(n)n^{it_m};x_m)+\mathbb{D}(\chi_m(n)n^{it_m},\chi_m(n)n^{it};x_m)\\
&\leq \mathbb{D}(f,\chi_m(n)n^{it_m};x_m)+O\left(\left(\sum_{p\mid q_m}\frac{1}{p}\right)^{1/2}\right)+\mathbb{D}(n^{i(t_m-t)},1;x_m)\\
&\leq  \mathbb{D}(f,\chi_m(n)n^{it_m};x_m)+O(\log \log(3\omega_{\max}(m))))^{1/2})\\
&\quad +\left(\log(1+|t_m-t|\log x_m)\right)^{1/2}\\
&\leq F(x_m)+ 2F(x_{m+1})+O((\log \log(3\omega_{\max}(m+1))))^{1/2}).
\end{split}
\end{align}

Since $\chi_{m+1}\overline{\chi_m}$ is principal for all $m\geq m_0$, by telescoping we see that
\begin{align*}
 \chi_{m}\overline{\chi_{m_0}}\quad  \textnormal{ is principal.}   
\end{align*}
Let $\chi_{m_0}^{*}$ be the primitive character inducing $\chi_{m_0}$. For $m\geq m_0$, write $\chi_m=\chi_{m_0}^{*}\chi_{0,m}$, where $\chi_{0,m}\pmod{q_m}$ is principal. Using~\eqref{eq6}, this implies that
\begin{align*}
 \mathbb{D}(f,\chi_{m_0}^{*}(n)n^{it};x_m)^2&\leq \mathbb{D}(f,\chi_{m}(n)n^{it};x_m)^2+O\left(\sum_{p\mid q_mq_{m_0}}\frac{1}{p}\right)\\
 &\leq   9F(x_{m+1})^2+O(\log \log(3\omega_{\max}(m+1))))  
\end{align*}

Since any large enough $x$ belongs to some interval $[x_m,x_{m+1}]$ and since $F(x_m)\gg \log \log(3\omega_{\max}(m+1))$, we conclude that
\begin{align*}
\mathbb{D}(f,\chi_{m_0}^{*}(n)n^{it};x)^2&\leq \mathbb{D}(f,\chi_{m_0}^{*}(n)n^{it};x_{m-1})^2+O\left(\sum_{x_{m-1}\leq p\leq x_{m+1}}\frac{1}{p}\right)\\
&= \mathbb{D}(f,\chi_{m_0}^{*}(n)n^{it};x_{m-1})^2+ O(1)\leq 10F(x_m)^2\leq 16F(x)^2  
\end{align*}
for all large enough $x$. This completes the proof.
\end{proof}

\section{Proof of Theorem~\ref{thm_main}}\label{proof_cor1}

As we shall see later, Theorem~\ref{thm_main} follows by combining Theorem~\ref{thm_correlation_conclusion} with the next result.

\begin{lemma}[Boosting a pretentiousness hypothesis]\label{le_pretentious_conclusion}
Let $A\geq 1$ be fixed. Let $f:\mathbb{N}\to \mathbb{D}$ be a multiplicative function such that for every $\varepsilon>0$ we have
\begin{align*}
\delta^{-}_{\log}\Big(\Big\{x\in \mathbb{N}:\,\, \inf_{|t|\leq x^{A}}\,\,\inf_{\substack{\chi\pmod q\\q\leq (\log x)^{A}}}\mathbb{D}(f,\chi(n)n^{it};x)^2\leq \varepsilon\log \log x\Big\}\Big)>0.    
\end{align*}
Then there exists a real number $t$, a primitive Dirichlet character $\chi$, and a function $\eta(X)$ tending to $0$ as $X\to \infty$ such that
\begin{align*}
 \delta_{\log\log }\left(\left\{x\in \mathbb{N}:\,\, \mathbb{D}(f,\chi(n)n^{it};x^{\eta(x)},x)\leq \eta(x)\right\}\right)=1.   
\end{align*}
\end{lemma}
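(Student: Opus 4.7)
The plan is to first pin down a single primitive Dirichlet character $\chi$ and phase $t\in\mathbb{R}$ for which $\mathbb{D}(f,\chi(n)n^{it};x)^2 = o(\log\log x)$ for all large $x$, and then to upgrade this integrated bound to the truncated bound asserted by the lemma, holding on a set of full doubly-logarithmic density. The first step is powered by the rigidity Lemma~\ref{le_rigidity}; the second step is a Markov-type argument on the doubly-logarithmic axis followed by a diagonalization.

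First I would extract sampling sequences from the hypothesis. For each integer $k\geq 1$, the set $S_k$ of $x$ at which the inf in the hypothesis is at most $(1/k)\log\log x$ has $\delta^-_{\log}(S_k)\geq \delta_k > 0$, and a short pigeonhole then shows $S_k \cap [y,y^{C_k}] \neq \emptyset$ for $y$ large, with $C_k \ll 1/\delta_k$. Picking $(x_n^{(k)})_n\subset S_k$ with $x_{n+1}^{(k)}\leq (x_n^{(k)})^{C_k}$, together with witnesses $(\chi_n^{(k)},t_n^{(k)})$, I would apply Lemma~\ref{le_rigidity} with $F_k(X)=\max\{\log\log\log\log X,\sqrt{(1/k)\log\log X}\}$ (admissible for $k$ large) to obtain a primitive character $\chi_k^{*}$ and a real $t_k^{*}$ satisfying $\mathbb{D}(f,\chi_k^{*}(n)n^{it_k^{*}};x)^2 \leq 16F_k(x)^2$ for all large $x$.

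Next I would argue that $(\chi_k^{*},t_k^{*})$ is eventually constant in $k$. For $k,l$ large and $x$ large, the triangle inequality together with the identity $\mathbb{D}(g_1,g_2;x)^2 = \mathbb{D}(g_1\overline{g_2},1;x)^2$ yields
\[
\mathbb{D}\bigl(\chi_k^{*}\overline{\chi_l^{*}}(n) n^{i(t_k^{*}-t_l^{*})},1;x\bigr)^2 \ll \Bigl(\tfrac{1}{k}+\tfrac{1}{l}\Bigr)\log\log x.
\]
Lemma~\ref{le_pretentious_distance} then forces (i) $\chi_k^{*}=\chi_l^{*}$ as primitive characters, since otherwise the lower bound $(1/4-o(1))\log\log x$ from part~1 would be violated; and (ii) applied to the resulting principal character and phase $u = t_k^{*}-t_l^{*}$, part~2 gives $\log(1+|u|\log x) \ll (1/k+1/l)\log\log x + O(\log\log\log x)$, i.e., $|u| \leq (\log x)^{-1+O(1/k+1/l)}(\log\log x)^{O(1)}$. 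Since $u$ is fixed but $x$ may be taken arbitrarily large, this forces $t_k^{*}=t_l^{*}$. Writing $(\chi,t)$ for this common value for $k\geq k_0$, I obtain $D(x):=\mathbb{D}(f,\chi(n)n^{it};x)^2 \leq (17/k)\log\log x$ for all $k\geq k_0$ and $x\geq X_k$, hence $D(x)=o(\log\log x)$ as $x\to\infty$.

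Finally I would convert $D(x)=o(\log\log x)$ to the truncated conclusion. Parametrize via $u=\log\log x$ and set $\mathcal{D}(u)=D(e^{e^u})$, so $\mathcal{D}$ is nondecreasing with $\mathcal{D}(u)=o(u)$. A telescoping gives, for any $\delta > 0$,
\[
\int_0^U [\mathcal{D}(u)-\mathcal{D}(u-\delta)]\,du = \int_{U-\delta}^U \mathcal{D} - \int_{-\delta}^0 \mathcal{D} \leq \delta\,\mathcal{D}(U) = o(\delta U),
\]
so Markov's inequality bounds the measure of $u\in[0,U]$ with $\mathcal{D}(u)-\mathcal{D}(u-\delta)>\delta^2$ by $o(U)/\delta$. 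Partitioning $[0,\infty)$ into blocks on which $\delta$ is piecewise constant and tending to $\infty$ slowly (diagonalizing over a sequence $\delta_k\to\infty$ against the rate $\mathcal{D}(U)/U\to 0$), I would define $\eta(x)=e^{-\delta}$ on each block so that $\eta(x)\to 0$ and the bad set $\{x : D(x)-D(x^{\eta(x)}) > \eta(x)^2\}$ has doubly-logarithmic density zero. Since $D(x)-D(x^{\eta(x)}) = \mathbb{D}(f,\chi(n)n^{it};x^{\eta(x)},x)^2$, this is exactly the desired conclusion.

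The main obstacle is obtaining $t_k^{*}=t_l^{*}$ \emph{exactly} (rather than only approximately) in the second step: merely $t_k^{*}\to t$ with rate $o(1)$ would only yield $\mathbb{D}(f,\chi(n)n^{it};x)^2 = O(\log\log x)$, not $o(\log\log x)$. The rigidity comes from the fact that Lemma~\ref{le_pretentious_distance} imposes a lower bound on the pretentious distance from a twisted principal character to $1$ that grows with $\log\log x$ whenever the phase is any fixed nonzero real number, so letting $x\to\infty$ forces equality of the phases. A secondary technical point is calibrating the diagonal in the third step so that $\eta(x)$ decays slowly enough relative to the rate $D(x)/\log\log x\to 0$.
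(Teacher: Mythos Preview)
Your approach is essentially the same as the paper's: extract a well-spaced sequence from the positive-lower-log-density hypothesis, feed it into Lemma~\ref{le_rigidity} for each $\varepsilon=1/k$, stabilize the resulting characters and phases via Lemma~\ref{le_pretentious_distance} to obtain a single $(\chi,t)$ with $D(x)=\mathbb{D}(f,\chi(n)n^{it};x)^2=o(\log\log x)$, and finally upgrade to the truncated bound by a Markov/telescoping argument on the $u=\log\log x$ axis. The first two steps are carried out correctly.

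There is, however, a genuine calibration error in your final step. Your Markov inequality (correctly) bounds the measure of $\{u\in[0,U]:\mathcal{D}(u)-\mathcal{D}(u-\delta)>\delta^2\}$ by $\mathcal{D}(U)/\delta=o(U)/\delta$, but you then claim this controls the bad set $\{x:D(x)-D(x^{\eta(x)})>\eta(x)^2\}$ with $\eta(x)=e^{-\delta}$. Since $\eta(x)^2=e^{-2\delta}\ll \delta^2$ when $\delta\to\infty$, the set with threshold $e^{-2\delta}$ is \emph{larger} than the set with threshold $\delta^2$, so your Markov bound says nothing about it. The fix is straightforward: write $\mathcal{D}(U)\leq\varepsilon(U)\,U$ with $\varepsilon(U)\to 0$, apply Markov with threshold $\theta=\varepsilon(U)^{1/2}$ (so the bad measure is $\leq \delta\,\varepsilon(U)^{1/2}U$), and choose $\delta=\delta(U)$ growing like a small negative power of $\varepsilon(U)$, e.g.\ $\delta\asymp\log(1/\varepsilon(U))$. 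Then setting $\eta(x)=\max\bigl(e^{-\delta},\theta^{1/2}\bigr)$ gives $\eta(x)\to 0$, $x^{\eta(x)}\geq x^{e^{-\delta}}$, and $D(x)-D(x^{\eta(x)})\leq\theta\leq\eta(x)^2$ off a set of vanishing doubly-logarithmic density. This is precisely what the paper does: it takes threshold $\eta_1(x)^{1/2}$ where $\eta_1$ is the rate in $D(x)\leq\eta_1(x)\log\log x$, and then sets the final $\eta(X)=\eta_1(X)^{1/4}$.
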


In particular, the result applies to any function $f$ that is moderately non-pretentious. 

\begin{proof} 
If $f$ is pretentious, the claim is clear, since then there exist a Dirichlet character $\chi$ and a real number $t$ such that $\mathbb{D}(f,\chi(n)n^{it};y,\infty)=o_{y\to \infty}(1)$. Suppose from now on that $f$ is non-pretentious. 

Let $\varepsilon>0$. In what follows, any implied constants will be independent of $\varepsilon$. By assumption, the set
\begin{align*}
 \mathcal{S}_{\varepsilon}:= \Big\{x\in \mathbb{N}:\,\, \inf_{|t|\leq x^{A}}\,\,\inf_{\substack{\chi\pmod q\\q\leq (\log x)^{A}}}\mathbb{D}(f,\chi(n)n^{it};x)^2\leq \varepsilon\log \log x\Big\}  
\end{align*}
satisfies
\begin{align*}
 \delta^{-}_{\log}(\mathcal{S}_{\varepsilon})>0.   
\end{align*}
In particular, this implies that there is some $a\geq 1$ (depending on $\varepsilon$) such that $\mathcal{S}_{\varepsilon}$ contains a sequence of the form $(x_n)_{n\geq 1}$ with $x_n+1\leq x_{n+1}\leq x_n^{a}$ for all $n\geq 1$. Hence we have
\begin{align}\label{eq:dist}
\inf_{|t|\leq x_n^{A}}\,\,\inf_{\substack{\chi\pmod q\\q\leq (\log x_n)^{A}}}\mathbb{D}(f,\chi(n)n^{it};x_n)^2\leq \varepsilon\log \log x_n.    
\end{align}

Combining~\eqref{eq:dist} and Lemma~\ref{le_rigidity}, we see that for some real number $t_{\varepsilon}$ and some primitive Dirichlet character $\chi_{\varepsilon}$ we have
\begin{align}\label{eq15}
\mathbb{D}(f,\chi_{\varepsilon}(n)n^{it_{\varepsilon}};x)^2\ll \varepsilon \log \log x.    \end{align}
for all $x\geq 10$.
We claim that $t_{\varepsilon}$ and $\chi_{\varepsilon}$ are in fact independent of $\varepsilon$. Indeed, if  $\varepsilon_0>0$ is a small constant and $\varepsilon \in (0,\varepsilon_0)$ then
\begin{align*}
\mathbb{D}(f,\chi_{\varepsilon_0}(n)n^{it_{\varepsilon_0}};x)^2&\ll \varepsilon_0 \log \log x,\\
\mathbb{D}(f,\chi_{\varepsilon}(n)n^{it_{\varepsilon}};x)^2&\ll \varepsilon_0 \log \log x
\end{align*}
for $j\in \{1,2\}$ and all $x\geq 10$, and by the triangle inequality for the pretentious distance this gives
\begin{align*}
\mathbb{D}(\chi_{\varepsilon_0}(n)n^{it_{\varepsilon_0}},\chi_{\varepsilon}(n)n^{it_{\varepsilon}};x)^2\ll \varepsilon_0 \log \log x  
\end{align*}
for all $x\geq 10$. If $\varepsilon_0>0$ is small enough, by Lemma~\ref{le_pretentious_distance} this implies that $t_{\varepsilon}=t_{\varepsilon_0}$ and $\chi_{\varepsilon}=\chi_{\varepsilon_0}$. Therefore, there exist some real numbers $t$ and $\chi$ such that for any $\varepsilon>0$ we have 
\begin{align*}
\mathbb{D}(f,\chi(n)n^{it};x)^2\ll \varepsilon \log \log x.        
\end{align*}
for all $x\geq 10$. This means that there exists a decreasing function $\eta_1(X)$ tending to $0$ as $X\to \infty$ such that
\begin{align}\label{eq:15b}
D(x) := \mathbb{D}(f,\chi(n)n^{it};x)^2\leq \eta_1(x)\log \log x
\end{align}
for all $x\geq 10$.

Let 
\begin{align*}
\mathcal{D}=\{x\in \mathbb{N}:\,\, D(x)-D(x^{\eta_1(x)})\geq \eta_1(x)^{1/2}\}.    
\end{align*}
Now, for any $X\geq 10$ we have
\begin{align}\label{eq9}\begin{split}
 &\frac{1}{\log \log X}\sum_{3\leq n\leq X}\frac{1_{n\in \mathcal{D}}}{n\log n}\\
 \leq&  \frac{1}{\log \log X}\sum_{0\leq j\leq \log \log X}  \sum_{e^{e^j}\leq n\leq e^{e^{j+1}}}\frac{1}{n\log n}1_{D(e^{e^{j+1}})-D(e^{e^{j-\log(1/\eta_1(X))}})\geq \eta_1\left(X\right)^{1/2}},
 \end{split}
 \end{align}
 using the fact that $\eta_1(\cdot)$ is decreasing.
By telescopic summation and~\eqref{eq:15b}, we see that the left-hand side of~\eqref{eq9} is
\begin{align}\label{eq:15c}
&\ll \frac{1}{\eta_1(X)^{1/2}\log \log X}\sum_{0\leq j\leq \log \log X}(D(e^{e^{j+1}})-D(e^{e^{j-\log(1/\eta_1(X))}}))\nonumber\\
&\ll \frac{1}{\eta_1(X)^{1/2}\log \log X}\sum_{0\leq j\leq \log \log X}\,\sum_{0\leq k\leq 1+\log(1/\eta_1(X))}(D(e^{e^{j+1-k}})-D(e^{e^{j-k}}))\nonumber\\
&\ll \frac{\log(1/\eta_1(X))}{\eta_1(X)^{1/2}\log \log X}\max_{0\leq k\leq 1+\log(1/\eta_1(X))}\,\sum_{0\leq j\leq \log \log X}(D(e^{e^{j+1-k}})-D(e^{e^{j-k}}))\nonumber\\
&\ll \frac{\log(1/\eta_1(X))}{\eta_1(X)^{1/2}\log \log X}D(X^{e})\nonumber\\
&\ll \frac{\log(1/\eta_1(X))}{\eta_1(X)^{1/2}\log \log X}\eta_1(X^{e})(\log \log X)\ll \eta_1(X)^{1/2}\log(1/\eta_1(X)).
\end{align}

 Putting together~\eqref{eq9} and~\eqref{eq:15c} and sending $X\to \infty$, we obtain
\begin{align*}
 \delta_{\log \log}^{+}(\mathcal{D})=0,    
\end{align*}
so
\begin{align*}
 \delta_{\log \log}^{-}(\mathbb{N}\setminus \mathcal{D})=1,    
\end{align*}
Now, the function $\eta(X):=\eta_1(X)^{1/4}$ satisfies
\begin{align*}
 \delta_{\log\log }\left(\left\{x\in \mathbb{N}:\,\, \mathbb{D}(f,\chi(n)n^{it};x^{\eta(x)},x)\leq \eta(x)\right\}\right)=1,  
 \end{align*}
as desired. 
\end{proof}

We are now ready to prove Theorem~\ref{thm_main}.

\begin{proof}[Proof of Theorem~\ref{thm_main}] Let $k\geq 1$ and $h_1,\ldots, h_k\in \mathbb{N}$. Let $f_1,\ldots, f_k:\mathbb{N}\to \mathbb{D}$ be multiplicative functions with $f_1$ moderately non-pretentious and $f_2,\ldots, f_k$ either moderately non-pretentious or pretentious. Then, by Lemma~\ref{le_pretentious_conclusion} there exist real numbers $t_1,\ldots, t_k$, Dirichlet characters $\chi_1,\ldots, \chi_k$, and a function $\eta(X)$ tending to $0$ as $X\to \infty$ such that the sets
\begin{align*}
\mathcal{S}_j=\{x\in \mathbb{N}:\,\, \mathbb{D}(f_j,\chi_j(n)n^{it_j};x^{\eta(x)},x)\leq \eta(x)\}
\end{align*}
for $1\leq j\leq k$ satisfy
\begin{align*}
 \delta_{\log \log}(\mathcal{S}_j)=1.   
\end{align*}
Hence, denoting $\mathcal{S}=\bigcap_{1\leq j\leq k}\mathcal{S}_j$, we also have
\begin{align}\label{eq16}
 \delta_{\log \log}(\mathcal{S})=1.   
\end{align}
Moreover, the set $\mathcal{S}$ only depends on the set $\{f_1,\ldots, f_k\}$. 

Now, for any $\varepsilon>0$, we have for all large enough $x\in \mathcal{S}$ the bound
\begin{align}\label{eq22}
 \max_{1\leq j\leq k}\mathbb{D}(f_j,\chi_j(n)n^{it_j};x^{\varepsilon},x)\leq \varepsilon.  
\end{align}
Furthermore, by the assumption that $f_1$ is moderately non-pretentious, we have
\begin{align}\label{eq23}
\mathbb{D}(f_1,\chi_1(n)n^{it_1};x^{\varepsilon})\xrightarrow[x\to \infty]{}\infty.  
\end{align}
Applying Theorem~\ref{thm_correlation_conclusion}, taking~\eqref{eq22} and~\eqref{eq23} into account, it follows that
\begin{align*}
 \limsup_{\substack{x\to \infty\\x\in \mathcal{S}}}\left|\frac{1}{x}\sum_{n\leq x}f_1(n+h_1)\cdots f_k(n+h_k)\right|\ll \left(\log \frac{1}{\varepsilon}\right)^{1/2}\varepsilon.   
\end{align*}
Sending $\varepsilon\to 0$, we conclude that
\begin{align*}
 \lim_{\substack{x\to \infty\\ x\in \mathcal{S}}}\left|\frac{1}{x}\sum_{n\leq x}f_1(n+h_1)\cdots f_k(n+h_k)\right|=0.   
 \end{align*}
 Combining this with~\eqref{eq16}, Theorem~\ref{thm_main} follows. 
\end{proof}

\section{Proofs of the other correlation results}\label{proof_cor2}

\subsection{Two-point correlations}

The proof of Theorem~\ref{thm_elliott_2point} is based on the following strengthening of Theorem~\ref{thm_A}(1) that tells us more about the set of scales where the correlations of two multiplicative functions can be large. We prove this with a somewhat more direct argument than the proof of Theorem~\ref{thm_A}(1) in~\cite{tt-ant} (related to the one in ~\cite[Section 8]{helfgott-radziwill}), using ideas of Tao from~\cite{tao-chowla} (as in~\cite{tt-ant}) but avoiding some other tools (approximate homomorphisms and the Hardy--Littlewood maximal inequality) that were used in~\cite{tt-ant}.

\begin{theorem}\label{thm_elliott_2point_stronger}
Let $f_1,f_2:\mathbb{N}\to \mathbb{D}$ be multiplicative functions. Let $A(X)$ be any function tending to infinity as $X\to \infty$. Let
\begin{align*}
\mathcal{X}=\Big\{x\in \mathbb{N}:\,\, \max_{j\in \{1,2\}}\inf_{|t|\leq A(x)x}\,\,\min_{\substack{\chi\pmod q\\q\leq A(x)}}\mathbb{D}(f_1,\chi(n)n^{it};x)\geq A(x)\Big\},   
\end{align*}
and suppose that $\mathcal{X}$ is infinite.
Then, for some function $\psi(X)$ tending to infinity as $X\to \infty$ we have 
\begin{align}\label{eq:lim2}
 \lim_{\substack{x\to \infty\\x\in \mathcal{X}}}\frac{1}{\log x}\sum_{N\leq x}\max_{\substack{1\leq h_1,h_2\leq \psi(N)\\h_1\neq h_2}}\left|\frac{1}{N}\sum_{n\leq N}f_1(n+h_1)f_2(n+h_2)\right|=0.   
\end{align}
\end{theorem}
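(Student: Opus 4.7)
The plan is to adapt Tao's entropy decrement approach to the logarithmically averaged two-point Chowla conjecture~\cite{tao-chowla}, in the streamlined form inspired by~\cite[Section 8]{helfgott-radziwill}, so as to (i) exploit the symmetric non-pretentiousness hypothesis (the $\max$ over $j \in \{1,2\}$) and (ii) keep track of uniformity over shifts $h_1, h_2 \leq \psi(N)$. By the trivial symmetry of the two-point correlation under $(f_1,h_1) \leftrightarrow (f_2,h_2)$, we may assume $f_1$ is the factor realising the hypothesis on $\mathcal{X}$. Choose $\psi(N) \to \infty$ sufficiently slowly (e.g.\ $\psi(N) = (\log \log A(N))^{1/10}$) and denote
$$C_\psi(N) := \max_{\substack{1 \leq h_1, h_2 \leq \psi(N) \\ h_1 \neq h_2}} \left| \frac{1}{N} \sum_{n \leq N} f_1(n + h_1) f_2(n + h_2) \right|.$$
The goal is to prove that $C_\psi(N)$ is small, in the appropriate logarithmic average sense, along $\mathcal{X}$.

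The central step is a transfer to shifted scales via multiplicativity. For a prime $p$ coprime to $h_1-h_2$, writing $n=pm$ on the sum $\sum_{p\mid n}f_1(n+ph_1)f_2(n+ph_2)$ and using a Tur\'an--Kubilius-style removal of the exceptional cases $p\mid n+h_j$ yields
$$\frac{1}{N}\sum_{n\leq N}f_1(n+h_1)f_2(n+h_2) \;\approx\; \frac{\overline{f_1(p)f_2(p)}}{N/p}\sum_{m\leq N/p}f_1(pm+ph_1)f_2(pm+ph_2).$$
Averaging this identity over primes $p\in[P,2P]$, with $P=P(N)\leq\psi(N)$ slowly growing, and then applying Tao's entropy decrement---executed directly on the Shannon entropy of the local random vector $(f_j(pn+a))_{p\in[P,2P]}$, bypassing the approximate-homomorphism / Hardy--Littlewood-maximal framework of~\cite{tt-ant}---asymptotically decouples $p$ from $m$ for a logarithmically typical $N$. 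One is thereby reduced to bounding the averaged quantity
$$\frac{1}{\pi(2P)-\pi(P)}\sum_{p\in[P,2P]}\left|\frac{1}{N/p}\sum_{m\leq N/p}f_1(pm+a_1)\,f_2(pm+a_2)\right|$$
with $a_j\leq P\psi(N)$, uniformly over $h_1,h_2\leq\psi(N)$.

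A Cauchy--Schwarz in $p$ then bounds this by (the square root of) an expanded sum involving $\frac{1}{N/p}\sum_{m}f_1(pm+a_1)\overline{f_1(p'm+a_1')}$ for primes $p,p'\in[P,2P]$, to which the Matom\"aki--Radziwi\l{}\l{} theorem applies after a change of variables. The hypothesis $\inf_{|t|\leq A(x)x}\inf_{q\leq A(x)}\mathbb{D}(f_j,\chi(n)n^{it};x)\geq A(x)$ for $x\in\mathcal{X}$ is exactly what supplies the required growth of the pretentious distance at the short scales $N/P$ with shifts $\leq P\psi(N)$; the enlarged phase range $|t|\leq A(x)x$ (absent in Elliott's original formulation) is precisely what absorbs the twisted linear phases $p^{it}$ arising from the multiplicativity transfer, and is why the hypothesis~\eqref{eq28} of Theorem~\ref{thm_A}(1) is stated with a large $|t|$-range.

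The main technical obstacle is the joint uniformity in $(h_1,h_2,p)$ after the transfer step, since the standard entropy decrement produces one good scale per test function. One must either run the entropy decrement on a single ``global'' random variable encoding all relevant test functions at once---which forces $\psi(N)$ to grow extremely slowly relative to $A(N)$, but is permissible since the conclusion only requires \emph{some} $\psi\to\infty$---or combine a small union bound with the averaging over $p$. The passage from a pointwise conclusion $C_\psi(N)=o(1)$ on a full-log-density subset of $\mathcal{X}$ to the displayed Ces\`aro statement in~\eqref{eq:lim2} is then a routine Stolz--Ces\`aro argument.
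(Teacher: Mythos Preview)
Your overall architecture (entropy decrement plus Matom\"aki--Radziwi\l{}\l{}, bypassing the approximate-homomorphism machinery of~\cite{tt-ant}) is exactly the paper's approach. But the displayed transfer identity is wrong for Ces\`aro averages: what the entropy decrement actually gives (Lemma~\ref{le_entropy}) is, on average over $p$, the relation $S(x;ph_1,ph_2)\approx f_1(p)f_2(p)\,S(x/p;h_1,h_2)$, \emph{with the dilated shifts on the left}. Your version with $S(N;h_1,h_2)$ on the left is the logarithmic-average identity from~\cite{tao-chowla}; for Ces\`aro averages the scale must change, and once you expand $f_j(p(m+h_j))\approx f_j(p)f_j(m+h_j)$ on your right-hand side both sides collapse to $S(N/p;h_1,h_2)$ and the relation carries no information. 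Consequently, after applying Lemma~\ref{le_Tao} to the dilated side you do \emph{not} obtain a pointwise bound $C_\psi(N)=o(1)$; you only get $\mathbb{E}_{p\leq P(x)}^{\log}|S(x/p;h_1,h_2)|=o(1)$ for $x$ in the good set.

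This makes your last step a genuine gap rather than a routine Stolz--Ces\`aro argument: the scales $x/p$ sweep only a short window near $x$, whereas~\eqref{eq:lim2} requires control of $S(N;h_1,h_2)$ for \emph{all} $N\leq x$ in logarithmic average. The paper fills this gap by first thickening $\mathcal{X}$ to a set $\mathcal{X}_1$ (via the pretentious triangle inequality, so that $x\in\mathcal{X}$ forces the whole interval $[x^{1/A(x)},x]$ into $\mathcal{X}_1$), applying the entropy-decrement/MRT bound at every $y\in[x^{1/A(x)},x]$, integrating $\int_{x^{1/A(x)}}^{x}\mathbb{E}_p|S(y/p;\cdot)|\,dy/y$, and then pigeonholing to a single prime $P_x$ before changing variables $w=y/P_x$. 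Two further points you omit: the reduction to $\sum_{|f_j(p)|\leq 1-c}1/p<\infty$ (else the correlations vanish trivially by Delange, and without it you cannot strip off the factor $|f_1(p)f_2(p)|$), and the exchange of $\max_{h_1,h_2}$ with $\mathbb{E}_p$, which costs a factor $h(x)^2$ and forces $\psi$ to grow slowly.
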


We note that Theorem~\ref{thm_A}(1) essentially corresponds to the case $\mathcal{X}=\mathbb{N}$ of Theorem~\ref{thm_elliott_2point_stronger}.
 
For the proof of Theorem~\ref{thm_elliott_2point_stronger}, we first need what might be called a ``decoupling inequality'' for correlations, developed by Tao  in~\cite{tao-chowla} (and generalized in~\cite[Theorem 3.6]{tt-duke},~\cite[Proposition C.1]{fh-annals}). This allows one to compare a one-variable correlation average with a two-variable correlation average (we shall only need the two-point case of the result).

\begin{lemma}[Decoupling inequality for correlations]\label{le_entropy}
Let $k\geq 1$ and $h_1,\ldots, h_k\in \mathbb{N}$ be fixed.
Then, for any function $P(x)$ tending to infinity slowly enough and any $g_1,\ldots, g_k:\mathbb{N}\to \mathbb{D}$, we have
\begin{align*}
 \mathbb{E}_{p\leq P(x)}^{\log}|\mathbb{E}_{n\leq x}g_1(n+ph_1)\cdots g_k(n+ph_k)-\mathbb{E}_{n\leq x/p}g_1(p(n+h_1))\cdots g_k(p(n+h_k))|=o(1).   
\end{align*}
\end{lemma}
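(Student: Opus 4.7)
The plan is to deduce the lemma from Tao's entropy-decrement principle in \cite{tao-chowla}, generalized in \cite[Theorem 3.6]{tt-duke} and \cite[Proposition C.1]{fh-annals} to the form we need. Writing $F_p(n) := \prod_{j=1}^k g_j(n+ph_j)$ and performing the substitution $m = pn$ in the second average, the quantity inside the absolute value in the lemma equals
$$\mathbb{E}_{n \leq x} F_p(n) - \mathbb{E}_{n\leq x,\, p\mid n}F_p(n),$$
measuring how the conditional distribution of $F_p(n)$ given $p\mid n$ differs from the unconditional distribution on $n \leq x$.

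First, I would embed the problem in a $p$-independent joint framework: with $M = P(x)\cdot\max_j h_j$, consider the long $\mathbb{D}$-valued random vector
$$\vec{G}(n) := \bigl( g_j(n+s) \bigr)_{1 \leq j \leq k,\ 0 \leq s \leq M},$$
so that each $F_p(n)$ is a coordinate-wise product of $k$ entries of $\vec{G}(n)$. Under $n \sim \mathrm{Unif}[1,x]$ and after an $\varepsilon$-discretization of $\mathbb{D}$, the joint Shannon entropy of $\vec{G}(n)$ is bounded uniformly, with a bound depending only on $\varepsilon$ and $M$.

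Second, and this is the heart of the argument, I would invoke the entropy decrement. Each conditioning on $n \bmod p$ refines the distribution of $\vec{G}(n)$ and hence can only reduce conditional entropy; summing these reductions across primes with weight $1/p$ yields the key bound
$$\sum_{p \leq P(x)} \frac{I\bigl(\vec{G}(n);\ n \bmod p\bigr)}{p} \ll 1,$$
independently of $P(x)$. Consequently, on log-average over $p \leq P(x)$, the mutual information tends to $0$. By Pinsker's inequality applied to the marginal recovering $F_p$, the total-variation distance between the law of $F_p(n)$ and its conditional law given $p\mid n$ is then $o(1)$ on log-average, and since $|F_p| \leq 1$ this translates into $o(1)$ closeness of the corresponding expectations.

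The main obstacle is calibrating the growth of $P(x)$ against the dimension of the auxiliary vector: $M$ scales with $P(x)$, so the entropy-budget argument only yields $o(1)$ once $P(x)$ grows slowly enough (e.g.\ slower than iterated logarithms of $x$). This is precisely the quantitative content of the phrase ``tending to infinity slowly enough'' in the statement, and it is handled by the explicit entropy bookkeeping in the references above.
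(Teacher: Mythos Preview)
Your approach matches the paper's: both invoke the entropy decrement (the paper's proof is simply a citation to Proposition~2.3 of \cite{tt-ant}, which packages the same argument you reference from \cite{tao-chowla}, \cite{tt-duke}, \cite{fh-annals}). One caveat on the sketch: the displayed bound $\sum_{p \leq P(x)} I(\vec G(n); n \bmod p)/p \ll 1$ ``independently of $P(x)$'' is not quite right as written. Under approximate independence of the residues $n \bmod p$, subadditivity gives $\sum_{p \leq P} I(\vec G; n \bmod p) \leq H(\vec G) \ll M$, but since $M$ scales with $P(x)$ this does not directly give a log-weighted $O(1)$ bound, nor is the $1/p$-weighted sum what naturally emerges from the chain rule. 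The actual argument in the references pigeonholes over dyadic prime scales to locate ranges where the conditional mutual information drop is small, and it is this step---together with the slow growth of $P(x)$, which you correctly flag in your last paragraph---that yields the $o(1)$. Since you ultimately defer to the references for this bookkeeping, the proposal is sound; just do not rely on the intermediate inequality as stated.
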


\begin{proof}
This follows from~\cite[Proposition 2.3]{tt-ant} (where the multiplicativity assumption is not used). See also~\cite[Corollary 1.1]{helfgott-radziwill} for a quantitative version of the case $k=2$.
\end{proof}

We also need the following lemma that can be extracted from~\cite{tao-chowla} (and is based on the work in~\cite{MRT}). It allows one to deal with the two-dimensional averages that result from Lemma~\ref{le_entropy}.

\begin{lemma}\label{le_Tao} Let $h_1,h_2\in \mathbb{N}$ be distinct fixed integers. Let $1\leq P(X)\leq X$ be a function tending to infinity. Let $f_1,f_2:\mathbb{N}\to \mathbb{D}$ be multiplicative.  Let $x\geq 3$, and let $$M_A=\min_{j\in \{1,2\}}\,\inf_{|t|\leq Ax}\,\min_{\substack{\chi\pmod     q\\q\leq A}}\mathbb{D}(f_j,\chi(n)n^{it};x).$$
Then we have
\begin{align*}
\mathbb{E}_{p\leq P(x)}^{\log}|\mathbb{E}_{n\leq x}f_1(n+ph_1)f_2(n+ph_2)|=o_{M_A\to \infty}(1)+o_{A\to \infty}(1)+o_{x\to \infty}(1). \end{align*}
\end{lemma}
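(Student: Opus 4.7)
\medskip

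\textbf{Proof plan.} The strategy follows Tao's approach to the logarithmically averaged two-point Chowla/Elliott conjecture in~\cite{tao-chowla}, from which this lemma can essentially be extracted. The core idea is to exploit the interplay between the logarithmic averaging over primes $p$ and the multiplicativity of $f_1,f_2$, in order to convert a two-scale correlation (inner shift $ph_j$) into a one-scale correlation that can be attacked by the Matom\"aki--Radziwi\l\l\ machinery driven by the pretentious distance.

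\textbf{Step 1 (Decoupling).} First, apply Lemma~\ref{le_entropy} in the case $k=2$, $g_j=f_j$. This replaces, up to an $o_{x\to\infty}(1)$ error averaged over $p\leq P(x)$, the quantity
\[
\mathbb{E}_{n\leq x}f_1(n+ph_1)f_2(n+ph_2)
\]
by
\[
\mathbb{E}_{n\leq x/p}f_1(p(n+h_1))f_2(p(n+h_2)).
\]

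\textbf{Step 2 (Multiplicativity).} Next, use that $f_j(p(n+h_j))=f_j(p)f_j(n+h_j)$ whenever $(p,n+h_j)=1$, which fails for at most $O(1/p)$ proportion of $n\leq x/p$. The exceptional contribution, once log-averaged over $p\leq P(x)$, is $o_{P(x)\to\infty}(1)=o_{x\to\infty}(1)$. Thus we reduce the problem to bounding
\[
\mathbb{E}^{\log}_{p\leq P(x)}\bigl|f_1(p)f_2(p)\,T_p\bigr|,\qquad T_p:=\mathbb{E}_{n\leq x/p}f_1(n+h_1)f_2(n+h_2).
\]

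\textbf{Step 3 (Reduction via MRT).} At this point one invokes the quantitative Matom\"aki--Radziwi\l\l--Tao machinery. Since the absolute value sits inside the average in $p$, direct cancellation from $\sum_p f_1(p)f_2(p)$ is not available; instead, one exploits a further iteration of the decoupling/multiplicativity trick, combined with Halász-type estimates to replace each $f_j$ on a typical short interval by a Dirichlet polynomial. The hypothesis $M_A\geq A$ — namely that each $f_j$ is at pretentious distance $\geq A$ from every twist $\chi(n)n^{it}$ with $q\leq A$, $|t|\leq Ax$ — then ensures that the resulting Dirichlet polynomials make a contribution of size $o_{M_A\to\infty}(1)+o_{A\to\infty}(1)+o_{x\to\infty}(1)$, uniformly over primes $p\leq P(x)$.

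\textbf{Step 4 (Assembly).} Combining Steps 1--3 and using Mertens' theorem to compare $\sum_{p\leq P(x)}1/p$ with $\log\log P(x)$, one obtains the claimed triple-$o(1)$ bound.

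\textbf{Main obstacle.} The main difficulty lies in Step 3: making sure that the MRT-type structure theorem gives a bound depending only on the pretentious distance $M_A$ at the input scale $x$, uniformly as $p$ varies across primes $\leq P(x)$. This is delicate because the inner average $T_p$ is taken over $n\leq x/p$, and one must verify that the pretentiousness hypothesis at scale $x$ transfers to the scales $x/p$ via Mertens' estimate, losing only $O(1)$ in the relevant distance. A secondary difficulty is that the hypothesis controls each $f_j$ individually (not the product $f_1f_2$, which may well be pretentious), so the cancellation must be extracted through the interaction of the prime shift $p$ with the multiplicativity of \emph{each} factor, rather than from cancellation in $\sum_p f_1(p)f_2(p)$.
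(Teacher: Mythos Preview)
Your Steps 1--2 go in the wrong direction and leave you with a circular task. After applying Lemma~\ref{le_entropy} and multiplicativity, you are reduced to bounding
\[
\mathbb{E}^{\log}_{p\leq P(x)}|T_p|,\qquad T_p=\mathbb{E}_{n\leq x/p}f_1(n+h_1)f_2(n+h_2),
\]
where the shifts $h_1,h_2$ are now \emph{fixed} and $T_p$ depends on $p$ only through the scale $x/p$. Since $P(x)$ grows slowly, $x/p\asymp x$ and $T_p$ is essentially constant in $p$; bounding $\mathbb{E}^{\log}_p|T_p|$ is therefore no easier than bounding a single $|S(x;h_1,h_2)|$, which is the two-point Elliott statement itself. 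In the paper's architecture, Lemma~\ref{le_entropy} and Lemma~\ref{le_Tao} are \emph{separate} inputs to Theorem~\ref{thm_elliott_2point_stronger}: Lemma~\ref{le_Tao} bounds $\mathbb{E}^{\log}_p|S(x;ph_1,ph_2)|$, and Lemma~\ref{le_entropy} transfers this to $\mathbb{E}^{\log}_p|S(x/p;h_1,h_2)|$. Your Step~1 runs this transfer backwards, so your Step~3 is left carrying the entire content of the lemma, and the phrase ``further iteration of the decoupling/multiplicativity trick'' does not supply it.

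The actual mechanism (from~\cite[Section 3]{tao-chowla}, or equivalently~\cite[Proposition 8.3]{helfgott-radziwill} combined with~\cite[Theorem 1.7]{MRT}) works directly with the $p$-dependent shifts $ph_1,ph_2$: one partitions $n\leq x$ into short intervals of a slowly growing length $H$, applies the Matom\"aki--Radziwi\l\l--Tao structure theorem to approximate each $f_j$ on most such intervals by a short trigonometric/Dirichlet polynomial, and then the correlation on each interval becomes a bilinear expression in which the averaging over primes $p$ (through the varying shifts $ph_j$) produces genuine cancellation via the circle method. The non-pretentiousness hypothesis $M_A\to\infty$ enters precisely to control the major-arc contribution in that analysis. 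None of this is visible once you have stripped the $p$-dependence from the shifts, so your reduction discards exactly the feature that makes the problem tractable.
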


\begin{proof}
This follows from the arguments in~\cite[Section 3]{tao-chowla}. Alternatively, combine~\cite[Proposition 8.3]{helfgott-radziwill} (with $\mathbf{Q}$ there a suitably chosen subset of the primes in $[H/2,H]$ for any $H\leq P(x)$) with~\cite[Theorem 1.7]{MRT}. 
\end{proof}

We are now ready to prove Theorem~\ref{thm_elliott_2point_stronger}.

\begin{proof}[Proof of Theorem~\ref{thm_elliott_2point_stronger}] Let us first note that we can assume for every $c>0$ and for $j\in \{1,2\}$ that
\begin{align}\label{eq21a}
\sum_{\substack{p\in \mathbb{P}\\|f_j(p)|\leq 1-c}}\frac{1}{p}<\infty,
\end{align}
since otherwise the claim follows (with the limit in~\eqref{eq:lim2} being over all $x\in \mathbb{N}$)  by applying the triangle inequality and Delange's theorem~\cite[Thm. III.4.2]{Tenenbaum} to bound
\begin{align*}
 \left|\frac{1}{x}\sum_{n\leq x}f_1(n+h_1)f_2(n+h_2)\right|\leq \frac{1}{x}\sum_{n\leq x}|f_j(n+h_j)|\ll \exp\left(-\sum_{p\leq x}\frac{1-|f_j(p)|}{p}\right)=o(1).   
\end{align*}

Let $A(\cdot)$ be as in the theorem, and let
\begin{align*}
 \mathcal{X}_1=\{x\in \mathbb{N}:\,\, \max_{j\in \{1,2\}}\inf_{|t|\leq A(x)x}\,\min_{\substack{\chi\pmod q\\q\leq A(x)}}\mathbb{D}(f_j,\chi(n)n^{it};x)\geq A(x)/2\}.   
\end{align*}

By the triangle inequality for the pretentious distance, we see that for all large enough $x$ we have
\begin{align}\label{eq30}
 x\in \mathcal{X}\implies [x^{1/A(x)},x]\cap \mathbb{N}\subset \mathcal{X}_1.   
\end{align}

Let 
\begin{align*}
S(x;h_1,h_2):=\mathbb{E}_{n\leq x}f_1(n+h_1)f_2(n+h_2).   
\end{align*} 
By Lemma~\ref{le_entropy} and the fact that $f_j(pn)=f_j(p)f_j(n)+O(1_{p\mid n})$, for any slowly enough growing function $P(\cdot)$ and any fixed, distinct $h_1,h_2\in \mathbb{N}$ we have
\begin{align}\label{eq:lim5}
\lim_{x\to \infty}\mathbb{E}_{p\leq P(x)}^{\log}|S(x;ph_1,ph_2)-f_1(p)f_2(p)S(x/p;h_1,h_2)|=0.      \end{align}
We may choose $P(\cdot)$ so that $P(\cdot)$ is increasing and $P(\log x)\geq P(x)-1$ for all $x$. 

We make the following simple observation which we will use several times in this section without further mention. If $F:\mathbb{N}^k\to \mathbb{R}_{\geq 0}$ satisfies $\lim_{x\to \infty}F(x,n_1,\ldots, n_{k-1})=0$ for any $n_i\in \mathbb{N}$, then there exists some function $N(X)$ tending to infinity as $X\to \infty$ such that
\begin{align}\label{eq:simple}
\max_{1\leq n_1,\ldots, n_{k-1}\leq N(x)}F(x,n_1,\ldots, n_{k-1})\leq \frac{1}{N(x)}
\end{align}
for all $x\in \mathbb{N}$.

By this observation and~\eqref{eq:lim5}, there exists a function $H(\cdot)$ tending to infinity such that 
\begin{align}\label{eq:entropy}
\lim_{x\to \infty}\max_{\substack{1\leq h_1,h_2\leq H(x)\\h_1\neq h_2}}\mathbb{E}_{p\leq P(x)}^{\log}|S(x;ph_1,ph_2)-f_1(p)f_2(p)S(x/p;h_1,h_2)|=0.  
\end{align}

By Lemma~\ref{le_Tao}, if $H(\cdot)$ is growing sufficiently slowly we have 
\begin{align*}
\lim_{\substack{x\to \infty\\x\in \mathcal{X}_1}}\max_{\substack{1\leq h_1, h_2\leq H(x)\\h_1\neq h_2}}\mathbb{E}_{p\leq P(x)}^{\log}|S(x;ph_1,ph_2)|=0,    
\end{align*}
so by~\eqref{eq:entropy} we have
\begin{align*}
\lim_{\substack{x\to \infty\\x\in \mathcal{X}_1}}\max_{\substack{1\leq h_1, h_2\leq H(x)\\h_1\neq h_2}}\mathbb{E}_{p\leq P(x)}^{\log}|f_1(p)f_2(p)S(x/p;h_1,h_2)|=0. \end{align*}
Recalling~\eqref{eq21a}, we obtain
\begin{align*}
\lim_{\substack{x\to \infty\\x\in \mathcal{X}_1}}\max_{\substack{1\leq h_1, h_2\leq H(x)\\h_1\neq h_2}}\mathbb{E}_{p\leq P(x)}^{\log}|S(x/p;h_1,h_2)|=0.     
\end{align*}
Therefore, for some increasing function
 $h(X)\leq H(X)$ tending to infinity slowly we have 
 \begin{align}\label{eq:lim3}
\max_{\substack{1\leq h_1, h_2\leq H(x)\\h_1\neq h_2}}\mathbb{E}_{p\leq P(x)}^{\log}|S(x/p;h_1,h_2)|\leq \frac{1}{h(x)^3}     
\end{align}
for all $x\in \mathcal{X}_1$. We have
\begin{align*}
\max_{\substack{1\leq h_1, h_2\leq h(x)\\h_1\neq h_2}}\mathbb{E}_{p\leq P(x)}^{\log}|S(x/p;h_1,h_2)|&\geq \frac{1}{h(x)^2}\sum_{\substack{1\leq h_1,h_2\leq h(x)\\h_1\neq h_2}}\mathbb{E}_{p\leq P(x)}^{\log}|S(x/p;h_1,h_2)|\\
&\geq \frac{1}{h(x)^2}\mathbb{E}_{p\leq P(x)}^{\log}\max_{\substack{1\leq h_1, h_2\leq h(x)\\h_1\neq h_2}}|S(x/p;h_1,h_2)|,
\end{align*}
and combined with~\eqref{eq:lim3} this yields
\begin{align*}
\lim_{\substack{x\to \infty\\x\in \mathcal{X}_1}}  \mathbb{E}_{p\leq P(x)}^{\log}\max_{\substack{1\leq h_1, h_2\leq h(x)\\h_1\neq h_2}}|S(x/p;h_1,h_2)|=0.    
\end{align*}

Recalling~\eqref{eq30} and using the fact that $P(x^{1/A(x)})\geq P(\log x)\geq P(x)-1$ for all large $x$ by the choice of $P(\cdot)$,  we have 
\begin{align*}
 \lim_{\substack{x\to \infty\\x\in \mathcal{X}}}\mathbb{E}_{p\leq P(x)}^{\log} \frac{1}{\log x}\int_{x^{1/A(x)}}^{x}\max_{\substack{1\leq h_1, h_2\leq h(x^{1/A(x)})\\h_1\neq h_2}}|S(y/p;h_1,h_2)|\, \frac{dy}{y}=0.  
\end{align*}
By the pigeonhole principle, we can find some $P_x\in [2,P(x)]$ such that
\begin{align*}
 \lim_{\substack{x\to \infty\\x\in \mathcal{X}}}\frac{1}{\log x}\int_{x^{1/A(x)}}^{x}\max_{\substack{1\leq h_1, h_2\leq h(x^{1/A(x)})\\h_1\neq h_2}}|S(y/P_x;h_1,h_2)|\, \frac{dy}{y}=0,  
\end{align*}
and making the change of variables $w=y/P_x$ in the integral, this implies that 
\begin{align*}
 \lim_{\substack{x\to \infty\\x\in \mathcal{X}}}\frac{1}{\log x}\int_{x^{1/A(x)}}^{x}\max_{\substack{1\leq h_1, h_2\leq h(x^{1/A(x)})\\h_1\neq h_2}}|S(w;h_1,h_2)|\, \frac{dw}{w}=0,  
\end{align*}
since $P_x\leq P(x)$ and $P(\cdot)$ is growing sufficiently slowly. Widening the integration range to $[1,x]$ (noting that the contribution of $[1,x^{1/A(x)}]$ is negligible) and approximating the integral with a sum, we see that
\begin{align}\label{eq:lim4}
\lim_{\substack{x\to \infty\\x\in \mathcal{X}}}\frac{1}{\log x}\sum_{n\leq x}\frac{1}{n}\max_{\substack{1\leq h_1, h_2\leq h(x^{1/A(x)})\\h_1\neq h_2}}|S(n;h_1,h_2)|=0.    
\end{align}
Taking $\psi(X)=h(X^{1/A(X)})$ completes the proof.
\end{proof}

Theorem~\ref{thm_elliott_2point}
 follows quickly by combining Theorem~\ref{thm_elliott_2point_stronger} and Theorem~\ref{thm_main}.
 
\begin{proof}[Proof of Theorem~\ref{thm_elliott_2point}] Let 
\begin{align*}
\mathcal{Y}=\{x\in \mathbb{N}:\,\, \max_{j\in \{1,2\}}\inf_{|t|\leq x^2}\,\,\min_{\substack{\chi\pmod q\\q\leq \log x}}\mathbb{D}(f_j,\chi(n)n^{it};x)\geq \log \log \log x\}.
\end{align*}
If $\mathcal{Y}$ is infinite, the claim follows from Theorem~\ref{thm_elliott_2point_stronger} (taking $A(x)=\log \log \log x$) and Markov's inequality. Suppose then that $\mathcal{Y}$ is finite.  Then $f_1$ is moderately non-pretentious (recalling that by assumption $f_1$ is non-pretentious) and $f_2$ is either pretentious or moderately non-pretentious. Hence, the claim follows from Theorem~\ref{thm_main}.
\end{proof}

We next prove Proposition~\ref{prop_MRT}, which complements Theorem~\ref{thm_elliott_2point}.

\begin{proof}[Proof of Proposition~\ref{prop_MRT}] We modify the  parameters in the construction of Matom\"aki--Radziwi\l{}\l{}--Tao~\cite[Appendix B]{MRT} of a multiplicative function violating two-point Elliott. We define a sequence $(f(p))_p$ iteratively as follows. 
\begin{enumerate}
    \item Set $t_1=100$ and $f(p)=1$ for $p\leq t_1$. 

    \item Suppose that $f(p)$ has been defined for $p\leq t_m$. Since the vector $(p^{it})_{p\leq t_m}$ is asymptotically equidistributed in $\prod_{p\leq t_m}S^1$ as $t\to \infty$, we can find some $s_{m+1}>\exp(t_{m})$ such that
    \begin{align*}
     |p^{is_{m+1}}-f(p)|\leq \frac{1}{t_m^2}   
    \end{align*}
    for all $p\leq t_m$. Set $t_{m+1}=\exp(s_{m+1})$ and define $f(p)=p^{is_{m+1}}$ for $t_m<p\leq t_{m+1}$. Now  iterate step (2).
\end{enumerate}
Then we define a completely multiplicative function $f:\mathbb{N}\to \mathbb{D}$ by
\begin{align*}
 f(n)=\prod_{p^{\ell}\mid\mid n}f(p)^{\ell}.   
\end{align*}

Now, for any $n\in [s_{m+1}^2,t_{m+1}]$ such that $p^{\ell}\mid n$ with $p\leq t_m$ implies $\ell\leq \log t_m$, we compute
\begin{align*}
f(n)&=\prod_{p^{\ell}\mid\mid n}f(p)^{\ell}=\prod_{\substack{p^{\ell}\mid \mid n\\p\leq t_m}}f(p)^{\ell}\cdot \prod_{\substack{p^{\ell}\mid\mid n\\t_m<p\leq t_{m+1}}}f(p)^{\ell}\\
&=\prod_{\substack{p^{\ell}\mid \mid n\\p\leq t_m}}\left(p^{is_{m+1}}+O\left(\frac{1}{t_m^2}\right)\right)^{\ell}\cdot \prod_{\substack{p^{\ell}\mid\mid n\\t_m<p\leq t_{m+1}}}p^{is_{m+1}\ell}\\
&=n^{is_{m+1}}\left(1+O\left(\frac{1}{t_m^2}\right)\right)^{O(t_m)}= n^{is_{m+1}}+O\left(\frac{1}{t_m}\right),
\end{align*}
so 
\begin{align*}
f(n)\overline{f(n+1)}&=\left(1-\frac{1}{n+1}\right)^{is_{m+1}}+O\left(\frac{1}{t_m}\right)\\
&=1+O\left(\frac{s_{m+1}}{s_{m+1}^2}\right)+O\left(\frac{1}{t_m}\right)\\
&=1+O\left(\frac{1}{t_m}\right).   
\end{align*}

Now let $x\in [(\log t_{m+1})^4,t_{m+1}]$. Note that the proportion of $n\leq x$ that satisfy $p^{\ell}\mid n$ for some $p\leq t_m$ and $\ell>\log t_m$ is $o_{x\to \infty}(1)$. Hence, we have by the union bound (recalling that $x^{1/2}\geq (\log t_{m+1})^2= s_{m+1}^2$)
\begin{align}\label{eq27}\begin{split}
 \frac{1}{x}\sum_{n\leq x}f(n)\overline{f(n+1)}&=\frac{1}{x}\sum_{x^{1/2}\leq n\leq x}f(n)\overline{f(n+1)}+o_{x\to \infty}(1)=1+o_{x\to \infty}(1).
 \end{split}
\end{align}

Let $\mathcal{X}$ be the set of $x\in \mathbb{N}$ for which~\eqref{eq27} holds. We have
\begin{align*}
\frac{1}{\log \log t_{m+1}}\sum_{n\leq t_{m+1}}\frac{1_{n\in \mathcal{X}}}{n\log n}\geq \frac{1}{\log \log t_{m+1}}\sum_{(\log t_{m+1})^4\leq n\leq t_{m+1}}\frac{1}{n\log n}=1-o_{m\to \infty}(1), 
\end{align*}
so we have $\delta^{+}_{\log \log}(\mathcal{X})=1$. 

What remains to be shown is that $f$ is non-pretentious. Let $\chi$ be any fixed Dirichlet character and $t$ any fixed real number.  Then by Lemma~\ref{le_pretentious_distance} and Mertens's theorem (recalling $s_{m+1}=\log t_{m+1}$) we have
\begin{align*}
 \mathbb{D}(f,\chi(n)n^{it};t_m,t_{m+1})^2&=\mathbb{D}(n^{is_{m+1}},\chi(n)n^{it};t_m,t_{m+1})^2\\
 &\geq \mathbb{D}(n^{i\log t_{m+1}},\chi(n)n^{it};t_{m+1})^2-O(\log \log t_m)\\
 &\gg   \log \log t_{m+1}.   
\end{align*}
Letting $m\to \infty$, we see that $f$ is non-pretentious. 
\end{proof}

\begin{remark}
By modifying the growth rate of $t_m$ in terms of $s_m$ in the construction above, one can show that for any function $\psi(X)\leq X$ tending to infinity as $X\to \infty$ and for any $\delta>0$ there exists a multiplicative function $f:\mathbb{N}\to \mathbb{D}$ such that
\begin{align}\label{eq:psi}
\min_{y\in [\psi(X),X]}\left|\frac{1}{x}\sum_{n\leq x}f(n)\overline{f(n+1)}\right|\geq 1-\delta   
\end{align}
holds for infinitely many $X\in \mathbb{N}$. 
\end{remark}

\subsection{Higher order correlations}

\begin{proof}[Proof of Theorem~\ref{thm_elliott_higher}] Let us first note that we can assume for every $c>0$ that
\begin{align}\label{eq21}
\sum_{\substack{p\in \mathbb{P}\\|f(p)|\leq 1-c}}\frac{1}{p}<\infty,
\end{align}
since otherwise by the triangle inequality and Delange's theorem~\cite[Thm. III.4.2]{Tenenbaum} we have
\begin{align}\label{eq:correlate24}
 \left|\frac{1}{x}\sum_{n\leq x}f^{e_1}(n+h_1)\cdots f^{e_k}(n+h_k)\right|\leq \frac{1}{x}\sum_{n\leq x}|f^{e_1}(n+h_1)| \ll \exp\left(\sum_{p\leq x}\frac{1-|f^{e_1}(p)|}{p}\right)=o(1),  
\end{align}
since $|f(p)|\leq 1-c$ implies $|f(p)^{e_1}|\leq 1-c$.

Set $m := e_1 + \cdots + e_k$. By Theorem~\ref{thm_A}(2), we have the desired conclusion (with $\mathcal{X}$ satisfying $\delta_{\log}(\mathcal{X})=1$, so in particular $\delta_{\log \log}(\mathcal{X})=1$), unless there exist a Dirichlet character $\chi$ and a real number $t$ such that
\begin{align}\label{eq19}
 \mathbb{D}(f^{m},\chi(n)n^{it};x)^2=o(\log \log x)   
\end{align}
for $x\geq 10$. 

We shall first show that $t=0$. Let $\ell$ be the order of $\chi$. Then by the triangle inequality for the pretentious distance we deduce that
\begin{align}\label{eq18}
 \mathbb{D}(f^{d\ell m},n^{id\ell t};x)^2=o(\log \log x).   
\end{align}
Since $f$ takes values in the convex hull of the $d$th roots of unity, there exists some $\eta>0$ such that all the values $f^{d\ell m}(p)$ are at least distance $\eta$ away from $-1$. Now, we can estimate
\begin{align}\label{eq25}
 \mathbb{D}(f^{d\ell m},n^{id\ell t};x)^2\geq \sum_{\substack{p\leq x\\|p^{itd\ell }+1|\leq \eta/3}}\frac{1-\textnormal{Re}(f(p)^{d\ell m}p^{-id\ell t})}{p}\gg_{\eta} \sum_{\substack{p\leq x\\|p^{id\ell t}+1|\leq \eta/3}}\frac{1}{p}.    
\end{align}
If $t \neq 0$ observe that for a large enough constant $j_0(\eta)\geq 1$ we have
$$
p \in \bigcup_{j \geq j_0(\eta)} [A_j,B_j]\implies |p^{id\ell t} + 1|\leq \eta/3,
$$
where we have set
$$
A_j = e^{\frac{\pi}{d\ell|t|}(2j+1-\eta/(10\pi))}, \quad B_j = e^{\frac{\pi}{d\ell|t|}(2j+1+\eta/(10\pi))}.
$$
Note that $B_j = e^{\eta/(5d\ell|t|)} A_j$, so if $j \geq j_0(\eta)$ then the prime number theorem implies that
$$
\sum_{A_j \leq p \leq B_j} \frac{1}{p} \gg_{\eta} \frac{B_j-A_j}{A_j \log A_j} \gg_{\eta} \frac{1}{\log A_j} \gg \frac{1}{j},
$$
and thus 
\begin{align}\label{eq25b}
\sum_{\substack{p\leq x\\|p^{id\ell t}+1|\leq \eta/3}}\frac{1}{p} \geq \sum_{j_0(\eta) \leq j \leq \tfrac{d\ell |t|}{2\pi} \log x-1}\, \sum_{A_j \leq p \leq B_j} \frac{1}{p} \gg_{\eta} \log\left(\frac{d\ell|t|(\log x)}{2\pi}-1\right) \gg \log\log x.
\end{align}
Together with~\eqref{eq25} this contradicts~\eqref{eq18}. Thus, we may assume that $t=0$ in~\eqref{eq18}.

Now~\eqref{eq19} reads as
\begin{align*}
 \mathbb{D}(f^{m},\chi;x)^2=o(\log \log x).    
\end{align*}
Since $(d,m)=1$, there exist $a,b\in \mathbb{N}$ such that $am=1+bd$. Hence, by the triangle inequality,
\begin{align}\label{eq20}
 \mathbb{D}(f^{1+bd},\chi^{a};x)^2=o(\log \log x).    
\end{align}

Let $\eta>0$ be small, and let 
\begin{align*}
 \mathcal{P}_{\eta}=\{p\in \mathbb{P}:\,\, |f(p)^{bd}-1|\geq \eta\}.   
\end{align*}
Note that
\begin{align*}
 |f(p)^{bd}-1|\geq \eta\implies \min_{1\leq j\leq bd}\left|f(p)-e\left(\frac{j}{bd}\right)\right|\gg \eta.   
\end{align*}
Since $f$ takes values in the convex hull of the $d$th roots of unity $f$, this implies that $|f(p)|\leq 1-c_0\eta$ for some constant $c_0>0$. Hence, 
\begin{align*}
p\in \mathcal{P}_{\eta}\implies |f(p)|\leq 1-c_0\eta.
\end{align*}
But by~\eqref{eq21} this implies that
\begin{align*}
\sum_{p\in \mathcal{P}_{\eta}}\frac{1}{p}\ll_{\eta} 1.     
\end{align*}
Therefore, for any $\eta>0$ we have by Mertens's theorem
\begin{align*}
\mathbb{D}(f^{bd},1;x)^2\leq O_{\eta}(1)+\sum_{p\leq x}\frac{\eta}{p}\leq O_{\eta}(1)+ \eta \log \log x.    
\end{align*}
Sending $\eta\to 0$, we conclude that
\begin{align*}
 \mathbb{D}(f^{bd},1;x)^2=o(\log \log x).   
 \end{align*}
 
Now, recalling~\eqref{eq20} and applying the triangle inequality, we see that
\begin{align*}
 \mathbb{D}(f,\chi^{a};x)^2\leq    
 2(\mathbb{D}(f^{1+bd},\chi^{a};x)^2+\mathbb{D}(\overline{f}^{bd},1;x)^2)=o(\log \log x).   
\end{align*}
Hence, by the triangle inequality again, for each $1\leq j\leq k$ we have
\begin{align*}
 \mathbb{D}(f^{e_j},\chi^{ae_j};x)^2=o(\log \log x).     \end{align*}
This implies that each of $f_1^{e_1},\ldots, f_k^{e_k}$ is either pretentious of moderately non-pretentious.

We shall show that $f_1^{e_1}$ cannot be pretentious. Supposing for a contradiction that $f_1^{e_1}$ is pretentious,   there exist some real number $u$ and Dirichlet character $\chi'$ such that
\begin{align*}
 \mathbb{D}(f_1^{e_1},\chi'(n)n^{iu};\infty)<\infty.    
\end{align*}
We must have $u\neq 0$, as otherwise we contradict the assumption of the theorem. If $\ell'$ is the order of $\chi'$, then the triangle inequality for the pretentious distance gives
\begin{align}\label{eq25c}
 \mathbb{D}(f_1^{de_1\ell'},n^{id\ell' u};\infty)<\infty.
\end{align}
By our assumption that $f_1$ takes values in the convex hull of the $d$th roots of unity, there exists some $\eta>0$ such that $|f_1(p)^{d e_1\ell'}+1|\geq \eta$ for all primes $p$. This leads to a contradiction with~\eqref{eq25c}, arguing as in~\eqref{eq25} and~\eqref{eq25b}. Summarizing, we now know that $f_1^{e_1}$ is moderately non-pretentious and $f_2^{e_2},\ldots, f_k^{e_k}$ are either moderately non-pretentious or pretentious, so the claim follows from Theorem~\ref{thm_main}. 
\end{proof}

\begin{proof}[Proof of Corollary~\ref{cor_odd}]
This follows immediately from Theorem~\ref{thm_elliott_higher} with $e_1=\cdots =e_k=1$ and $d=2$ after we note that for $f:\mathbb{N}\to [-1,1]$ and $\chi$ any non-real Dirichlet character we always have $\mathbb{D}(f,\chi;\infty)=\infty$; this follows from Lemma~\ref{le:real}.
\end{proof}

\begin{proof}[Proof of Theorem~\ref{thm_omega}]
Let $f:\mathbb{N}\to \{-1,+1\}$ be either of the two multiplicative functions $(-1)^{\omega_{\mathcal{P}}(n)}$ or $(-1)^{\Omega_{\mathcal{P}}(n)}$. Since $\sum_{p\in \mathcal{P}}\frac{1}{p}=\infty$, we have
\begin{align*}
 \mathbb{D}(f,1;\infty)=\infty.   
\end{align*}

On the other hand, since $\mathcal{P}$ has relative density $0$ in the primes, for any fixed $\varepsilon>0$ we have
\begin{align*}
 \mathbb{D}(f,1;x^{\varepsilon},x)^2=\sum_{\substack{x^{\varepsilon}< p\leq x\\p\in \mathcal{P}}}\frac{2}{p}=o(1)   
\end{align*}
as $x\to \infty$ by partial summation and the fact that $|\{p\in [y/2,y]:\,\, p\in \mathcal{P}\}|=o(\pi(y))$. Therefore, there exists some function $\eta(X)$ tending to zero slowly as $X\to \infty$ such that
\begin{align*}
\mathbb{D}(f,1;x^{\eta(x)},x)^2=o(1)    
\end{align*}
as $x\to \infty$. Hence, the claim follows from Theorem~\ref{thm_correlation_conclusion}. 
\end{proof}

\begin{proof}[Proof of Corollary~\ref{cor:sarnak}] This follows immediately by combining Theorem~\ref{thm_omega} with~\cite[Theorem 4.10]{HKPLR}.
\end{proof}

\begin{proof}[Proof of Proposition~\ref{prop_elliott_implication}] We shall prove the two claimed implications separately.

\textbf{Implication from Conjecture~\ref{conj_elliott_Tao} to Conjecture~\ref{conj_elliott:modified}.}

Since $f_1$ is non-pretentious, there must exist some function $M(x)$ tending to infinity such that
\begin{align}\label{eq:M(x)}
\inf_{|t|\leq M(x)}\,\, \min_{\substack{\chi\pmod q\\q\leq M(x)}}\mathbb{D}(f_1,\chi(n)n^{it};x)\geq M(x)   
\end{align}
for infinitely many $x\in \mathbb{N}$ (using~\eqref{eq:simple} with  $F(x,\chi, N):=1/\max_{|t|\leq N}\mathbb{D}(f_1,\chi(n)n^{it};x)$). 

 Denote by $\mathcal{M}$ the collection of multiplicative functions taking values in $\mathbb{D}$. By Conjecture~\ref{conj_elliott_Tao}, there exist functions $x_0(\cdot)$ and $A(\cdot)$ such that the following holds. For any $\varepsilon>0$ $k\in \mathbb{N}$, $H\geq 1$, if $x\geq x_0(\varepsilon,k,H)$ and $h_1,\ldots, h_k\in [1,H]$ are distinct natural numbers, we have
 \begin{align*}
\max_{f_2,\ldots, f_k\in \mathcal{M}} \left|\frac{1}{x}\sum_{n\leq x}f_1(n+h_1)\cdots f_k(n+h_k)\right|\leq \varepsilon  \end{align*}
 unless 
 \begin{align*}
 \inf_{|t|\leq A(\varepsilon,k,H)x^{k-1}}\,\,\min_{\substack{\chi\pmod q\\q\leq A(\varepsilon,k,H)}}\mathbb{D}(f_1,\chi(n)n^{it};x)\leq A(\varepsilon,k,H).     
 \end{align*}
By letting $\varepsilon(x)$ be a function tending to $0$ sufficiently slowly (in terms of $x_0(\cdot)$, $A(\cdot)$ and $M(\cdot)$), we see that 
 \begin{align}\label{eq26}
\max_{J\leq 1/\varepsilon(x)}\max_{f_2,\ldots, f_J\in \mathcal{M}}\left|\frac{1}{x}\sum_{n\leq x}\prod_{1\leq j\leq J}f_j(n+j)\right|\leq \varepsilon(x)   
 \end{align}
 unless 
 \begin{align}\label{eq26b}
 \inf_{|t|\leq x^{B(x)}}\min_{\substack{\chi\pmod q\\q\leq B(x)}}\mathbb{D}(f_1,\chi(n)n^{it};x)\leq B(x),     
 \end{align}
 say, for some function $B(x)$ that may be made to tend to infinity arbitrarily slowly by making $\varepsilon(x)\to 0$ slowly enough. In particular, we may require that $B(x)\leq M(x)^{1/2}$ and $B(e^{x})\leq B(x)+1$ for all $x\in \mathbb{N}$.
 
Let $a\geq 2$. Let $\mathcal{E}$ be the set of $x$ for which~\eqref{eq26b} holds. If $\mathcal{E}$ contains a subsequence $(x_n)$ with $x_n+1\leq x_{n+1}\leq x_n^{a}$ for some $a > 1$, then by Lemma~\ref{le_rigidity}, and the fact that $B(x)$ grows sufficiently slowly, there exist a real number $t_1$ and a Dirichlet character $\chi_1$ such that
 \begin{align*}
  \mathbb{D}(f_1,\chi_1(n)n^{it_1};x)\ll B(x)\leq  M(x)^{1/2}
  \end{align*}
 for $x\geq 10$. But this contradicts~\eqref{eq:M(x)}. Hence, the set $\mathcal{E}$ of $x\in \mathbb{N}$ for which~\eqref{eq26b} holds contains no such subsequence $(x_n)$ for any $a > 1$. But then $\delta^{-}_{\log}(\mathcal{E})=o_{a\to \infty}(1)$, and sending $a\to \infty$ we obtain $\delta^{-}_{\log}(\mathcal{E})=0$, so $\delta^{+}_{\log}(\mathbb{N}\setminus \mathcal{E})=1$. Noting that~\eqref{eq26} holds for $x\in \mathbb{N}\setminus \mathcal{E}$, the statement of Conjecture~\ref{conj_elliott:modified} follows with $\mathcal{X}=\mathbb{N}\setminus\mathcal{E}$, noting that $\mathcal{X}$ depends only on $f_1$.

 \textbf{Implication from Conjecture~\ref{conj_elliott:modified} to Conjecture~\ref{conj_sarnak}. }

 To see that Conjecture~\ref{conj_elliott:modified} implies Conjecture~\ref{conj_sarnak}, we may follow verbatim Sarnak's combinatorial proof (see~\cite{tao-blog-sarnak}) that the Chowla conjecture implies the Elliott conjecture. For the sake of completeness we give details. Let $f:\mathbb{N}\to \mathbb{D}$ be a non-pretentious multiplicative function. Let $\mathcal{X}$ be the set corresponding to $f$ in Conjecture~\ref{conj_elliott:modified}. Then $\delta^{+}_{\log}(\mathcal{X})=1$ and
 \begin{align}\label{eq:sarnak4}
 \lim_{\substack{x\to \infty\\x\in \mathcal{X}}}\mathbb{E}_{n\leq x}f(n+h)\prod_{j=1}^kf_j(n+h_j)=0    
 \end{align}
 for any $k\geq 1$, any multiplicative functions $f_j:\mathbb{N}\to \mathbb{D}$ and any $h_1,\ldots, h_k\in \mathbb{N}$ distinct from $h$.

 Let $a:\mathbb{N}\to \mathbb{D}$ be deterministic, and let \begin{align*}
 x\ggg H\ggg 1/\varepsilon\ggg 1   
 \end{align*} 
with $x\in \mathcal{X}$ be parameters, each one large enough in terms of the ones to the right of it. By the triangle inequality, we can estimate
\begin{align}\label{eq:sarnak1}
\left|\mathbb{E}_{n\leq x}f(n)a(n)\right|\leq \mathbb{E}_{n\leq x}\left|\mathbb{E}_{j\leq H}f(n+j)a(n+j)\right|+o_{H\to \infty}(1).  
\end{align}
This can be bounded as 
\begin{align*}
\ll \varepsilon+\mathbb{P}_{n\leq x}(\{\left|\mathbb{E}_{j\leq H}f(n+j)a(n+j)\right|\geq \varepsilon\}),
\end{align*}
where for any event $E_n$ we write $\mathbb{P}_{n\leq x}(E_n):=\frac{1}{x}|\{n\leq x:\,\, E_n\textnormal{ holds}\}|$.

Since $a$ is deterministic, the set $((a(n+1),\ldots, a(n+H)))_{n\in \mathbb{N}}$ can be covered with $O(\exp(\varepsilon^{10}H))$ balls of radius $\varepsilon/2$ (since $H$ is large in terms of $\varepsilon$). Using this, the triangle inequality and the union bound, we see that
\begin{align}\label{eq:sarnak2}
 \mathbb{P}_{n\leq x}(\{\left|\mathbb{E}_{j\leq H}f(n+j)a(n+j)\right|\geq \varepsilon\})\ll \exp(\varepsilon^{10}H)  \mathbb{P}_{n\leq x}(\{\left|\mathbb{E}_{j\leq H}f(n+j)\zeta_j\right|\geq \varepsilon/2\})  
\end{align}
for some $\zeta_j\in \mathbb{D}$. 
By Chebyshev's inequality, for any integer $1\leq m\leq H/2$ (to be specified later) the right-hand side of~\eqref{eq:sarnak2} is
\begin{align*}
\ll \exp(\varepsilon^{10}H)  \frac{1}{(\varepsilon/2)^{2m}}\mathbb{E}_{n\leq x}\left|\mathbb{E}_{j\leq H}f(n+j)\zeta_j\right|^{2m}.    
\end{align*}
Expanding out $2m$th moment here, this becomes
\begin{align}\label{eq:sarnak3}
 \exp(\varepsilon^{10}H)  \frac{1}{(\varepsilon/2)^{2m}} \frac{1}{H^{2m}}\sum_{j_1,\ldots, j_{2m}\leq H}\zeta_{j_1}\cdots \zeta_{j_m}\overline{\zeta_{j_{m+1}}}\cdots \overline{\zeta_{j_{2m}}}\mathbb{E}_{n\leq x} \prod_{i=1}^m f(n+j_i)\prod_{i=1}^m \overline{f}(n+j_{i+m}).  
\end{align}

Using~\eqref{eq:sarnak4} (which is based on Conjecture~\ref{conj_elliott:modified}), the contribution of all the terms in~\eqref{eq:sarnak3} where at least one of the $j_i$ occurs with multiplicity $1$ in $(j_1,\ldots, j_{2m})$ is $o_{x\to \infty}(1)$, recalling that $x\in \mathcal{X}$. We then upper bound the number of tuples $j_1,\ldots, j_{2m}$ for which all the $j_i$ have multiplicity $\geq 2$ in $(j_1,\ldots, j_{2m})$. Such tuples necessarily satisfy $|\{j_1,\ldots, j_{2m}\}|\leq m$, so there are at most
\begin{align*}
 \sum_{1\leq j\leq m}\binom{H}{j}(2m)!\leq  m(2m)!\binom{H}{m}\leq m(2m)!\frac{H^m}{m!}  
\end{align*}
such tuples (recalling that $m\leq H/2$), each giving a contribution of $\leq 1$ to the sum in~\eqref{eq:sarnak3}. Hence,~\eqref{eq:sarnak3} is bounded by
\begin{align*}
 \ll  \exp(\varepsilon^{10}H) m(2m)!\frac{H^{-m}}{m!}(\varepsilon/2)^{-2m}+o_{x\to \infty}(1). 
\end{align*}
Using the simple inequalities $n\leq 2^n$ and $n^ne^{-n}\leq n!\leq n^n$, the main term here is 
\begin{align*}
  \exp(\varepsilon^{10}H)\left(\frac{2\cdot (2m)^2}{H(\varepsilon/2)^2m/e}\right)^m=\exp(\varepsilon^{10}H)\left(\frac{32em}{H\varepsilon^2}\right)^m.  
\end{align*}
Taking $m$ to be the largest integer less than $\varepsilon^2H/(64e)$ (which is necessarily $>1$ by the assumption that $H$ is large in terms of $\varepsilon$), the previous expression is bounded by
\begin{align*}
\ll  \exp(\varepsilon^{10}H)2^{-m}\ll \exp\left(\varepsilon^{10}H-\frac{\varepsilon^2}{10}\frac{H}{64e}\right)\ll \exp(-\varepsilon^3 H),   
\end{align*}
using again $H\ggg 1/\varepsilon\ggg 1$. Collecting all the error terms, we see that 
\begin{align*}
 \limsup_{\substack{x\to \infty\\x\in \mathcal{X}}} \left|\mathbb{E}_{n\leq x}f(n)a(n)\right|\ll \varepsilon+\exp(-\varepsilon^3H).   
\end{align*}
Since $H\geq \varepsilon^{-10}$ and $1/\varepsilon$ can be taken to be arbitrarily large, the claim follows. 
\end{proof}

\section{Constructing Furstenberg systems}\label{proof_furst}

For proving Theorem~\ref{thm:furst}, we first need to know that pretentious multiplicative functions have unique Furstenberg systems.\footnote{We thank Nikos Frantzikinakis for pointing out the following argument to us.}

\begin{proposition}\label{prop:furst}
 Let $g:\mathbb{N}\to \{0,1\}$ be a pretentious multiplicative function. Then $g$ has a unique Furstenberg system.    
\end{proposition}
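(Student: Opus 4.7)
The plan is to reduce uniqueness of the Furstenberg system to the existence of ordinary correlation limits, and then to construct those limits directly by a periodic-approximation argument that exploits the strong constraint pretentiousness places on a $\{0,1\}$-valued function.

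First, by the Riesz representation theorem, uniqueness of the Furstenberg system of $\boldsymbol{g}$ is equivalent to the existence of $\lim_{x\to\infty} \frac{1}{x}\sum_{n\leq x} h(T^n\boldsymbol{g})$ for every continuous $h:\{0,1\}^{\mathbb{Z}}\to \mathbb{C}$. By Stone--Weierstrass every such $h$ is uniformly approximated by polynomials in finitely many coordinates, and since $g(n)^2 = g(n)$ these polynomials reduce to linear combinations of monomials $\prod_{j \in S} y(j)$ over finite $S \subset \mathbb{Z}$ (which, after translation, may be assumed to lie in $\mathbb{N}$). Thus it suffices to show that for every $k \geq 1$ and distinct $h_1,\ldots,h_k \in \mathbb{N}$ the limit
\begin{equation*}
L(h_1,\ldots,h_k) := \lim_{x\to\infty} \frac{1}{x}\sum_{n \leq x} g(n+h_1)\cdots g(n+h_k)
\end{equation*}
exists.

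The key structural input is that pretentiousness of $g$ forces $\sum_{p : g(p) = 0} 1/p < \infty$: the summands defining $\mathbb{D}(g,\chi(n)n^{it};\infty)^2$ are nonnegative, and each prime with $g(p) = 0$ contributes exactly $1/p$. With this in hand, I would introduce for parameters $y, M \geq 1$ the periodic surrogate
\begin{equation*}
g_{y,M}(n) := \prod_{p \leq y} g\bigl(p^{\min(v_p(n),M)}\bigr),
\end{equation*}
which is $\{0,1\}$-valued and periodic modulo $\prod_{p\leq y} p^{M+1}$, so its $k$-point correlations converge by periodicity to explicit constants $L_{y,M}(h_1,\ldots,h_k)$ as $x \to \infty$.

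Finally, I would compare $g$ with $g_{y,M}$. A case analysis shows that the exceptional set $E_{y,M} := \{n \in \mathbb{N} : g(n) \neq g_{y,M}(n)\}$ is contained in $\{n : \exists\, p \leq y,\, v_p(n) > M\}$, of upper density at most $\sum_{p \leq y} p^{-(M+1)}$, together with $\{n : \exists\, p > y \text{ with } g(p^{v_p(n)}) = 0\}$, whose upper density is bounded by $\sum_{p > y,\, g(p)=0} 1/p + O(1/y)$ after separating the contributions from $v_p(n)=1$ and $v_p(n) \geq 2$. Choosing $M = M(y) \to \infty$ slowly as $y \to \infty$, both pieces tend to $0$, and a union bound over the $k$ shifts gives
\begin{equation*}
\limsup_{x \to \infty} \left|\frac{1}{x}\sum_{n \leq x} g(n+h_1)\cdots g(n+h_k) - L_{y,M(y)}(h_1,\ldots,h_k)\right| = o_{y\to\infty}(1),
\end{equation*}
from which Cauchy-ness of $L_{y,M(y)}$ in $y$ and convergence of the original correlations to the common limit $L(h_1,\ldots,h_k)$ follow. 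The main obstacle is controlling the ``large prime factor'' part of $E_{y,M}$: this is precisely what the summability $\sum_{p:g(p)=0} 1/p < \infty$ coming from pretentiousness is designed to handle, while everything else amounts to routine bookkeeping of smooth-number and power densities.
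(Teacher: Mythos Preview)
Your proof is correct and follows the same underlying idea as the paper's: approximate $g$ in $L^1$-density by periodic functions, and deduce convergence of all correlations (hence uniqueness of the Furstenberg system) from the periodic approximants. The paper does this in two lines by citing external results --- a theorem of Daboussi and Delange that a pretentious $\{0,1\}$-valued multiplicative function is \emph{rationally almost periodic} (which is exactly your approximation $g \approx g_{y,M}$ with $\overline{d}(E_{y,M}) \to 0$), and a theorem that any rationally almost periodic sequence on a finite alphabet has a unique Furstenberg system. Your argument unpacks both of these black boxes for this specific $g$: the construction of $g_{y,M}$ and the density bound on $E_{y,M}$ (via $\sum_{p:\,g(p)=0}1/p<\infty$, which is the only way pretentiousness is used) reprove the Daboussi--Delange decomposition in this case, and your Cauchy argument for the correlation limits is the relevant special case of the second cited result. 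So the strategies coincide; your version is longer but self-contained, the paper's is a two-sentence appeal to the literature.
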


\begin{proof}  By~\cite[Theorem 6]{daboussi-delange}, $g$ is ``rationally almost periodic'' in the sense that for any $\varepsilon>0$ and $n\geq 1$ we can write 
\begin{align}\label{eq:rap}
g(n)=P_{\varepsilon}(n)+E_{\varepsilon}(n)\,\,\textnormal{ where }\,\,\limsup_{x\to \infty}\frac{1}{x}\sum_{n\leq x}|E_{\varepsilon}(n)|\leq \varepsilon,
\end{align}
where $P_{\varepsilon}$ is some periodic function.
It follows from~\cite[Theorem 1.7]{RAP} that any rationally almost periodic sequence taking values in a finite set has a unique Furstenberg system. Hence, $g$ has a unique Furstenberg system. 
\end{proof}

We then show that a class of non-pretentious multiplicative functions has a unique and explicitly given Furstenberg measure.

\begin{theorem}\label{thm:furst2} Let $g:\mathbb{N}\to \{0,1\}$ be a pretentious multiplicative function, and let $\nu_g$ be the corresponding unique Furstenberg measure on $\{0,1\}^{\mathbb{Z}}$. Let $\mathcal{P}$ be any subset of the primes with $\sum_{p\in \mathcal{P}}\frac{1}{p}=\infty$ such that the relative density of $\mathcal{P}$ within $\mathbb{P}$ is $0$. Then, if $f(n)=(-1)^{\Omega_{\mathcal{P}}(n)}$ or $f(n)=(-1)^{\omega_{\mathcal{P}}(n)}$, the function $fg$ has a unique Furstenberg measure $\nu_{fg}$ given on cylinder sets $C$ of $\{-1,0,+1\}^{\mathbb{Z}}$ by
\begin{align}\label{eq:furst2}\nu_{fg}(C)=2^{-|\textnormal{Supp}(C)|}\nu_g(C^2),     
\end{align}
where $\textnormal{Supp}(C)$ is the set of coordinates of $C$ fixed to be $\neq 0$ and $C^2:=\{\mathbf{x}^2:\,\, \mathbf{x}\in C\}$, where $\mathbf{x}^2(j):=\mathbf{x}(j)^2$ for all $j\in \mathbb{Z}$.
\end{theorem}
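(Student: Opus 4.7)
The uniqueness of the Furstenberg system and the formula~\eqref{eq:furst2} will be established simultaneously. Since the cylinder sets form a $\pi$-system generating the Borel $\sigma$-algebra of $\{-1,0,+1\}^{\mathbb{Z}}$, and their indicators (being continuous, as cylinders are clopen in the product topology over a discrete alphabet) span a uniformly dense subspace of $C(\{-1,0,+1\}^{\mathbb{Z}})$ by Stone--Weierstrass, it suffices to verify that for every finite $J\subset \mathbb{Z}$ and every prescription $(\epsilon_j)_{j\in J}\in\{-1,0,+1\}^J$, the density
\begin{align*}
\lim_{x\to\infty}\frac{1}{x}\bigl|\{n\leq x : (fg)(n+j)=\epsilon_j\text{ for all }j\in J\}\bigr|
\end{align*}
exists and equals $2^{-|J_1|}\nu_g(C^2)$, where $J_1:=\{j\in J:\epsilon_j\neq 0\}=\textnormal{Supp}(C)$ and $C^2$ is the cylinder prescribing values $\epsilon_j^2$ on $J$.

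The starting point is the following decomposition. Because $f$ is $\{-1,+1\}$-valued, the event $(fg)(m)=\epsilon$ is $\{g(m)=0\}$ when $\epsilon=0$ and $\{g(m)=1\}\cap\{f(m)=\epsilon\}$ when $\epsilon\in\{\pm1\}$. Setting $\Psi(n):=\prod_{j\in J}1_{g(n+j)=\epsilon_j^2}$ and using $1_{f(m)=\epsilon}=(1+\epsilon f(m))/2$ for $\epsilon\in\{\pm 1\}$, we obtain
\begin{align*}
\prod_{j\in J}1_{(fg)(n+j)=\epsilon_j}
=\frac{\Psi(n)}{2^{|J_1|}}\prod_{j\in J_1}\bigl(1+\epsilon_j f(n+j)\bigr)
=\frac{1}{2^{|J_1|}}\sum_{S\subseteq J_1}\Big(\prod_{j\in S}\epsilon_j\Big)\Psi(n)\prod_{j\in S}f(n+j).
\end{align*}
The term $S=\emptyset$ contributes $2^{-|J_1|}\Psi(n)$, whose Ces\`aro average tends to $2^{-|J_1|}\nu_g(C^2)$ by the uniqueness of the Furstenberg system of $g$ (Proposition~\ref{prop:furst}) applied to the continuous function $1_{C^2}$; this produces the desired main term.

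The crux is to show that for every nonempty $S\subseteq J_1$,
\begin{align*}
\lim_{x\to\infty}\frac{1}{x}\sum_{n\leq x}\Psi(n)\prod_{j\in S}f(n+j)=0.
\end{align*}
For this I would exploit the rational almost periodicity of $g$ used in the proof of Proposition~\ref{prop:furst}: by~\cite[Theorem~6]{daboussi-delange}, for every $\varepsilon>0$ there is a periodic function $P_\varepsilon$ of period $q=q(\varepsilon)$ such that $\limsup_x \tfrac{1}{x}\sum_{n\leq x}|g(n)-P_\varepsilon(n)|\leq \varepsilon$. Writing each indicator factor in $\Psi$ as $1_{g(n+j)=1}=g(n+j)=P_\varepsilon(n+j)+E_\varepsilon(n+j)$ or $1_{g(n+j)=0}=1-g(n+j)$ and expanding the product, we get $\Psi(n)=\widetilde P_\varepsilon(n)+\widetilde E_\varepsilon(n)$, with $\widetilde P_\varepsilon$ periodic of period $q$ and $\limsup_x\tfrac{1}{x}\sum_{n\leq x}|\widetilde E_\varepsilon(n)|\ll_{|J|}\varepsilon$ by translation invariance of the upper Ces\`aro mean. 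The error contribution is then $O_{|J|}(\varepsilon)$. Expanding $\widetilde P_\varepsilon$ as a linear combination of the residue class indicators $1_{n\equiv r\pmod q}$ and substituting $n=qm+r$, the main term reduces to finitely many correlations of the form $\tfrac{1}{N}\sum_{m\leq N}\prod_{j\in S}f(qm+r+j_i)$. By Theorem~\ref{thm_omega}, applied with $a_i=q$ and shifts $h_i=r+j_i$ (distinct, since the $j_i\in S$ are distinct, so $a_ih_j\ne a_jh_i$), each such correlation tends to $0$. Sending $\varepsilon\to 0$ closes the argument.

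The main obstacle is this mixed correlation of $f$ against the $g$-cylinder weight $\Psi$; once $\Psi$ is replaced by a periodic approximant, the estimate falls squarely in the scope of Theorem~\ref{thm_omega}, and rational almost periodicity of pretentious $\{0,1\}$-valued functions makes that replacement effective.
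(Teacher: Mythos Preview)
Your proof is correct and follows essentially the same route as the paper: expand the indicators via $1_{f(m)=\epsilon}=(1+\epsilon f(m))/2$, identify the main term $2^{-|J_1|}\Psi(n)$ whose average converges by Proposition~\ref{prop:furst}, and kill the cross terms by approximating $\Psi$ with a periodic weight via rational almost periodicity (Daboussi--Delange) and then invoking Theorem~\ref{thm_omega}. The paper phrases the reduction slightly differently (working through the identity $1_{fg(n)=v}=\tfrac12 g(n)+\tfrac v2 fg(n)$ and then isolating the mixed correlation~\eqref{eq:correlate6}), but the substance is the same.
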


\begin{remark}
Note that there are many explicit choices of $f$ satisfying the assumption. For example, if $p_m$ denotes the $m$th prime one can take $f(n)=(-1)^{\Omega_{\mathcal{P}}(n)}$,  $\mathcal{P}=\{p_{m\lfloor \log \log \log m\rfloor}:\,\, m\geq e^{100}\}$.    
\end{remark}

\begin{proof}
 Let $\mathcal{F}$ be the collection of completely multiplicative functions $f:\mathbb{N}\to \{-1,+1\}$ that satisfy Elliott's conjecture in the following sense: for any $k\geq 1$, any integers $a_1,\ldots, a_k\geq 1$  and any distinct $h_1,\ldots, h_k\in \mathbb{N}$ with $a_ih_j\neq a_jh_i$ for all $i\neq j$ we have
\begin{align}\label{eq:correlate7}
 \lim_{x\to \infty}\frac{1}{x}\sum_{n\leq x}f(a_1n+h_1)\cdots f(a_kn+h_k)=0.   
\end{align}
By Theorem~\ref{thm_omega} we have
\begin{align}\label{eq:furst1}
\mathcal{F}\supset \{n\mapsto (-1)^{\Omega_{\mathcal{P}}(n)}:\,\, \sum_{p\in \mathcal{P}}\frac{1}{p}=\infty,\,\, d_{\mathbb{P}}(\mathcal{P})=0\},    
\end{align}
where $d_{\mathbb{P}}(\mathcal{P})$ denotes the relative density of $\mathcal{P}$ within $\mathbb{P}$. The same holds also with $\omega_{\mathcal{P}}(n)$ in place of $\Omega_{\mathcal{P}}(n)$.

We claim that for any $f\in \mathcal{F}$ the multiplicative function $fg:\mathbb{N}\to \{-1,0,+1\}$ has a unique Furstenberg measure, and that this measure satisfies~\eqref{eq:furst2}. After that the desired claim follows~\eqref{eq:furst1}.

Using the identity \begin{align*}
 1_{fg(n)=v}=\frac{1}{2}g(n)+\frac{v}{2}fg(n)   
\end{align*}
for $v\in \{-1,+1\}$, we see that for any distinct integers $h_1,\ldots, h_k$ and $v_1,\ldots, v_k\in \{-1,0,+1\}$ and for any $x\geq 1$ we have 
\begin{align}\label{eq:correlate3}
\frac{1}{x}\sum_{n\leq x}1_{fg(n+h_1)=v_1}\cdots 1_{fg(n+h_k)=v_k}&= \frac{1}{x}\sum_{n\leq x}\prod_{\substack{j\leq k\\v_j\neq 0}}\left(\frac{1}{2}g(n+h_j)+\frac{v_j}{2}fg(n+h_j)\right)\prod_{\substack{j\leq k\\v_j=0}}(1-g(n+h_j)).
\end{align}
We claim that 
\begin{align}\label{eq:correlate5}
\frac{1}{x}\sum_{n\leq x}1_{fg(n+h_1)=v_1}\cdots 1_{fg(n+h_k)=v_k}&= \frac{1}{x}\sum_{n\leq x}\prod_{\substack{j\leq k\\v_j\neq 0}}\left(\frac{1}{2}g(n+h_j)\right)\prod_{\substack{j\leq k\\v_j=0}}(1-g(n+h_j))+o(1). 
\end{align}
To deduce this from~\eqref{eq:correlate3}, it suffices to show that for any nonempty set $A\subset \{h_1,\ldots, h_k\}$ and any set $B\subset \{h_1,\ldots, h_k\}\setminus A$ we have
\begin{align}\label{eq:correlate6}
 \lim_{x\to \infty}\frac{1}{x}\sum_{n\leq x}\prod_{j\in A}fg(n+j)\prod_{j\in B}g(n+j)=0.   
\end{align}
By applying the decomposition~\eqref{eq:rap} to each occurrence of $g$ in~\eqref{eq:correlate6}, and noting that as $\varepsilon\to 0$ the contribution of any term involving $E_{\varepsilon}$ is negligible, we see that~\eqref{eq:correlate6} follows if we show that 
\begin{align*}
 \lim_{x\to \infty}\frac{1}{x}\sum_{n\leq x}\prod_{j\in A}f(n+j)1_{n\equiv C\pmod{D}}=0   
\end{align*}
for any $C,D\in \mathbb{N}$. But this follows from~\eqref{eq:correlate7} after making a change of variables.

We now conclude that~\eqref{eq:correlate5} holds, so 
\begin{align*}
 \lim_{x\to \infty}\frac{1}{x}\sum_{n\leq x}1_{fg(n+h_1)=v_1}\cdots 1_{fg(n+h_k)=v_k}&= \lim_{x\to \infty}\frac{1}{x}\sum_{n\leq x}\prod_{\substack{j\leq k\\v_j\neq 0}}\left(\frac{1}{2}g(n+h_j)\right)\prod_{\substack{j\leq k\\v_j=0}}(1-g(n+h_j));    
\end{align*}
the limit on the right-hand side exists by Proposition~\ref{prop:furst}. Hence we have 
\begin{align}\label{eq:correlate3b}
 \lim_{x\to \infty}\frac{1}{x}\sum_{n\leq x}1_{fg(n+h_1)=v_1}\cdots 1_{fg(n+h_k)=v_k}&=2^{-r} \lim_{x\to \infty}\frac{1}{x}\sum_{n\leq x}\prod_{\substack{j\leq k\\v_j\neq 0}}1_{g(n+h_j)=1}\prod_{\substack{j\leq k\\v_j=0}}1_{g(n+h_j)=0},    
\end{align}
where $r$ is the number of $j\leq k$ for which $v_j\neq 0$. 
In particular, since these limits exist, there exists a unique Furstenberg measure $\nu_{fg}$ for $fg$. Let $\nu_g$ be the unique Furstenberg measure of $g$.

Now, by the definition of a Furstenberg measure, and the fact that the algebra generated by the indicators of cylinder sets is dense in the space of continuous functions with respect to the uniform topology, we  have
\begin{align*}
\lim_{x\to \infty}\frac{1}{x}\sum_{n\leq x}1_{fg(n+h_1)=v_1}\cdots 1_{fg(n+h_k)=v_k}&=\nu_{fg}(\mathbf{x}\in \{-1,0,+1\}^{\mathbb{Z}}:\,\, \mathbf{x}(h_i)=v_j\,\forall\, j\leq k),\\
\lim_{x\to \infty}\frac{1}{x}\sum_{n\leq x}1_{g(n+h_1)=v_1^2}\cdots 1_{g(n+h_k)=v_k^2}&=\nu_{g}(\mathbf{x}\in \{0,1\}^{\mathbb{Z}}:\,\, \mathbf{x}(h_j)=v_j^2\,\,\forall\, j\leq k)
\end{align*}
for any $v_1,\ldots, v_k\in \{-1,0,+1\}$ and any distinct integers $h_1,\ldots, h_k$. By ~\eqref{eq:correlate3b} this implies that
\begin{align*}
\nu_{fg}(\mathbf{x}\in \{-1,0,+1\}^{\mathbb{Z}}:\,\, \mathbf{x}(h_j)=v_j\,\forall\, j\leq k)=2^{-r}\nu_{g}(\mathbf{x}\in \{0,1\}^{\mathbb{Z}}:\,\, \mathbf{x}(h_j)=v_j^2\,\forall\, j\leq k)    
\end{align*}
for any $v_1,\ldots, v_k\in \{-1,0,+1\}$ and any $h_1,\ldots, h_k\in \mathbb{Z}$, where $r$ is the number of $j$ such that $v_j\neq 0$. Hence, for any cylinder set $C$ of $\{-1,0,+1\}^{\mathbb{Z}}$ we have
\begin{align*}
\nu_{fg}(C)=2^{-|\textnormal{supp}(C)|}\nu_g(C^2),   
\end{align*}
as desired.
\end{proof}

Theorem~\ref{thm:furst} will follow immediately by combining Theorem~\ref{thm:furst2} and the following proposition (cf.~\cite[(15) and Section 4.3]{HKPLR}).\footnote{We thank Mariusz Lema\'nczyk for discussions relating to this.}

\begin{proposition}
 Let $\nu$ be a shift-invariant measure on $\{0,1\}^{\mathbb{Z}}$. Let $\widetilde{\nu}$ be a measure on $\{-1,0,+1\}^{\mathbb{Z}}$ given on cylinder sets $C$ of  $\{-1,0,+1\}^{\mathbb{Z}}$ by
 \begin{align*}
  \widetilde{\nu}(C)=2^{-|\textnormal{Supp}(C)|}\nu(C^2).    
 \end{align*}
 Let $T$ be the left shift on $\{-1,0,+1\}^{\mathbb{Z}}$. 
 Then the dynamical system $(\{-1,0,+1\}^{\mathbb{Z}},T, \widetilde{\nu})$ is isomorphic to the product of the system $(\{0,1\}^{\mathbb{Z}},T,\nu)$ and a Bernoulli system. 
\end{proposition}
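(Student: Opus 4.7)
The plan is to exhibit the target system as a relatively Bernoulli extension of the base system $(\{0,1\}^{\mathbb{Z}}, T, \nu)$ and then invoke the structure theorem for such extensions.

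First, I would introduce the natural factor map $\pi \colon (\{-1,0,+1\}^{\mathbb{Z}}, \widetilde{\nu}) \to (\{0,1\}^{\mathbb{Z}}, \nu)$ defined by $\pi(\mathbf{x})(j) = \mathbf{x}(j)^2$. This map is measurable and shift-equivariant. One verifies $\pi_*\widetilde{\nu} = \nu$ by decomposing $\pi^{-1}(D)$, for a cylinder $D \subset \{0,1\}^{\mathbb{Z}}$, as a disjoint union over the $2^r$ compatible sign assignments $\mathbf{v} \in \{-1,+1\}^{\{j \in \textnormal{Supp}(D) : \mathbf{y}(j)=1\}}$ (where $r$ counts the coordinates $j \in \textnormal{Supp}(D)$ with prescribed value $\mathbf{y}(j) = 1$), and then summing the $2^r$ cylinder contributions, each of mass $2^{-r}\nu(D)$, via the defining formula $\widetilde{\nu}(C) = 2^{-|\textnormal{Supp}(C)|}\nu(C^2)$.

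Next, I would compute the disintegration $\widetilde{\nu} = \int \widetilde{\nu}_{\mathbf{y}}\, d\nu(\mathbf{y})$ over $\pi$. The fiber $\pi^{-1}(\mathbf{y})$ identifies with $\{-1,+1\}^{S(\mathbf{y})}$, where $S(\mathbf{y}) = \{j \in \mathbb{Z} : \mathbf{y}(j) = 1\}$, and the cylinder formula shows directly that $\widetilde{\nu}_{\mathbf{y}}$ is the uniform Bernoulli $(1/2,1/2)$-product measure on this fiber. Equivalently, conditional on the factor $\sigma$-algebra $\pi^{-1}\mathcal{B}(\{0,1\}^{\mathbb{Z}})$, the sign process at positions of $\mathbf{y}=1$ is an i.i.d.\ family of fair $\pm 1$ variables, independent of the base. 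This is precisely the defining property of a relatively Bernoulli extension in the sense of Thouvenot.

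Finally, I would invoke the relative isomorphism theorem of Thouvenot: a measure-preserving extension with such a disintegration is isomorphic to the direct product of the base with a Bernoulli system, whose entropy is the relative fiber entropy $(\log 2)\cdot \nu(\{\mathbf{y}(0) = 1\})$. For the specific setup at hand, this structural argument is spelled out in~\cite[(15) and Section~4.3]{HKPLR}, to which we refer. The principal obstacle in a self-contained treatment is the invocation of Thouvenot's theorem; a purely hands-on proof would require constructing the isomorphism explicitly via a $T$-equivariant enumeration of the set $\{j : \mathbf{y}(j)=1\}$ and careful bookkeeping of the Bernoulli noise, which is delicate and essentially recapitulates the abstract argument.
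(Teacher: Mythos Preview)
Your approach is correct and, like the paper's, ultimately rests on Thouvenot's relative Bernoulli theory; the packaging differs. Rather than computing the fiber disintegration directly, the paper introduces an auxiliary system $\mathcal{S}_1 = (\{0,1\}^{\mathbb{Z}} \times \{-1,+1\}^{\mathbb{Z}}, T, \nu \otimes B(1/2))$, which is \emph{manifestly} the product of $(\{0,1\}^{\mathbb{Z}},T,\nu)$ with a Bernoulli shift. The pointwise-multiplication map $(\mathbf{y},\mathbf{z}) \mapsto \mathbf{y}\cdot\mathbf{z}$ exhibits $(\{-1,0,+1\}^{\mathbb{Z}},T,\widetilde{\nu})$ as a factor of $\mathcal{S}_1$ lying above $(\{0,1\}^{\mathbb{Z}},T,\nu)$ (via your squaring map $\pi$), and Thouvenot's theorem on \emph{intermediate factors} of relatively Bernoulli extensions then yields the product structure. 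Your route---computing the conditional measures and observing that the fibers are Bernoulli $(1/2,1/2)$ on the random support $S(\mathbf{y})$---reaches the same destination, but note a terminological slip: having i.i.d.\ fiber measures on a \emph{varying} index set is not literally the definition of ``relatively Bernoulli,'' and passing from that to a genuine direct product still requires Thouvenot (or the explicit enumeration you mention). The paper's intermediate-system trick makes the invocation of Thouvenot's theorem cleaner, since $\mathcal{S}_1$ is already a product; your disintegration is more hands-on but leaves slightly more to the cited black box.
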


\begin{proof} 
Define a measure $\nu'$ on cylinder sets $C$ of $\{-1,0,+1\}^{\mathbb{Z}}$ by $\nu'(C)=\nu(C^2)$. Consider the following dynamical systems 
\begin{align*}
\mathcal{S}_1&=(\{0,1\}^{\mathbb{Z}}\times \{-1,+1\}^{\mathbb{Z}},T\otimes T,\nu\otimes B(1/2)), \\
\mathcal{S}_2&=(\{-1,0,+1\}^{\mathbb{Z}},T,\nu'\cdot B(1/2)),\\
\mathcal{S}_3&=(\{0,1\}^{\mathbb{Z}},T,\nu),
\end{align*}
where $\nu_1\cdot \nu_2(A):=\nu_1(A)\nu_2(A)$ for all $A$ and $B(1/2)$ is the Bernoulli measure $(1/2,1/2)^{\otimes \mathbb{Z}}$. Note that $\tilde{\nu}=\nu'\cdot B(1/2)$. The system $\mathcal{S}_2$ is a factor of $\mathcal{S}_1$ (with the factor map $\pi(\mathbf{x},\mathbf{y})=\mathbf{x}\cdot \mathbf{y}$, where $\cdot$ denotes pointwise multiplication of sequences) and the system $\mathcal{S}_3$ is a factor of $\mathcal{S}_2$ (with the factor map $\pi(\mathbf{x})=\mathbf{x}^2$, with the squaring map of a sequence taken pointwise). Note that $\mathcal{S}_1$ is a relatively Bernoulli extension of $\mathcal{S}_3$. By Thouvenot's theorem~\cite{thouvenot}, $\mathcal{S}_2$ must then be relatively Bernoulli over $\mathcal{S}_3$, and therefore, up to measure-theoretic isomorphism of dynamical systems, the system $\mathcal{S}_2$ is the direct product of $\mathcal{S}_3$ and a Bernoulli system. 
\end{proof}

We then turn to proving Proposition~\ref{prop:conj}. 

\begin{proof}[Proof of Proposition~\ref{prop:conj}] Let $f: N\to {-1,0,+1}$ be a non-pretentious multiplicative function. We may factorize $f=f'g$. where $f':\mathbb{N}\to \{-1,+1\}$ and $g:\mathbb{N}\to \{0,1\}$, $g(n)=|f(n)|$ are multiplicative. We may suppose that $\mathbb{D}(g,1;\infty)<\infty$, since otherwise by the triangle inequality and Delange's theorem~\cite[Thm. III.4.2]{Tenenbaum} similarly as in~\eqref{eq:correlate24} we have 
\begin{align*}
\lim_{x\to \infty}\frac{1}{x}\sum_{n\leq x}f(n+h_1)\cdots f_k(n+h_k)=0    
\end{align*}
for any distinct $h_1,\ldots, h_k\in \mathbb{N}$, which implies that the unique Furstenberg measure of $f$ (and of $|f|$)is the Bernoulli measure $\kappa^{\mathbb{Z}}$, where $\kappa$ is the measure on $\{-1,0,1\}$ supported on $0$. Now, since $f$ is non-pretentious by assumption, from the pretentious triangle inequality it follows that $f'$ is also non-pretentious. 

From the proof of Theorem~\ref{thm:furst2} it is clear that if $f'$ satisfies Elliott's conjecture (in  the sense that $f'$ satisfies~\eqref{eq:correlate7}), then $f'g$ has a unique Furstenberg system, which is isomorphic to the direct product of the Furstenberg system of $g$ and a Bernoulli system. Hence, it suffices to show that any non-pretentious $f':\mathbb{N}\to \{-1,+1\}$ satisfies~\eqref{eq:correlate7}. Let $a_1,\ldots, a_k,h_1,\ldots, h_k\in \mathbb{N}$ satisfy $a_ih_j\neq a_jh_i$ whenever $i\neq j$. Let $q=a_1\cdots a_k$, and factor $f'=f_1f_2$, where $f_2$ is the completely multiplicative function given on the primes by $f_2(p)=f'(p)$, and $f_1$ is the multiplicative function given by $f_1(p^{\ell})=f'(p^{\ell})f'(p)^{\ell}$ for $\ell\geq 1$. 
Then the left-hand side of~\eqref{eq:correlate7} is equivalent to
\begin{align}\label{eq:correlate25}
\frac{1}{x}\sum_{n\leq x}\prod_{j=1}^kf_1(a_jn+h_j)f_2\left(qn+h_j\frac{q}{a_j}\right)=o(1). 
\end{align}

The function $f_1$ is pretentious (in fact, $\mathbb{D}(f_1,1;\infty)=0$) and therefore we have the decomposition~\eqref{eq:rap} for $f_1$. Substituting that into~\eqref{eq:correlate25} and sending $\varepsilon\to 0$, it suffices to show that
\begin{align*}
\frac{1}{x}\sum_{n\leq x}\prod_{j=1}^kf_2\left(qn+h_j\frac{q}{a_j}\right)1_{n\equiv b_j\pmod{r_j}}=o(1) 
\end{align*}
for any $b_j,r_j\in \mathbb{N}$. Making a change of variables $m = qn$, we reduce to showing that
\begin{align*}
 \frac{1}{x}\sum_{m\leq qx}\prod_{j=1}^kf_2\left(m+h_j\frac{q}{a_j}\right)1_{m/q\equiv b_j\pmod{r_j}}1_{m\equiv 0\pmod{q}}=o(1).    
\end{align*}
If the simultaneous congruences $m/q\equiv b_j\pmod{r_j}$ and $m\equiv 0\pmod q$ are solvable in $m$, they have a unique solution $m\equiv b_j'\pmod{r_j'}$ for some $b_j', r_j'\in \mathbb{N}$. Let $s_j$ be distinct integers greater than $\max_{i\leq k}h_i$ such that $(b_j+s_j,r_j)=1$.  We can expand $1_{n\equiv b_j\pmod{r_j}}=1_{n+s_j\equiv b_j+s_j\mod{r_j}}$ as a linear combination of Dirichlet characters to reduce matters to showing that 
\begin{align}\label{eq:correlate26}
\frac{1}{x}\sum_{m\leq qx}\prod_{j=1}^kf_2\left(m+h_j\frac{q}{a_j}\right)\prod_{j=1}^{k}\chi_j(m+s_j)=o(1) 
\end{align}
for any Dirichlet characters $\chi_j\pmod{r_j'}$.

By Conjecture~\ref{conj_elliott_Tao}, we have~\eqref{eq:correlate26} provided that 
for any fixed $A\geq 1$ we have 
\begin{align*}
\inf_{|t|\leq x^{A}}\,\,\min_{\substack{\chi\pmod q\\q\leq A}}\mathbb{D}(f_2,\chi(n)n^{it};x)^2\geq A
\end{align*}
for all large enough $x$. But 
since $f_2$ is real-valued and non-pretentious, this follows from Lemma~\ref{le:real}. 
\end{proof}

\section{Hudson's conjecture}\label{proof_huds}

\subsection{A lemma on correlations of multiplicative functions}

For proving Theorem~\ref{thm_hudson}, we need the following explicit upper bound for correlations of multiplicative functions along a dense set of scales. 

\begin{lemma}\label{le_upper_bound_correlation}
Let $k\geq 1$, and let $f:\mathbb{N}\to \{-1,+1\}$ be multiplicative. Suppose that $\mathbb{D}(f,\chi;\infty)=\infty$ for all Dirichlet characters $\chi$. Then there exists a set $\mathcal{X}\subset \mathbb{N}$ with $\delta_{\log}^{+}(\mathcal{X})=1$ such that we have 
\begin{align*}
\limsup_{\substack{x\to \infty\\x\in \mathcal{X}}}\left|\frac{1}{x}\sum_{n\leq x}f(n+1)\cdots f(n+k)\right|\leq \frac{1}{2}    
\end{align*}
and
\begin{align*}
\lim_{\substack{x\to \infty\\x\in \mathcal{X}}}\frac{1}{x}\sum_{n\leq x}f(n)f(n+j)=0    
\end{align*}
for all $j\in \mathbb{N}$. 
\end{lemma}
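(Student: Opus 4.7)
The plan is to deduce both conclusions from Theorem~\ref{thm_elliott_2point} via a short combinatorial step that uses the $\{-1,+1\}$-valuedness of $f$ crucially. First, since $f$ is real-valued and $\mathbb{D}(f,\chi;\infty)=\infty$ for every Dirichlet character $\chi$ (in particular every real one), Lemma~\ref{le:real} yields the uniform bound $\inf_{|t|\leq x^{A},\,q\leq A}\mathbb{D}(f,\chi(n)n^{it};x)\to\infty$ for each fixed $A\geq 1$; in particular $f$ is non-pretentious. Applying Theorem~\ref{thm_elliott_2point} to $(f_1,f_2)=(f,f)$ then produces a single set $\mathcal{X}\subset\mathbb{N}$ with $\delta_{\log}^{+}(\mathcal{X})=1$ along which $\frac{1}{x}\sum_{n\leq x}f(n+h_1)f(n+h_2)\to 0$ for every pair of distinct shifts, which yields the second conclusion of the lemma immediately.

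For the first conclusion the key observation is that since $f^{2}\equiv 1$, one has the telescoping identity
\[
F(n)F(n+1)=\prod_{i=1}^{k}f(n+i)\prod_{i=2}^{k+1}f(n+i)=f(n+1)\,f(n+k+1),
\]
the middle factors squaring to $1$. Specialising the vanishing of two-point correlations along $\mathcal{X}$ to the shifts $(1,k+1)$ gives $\frac{1}{x}\sum_{n\leq x-1}F(n)F(n+1)=o(1)$ as $x\to\infty$ in $\mathcal{X}$.

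The rest is a purely combinatorial argument on the $\{-1,+1\}$-valued sequence $F(1),\ldots,F(x)$. Writing $K(x)=|\{n\leq x-1:F(n)\neq F(n+1)\}|$ for the number of sign changes, the identity $\sum_{n\leq x-1}F(n)F(n+1)=(x-1)-2K(x)$ combined with the previous step forces $K(x)=x/2+o(x)$ along $\mathcal{X}$. Partitioning $[1,x]$ into its maximal constant-sign runs gives exactly $K(x)+1$ runs, alternating in sign, so the counts $R^{\pm}$ of $\pm$-runs satisfy $|R^{+}-R^{-}|\leq 1$ and in particular $R^{\pm}\geq K(x)/2+O(1)=x/4+o(x)$. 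Since each run has length at least~$1$, the counts $N^{\pm}=|\{n\leq x:F(n)=\pm 1\}|$ satisfy $N^{\pm}\geq R^{\pm}\geq x/4+o(x)$, whence $\bigl|\frac{1}{x}\sum_{n\leq x}F(n)\bigr|=|N^{+}-N^{-}|/x\leq 1/2+o(1)$. Taking $\limsup$ along $\mathcal{X}$ gives the first conclusion.

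The only non-routine step is spotting the identity $F(n)F(n+1)=f(n+1)f(n+k+1)$, which is what converts the $k$-point question into a two-point one and is ultimately responsible for the constant $\tfrac{1}{2}$ in the bound; everything else reduces to Theorem~\ref{thm_elliott_2point} together with an elementary count of monotone runs.
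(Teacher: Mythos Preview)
Your proof is correct and rests on the same key identity as the paper's: writing $F(n)=f(n+1)\cdots f(n+k)$, the telescoping $F(n)F(n+1)=f(n+1)f(n+k+1)$ (since $f^2\equiv 1$) reduces the $k$-point question to a two-point correlation, which is then killed along $\mathcal{X}$ by Theorem~\ref{thm_elliott_2point}. The only difference is how the constant $\tfrac12$ is extracted from the vanishing of $\frac1x\sum_{n\leq x}F(n)F(n+1)$. The paper proceeds more directly: from $S(x)=\sum_{n\leq x}F(n)=\sum_{n\leq x}F(n+1)+O(1)$ and the triangle inequality one gets
\[
2|S(x)|\leq \sum_{n\leq x}|F(n)+F(n+1)|+O(1)=\sum_{n\leq x}\bigl(1+F(n)F(n+1)\bigr)+O(1)=x+o(x),
\]
using $|a+b|=1+ab$ for $a,b\in\{-1,+1\}$. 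Your route via sign-change and run counting reaches the same conclusion but is a slightly longer detour; the identity $\sum_{n\leq x-1}F(n)F(n+1)=(x-1)-2K(x)$ that you use is of course equivalent to $|a+b|=1+ab$ summed, so the two arguments are reformulations of each other. Your appeal to Lemma~\ref{le:real} to justify non-pretentiousness (needed for Theorem~\ref{thm_elliott_2point}) is a good observation; the paper leaves this implicit.
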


\begin{proof}
We follow the proof of~\cite[Lemma 7.2]{tt-FMS}, where a similar statement was proved for logarithmic averages with $f=\lambda$. Let
\begin{align*}
S(x):=\sum_{n\leq x}f(n+1)\cdots f(n+k)   
\end{align*}
with $x\in \mathbb{N}$. By shifting the summation index, we have
\begin{align*}
S(x)=\sum_{n\leq x}f(n+2)\cdots f(n+k+1)+O(1).    
\end{align*}
Combining these and using the triangle inequality, we have
\begin{align*}
2|S(x)|&\leq \sum_{n\leq x}|f(n+1)\cdots f(n+k)+f(n+2)\cdots f(n+k+1)|+O(1)\\
&=\sum_{n\leq x}|f(n+1)+f(n+k+1)|+O(1).    
\end{align*}
 Note that 
\begin{align}\label{eq:signs1}\begin{split}
 &\sum_{n\leq x}|f(n+1)+f(n+k+1)|\\
 =&\sum_{v_1,v_2\in \{-1,+1\}}|v_1+v_2|\sum_{n\leq x}1_{f(n+1)=v_1}1_{f(n+k+1)=v_2}\\
 =&\sum_{v_1,v_2\in \{-1,+1\}}\frac{|v_1+v_2|}{4}\left(x+(v_1+v_2)\sum_{n\leq x}f(n)+v_1v_2\sum_{n\leq x}f(n)f(n+k+1)\right) + o(x)\\
 =& x+\sum_{n\leq x}f(n)f(n+k+1)+o(x)
 \end{split}
\end{align}
by Wirsing's theorem on mean values of real-valued multiplicative functions (see~\cite[Thm. III.4.5]{Tenenbaum}) and the fact that $\mathbb{D}(f,\chi;\infty)=\infty$ for all real characters $\chi$.

Now let $\mathcal{X}$ be the set in Theorem~\ref{thm_elliott_2point}. Then $\delta_{\log}^{+}(\mathcal{X})=1$ and 
\begin{align*}
\limsup_{\substack{x\to \infty\\x\in \mathcal{X}}}\left|\frac{1}{x}\sum_{n\leq x}f(n)f(n+j)\right|=0    
\end{align*}
for all $j\in \mathbb{N}$, so by~\eqref{eq:signs1} we obtain 
$$\limsup_{\substack{x\to \infty\\x\in \mathcal{X}}}\frac{1}{x}|S(x)|\leq \frac{1}{2}.$$
This completes the proof. 
\end{proof}

\subsection{Non-pretentious case}

In the case of non-pretentious functions, we obtain a stronger conclusion than what Theorem~\ref{thm_hudson} states.
\begin{theorem}[Length four sign patterns of non-pretentious functions]\label{thm_signs}
Let $f:\mathbb{N}\to \{-1,+1\}$ be a  multiplicative function such that $\mathbb{D}(f,\chi;\infty)=\infty$ for all real\footnote{In light of Lemma~\ref{le:real}, this condition implies that $f$ is non-pretentious in the sense of Definition~\ref{def:Pret}.} Dirichlet characters $\chi$. Let $v_1,v_2,v_3,v_4\in \{-1,+1\}$. Then
\begin{align*}
\delta_{\log}^{+}(\{n\in \mathbb{N}:\,\, f(n+j)=v_j\,\, \forall 1\leq j\leq 4\})\geq \frac{1}{32}.    
\end{align*}
\end{theorem}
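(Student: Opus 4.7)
The approach is to expand the indicator $1_A(n)=\prod_{j=1}^4 1_{f(n+j)=v_j}$ using the identity $1_{f(n+j)=v_j}=(1+v_jf(n+j))/2$ (valid since both $v_j,f(n+j)\in\{-1,+1\}$). This gives
\[
\frac{|A\cap[1,x]|}{x}=\frac{1}{16}\left(1+\sum_{\emptyset\neq S\subseteq\{1,2,3,4\}}\Big(\prod_{j\in S}v_j\Big)C_S(x)\right),\qquad C_S(x):=\frac{1}{x}\sum_{n\leq x}\prod_{j\in S}f(n+j).
\]
If on a set of $x$ of full lower logarithmic density (or lower double-logarithmic density) one has $|C_S(x)|=o(1)$ for $1\leq|S|\leq 3$ and $|C_{\{1,2,3,4\}}(x)|\leq 1/2+o(1)$, then $|A\cap[1,x]|/x\geq 1/32-o(1)$ on that set; a routine partial summation then converts this into $\delta_{\log}^{-}(A)\geq 1/32$ (respectively $\delta_{\log\log}^{-}(A)\geq 1/32$), yielding the conclusion.

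I would split into two cases. Suppose first that $\limsup_{x\to\infty}\mathbb{D}(f,\chi(n)n^{it};x)^2/\log\log x>0$ for every Dirichlet character $\chi$ and real $t$. Since $f\in\{-1,+1\}$ we have $f^3=f$, so Theorem~\ref{thm_A}(2) applied with $f_1=f_2=f_3=f$ (product equal to $f$) gives a set $\mathcal{X}_3$ with $\delta_{\log}(\mathcal{X}_3)=1$ on which all 3-point correlations vanish. Our hypothesis together with Lemma~\ref{le:real} shows that $\inf_{|t|\leq x}\mathbb{D}(f,\chi(n)n^{it};x)\to\infty$ for every $\chi$, so Theorem~\ref{thm_A}(1) gives a set $\mathcal{X}_2$ with $\delta_{\log}(\mathcal{X}_2)=1$ where 2-point correlations vanish, and repeating the proof of Lemma~\ref{le_upper_bound_correlation} with this $\mathcal{X}_2$ in place of the set from Theorem~\ref{thm_elliott_2point} gives $|C_{\{1,2,3,4\}}(x)|\leq 1/2+o(1)$ on $\mathcal{X}_2$. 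The 1-point sums $C_{\{j\}}(x)$ tend to $0$ unconditionally by Wirsing's theorem. Since intersections of sets of full $\delta_{\log}$-density are of full $\delta_{\log}$-density, on $\mathcal{X}_2\cap\mathcal{X}_3$ we have $|A\cap[1,x]|/x\geq 1/32-o(1)$, and partial summation gives $\delta_{\log}^{-}(A)\geq 1/32$, hence $\delta_{\log}^{+}(A)\geq 1/32$.

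In the remaining case there exist $\chi,t$ with $\mathbb{D}(f,\chi(n)n^{it};x)^2=o(\log\log x)$, which makes $f$ moderately non-pretentious. I then apply Theorem~\ref{thm_main} with $k=4$ and $f_1=\cdots=f_4=f$: inspecting its proof (via Lemma~\ref{le_pretentious_conclusion}) shows that the resulting set $\mathcal{X}$, depending only on $f$, in fact satisfies $\delta_{\log\log}(\mathcal{X})=1$, and on $\mathcal{X}$ one has $C_S(x)=o(1)$ for every nonempty $S\subseteq\{1,2,3,4\}$. Consequently $|A\cap[1,x]|/x=1/16+o(1)$ on $\mathcal{X}$; the log-log analogue of partial summation (with weights $1/(n\log n)$) yields $\delta_{\log\log}^{-}(A)\geq 1/16$, so $\delta_{\log}^{+}(A)\geq\delta_{\log\log}^{+}(A)\geq\delta_{\log\log}^{-}(A)\geq 1/16>1/32$.

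The main technical obstacle is the density bookkeeping: the partial summation step requires a single set of full \emph{lower} density (in $\delta_{\log}$ or $\delta_{\log\log}$), which is why the individual correlation results must be combined carefully. The case split is chosen precisely so that in the first case the relevant pieces come from Theorem~\ref{thm_A}, whose sets satisfy the stronger $\delta_{\log}=1$ (and so intersect cleanly), while in the second case a single application of Theorem~\ref{thm_main} handles all correlations at once on a set of full $\delta_{\log\log}$. The gap between the $1/32$ (obtained in the first case) and the $1/16$ (in the second) comes entirely from the slack in the $4$-point correlation bound of Lemma~\ref{le_upper_bound_correlation}.
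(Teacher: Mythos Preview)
Your proposal is correct and follows essentially the same route as the paper: expand via $1_{f(n+j)=v_j}=(1+v_jf(n+j))/2$, handle $|S|=1$ by Wirsing, split into the case where \eqref{eq29} holds (using Theorem~\ref{thm_A}(2) for $|S|=3$) versus the moderately non-pretentious case (using Theorem~\ref{thm_main} for all $|S|$), and bound the $4$-point term via the argument of Lemma~\ref{le_upper_bound_correlation}. The one noteworthy difference is that in the first case you invoke Lemma~\ref{le:real} to see that \eqref{eq28} holds automatically, so that Theorem~\ref{thm_A}(1) supplies the $2$-point input on a set of full $\delta_{\log}$-density; the paper instead quotes Lemma~\ref{le_upper_bound_correlation} as stated (which rests on Theorem~\ref{thm_elliott_2point} and only gives $\delta_{\log}^+(\mathcal{X}_1)=1$). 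Your variant makes the intersection argument cleaner and even yields the stronger $\delta_{\log}^-(A)\ge 1/32$ in that case.
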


\begin{proof}
Using $1_{f(n+j) = v}=(1+vf(n+j))/2$ for $v\in \{-1,+1\}$, we have
\begin{align}\label{eq:Ca}
\frac{1}{x}\sum_{n\leq x}\prod_{j=1}^{4}1_{f(n+j)=v_j}=\frac{1}{16}\left(1+\sum_{\substack{A\subseteq \{1,2,3,4\}\\ A \neq \emptyset}}C_{A}(x)\right),  
\end{align}
where
\begin{align*}
C_A(x):=\frac{1}{x}\sum_{n\leq x}\prod_{j\in A}v_jf(n+j).   \end{align*}

By Wirsing's theorem, we have $C_A(x)=o(1)$ for $|A|=1$. Moreover, by Lemma~\ref{le_upper_bound_correlation} there exists a set $\mathcal{X}_1$ with $\delta_{\log}^{+}(\mathcal{X}_1)=1$ such that for $x\in \mathcal{X}_1$ we have
\begin{align*}
\sum_{\substack{A\subset \{1,2,3,4\}\\|A|=2\textnormal{ or } |A|=4}}|C_A(x)|\leq \frac{1}{2}+o(1).    
\end{align*}

If $f$ is not moderately non-pretentious, then by Theorem~\ref{thm_A}(2) there exists a set $\mathcal{X}_2$ with $\delta_{\log}(\mathcal{X}_2)=1$ such that for $x\in \mathcal{X}_2$ and $|A|=3$ we have
\begin{align*}
 |C_A(x)|=o(1).    
\end{align*}
Then we can take $\mathcal{X}=\mathcal{X}_1\cap \mathcal{X}_2$ to obtain 
\begin{align}\label{eq:signs2}
\liminf_{\substack{x\to \infty\\x\in \mathcal{X}}}\frac{1}{x}\sum_{n\leq x}\prod_{j=1}^{4}1_{f(n+j)=1}\geq \frac{1}{32}.    
\end{align}
The claim follows in this case, since $\delta_{\log}^{+}(\mathcal{X}_1)=1$ and $\delta_{\log}(\mathcal{X}_2)=1$ imply $\delta_{\log}^{+}(\mathcal{X})=1$.

If instead $f$ is moderately non-pretentious, then applying Theorem~\ref{thm_main} with $k=5$ and $f_j\in \{f,1\}$ we see that there exists a set $\mathcal{X}_3$ with $\delta_{\log}^{+}(\mathcal{X}_3)=1$ such that for $x\in \mathcal{X}_3$ and $1\leq |A|\leq 4$ we have
\begin{align*}
 C_A(x)=o(1);   
\end{align*}
here we make crucial use of the fact that the dense set in Theorem~\ref{thm_main} depends only on the set $\{f_1,\ldots, f_5\}=\{f,1\}$ and not on the corresponding $5$-tuple. Taking $\mathcal{X}=\mathcal{X}_3$, we again obtain~\eqref{eq:signs2}. 
\end{proof}

\subsection{A reduction} 

By Theorem~\ref{thm_signs}, it suffices to prove Theorem~\ref{thm_hudson} for pretentious functions. 
The following lemma reduces this to the seemingly weaker claim that if $f:\mathbb{N}\to \{-1,+1\}$ is a pretentious completely multiplicative function and $f\not \in F_{2,3}$, then $f(n+1)=f(n+2)=f(n+3)=f(n+4)=+1$ for at least one $n\in \mathbb{N}$.

\begin{proposition}
\label{prop_pretentious_density}
Let $f:\mathbb{N}\to \{-1,+1\}$ be multiplicative, and suppose that $\mathbb{D}(f,\chi;\infty)<\infty$ for some real Dirichlet character $\chi$. Let $k\geq 1$, and let $v_1,\ldots, v_k\in \{-1,+1\}$. Then the set
\begin{align*}
\{n\in \mathbb{N}:\,\, f(n+1)=v_1,\ldots, f(n+k)=v_k\}    
\end{align*}
is either empty or has positive asymptotic lower density.
\end{proposition}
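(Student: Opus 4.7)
The plan is to produce, given any $n_0 \in A := \{n : f(n+j) = v_j \text{ for } 1 \leq j \leq k\}$, a positive-density subset of $A$ of the form ``arithmetic progression meets sieve''. The pretentiousness hypothesis $\mathbb{D}(f,\chi;\infty) < \infty$ with $\chi$ a real character $\pmod q$ is equivalent to
$$R := \{p \text{ prime} : f(p) \neq \chi(p)\} \cup \{p : p \mid q\}$$
satisfying $\sum_{p \in R} 1/p < \infty$; in particular $f(p) = \chi(p) \in \{-1,+1\}$ on the complement of $R$. The idea is that most primes dividing $n+j$ behave ``like $\chi$'' under $f$, so controlling the behaviour on the sparse set $R$ (via a congruence) and on square factors (via a sieve) will pin down the sign pattern of $f(n+j)$.

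To implement this, augment $R$ to include also the prime divisors of $\prod_{j \leq k}(n_0+j)$ and all primes $\leq k$ -- a finite addition preserving $\sum 1/p < \infty$ -- and split $R = R^- \sqcup R^+$ at a threshold $Y > k$ chosen so large that every prime dividing any $n_0+j$ lies in $R^-$. Let $M := q \prod_{p \in R^-}p^L$ with $L > \max_{j,p}v_p(n_0+j)$, and let $\mathcal{N}$ denote the set of $n$ satisfying: (i) $n \equiv n_0 \pmod M$; (ii) $p \nmid n+j$ for all $p \in R^+$ and $j \leq k$; (iii) $p^2 \nmid n+j$ for all $p \notin R^-$ and $j \leq k$. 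I would show $\mathcal{N} \subseteq A$ as follows. Since $L$ is large, condition (i) forces $v_p(n+j) = v_p(n_0+j)$ for every $p \in R^-$, so $(n_0+j) \mid n+j$ and one may write $n+j = (n_0+j)\,t_j$ with $(n_0+j, t_j) = 1$. Conditions (ii)--(iii) then make $t_j$ squarefree and coprime to $R$, so multiplicativity combined with $f(p) = \chi(p)$ for $p \notin R$ yields $f(t_j) = \prod_{p \mid t_j}\chi(p) = \chi(t_j)$. Finally, writing $n - n_0 = Mr$, the divisibility $q \mid M/(n_0+j)$ (which uses $(n_0+j) \mid \prod_{p \in R^-}p^L$) gives $t_j = 1 + Mr/(n_0+j) \equiv 1 \pmod q$, so $\chi(t_j) = 1$ and hence $f(n+j) = f(n_0+j) = v_j$.

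A standard Chinese remainder / sieve computation then yields
$$d(\mathcal{N}) = \frac{1}{M}\prod_{p \in R^+}\!\left(1 - \frac{k}{p}\right)\prod_{p \notin R}\!\left(1 - \frac{k}{p^2}\right) > 0,$$
the first product being positive because $R^+ \subseteq (k,\infty)$ and $\sum_{p \in R^+}1/p < \infty$, and the second because $\sum_{p}1/p^2 < \infty$ together with all primes $p$ having $p^2 \leq k$ lying in $R^-$. Hence $d^-(A) > 0$, as required. The main technical obstacle is that $f$ is only assumed multiplicative, so $\mathbb{D}(f,\chi;\infty) < \infty$ constrains $f$ only on primes and not on higher prime powers; this is precisely why condition (iii) is imposed, to force $t_j$ to be squarefree so that the evaluation of $f(t_j)$ reduces to a product over primes. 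Were $f$ completely multiplicative, condition (iii) could be dropped, and the argument would be correspondingly shorter.
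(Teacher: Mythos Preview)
Your proof is correct and follows the same strategy as the paper's: freeze the contribution of the small bad primes via a congruence $n \equiv n_0$ modulo a large power of their product, then sieve out the large bad primes so that the cofactor $t_j$ is controlled by $\chi$. Your condition (iii), forcing $t_j$ to be squarefree, is a refinement the paper's argument glosses over but which is genuinely needed here, since for a merely multiplicative $f$ the hypothesis $f(p)=\chi(p)$ says nothing about $f(p^\ell)$ for $\ell\geq 2$.
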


In the proof of this proposition and in several subsequent proofs in this section, we will use the following simple lemma.

\begin{lemma}\label{le_equalvalues} Let $Q\geq 2$ be an integer. Let $f:\mathbb{N}\to \{-1,+1\}$ be a multiplicative function such that for some Dirichlet character $\chi$ we have $f(p)=\chi(p)$ for all $p\nmid Q$. Then for any integers $\alpha\geq n+1$ and $m\in \mathbb{N}$ we have
\begin{align*}
f(n+mQ^{\alpha})=f(n).    
\end{align*}
\end{lemma}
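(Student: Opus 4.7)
The plan is to exploit the hypothesis $\alpha \geq n+1$ to show that adding $mQ^{\alpha}$ to $n$ does not disturb the $p$-adic valuation of $n$ at any prime $p \mid Q$. Writing the ``$Q$-part'' of $n$ as $n_1 := \prod_{p \mid Q} p^{v_p(n)}$ and setting $n_2 := n/n_1$ (so that $(n_2, Q) = 1$), the key observation is that for every prime $p \mid Q$,
\begin{align*}
v_p(mQ^{\alpha}) \;\geq\; \alpha\, v_p(Q) \;\geq\; \alpha \;\geq\; n+1 \;>\; v_p(n),
\end{align*}
so $v_p(n+mQ^{\alpha}) = v_p(n) = v_p(n_1)$. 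Consequently one may write $n + mQ^{\alpha} = n_1 n_3$ for some positive integer $n_3$ with $(n_3, Q) = 1$.

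Since $(n_1, n_2) = (n_1, n_3) = 1$, multiplicativity gives $f(n) = f(n_1) f(n_2)$ and $f(n+mQ^{\alpha}) = f(n_1) f(n_3)$, so it suffices to show $f(n_2) = f(n_3)$. The assumption $f(p) = \chi(p)$ for all $p \nmid Q$, together with the fact that $f$ is (completely) multiplicative in the relevant applications, means that $f$ and $\chi$ agree on all integers coprime to $Q$; hence $f(n_2) = \chi(n_2)$ and $f(n_3) = \chi(n_3)$. The problem is thus reduced to showing $\chi(n_2) = \chi(n_3)$.

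For this, the identity $n_1(n_3 - n_2) = mQ^{\alpha}$ yields $n_3 - n_2 = mQ^{\alpha}/n_1$, which is an integer (since $v_p(n_1) \leq v_p(n) < \alpha \leq v_p(Q^{\alpha})$), and in fact is divisible by $Q$, because $v_p(Q^{\alpha}/n_1) \geq \alpha - v_p(n) \geq 1$ for every $p \mid Q$. Hence $n_3 \equiv n_2 \pmod{Q}$, and since $\chi$ has period (dividing) $Q$ in the intended setup, we conclude $\chi(n_2) = \chi(n_3)$, yielding $f(n+mQ^{\alpha}) = f(n)$. The only mildly subtle point is the implicit convention that the modulus of $\chi$ divides $Q$ (so that a congruence modulo $Q$ is enough to equate character values); this can always be arranged by enlarging $Q$ if necessary, which does not weaken the hypothesis $f(p) = \chi(p)$ for $p \nmid Q$.
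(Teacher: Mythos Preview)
Your proof is correct and follows essentially the same route as the paper's: both extract the ``$Q$-part'' $n_1$ of $n$ (the paper via the gcd $(n+mQ^{\alpha},Q^{\alpha})$, you via $p$-adic valuations), observe it is unchanged upon adding $mQ^{\alpha}$, and then use $f=\chi$ on the coprime-to-$Q$ cofactors together with periodicity of $\chi$. You are in fact a bit more explicit than the paper about the two tacit hypotheses needed---that $f$ agrees with $\chi$ on \emph{all} integers coprime to $Q$ (true in the applications, where $f$ is completely multiplicative) and that the modulus of $\chi$ divides $Q$---both of which the paper also uses.
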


\begin{proof} 
Since $\tfrac{n+mQ^{\alpha}}{(n+mQ^{\alpha},Q^{\alpha})}$ is coprime to $Q$ and $(n+mQ^{\alpha},Q^{\alpha})=(n+mQ^{\alpha},Q^{\alpha-1})$ by the assumption $\alpha\geq n+1$, we have
 \begin{align*}
f(n+mQ^{\alpha})&=f((n+mQ^{\alpha},Q^{\alpha}))f\left(\frac{n+mQ^{\alpha}}{(n+mQ^{\alpha},Q^{\alpha})}\right)\\
&=f((n+mQ^{\alpha},Q^{\alpha}))\chi\left(\frac{n+mQ^{\alpha}}{(n+mQ^{\alpha},Q^{\alpha})}\right)\\
&= f((n+mQ^{\alpha},Q^{\alpha}))\chi\left(\frac{n}{(n+mQ^{\alpha},Q^{\alpha})}\right)\\
&=f(n),
 \end{align*}
using the fact that the modulus of $\chi$ necessarily divides $Q$.
\end{proof}

\begin{proof}[Proof of Proposition~\ref{prop_pretentious_density}] Let $S$ be the set in question. Suppose that there is some $n_0\in S$. Let $\mathcal{P}=\{p\in \mathbb{P}:\,\ f(p)\neq \chi(p)\}$. Then by assumption
\begin{align}\label{eq:inverses}
\sum_{p\in \mathcal{P}}\frac{1}{p}<\infty.  \end{align}

Let $q$ be the modulus of $\chi$. Let $y>\max\{q,n_0+k\}$ be large, and factorize
\begin{align*}
f=f_{\leq y}f_{>y},\quad \textnormal{where }\,\, f_{\leq y}(p)=\begin{cases}
 f(p),\quad p\leq y\\ \chi(p), \quad p>y,\quad    
\end{cases} \quad  f_{>y}(p)=\begin{cases}
 1,\quad p< y\\ f(p)\chi(p), \quad p>y.\quad    
\end{cases}        
\end{align*}

Let $Q_y=\prod_{p\leq y}p$, and let $\alpha\geq n_0+k+1$ be an integer. Then by Lemma~\ref{le_equalvalues} for all $1\leq j\leq k$ and $m\in \mathbb{N}$ we have
\begin{align*}
f_{\leq y}(n_0+j+mQ_y^{\alpha})=f_{\leq y}(n_0+j).    
\end{align*}
Moreover, we also have 
\begin{align*}
f_{>y}(n_0+j+mQ_y^{\alpha})=f_{>y}(n_0+j)    
\end{align*}
if $n_0+j+mQ_y^{\alpha}$ is coprime to all the primes in $\mathcal{P}_{>y}:=\mathcal{P}\setminus [2,y]$.

For any $x$ large enough in terms of $y, \alpha, k$, we have by the union bound and~\eqref{eq:inverses} the estimate
\begin{align*}
\frac{1}{x}|\{m\leq x:\,\, \exists 1\leq j\leq k, p\in \mathcal{P}_{>y}:\,\, p\mid n_0+j+mQ_y^{\alpha}\}|&\leq \frac{k}{x}\sum_{\substack{p\in \mathcal{P}_{>y}\\p\leq 2xQ_y^{\alpha}}}\left(\frac{1}{p}+\frac{1}{x}\right)\\
&=o_{y\to \infty}(1).  \end{align*}  We conclude that if $y$, $\alpha$ are suitably chosen then the asymptotic lower density of $m\in \mathbb{N}$ for which $f(n_0+j+mQ_y^{\alpha})=f(n_0+j)$ for all $1\leq j\leq k$ is $\geq 0.99$, say. The claim follows.
\end{proof}

\subsection{The case of pretentious functions that are not modified characters}

Before continuing further, we record the following result of Schur mentioned in the introduction.
\begin{lemma}[Schur]\label{lem-3pattern}
If $f: \mb{N} \ra \{-1,+1\}$ is a completely multiplicative function such that $f \neq f_3^{\pm}$ then there is some $n \in \mb{N}$ such that $(f(n+1),f(n+2),f(n+3)) = (+1,+1,+1)$.
\end{lemma}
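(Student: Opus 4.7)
The plan is to prove the contrapositive: any completely multiplicative $f:\mathbb{N}\to\{-1,+1\}$ whose sequence $(f(n))_{n\geq 1}$ contains no triple $(+1,+1,+1)$ at consecutive positions must coincide with $f_3^+$ or $f_3^-$. I first note that $f_3^{\pm}$ indeed have no such triple: for any three consecutive integers $m, m+1, m+2$, exactly one lies in the residue class $2 \pmod 3$, and at such an integer $n$ one has $f_3^{\pm}(n) = \left(\tfrac{n}{3}\right) = -1$ (since $3\nmid n$ and the Jacobi symbol is multiplicative). So the task is to show these are the only such functions, via a finite inductive determination of $f(p)$ for each prime $p$.

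First I would pin down $f$ at the small primes $p\in\{2,5,7\}$ by direct inspection. Since $f$ is completely multiplicative, the values $f(n)$ for $n\leq 50$, say, are monomials in $f(2),f(3),f(5),f(7)$, and all squares $f(1), f(4), f(9), f(16), f(25), f(36), f(49)$ equal $+1$ automatically. Enumerating the triples $(f(n),f(n+1),f(n+2))$ for $n\leq 50$ and demanding that none of them equals $(+1,+1,+1)$ gives a small system of sign constraints on $(f(2),f(3),f(5),f(7))$; e.g.\ $(f(8),f(9),f(10))=(f(2),1,f(2)f(5))$ and $(f(48),f(49),f(50))=(f(2)f(3),1,f(2))$ are useful. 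A careful case analysis should leave exactly the two surviving assignments $f(2)=-1,f(5)=-1,f(7)=+1$ with $f(3)\in\{\pm 1\}$ free, matching $f_3^{\pm}$.

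Next, I would proceed by induction on primes $p\geq 11$, assuming $f(q)=\left(\tfrac{q}{3}\right)$ for every prime $q<p$ with $q\neq 3$. For each such $p$ I would exhibit a triple of consecutive integers $(N,N+1,N+2)$ such that exactly one entry is divisible by $p$ (to the first power), the other two entries factor entirely into primes strictly less than $p$, and the corresponding values of the Jacobi symbol $\left(\tfrac{\cdot}{3}\right)$ on those two entries are both $+1$. The inductive hypothesis then determines $f$ at the two small entries to be $+1$, and the forbidden-pattern hypothesis forces $f$ at the third entry to be $-1$; reading off the $p$-factor yields $f(p)=\left(\tfrac{p}{3}\right)$.

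The main obstacle is the uniform construction of such triples for arbitrary large $p$. This is essentially a smooth-number existence problem: one needs $N\equiv 0\pmod p$ with $N\pm 1$ (or a similar neighbouring pair) being $(p-1)$-smooth and in prescribed residue classes modulo $3$. For small primes this is a direct check, and for large primes it can be handled by a density argument using standard estimates on smooth numbers in arithmetic progressions (cf.\ the tools already used in the proof of Proposition~\ref{prop_pretentious_correlation2}), or alternatively by Schur's original elementary combinatorial reduction based on identities of the form $f(ab)=f(a)f(b)$ applied along well-chosen triples. Since the result is classical and purely qualitative, the cleanest route is to invoke Schur's 1918 argument directly rather than reprove the smooth-number statement from scratch.
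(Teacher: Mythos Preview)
The paper does not prove this lemma at all: it is stated as a classical result of Schur and simply cited (\cite{Schur}). So there is no paper-proof to compare against, and your final sentence --- ``invoke Schur's 1918 argument directly'' --- is exactly what the paper does.

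That said, the body of your proposal attempts more than a citation, and there the induction step is a genuine gap. Your scheme requires, for each prime $p\geq 11$, a triple of consecutive integers in which one entry is exactly divisible by $p$ and the other two are $(p{-}1)$-smooth with prescribed residues modulo $3$. You acknowledge this is ``the main obstacle'' and then offer two escapes: (i) smooth-number density estimates in arithmetic progressions, or (ii) ``Schur's original elementary combinatorial reduction.'' Option (ii) is circular --- that \emph{is} the thing you are trying to prove. Option (i) is not circular, but the estimates you point to (de Bruijn/Hildebrand--Tenenbaum type bounds, as used in Proposition~\ref{prop_pretentious_correlation2}) do not straightforwardly give what you need: you are asking for \emph{two specific consecutive} integers $N\pm 1$ (adjacent to a multiple of $p$) to be simultaneously $(p{-}1)$-smooth, which is a much more delicate statement than a density bound for smooth numbers in a progression. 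Results on consecutive smooth integers exist but are nontrivial and would be anachronistic heavy machinery for a 1918 elementary lemma.

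In short: your base-case sketch is fine and the inductive \emph{strategy} is reasonable, but as written the proof is incomplete at precisely the point where the content lies, and your fallback coincides with the paper's own treatment (cite Schur).
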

We will use this lemma to get a first structural result for $F_{2,3}.$

\begin{proposition}\label{prop:redtoModChar}
Suppose $f:\mb{N} \ra \{-1,+1\}$ is a completely multiplicative function and $f\in F_{2,3}$.  Suppose further that $\mathbb{D}(f,\chi;\infty) <\infty$ for some real primitive\footnote{Here, we allow $\chi$ to be the trivial character, i.e., with $q = 1$.} Dirichlet character modulo $q \geq 1$. Then, for any $p \nmid 6q$ we have $f(p) = \chi(p)$. 
\end{proposition}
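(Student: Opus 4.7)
The plan is to argue by contradiction. Suppose some prime $p_0 \nmid 6q$ satisfies $f(p_0) \neq \chi(p_0)$. Since $p_0 \nmid q$ forces $\chi(p_0) \in \{-1,+1\}$, this means $f(p_0) = -\chi(p_0)$. I will construct an $n$ with $f(n+1) = f(n+2) = f(n+3) = f(n+4) = +1$, contradicting $f \in F_{2,3}$.

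The starting point is Schur's Lemma~\ref{lem-3pattern}: since $f_3^{\pm}$ lie in $F_{2,2}$ and $f \in F_{2,3}$, we have $f \neq f_3^{\pm}$, so there is $m \geq 1$ with $f(m+1) = f(m+2) = f(m+3) = +1$. The hypothesis $f \in F_{2,3}$ then forces $f(m+4) = -1$ (otherwise $m$ already starts a length-four run).

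Next, mimicking the factorization used in the proof of Proposition~\ref{prop_pretentious_density}, I fix a large parameter $y > \max(p_0, m+4, q)$, set $Q = q \prod_{p \leq y} p$, and write $f = f_y g_y$ where $f_y(p) = f(p)$ at primes $p \leq y$ or $p \mid q$ and $f_y(p) = \chi(p)$ otherwise, with $g_y = f/f_y$ the complementary multiplicative factor. Because $y > m+4$, every $m+j$ ($1 \leq j \leq 4$) is $y$-smooth, so $g_y(m+j) = 1$ and $f_y(m+j) = f(m+j)$. Lemma~\ref{le_equalvalues} applied to $f_y$ gives $f_y(m+j+sQ^{\alpha}) = f_y(m+j)$ for any $\alpha \geq m+5$ and $s \in \mathbb{N}$. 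Since $\gcd(p_0, Q) = 1$ and $p_0 > 4$, there is a unique residue class of $s$ modulo $p_0$ forcing $p_0 \| m+4+sQ^{\alpha}$ and $p_0 \nmid m+j+sQ^{\alpha}$ for $j = 1, 2, 3$. For $n = m + sQ^{\alpha}$ in this class, the identities $f_y(n+j) = f_y(m+j)$ show that the four-fold condition $f(n+j) = +1$ becomes
$$
g_y(n+j) = 1 \text{ for } j = 1, 2, 3, \qquad g_y(n+4) = -1.
$$

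The main obstacle is to realize both conditions for a single $s$ in the chosen AP modulo $p_0$. The first three are the typical behavior of $g_y$: since $\mathbb{D}(g_y,1;\infty) = o_y(1)$, a sieve/union-bound argument of the kind used in the proof of Proposition~\ref{prop_pretentious_density} shows that $g_y(n+j) \neq 1$ for some $j \in \{1,2,3\}$ occurs on a density-$o_y(1)$ subset of the AP. The condition $g_y(n+4) = -1$ is the delicate one, because it requires $n+4$ to have an odd number of prime factors in $T_y := \{p > y : f(p) \neq \chi(p),\, p \nmid q\}$. When $T_y \neq \emptyset$, a Wirsing/Halász-type mean-value estimate shows this event has positive lower density along the AP, and intersecting with the typical set above yields a valid $s$. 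If instead $T_y = \emptyset$ for our chosen $y$ (so that all deviations of $f$ from $\chi$ lie at primes $\leq y$), I would restart the construction with a parameter $y' < p_0$, ensuring $p_0 \in T_{y'}$: then imposing $p_0^2 \| n+4$ in place of $p_0 \| n+4$ lets the factor $g_{y'}(p_0) = -1$ enter $g_{y'}(n+4)$ directly, furnishing the required sign flip. In either case a valid $s$ is produced, giving the length-four run and the desired contradiction.
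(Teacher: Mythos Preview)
Your construction is internally inconsistent. You take $y > p_0$ and $Q = q\prod_{p \leq y} p$, so $p_0 \mid Q$; yet you then assert $\gcd(p_0,Q)=1$ in order to choose $s$ with $p_0 \,\|\, m+4+sQ^{\alpha}$. With $p_0 \mid Q$ the $p_0$-adic valuation of $n+j = m+j+sQ^{\alpha}$ is fixed by $m+j$ and cannot be adjusted, so this step collapses. In any case $p_0 \leq y$ sits on the $f_y$-side of your factorization, so imposing $p_0$-divisibility does nothing to $g_y(n+4)$. In the $T_y = \emptyset$ branch you propose $p_0^2 \,\|\, n+4$ to insert ``the factor $g_{y'}(p_0) = -1$'', but $p_0^2$ contributes $g_{y'}(p_0)^2 = +1$, so no flip occurs; and the restart with $y' < p_0$ forfeits the identity $f_{y'}(m+j) = f(m+j)$ unless $m+j$ is free of $T_{y'}$-primes, which you have not arranged. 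Even in the $T_y \neq \emptyset$ branch, the event $\{g_y(n+4) = -1\}$ has density comparable to that of each $\{g_y(n+j) \neq 1\}$, so a bare union bound does not show their intersection with the AP is nonempty; a genuine argument (fixing a specific $p^{\ast} \in T_y$ with $p^{\ast}\,\|\, n+4$ and sieving all other $T_y$-primes out of $n+1,\ldots,n+4$) is needed but not given.

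The paper's proof avoids these issues by a two-stage argument. First it shows the exceptional set $S = \{p>3: p\nmid q,\, f(p)\neq\chi(p)\}$ has at most one element: if $p_1,p_2 \in S$ are distinct, one takes $n = (6q)^2 m$ with $p_i^{k_i}\,\|\, n+1+i$ for $i=1,2$ and all $n+j$ otherwise $S$-free, then chooses the exponents $k_i$ to force $f(n+j)=+1$ for $j=1,\ldots,4$. The remaining case $|S|=1$ is handled by writing $f = g h_{p_0}$ with $g$ a modified character, splitting into $g \neq f_3^{\pm}$ (where a Schur pattern for $g$ is shifted to align $p_0 \,\|\, n+4$ or $p_0 \nmid n+4$ as needed) and $g = f_3^{\pm}$ (using the explicit patterns of Lemma~\ref{lem:f3pm}). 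Your single-prime flip at position $4$ is close in spirit to the first of these subcases, but your sketch does not isolate why the $|S|\geq 2$ versus $|S|=1$ dichotomy is needed, nor handle the $f_3^{\pm}$ obstruction.
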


The proof of the proposition requires the following basic observation about the functions $f_3^{\pm}$.

\begin{lemma}\label{lem:f3pm}
If $f \in \{f_3^+,f_3^-\}$, both of the length 4 sign patterns $\mbf{\e}_1 := (+1,+1,-1,+1)$, $\mbf{\e}_2 := (+1,+1,-1,-1)$ occur in the sequence $f$ along an arithmetic progression. 

Precisely, we have
\begin{align*}
&(f_3^+(n),f_3^+(n+1),f_3^+(n+2),f_3^+(n+3)) = \begin{cases} \mbf{\e}_1 \text{ if } n \equiv 9 \pmod{27} \\ \mbf{\e}_2 \text{ if } n \equiv 3 \pmod{9}
\end{cases} \\
&(f_3^-(n),f_3^-(n+1),f_3^-(n+2),f_3^-(n+3)) = \begin{cases} \mbf{\e}_1 \text{ if } n \equiv 6 \pmod{27} \\ \mbf{\e}_2 \text{ if } n \equiv 9 \pmod{27}
\end{cases} 
\end{align*}
\end{lemma}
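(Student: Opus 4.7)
The plan is to verify the four claims by direct, completely explicit computation, exploiting the structure of $f_3^{\pm}$. Recall that since $f_3^{\pm}$ is completely multiplicative with $f_3^{\pm}(p) = \left(\tfrac{p}{3}\right)$ for $p \neq 3$ and $f_3^{\pm}(3) = \pm 1$, one has the closed form
\begin{equation*}
f_3^{\pm}(n) = (\pm 1)^{v_3(n)} \left(\frac{n/3^{v_3(n)}}{3}\right).
\end{equation*}
Thus $f_3^{\pm}(n)$ is completely determined by the $3$-adic valuation of $n$ together with the residue modulo $3$ of its $3$-free part (equivalently, by Lemma~\ref{le_equalvalues} applied with $Q=3$ and $\chi = \left(\tfrac{\cdot}{3}\right)$, the value of $f_3^{\pm}(n)$ depends only on $n$ modulo a sufficiently large power of $3$). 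The arithmetic progressions in the lemma are exactly those that pin down this data.

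The key simplification is that each of the four congruence conditions forces $n\equiv 0\pmod 3$, so among the four consecutive integers $n,n+1,n+2,n+3$, only the endpoints $n$ and $n+3$ are divisible by $3$. For the two middle values, irrespective of the $\pm$ sign, we automatically get $f_3^{\pm}(n+1) = \left(\tfrac{1}{3}\right) = +1$ and $f_3^{\pm}(n+2) = \left(\tfrac{2}{3}\right) = -1$, which already matches the second and third coordinates of both $\boldsymbol{\varepsilon}_1$ and $\boldsymbol{\varepsilon}_2$. It thus suffices to verify, in each of the four cases, the values of $f_3^{\pm}$ at $n$ and at $n+3$.

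This last step is a routine unwinding. For instance, if $n \equiv 9 \pmod{27}$, write $n = 9(1+3j)$; then $v_3(n)=2$ and $n/9 \equiv 1 \pmod 3$, while $n+3 = 3(4+9j)$ gives $v_3(n+3) = 1$ and $(n+3)/3 \equiv 1 \pmod 3$. Substituting into the closed form above yields $f_3^+(n) = f_3^+(n+3) = +1$, producing $\boldsymbol{\varepsilon}_1$. The remaining three cases (for $f_3^+$ when $n \equiv 3 \pmod 9$, and for $f_3^-$ in each of the two stated classes) are handled identically: read off $v_3(n), v_3(n+3)$ and the residues of the cofactors from the assumed congruence, and plug in.

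The only mild subtlety, which I would flag as the main bookkeeping obstacle, is that when $n+3$ can be divisible by a large power of $3$ (as in the $f_3^-$ cases, where $n \equiv 0 \pmod 3$ makes $n+3$ a multiple of $9$), the congruence condition on $n$ must be chosen with large enough modulus to simultaneously determine $v_3(n+3)$ \emph{and} the mod-$3$ residue of $(n+3)/3^{v_3(n+3)}$; this is precisely why some of the conditions are taken modulo $27$ rather than modulo $9$. Once the cases are aligned correctly, no further ideas are needed and the lemma reduces to a finite verification.
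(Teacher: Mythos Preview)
Your approach is exactly the paper's: write $f_3^{\pm}(n) = (\pm 1)^{v_3(n)}\chi_3(n/3^{v_3(n)})$, note that in every case $3\mid n$ so $f_3^{\pm}(n+1)=+1$ and $f_3^{\pm}(n+2)=-1$ automatically, and then read off $f_3^{\pm}(n)$ and $f_3^{\pm}(n+3)$ from the congruence data. That part is fine.

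There is, however, a genuine gap in one case that your sketch glosses over. For $f_3^-$ with the condition $n\equiv 6\pmod 9$, you have $n+3\equiv 0\pmod 9$, so $v_3(n+3)\ge 2$ and neither $v_3(n+3)$ nor the residue of the cofactor is determined by $n\pmod 9$ alone. In fact the stated claim is false at $n=15$: one computes $f_3^-(18)=f_3^-(2)f_3^-(3)^2=-1$, giving $(+1,+1,-1,-1)=\boldsymbol{\varepsilon}_2$, not $\boldsymbol{\varepsilon}_1$. Your remark that ``this is precisely why some of the conditions are taken modulo $27$'' is correct in spirit but misapplied here: the condition \emph{should} have been taken modulo $27$ (namely $n\equiv 6\pmod{27}$), but the lemma as stated uses modulo $9$. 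The paper's own proof quietly restricts to the sub-progression $n=3(3m+2)$ with $3\mid m$, i.e.\ $n\equiv 6\pmod{27}$, which is what is actually needed (and is all that is used downstream). You should either flag this discrepancy or carry out the case explicitly to catch it.
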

\begin{proof}
If $f = f_3^+$ (so $f(3) = +1$) then by taking $n = 9(3m+1)$ for $m \in \mb{N}$ we have
\begin{align*}
(f(n),f(n+1),f(n+2),f(n+3))& = (\chi_3(3m+1),\chi_3(27m+10),\chi_3(27m+11),\chi_3(9m+4))\\& = (+1,+1,-1,+1),
\end{align*}
and if $n = 3(3m+1)$ then
\begin{align*}
(f(n),f(n+1),f(n+2),f(n+3))& = (\chi_3(3m+1),\chi_3(9m+4),\chi_3(9m+5),\chi_3(3m+2))\\& = (+1,+1,-1,-1).
\end{align*}

Similarly, if $f = f_3^-$ (so $f(3) = -1$) then, again taking $n = 9(3m+1)$ we get
\begin{align*}
&(f(n),f(n+1),f(n+2),f(n+3))\\
=& (f_1(3)^2\chi_3(3m+1),\chi_3(27m+10),\chi_3(27m+11), f_1(3)\chi_3(9m+4)) \\
=& (+1,+1,-1,-1),
\end{align*}
while taking $n = 3(9m+2)$ with $3\mid m$, we get
\begin{align*}
&(f(n),f(n+1),f(n+2),f(n+3))\\
=&(f_1(3)\chi_3(9m+2),\chi_3(27m+7),\chi_3(27m+8),f_1(3)^2\chi_3(3m+1)) \\
=& (+1,+1,-1,+1).
\end{align*}
\end{proof}

\begin{proof}[Proof of Proposition~\ref{prop:redtoModChar}]
We employ a simple variant of the ``rotation trick'' for multiplicative functions, as developed in~\cite{tams-kmpt} (for a different application, see also~\cite{KMT-EDP}). 

We begin by showing that at most one prime $p > 3$ with $p \nmid q$ yields $f(p)\chi(p) \neq 1$. 

Let $S := \{p > 3 : p\nmid q, \,  f(p)\chi(p) \neq 1\}$. We necessarily have
$$
\sum_{\ss{p \leq x \\ p \in S}} \frac{1}{p} = \frac{1}{2} \sum_{p \leq x} \frac{1-f(p)\chi(p)}{p} + O(1) = \frac{1}{2}\mb{D}(f,\chi;x)^2 + O(1) \ll 1.
$$

Assume for the sake of contradiction that $|S| \geq 2$, and select $p_1,p_2 \in S$ distinct. Let $k_1,k_2 \in \mb{N}$ and let $a := (6q)^{2}$. 
We claim that there exist arbitrarily large integers $m$ with $(m,6)=1$ for which 
\begin{align}\label{eq:congruences}\begin{split}
(am+j,p)&=1,\,\forall 1\leq j\leq 4,\, \, p \in S \bk \{p_1,p_2\},\\
p_1^{k_1} &\mid \mid am+2,\\
p_2^{k_2} &\mid \mid am+3.
\end{split}
\end{align}
Indeed, if $y\geq 2$, by the Chinese remainder theorem we can find integers $1\leq b_y\leq Q_y$ such that~\eqref{eq:congruences} holds for $m\equiv b_y\pmod{Q_y}$ when the set $S$ there is replaced with $S\cap [1,y]$. Now, the proportion of $m\equiv b_y\pmod{Q_y}$ for which $(am+j,p)=1$ for some $p>y, p\in S$ is by the union bound 
$$\ll \sum_{\substack{p\in S\\ p>y}}\frac{1}{p}=o_{y\to \infty}(1),$$
so if $y$ is large enough then at least proportion $0.99$ of $m\equiv b_y\pmod{Q_y}$ satisfy~\eqref{eq:congruences}.

Fix any $m$ satisfying~\eqref{eq:congruences}. Since $4q\mid a$, 
$f(am+j) = f(j)f(\frac{a}{j}m+1)=f(j) = +1$ for $j = 1,4$. Moreover for $j\in \{2,3\}$ we have $\chi((am+j)/j) = 1$, and so
\begin{align}\label{eq:twoprimes}
f(am+j) &= f(jp_{j-1}^{k_{j-1}})f(\tfrac{am+j}{jp_{j-1}^{k_{j-1}}})=f(j)f(p_{j-1})^{k_{j-1}}\chi(\tfrac{am+j}{jp_{j-1}^{k_{j-1}}}) = f(j)(f(p_{j-1})\chi(p_{j-1}))^{k_{j-1}}.
\end{align}
Since $f\in F_{2,3}$ and $p_1,p_2\in S$, it follows that
\begin{align}\label{eq:twoprimes2}\begin{split}
0 &= (1+f(am+1))(1+f(am+2))(1+f(am+3))(1+f(am+4)) \\
&= 4(1+f(2)(f(p_1)\chi(p_1))^{k_1})(1+f(3)(f(p_2)\chi(p_2))^{k_3}).
\end{split}
\end{align}
Since $f(p_j)=-\chi(p_j)$ for $j=1,2$, choosing $k_1,k_2$ such that $f(2) = (-1)^{k_1}$ and $f(3) = (-1)^{k_2}$ the right-hand side is non-zero, which is a patent contradiction. 

Next, we show that $S = \emptyset$. Suppose instead that there is a unique prime $p_0 > 3$ with $p_0 \nmid q$ and $f(p_0) = -\chi(p_0)$, with $f \in F_{2,3}$. Thus, $f$ is a modification of $\chi$ outside of the primes dividing $6qp_0$. 

Define $h_{p_0}$ to be the completely multiplicative function defined by $h_{p_0}(p) = 1$ if $p \neq p_0$ and $h_{p_0}(p_0) = -1$. Then $g := fh_{p_0}$ satisfies $g(p) = \chi(p)$ for all $p\nmid q$. We consider two cases.

\textbf{Case 1:} $g \neq f_3^{\pm}$.
By Lemma~\ref{lem-3pattern}, we may find $n \in \mb{N}$ such that either $(g(n+1),g(n+2),g(n+3),g(n+4)) = (+1,+1,+1,-1)$ or $(+1,+1,+1,+1)$. 
Let $\alpha\geq n+5$ be any integer, 
and let $k \in \mb{Z}$ be chosen so that
$$
n+k(6q)^{\alpha}+4 \equiv p_0 - \eta \pmod{p_0^2},
$$
where $\eta := 0$ if $(+1,+1,+1,-1)$ is produced, and otherwise $\eta := -1$
if $(+1,+1,+1,+1)$ is produced. Each of these may be done since $(p_0,q)=1$. By Lemma~\ref{le_equalvalues} we have
$$
g(n+j+k(6q)^{\alpha}) = g(n+j)=+1 \text{ for } j = 1,2,3, 
$$
and $g(n+4+k(6q)^{\alpha}) = (-1)^{\eta+1}$.
On the other hand, since $p_0 \nmid n+j+k(6q)^{\alpha}$ for $j = 1,2,3$, we have
$$
h_{p_0}(n+j+k(6q)^{\alpha}) = +1 \text{ for } j = 1,2,3,
$$
and also
$$
h_{p_0}(n+4+k(6q)^{\alpha}) = h_{p_0}(p_0-\eta) = (-1)^{\eta+1}.
$$
Thus, as $f = gh_{p_0}$, setting $n'=n+k(6q)^{\alpha}$ we find $(f(n'+1),f(n'+2),f(n'+3),f(n'+4))= (+1,+1,+1,+1)$,
which is also a contradiction. 

\textbf{Case 2:} $g \in \{f_3^+, f_3^-\}$

Assume first that $g = f_3^+$. Applying Lemma~\ref{lem:f3pm}, we have
$$
(g(n),g(n+1),g(n+2),g(n+3)) = (+1,+1,-1,+1)
$$
whenever $n = 27m + 9$. If we choose $m \in \mb{N}$ such that $n+2 = 27m + 11 \equiv p_0 \pmod{p_0^2}$ (which can be done since $p_0 \neq 3$) then $p_0 \nmid n+j$ for $j = 0,1,3$, and so
$$
(h_{p_0}(n),h_{p_0}(n+1),h_{p_0}(n+2), h_{p_0}(n+3)) = (+1,+1,-1,+1).
$$
Again using $f = gh_{p_0}$ we conclude that $(f(n),f(n+1),f(n+2),f(n+3)) = (+1,+1,+1,+1)$, a contradiction. 

A similar contradiction is obtained if $g = f_3^-$, picking instead $n = 27m+6$ and using Lemma~\ref{lem:f3pm}.

Thus, in all cases we get a contradiction if $S \neq \emptyset$, so the claim follows.
\end{proof}

The case $q = 1$ of Proposition~\ref{prop:redtoModChar} may be disposed of by a direct check.
\begin{corollary}\label{cor:f23}
Let $f \in F_{2,3}$ be such that $\mathbb{D}(f,1;\infty) < \infty$. Then $f(p)= + 1$ for all $p > 2$, and $f(2) = -1$ (i.e., $f = g_3$ in Conjecture~\ref{4pattern}).
\end{corollary}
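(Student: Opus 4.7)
The plan is to apply Proposition~\ref{prop:redtoModChar} with $q = 1$ and $\chi$ the trivial character, which immediately yields $f(p) = +1$ for every prime $p > 3$. This reduces the problem to determining the values of $f(2)$ and $f(3)$, leaving only four candidate functions for $f$.

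Three of these four candidates can be ruled out by exhibiting an explicit starting point at which the forbidden four-term block $(+1,+1,+1,+1)$ occurs. If $f(2) = f(3) = +1$ then $f \equiv 1$, and the pattern occurs trivially. If $f(2) = +1$ and $f(3) = -1$, then $f(n) = (-1)^{v_3(n)}$, and a direct computation gives $f(7) = f(8) = f(9) = f(10) = +1$, using $v_3(9) = 2$ and $v_3(n) = 0$ for $n \in \{7,8,10\}$. If $f(2) = f(3) = -1$, then $f(n) = (-1)^{v_2(n)+v_3(n)}$, and one checks $f(4) = f(5) = f(6) = f(7) = +1$ since $v_2(4) = 2$, $v_2(6) + v_3(6) = 2$, and both exponents vanish at $5$ and $7$. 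In each case, the presence of a length-$4$ block of $+1$'s contradicts $f \in F_{2,3}$.

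The sole remaining candidate is $f(2) = -1$, $f(3) = +1$, which is precisely $g_3$. No step presents a serious obstacle: Proposition~\ref{prop:redtoModChar} does all the heavy lifting, and the conclusion follows from the finite case analysis above.
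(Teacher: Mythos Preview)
Your proof is correct and follows essentially the same approach as the paper: apply Proposition~\ref{prop:redtoModChar} with $q=1$ to reduce to the four choices of $(f(2),f(3))$, then rule out three of them by exhibiting an explicit length-four block of $+1$'s. The only difference is that for the case $f(2)=f(3)=-1$ you use the block $4,5,6,7$, whereas the paper uses $28,29,30,31$; your choice is in fact simpler.
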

\begin{proof}
By Proposition~\ref{prop:redtoModChar} we conclude that $f(p) = +1$ for all $p > 3$. 

If $f(2) = f(3) = +1$ then $f(n) = +1$ for all $n \in \mb{N}$, and thus $f \notin F_{2,3}$. 

If $f(2) = f(3) = -1$ then we have
$$
f(28) = f(7) = +1, \, f(29) = +1, \, f(30) = f(2)f(3)f(5) = +1, \, f(31) = +1,
$$
so $f \notin F_{2,3}$ also. 

Finally, if $f(2) = +1$ but $f(3) = -1$ then
$$
f(7) = +1, \, f(8) = f(2)^3 = +1, \, f(9) = f(3)^2 = +1, \, f(10) = f(2)f(5) = +1,
$$
hence $f \notin F_{2,3}$. 

The only remaining case is $f(2) = -1$, $f(3) = +1$, which is precisely $f = g_3$ in Hudson's classification.
\end{proof}

\subsection{The case of modified characters}
 In the sequel, let 
$$
\mc{D} := \{m \in \mb{N} : m = 2^k m', \, k \in \{0,2,3\}, \, 2 \nmid m' \text{ and } \mu^2(m') = 1\} \bk \{1\}.
$$
Given $m \in \mc{D}$ we denote by $\chi_m$ a primitive real character modulo $m$; when $8 \nmid m$ there is a unique such primitive character with conductor $m$, whereas if $8|m$ then there are two of them (corresponding to the two primitive real characters modulo $8$). For example, any odd prime $p$ belongs to $\mc{D}$, and $\chi_p$ is the Legendre symbol $\chi_p = (\tfrac{\cdot}{p})$.

We say that $f: \mb{N} \ra \{-1,+1\}$ is a \emph{modification} of a character $\chi_m$, $m \in \mc{D}$, if $f(p)$ differs from $\chi_m(p)$ at only finitely many primes $p$.

Combining Proposition\ref{prop:redtoModChar} and Corollary~\ref{cor:f23}, we know that any $f\in F_{2,3}$, $f\neq g_3$ is a modification of some real primitive Dirichlet character $\chi\pmod q$ at a subset of the primes dividing $6q$, with $q\geq 3$. But then $f$ is the modification of $\chi_m$ outside the primes dividing $6m$ for some $m\in \mathcal{D}$. The main goal in this subsection is to prove the following proposition,  which limits the possibilities for $m$, showing among other things that the set $F_{2,3}$ must be finite.

\begin{proposition} \label{prop:redToBddMod}
Let $f \in F_{2,3}$ be a function for which there is an $m \in \mc{D}$ such that $f(p) = \chi_m(p)$ for all $p \nmid 6m$. Then the following hold.
\begin{enumerate}
    \item Either $m \in \{4,8\}$, or else $m$ is a prime.

    \item We have $m < 197$.
\end{enumerate} 
\end{proposition}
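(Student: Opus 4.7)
The plan is to show that for any $m\in\mathcal{D}$ outside the allowed sets, no modification of $\chi_m$ at primes of $6m$ can lie in $F_{2,3}$, by exhibiting four consecutive integers on which $f$ takes the value $+1$. Since $f(p)=\chi_m(p)$ for all $p\nmid 6m$, I will fix an integer $K$ larger than any $n+5$ under consideration and work with $n$ in a single residue class modulo $(6m)^K$. Along such a class, Lemma~\ref{le_equalvalues} shows that the $6m$-part $e_j$ of $n+j$ is constant, so that
\[
f(n+j) \;=\; \biggl(\prod_{p\mid 6m} f(p)^{v_p(e_j)}\biggr)\,\chi_m\!\left(\tfrac{n+j}{e_j}\right)\;=:\;\epsilon_j\,\chi_m\!\left(\tfrac{n+j}{e_j}\right),
\]
where $\epsilon_j\in\{\pm 1\}$ depends only on the residue class and on the (fixed) modification signs $\{f(p):p\mid 6m\}$. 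The problem then reduces to selecting a residue $r\pmod{(6m)^K}$ for which $\chi_m((r+j)/e_j)=\epsilon_j$ for all $j\in\{1,2,3,4\}$ simultaneously; such a class yields infinitely many $n$ violating $f\in F_{2,3}$.

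For part~(1), I will rule out $m$ having at least two distinct odd prime divisors, and $m$ of the form $2^k m'$ with $k\in\{2,3\}$ and $m'$ possessing an odd prime divisor. In either case there are at least two coprime factors in a CRT-decomposition $\chi_m=\prod_{p}\chi_{p^{v_p(m)}}$. By CRT, the residue of $n$ modulo each prime-power component of $6m$ can be prescribed independently; for an odd prime $p\geq 5$ dividing $m$, choosing $n\equiv 0\pmod p$ makes each of $n+1,\ldots,n+4$ a free, nonzero residue modulo $p$ whose Legendre symbols can be matched to any prescribed pattern provided $p$ is not too small (this uses the Weil bound applied to the character sum $\sum_{n\bmod p}\prod_{j=1}^4(1+v_j\chi_p(n+j))$, which is $p/16+O(\sqrt p)$). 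The presence of a second independent factor of $m$ (or, in the $m=4m'$ and $8m'$ cases, the extra freedom supplied by the primitive real character modulo $2^k$) produces enough sign flexibility to solve the $2^4$ possible systems over the target vector $(\epsilon_j)_{j=1}^4$. The small moduli not covered by Weil are handled by explicit enumeration modulo $6m$.

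For part~(2), $m$ is an odd prime $\geq 197$, and $\chi_m$ is the Legendre symbol. The same Weil-bound estimate gives that the number of $n\pmod m$ with $(\chi_m(n+1),\ldots,\chi_m(n+4))$ equal to any prescribed sign vector is $m/16+O(\sqrt m)$, hence positive once $m$ exceeds an explicit threshold. One then compatibilises this with the residue of $n$ modulo $6$ (and modulo small powers of $2$ and $3$) to produce the full residue class $r\pmod{(6m)^K}$ realising the four $\epsilon_j$. The primes $m$ lying between the Weil threshold and $197$ are handled by direct verification that four consecutive quadratic residues of $\chi_m$ can be found in a class compatible with the $2,3$-structure. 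The main difficulty lies in this last computational step: the constant $197$ is presumably tight in the sense that the primes $5,7,11,13,53$ appearing in Hudson's list are precisely those for which no such pattern exists (so the reduction step must be executed carefully enough to distinguish these from other primes below $197$), and making the Weil bound completely explicit to handle the remaining gap is the chief technical hurdle.
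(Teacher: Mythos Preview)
Your overall strategy---reduce to a system of character constraints via Lemma~\ref{le_equalvalues} and solve it using CRT and Weil-type bounds---is broadly reasonable, but both parts have gaps, and the paper's arguments are structured rather differently.

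For part~(1), your sentence ``choosing $n\equiv 0\pmod p$ makes each of $n+1,\ldots,n+4$ a free, nonzero residue modulo $p$'' is confused: with $n\equiv 0\pmod p$ the residues $n+j\equiv j$ are \emph{fixed}, not free. More substantively, your plan to match the target vector $(\epsilon_j)$ via independent CRT choices does not account for the coupling between the $\epsilon_j$ (which depend on the $6m$-adic valuations of the $n+j$) and the character values (which depend on the residues modulo the same primes). For the smallest primes $p=5,7$ dividing $m$, the Weil count $p/16+O(\sqrt p)$ is useless, and since four consecutive integers cover four of the five residues modulo $5$, there is essentially no freedom there; you defer all of this to ``explicit enumeration'' that you do not perform. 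The paper avoids this entirely: it proves a factorisation lemma (Lemma~\ref{le:ProductsNotF23}) showing that if $f_1,f_2$ are modifications of $\chi_{m_1},\chi_{m_2}$ with $(m_1,m_2)=1$ then $f_1f_2\notin F_{2,3}$, using Schur's length-$3$ result to produce the pattern $(-1,+1,+1,+1)$ (or a specific substitute when $f_3^{\pm}$ appears) in each factor separately, and then combining via CRT. This reduces part~(1) to a single clean splitting argument with no Weil bound and no small-modulus casework.

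For part~(2), you have the right tool but miss the trick that makes the bound $197$ fall out directly. The paper does \emph{not} compute a weaker Weil threshold and then verify primes up to $197$ by hand; instead it evaluates $f$ along $n=36a+j$, so that $f(36a+j)=f(j)\chi_{p_0}(36a/j+1)=f\chi_{p_0}(j)\,\chi_{p_0}(36a+j)$, which absorbs all the $2,3$-structure into the fixed signs $f\chi_{p_0}(j)$. Summing $\prod_{j=1}^4(1+f\chi_{p_0}(j)\chi_{p_0}(36a+j))$ over $a\pmod{p_0}$, the singleton terms vanish by orthogonality, the two-element terms are Jacobi sums equal to $\pm 1$, and Weil bounds the three- and four-element terms by $2\sqrt{p_0}$ and $3\sqrt{p_0}$. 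This yields $p_0\le 38+11\sqrt{p_0}$, hence $p_0\le 196$, with no further verification needed for the proposition as stated. Your approach, keeping the $2,3$-compatibility as a separate constraint, would produce a worse constant and genuinely require the intermediate checking you describe.
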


The proof of Proposition~\ref{prop:redToBddMod} is based on the following lemma. 

\begin{lemma}\label{le:ProductsNotF23}
Let $m_1,m_2 \in \mc{D}$ be coprime. Let $f_1,f_2: \mb{N} \ra \{-1,+1\}$ be completely multiplicative functions satisfying $f_i(p) = \chi_{m_i}(p)$ whenever $p \nmid 6m_i$ for $i = 1,2$. Then $f_1f_2 \notin F_{2,3}$. 
\end{lemma}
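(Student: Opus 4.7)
Setting $f := f_1 f_2$, my plan is to exhibit an integer $n$ with $f(n+j) = +1$ for $j = 1,2,3,4$, so that $f$ has length at least $4$ and hence $f \notin F_{2,3}$. The construction is an extension of the rotation trick from the proof of Proposition~\ref{prop:redtoModChar} to two distinct ramified primes.

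Since $\gcd(m_1, m_2) = 1$, for every prime $p \nmid 6m_1m_2$ we have $f(p) = \chi_{m_1}(p)\chi_{m_2}(p) = \chi(p)$, where $\chi := \chi_{m_1 m_2}$ is a non-principal real character of modulus $M := m_1 m_2 \in \mc{D}$ with $M \geq 12$. As $m_1, m_2 \geq 2$ are coprime, I select distinct primes $p_1 \mid m_1$ and $p_2 \mid m_2$, focusing first on the generic sub-case $p_1, p_2 \geq 5$. Set $\tilde q := M/(p_1 p_2)$ and $a := (6\tilde q)^2$, so that $\tilde q^2 \mid a$ while $(a, p_1 p_2) = 1$.

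For an ordered pair of distinct indices $(j_1, j_2) \in \{1,2,3,4\}^2$ and exponents $k_1, k_2 \geq 1$ to be chosen, the Chinese remainder theorem supplies arbitrarily large $m \in \mathbb{N}$ with $(m, 6) = 1$, $(am + j, q) = 1$ for all $q \mid 6\tilde q$ and $j \in \{1,\ldots,4\}$, $p_1^{k_1} \| am + j_1$, $p_2^{k_2} \| am + j_2$, and $p_i \nmid am + j$ at the other indices; moreover, the residue class of $m$ modulo $p_i^{k_i+1}$ may be prescribed freely subject to these constraints. Additional $2$-adic and $3$-adic conditions on $m$, analogous to those implicit in the proof of Proposition~\ref{prop:redtoModChar}, guarantee that the $2$- and $3$-power parts of $am + j$ contribute trivially via $f(2)^2 = f(3)^2 = 1$. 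Writing $am + j = j \cdot ((a/j)m + 1)$ for $j \in \{1,2,3,4\}$ (valid since $j \mid a$) and invoking $(a/j)m + 1 \equiv 1 \pmod{\tilde q}$, the condition $f(am + j) = +1$ reduces to an explicit sign equation at each $j$. At the special indices $j \in \{j_1, j_2\}$ this equation is always solvable, by tuning $\chi_{p_i}$ of the cofactor via the residue of $m$ modulo $p_i^{k_i + 1}$ and by toggling the parity of $k_i$ to flip the factor $(f_i(p_i)\chi_{m_i/p_i}(p_i))^{k_i}$. At the non-special indices $j \notin \{j_1, j_2\}$, the equation reduces to a fixed condition involving only $f(j)$ and $\chi_{p_1}, \chi_{p_2}$ evaluated at elements of $\{-3,-2,-1,1,2,3\}$.

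By cycling through the $12$ ordered choices of $(j_1, j_2)$, and if necessary reselecting the primes $p_1, p_2$ among the prime divisors of $m_1$ and $m_2$, at least one configuration will make both fixed non-special conditions hold simultaneously, producing the sought-after $4$-tuple of $+1$ values and hence contradicting $f \in F_{2,3}$. The main obstacle is the ensuing finite but elaborate case analysis: verifying that for every combination of quadratic-residue data at $p_1$ and $p_2$ (governed by their residues modulo $24$ via reciprocity) some pair $(j_1, j_2)$ works. The sub-cases where $p_i \in \{2, 3\}$ -- arising when $m_i \in \{4, 8\}$, or when $3 \mid m_i$ -- require a modified choice of the base $a$ (omitting the corresponding prime factor) and a parallel but notationally heavier analysis; handling these small-prime cases uniformly is the most delicate part of the proof.
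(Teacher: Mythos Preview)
Your approach differs substantially from the paper's, and as written it is a plan rather than a proof: the central claim---that among the twelve ordered choices of $(j_1,j_2)$ (together with possible reselection of $p_1,p_2$) some configuration forces both non-special conditions to hold---is asserted but never verified, and the small-prime sub-cases $p_i\in\{2,3\}$ are explicitly deferred. The non-special condition at index $j$ reduces (in the generic case) to a sign identity of the shape $f(j)\,\chi_{p_1}(j-j_1)\,\chi_{p_2}(j-j_2)\,\chi_{p_1}(j)\,\chi_{p_2}(j)=+1$, and whether one can always satisfy the two such constraints simultaneously depends on the quadratic-residue behaviour of $\{1,2,3\}$ modulo $p_1$ and $p_2$; you would need to tabulate all residue classes of $(p_1,p_2)$ modulo $24$ and check each, and then separately redo the argument when one or both moduli are $3,4,$ or $8$. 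None of this is impossible, but it is real work that the proposal does not carry out.

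The paper bypasses this entirely by working with the factors rather than the product. After a harmless adjustment to make the sets of primes where $\tilde f_i$ differs from $\chi_{m_i}$ disjoint, the argument runs as follows: if neither $\tilde f_i$ equals $f_3^{\pm}$, Schur's lemma (Lemma~\ref{lem-3pattern}) guarantees that each $\tilde f_i$ takes the pattern $(-1,+1,+1,+1)$ at some $n_i$; since the ``bad'' prime sets for $\tilde f_1$ and $\tilde f_2$ are coprime, Lemma~\ref{le_equalvalues} and the Chinese remainder theorem produce a single $n$ at which both $\tilde f_i$ display this pattern, whence $f_1f_2=\tilde f_1\tilde f_2$ is identically $+1$ on $\{n+1,\ldots,n+4\}$. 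The exceptional case $\tilde f_i=f_3^{\pm}$ is handled by the explicit pattern data of Lemma~\ref{lem:f3pm}. This route uses only two cases, no Legendre-symbol computations, and no base-$a$ gymnastics; the key idea you are missing is that one should look for a \emph{common} length-$4$ sign pattern of $\tilde f_1$ and $\tilde f_2$ (not necessarily all $+1$'s) and then square it away in the product.
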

\begin{proof} 
Let $S := \{p \in \{2,3\} : f_1(p) \neq \chi_{m_1}(p) \text{ and } f_2(p) \neq \chi_{m_2}(p)\},$ and for $p \in S$ define the completely multiplicative function $h_p$ by 
$$
h_p(p') = \begin{cases} -1 \text{ if } p' = p \\ 1 \text{ otherwise.} \end{cases}
$$
Observe that the functions $\tilde{f}_i := f_i \prod_{p \in S} h_p$, $i = 1,2$, satisfy the following properties: 
\begin{enumerate}[(i)]
\item $\tilde{f}_1\tilde{f}_2 = f_1f_2$ 
\item $\tilde{f}_i(p) = f_i(p) = \chi_{m_i}(p)$ for all $p \nmid 6m_i$, and 
\item at most one of $\tilde{f}_1\chi_{m_1}(p)$ and $\tilde{f}_2\chi_{m_2}(p)$ is $-1$ for $p \in \{2,3\}$.
\end{enumerate}
If we let $\tilde{m}_i := \prod_{p : \tilde{f}_i\chi_{m_i}(p) = -1} p$, $i = 1,2$, then because $m_1,m_2$ are coprime, so are $\tilde{m}_1$ and $\tilde{m}_2$. 

In light of (i), we will work with the functions $\tilde{f}_i$ in place of $f_i$ and show that $f_1f_2 = \tilde{f}_1\tilde{f}_2 \notin F_{2,3}$. We consider the following two cases.

\textbf{Case 1:} $\tilde{f}_1,\tilde{f}_2 \neq f_3^{\pm}$.

Observe that for $f \in \{\tilde{f}_1,\tilde{f}_2\}$ 
there is a positive integer $n = n_f$ for which 
\begin{equation}\label{eq:SchurCons}
(f(n+1), f(n+2),f(n+3),f(n+4)) = (-1,+1,+1,+1).
\end{equation}
Indeed, $f$ is a modification of a non-principal character modulo $6m$ for some $m$ and $f \neq f_3^{\pm}$, so by Lemma~\ref{lem-3pattern} we may find $n \in \mb{N}$ such that $(f(n+1),f(n+2),f(n+3)) = (+1,+1,+1)$. 

If $f \in F_{2,3}$ then, necessarily, $f(n) = -1$, and thus the length 4 pattern $(-1,+1,+1,+1)$ must arise. 

If, on the other hand, $f \notin F_{2,3}$ then it is possible for $(f(n),f(n+1),f(n+2),f(n+3)) = (+1,+1,+1,+1)$, and in fact by Proposition~\ref{prop_pretentious_density} such patterns occur with positive asymptotic lower density, so $n$ can be arbitrarily large. But as $f$ is a modification of a non-principal character, there must exist $k \in \mb{N}$, chosen minimally with $1 \leq k < n$, such that 
$$
f(n-j+1) = +1 \quad \text{ for } \quad 0 \leq j \leq k, \, f(n-k) = -1
$$ 
(otherwise there exist arbitrarily large $n$ such that $f(1)=f(2)=\cdots =f(n)$, in which case $f\equiv 1$, contradicting the assumption that $f$ is a modification of a non-principal character modulo $6m$). In that case, $(f(n-k),f(n-k+1),f(n-k+2),f(n-k+3)) = (-1,+1,+1,+1)$. Thus, in all cases, $f$ must produce the pattern $(-1,+1,+1,+1)$. 

From~\eqref{eq:SchurCons}, for each $i = 1,2$ there is $n_i \in \mb{N}$ such that 
$$
(\tilde{f}_i(n_i+1), \tilde{f}_i(n_i+2),\tilde{f}_i(n_i+3),\tilde{f}_i(n_i+4)) = (-1,+1,+1,+1).
$$ 
We use these to show that there is in fact $n \in \mb{N}$ that yields this pattern simultaneously for both $\tilde{f}_1,\tilde{f}_2$, i.e., that 
\begin{align}\label{eq:signs3}
(\tilde{f}_i(n+1),\tilde{f}_i(n+2),\tilde{f}_i(n+3),\tilde{f}_i(n+4)) = (-1,+1,+1,+1), \, i = 1,2.
\end{align}

Let $\alpha$ be large enough in terms of $n_1,n_2$. Then, as $\tilde{m}_1^{\alpha}, \tilde{m}_2^{\alpha}$ are coprime, we can find $k_1,k_2\in \mathbb{N}$ such that
\begin{align*}
k_1\tilde{m}_1^{\alpha}-k_2\tilde{m}_2^{\alpha}=n_2-n_1.     
\end{align*}
But then by Lemma~\ref{le_equalvalues} for $n=n_1+k_1\widetilde{m}_1^{\alpha}=n_2+k_2\widetilde{m}_2^{\alpha}$ for all $1\leq j\leq 4$ we have
\begin{align*}
 \tilde{f}_1(n+j)=\tilde{f}_1(n_1+j),\quad \tilde{f}_2(n+j) = \tilde{f}_2(n_2+j),    
\end{align*}
so the claim~\eqref{eq:signs3} follows.

It follows that there is $n \in \mb{N}$ such that $\tilde{f}_1(n+j) = \tilde{f}_2(n+j)$ for $1 \leq j \leq 4$, and thus $\tilde{f}_1\tilde{f}_2(n+j) = +1$ for $1 \leq j \leq 4$. Hence, $f_1f_2 \notin F_{2,3}$, as claimed.

\textbf{Case 2:} $\tilde{f}_1= f_3^{\pm}$ or $\tilde{f}_2=f_3^{\pm}$. 

By symmetry, we may assume that $\tilde{f}_1\in \{f_3^{+},f_3^{-}\}$. Then $m_1 = 3$, and necessarily $3 \nmid m_2$ and $\tilde{f}_2 \notin \{f_3^+,f_3^-\}$. 

By Lemma~\ref{lem:f3pm}, we can find $n_1^+, n_1^-$ such that
$$
(f_1(n_1^{\pm}+1),f_1(n_1^{\pm}+2),f_1(n_1^{\pm}+3),f_1(n_1^{\pm}+4)) = (+1,+1,-1,\pm 1). 
$$
Now, as $f_2 \neq f_3^{\pm}$, Lemma~\ref{lem-3pattern} implies the existence of $n_2$ such that
$$
(f_2(n_2),f_2(n_2+1),f_2(n_2+2), f_2(n_2+3)) = (+1,+1,+1,-1)
$$ 
(since $f_2$ is a modification of a non-principal character, $f_2$ must change sign eventually). This implies that either
$$
(f_2(n_2+1),f_2(n_2+2),f_2(n_2+3),f_2(n_2+4)) = (+1,+1,-1,+1) \text{ or } (+1,+1,-1,-1).
$$ 
Hence, we find that for some $\mbf{\e} \in \{(+1,+1,-1,+1), (+1,+1,-1,-1)\}$ both $f_1$ and $f_2$ must take the  sign pattern $\mbf{\e}$. Arguing as in Case 1, we can find a common $n \in \mb{N}$ such that
$$
(f_1(n+1),f_1(n+2),f_1(n+3),f_1(n+4)) = (f_2(n+1),f_2(n+2),f_2(n+3),f_2(n+4)) = \mbf{\e},
$$
and thus $(f_1f_2(n+1),\ldots,f_1f_2(n+4)) = (+1,+1,+1,+1)$, as claimed.
\end{proof}

With the above lemma proved, we are ready to prove Proposition~\ref{prop:redToBddMod}.

\begin{proof}[Proof of Proposition~\ref{prop:redToBddMod}] (1) Suppose instead that $m$ is not a prime and $m \notin \{4,8\}$.  
Write $m = m_0d$, where $m_0$ is either prime or in $\{4,8\}$, and $d > 1$ is coprime to $m_0$. Whenever $p \nmid 6m_0d$ we have $f(p) = \chi_{m_0}(p)\chi_d(p)$, and so we can factor $f=f_{m_0}f_{d_0}$, where $f_{m_0},f_d$ are completely multiplicative functions given by
$$
f_{m_0}(p) = \begin{cases} \chi_{m_0}(p) &\textnormal{if } p \nmid [6,m_0] \\ f(p)\chi_d(p) &\textnormal{if } p \mid m_0, \, p \nmid 6 \\ f(p) &\textnormal{if } p\mid 6 \end{cases} \quad \quad f_d(p) = \begin{cases} \chi_d(p) &\textnormal{if } p \nmid [6,d] \\ f(p)\chi_{m_0}(p) &\textnormal{if } p\mid d, \, p \nmid 6 \\ 1 &\textnormal{if } p\mid 6.\end{cases}
$$
Then $f_{m_0}(p) = \chi_{m_0}(p)$ for all $p \nmid 6m_0$ and $f_d(p) = \chi_d(p)$ for all $p \nmid 6d$. But by Lemma~\ref{le:ProductsNotF23}, $f\notin F_{2,3}$, which is a contradiction. Thus, $m$ must be either prime or equal to $4$ or $8$. 

(2) If $m \in \{3,4,8\}$ then the claim $m < 197$ is trivial. Thus, assume next that $m > 3$ is prime, writing $m = p_0$. 

Suppose that $f \in F_{2,3}$, with $f(p) = \chi_{p_0}(p)$, outside, possibly, of $p = 2,3$ and $p_0$.
For $1 \leq j \leq 4$ let $1 \leq a_j \leq p_0$ be such that $36a_j+ j\equiv 0 \pmod{p_0}$, and let $a \in \{1,\ldots,p_0\} \backslash \{a_1,a_2,a_3,a_4\}$. Then
$$
0 = \prod_{1 \leq j \leq 4} (1+f(36a+j)) = \prod_{1 \leq j \leq 4} (1+f(j) \chi_{p_0}(\tfrac{36}{j}a+1)) = \prod_{1 \leq j \leq 4} (1 + f\chi_{p_0}(j) \chi_{p_0}(36a+j)).
$$
Summing over $1 \leq a \leq p_0$, upper bounding the contribution of the above product at each of $a \in \{a_1,a_2,a_3,a_4\}$ trivially by $8$, we obtain
\begin{align*}
0 &= \sum_{\substack{1 \leq a \leq p_0 \\ a \notin \{a_1,a_2,a_3,a_4\}}} \left(1+f(36a+1)\right)\left(1+f(36a+2)\right)\left(1+f(36a+3)\right) \left(1+f(36a+4)\right) \\
&\geq  \sum_{a = 1}^{p_0} \prod_{1 \leq j \leq 4} \left(1+f\chi_{p_0}(j)\chi_{p_0}(36a+j)\right) -4\cdot 8.
\end{align*}
Expanding out the product, we obtain
$$
0 \geq  p_0 + \sum_{i = 1}^4 \sum_{\substack{S \subseteq \{1,2,3,4\} \\ |S| = i}} f\chi_{p_0}\left(\prod_{j \in S} j \right) \sum_{a = 1}^{p_0} \chi_{p_0}\left(\prod_{j \in S} (36a+j)\right) -32,
$$
and so by the triangle inequality we find
\begin{align} \label{eq:charSumBd}
p_0 \leq 32 + \sum_{j = 1}^4 \sum_{\substack{S \subseteq \{1,2,3,4\} \\ |S| = j}} \left|\sum_{a = 1}^{p_0} \chi_{p_0}\left(\prod_{j \in S} (36a+j)\right)\right|.
\end{align}
For each $S \subseteq \{1,2,3,4\}$ define $\Xi_S := \sum_{a = 1}^{p_0} \chi_{p_0}\left(\prod_{j \in S} (36a+j)\right)$ (note that the polynomial $X \mapsto \prod_{j \in S}(36X+j)$ is not a square modulo $p_0$ since $p_0 > 3$). 

When $|S| = 1$ the orthogonality of Dirichlet characters (as well as $p_0 \nmid 36$) implies that $\Xi_S = 0$. 

When $|S| = 2$, say $S = \{i,j\}$, then as $p_0 > 3$ the element $j-i$ is invertible modulo $p_0$. Hence, on $\mathbb{F}_{p_0}$ we have the bijective transformations $a \mapsto 36a =: b$, $b \mapsto b+i =: c$, $c \mapsto -(j-i)^{-1}c =: d$ that give rise to
\begin{align*}
\Xi_{\{i,j\}} &= \sum_{a = 1}^{p_0} \chi_{p_0}\left((36a+i)(36a+j)\right)  = \sum_{b \pmod{p_0}}\chi_{p_0}\left((b+i)(b + j)\right)\\
&= \sum_{c \pmod{p_0}}\chi_{p_0}\left(c(c + j-i)\right) \\
&= \chi_{p_0}\left(-1\right) \sum_{d \pmod{p_0}}\chi_{p_0}\left(d(1-d)\right) = \chi_{p_0}\left(-1\right) J(p_0),
\end{align*}
where $J(p_0)$ is the Jacobi sum attached to the Legendre symbol modulo $p_0$. It is a classical result (e.g.,~\cite[(3.20)]{iwaniec-kowalski}) that $J(p_0) = -1$, and thus $|\Xi_S| = 1$ whenever $|S| = 2$. 

Finally, if $|S| \in \{3,4\}$, the Weil bound (see e.g.~\cite[Corollary 11.24]{iwaniec-kowalski}) implies that
$$
|\Xi_S| \leq (|S|-1) \sqrt{p_0}.
$$
Applied to~\eqref{eq:charSumBd}, the above remarks show that
$$
p_0 \leq 32 + 6 \cdot 1 + 4 \cdot 2\sqrt{p_0} + 3\sqrt{p_0} = 38 + 11 \sqrt{p_0}.
$$
From this, we deduce by completing the square that $(\sqrt{p_0} - 11/2)^2 \leq 273/4$, and thus $p_0 \leq 196$, and the claim follows.
\end{proof}

It remains to classify those completely multiplicative functions $f: \mb{N} \ra \{-1,+1\}$ for which there is $m < 197$, with either $m \in \{4,8\}$ or $m$ a prime, such that
\begin{equation}\label{eq:excepPrimes}
f(p) = \chi_m(p) \text{ whenever } p \nmid 6m.
\end{equation}
To deal with the case of $m \notin \{3,4,8\}$ (thus prime, say $m = p_0 > 3$, and $\chi_{p_0} = (\tfrac{\cdot}{p_0})$) we use the following simple criterion, which helps with a computer verification.
\begin{lemma} \label{lem:forCompVer}
Let $p_0  > 3$ be a prime, and let $f: \mb{N} \ra \{-1,+1\}$ be a completely multiplicative function such that $f(p)=\chi_{p_0}(p)$ whenever $p\nmid 6p_0$. Then if there is $n\in \mb{N}$ satisfying $24n + 4 < p_0^3$ such that
\begin{equation}\label{eq:prodCharsExp}
\prod_{j=1}^4 \left(1+f(j)\left(\frac{j}{p_0}\right)\left(\frac{24n+j}{p_0}\right)\right) > 0,
\end{equation}
we have $f \notin F_{2,3}$.
\end{lemma}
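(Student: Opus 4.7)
The plan is to construct, from the hypothesized $n$, a natural number $N$ with $f(N+j)=+1$ for every $j\in\{1,2,3,4\}$, which would immediately yield $f\notin F_{2,3}$. Since $p_0>3$ and $24n+1,\dots,24n+4$ are four consecutive integers, at most one of them is divisible by $p_0$; I denote its index by $j_0$ when it exists. The hypothesis that $\prod_{j=1}^4(1+f(j)(\frac{j}{p_0})(\frac{24n+j}{p_0}))>0$ is then equivalent to the identity $\left(\frac{24n+j}{p_0}\right)=f(j)\left(\frac{j}{p_0}\right)$ for every $j\in\{1,2,3,4\}\setminus\{j_0\}$.

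If no such $j_0$ exists, I would use the Chinese remainder theorem to choose $n'\in\mathbb{N}$ with $n'\equiv n\pmod{p_0}$ and $n'\not\equiv 1\pmod 3$, and set $N=24n'$. Then the $2$-adic and $3$-adic valuations of $N+j$ agree with those of $j$ (the condition $n'\not\equiv 1\pmod 3$ ensures $v_3(N+3)=1$) and $p_0\nmid N+j$. Complete multiplicativity of $f$, together with $f(q)=\left(\frac{q}{p_0}\right)$ for every prime $q\nmid 6p_0$, then gives $f(N+j)=f(j)\left(\frac{j}{p_0}\right)\left(\frac{N+j}{p_0}\right)$, which equals $+1$ by the hypothesis since $\left(\frac{N+j}{p_0}\right)=\left(\frac{24n+j}{p_0}\right)=f(j)\left(\frac{j}{p_0}\right)$.

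In the harder case where some $j_0$ exists with $p_0\mid 24n+j_0$, I would use Dirichlet's theorem on primes in arithmetic progressions, combined with orthogonality of Dirichlet characters modulo $p_0$, to produce a prime $p\neq p_0$ satisfying (i) $p\equiv 1\pmod{24}$, (ii) $j_0(p_0^2p-1)/24\not\equiv 1\pmod 3$ whenever $j_0\neq 3$, and (iii) $\left(\frac{p}{p_0}\right)=f(j_0)$. I would then set $N:=j_0(p_0^2p-1)$. Condition (i) (and $p_0^2\equiv 1\pmod{24}$) gives $24\mid N$, so $N+j=24M+j$ with $M=j_0(p_0^2p-1)/24$, and conditions (i) and (ii) force $v_2(N+j)=v_2(j)$ and $v_3(N+j)=v_3(j)$ for every $j$. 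Since $N+j_0=j_0p_0^2p$, complete multiplicativity together with $f(p_0)^2=1$ and (iii) yields $f(N+j_0)=f(j_0)\left(\frac{p}{p_0}\right)=f(j_0)^2=+1$. For $j\neq j_0$, the congruence $24n+j_0\equiv 0\pmod{p_0}$ gives $N+j\equiv j-j_0\equiv 24n+j\pmod{p_0}$ with $p_0\nmid N+j$, so $\left(\frac{N+j}{p_0}\right)=\left(\frac{24n+j}{p_0}\right)=f(j)\left(\frac{j}{p_0}\right)$ by the hypothesis, giving again $f(N+j)=+1$.

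The main obstacle will be producing the auxiliary prime $p$ satisfying (i)-(iii) simultaneously. The congruences (i) and (ii) carve out a nonempty union of residue classes modulo $72$ coprime to $72$, while (iii) is a quadratic residue condition modulo $p_0$; since $\gcd(72,p_0)=1$, Dirichlet's theorem together with orthogonality of characters modulo $p_0$ delivers infinitely many primes $p$ satisfying all three conditions at once, completing the construction.
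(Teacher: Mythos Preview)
Your argument is correct. In fact, you never invoke the hypothesis $24n+4<p_0^3$, so you end up proving a slightly stronger statement than the one in the paper.

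The two proofs diverge in the case where $p_0\mid 24n+j_0$ for some $j_0$. The paper uses the bound $24n+4<p_0^3$ to pin down $v_{p_0}(24n+j_0)\in\{1,2\}$ and then shifts $n$ by a suitable multiple of $p_0$ or $p_0^2$ so that the new value of $24n+j_0$ has $p_0$-part exactly $p_0^2$ with cofactor $\equiv 1\pmod{p_0}$; this is entirely elementary. You instead manufacture a brand new $N=j_0(p_0^2p-1)$ from a prime $p$ produced by Dirichlet's theorem, tailoring $p$ so that the $2$-, $3$-, and $p_0$-adic structure of $N+j$ is exactly what is needed. Your route costs an appeal to Dirichlet but gains two things: it removes the size restriction on $n$, and it makes the control of $v_3(N+3)$ completely explicit via your conditions $n'\not\equiv 1\pmod 3$ and (ii). This last point is worth noting, since the identity $f(24m+3)=g(3)\bigl(\tfrac{24m+3}{p_0}\bigr)$ used implicitly in the paper's argument can fail when $f(3)\ne\chi_{p_0}(3)$ and $3\mid 8m+1$; your construction sidesteps that subtlety entirely.
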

\begin{proof}
Suppose~\eqref{eq:prodCharsExp} holds for some $n_0 \in \mb{N}$ with $24n_0 + 4 < p_0^3$, but $f \in F_{2,3}$. For $p\neq p_0$ set $g(p) := f(p)(\tfrac{p}{p_0})$, and extend $g$ to a completely multiplicative function.

By assumption, there is some $1 \leq j \leq 4$ such that
$$
f(24n_0+j) = f(j)f(24n_0/j+1)  \neq +1.
$$ 
If $p_0 \nmid (24n_0/j+1)$ for each $1 \leq j \leq 4$ then $f(24n_0+j_0) = g(j_0)\left(\frac{24n_0+j_0}{p_0}\right) \neq +1$ for some $1 \leq j_0 \leq 4$, hence~\eqref{eq:prodCharsExp} fails, a contradiction. Thus, we must have $p_0\mid 24n_0+j_0$ for exactly one $1 \leq j_0 \leq 4$, with $f(24n_0+j) = +1$ for $j\in \{1,2,3,4\}\setminus \{j_0\}$ necessarily.

Let $b_0 \equiv n_0 \pmod{p_0}$. Then for any $n' \equiv b_0 \pmod{p}$ and $j \neq j_0$, $1 \leq j \leq 4$, we have
\begin{align}\label{eq:24n}
f(24n'+j) = g(j)\left(\frac{24n'+j}{p_0}\right) = g(j)\left(\frac{24b_0+j}{p_0}\right) = g(j)\left(\frac{24n_0+j}{p_0}\right) = +1.
\end{align}
Suppose $p_0^{\nu}\mid \mid 24n_0+j_0$; by assumption, $\nu \in \{1,2\}$. Write $24n_0 + j_0 = N_0p_0^{\nu}$. Then
$$
-1 = f(24n_0+j_0) = f(p_0)^{\nu} \left(\frac{N_0}{p_0}\right).
$$
If $\nu = 1$ then shifting $n_0$ to $n = n_0 + N_1p_0$ with $N_1$ chosen to satisfy $N_0+24N_1 \equiv p_0 \pmod{p_0^2}$, then $f(24n+j) = +1$ for $j \neq j_0$ by~\eqref{eq:24n}, $24n+j_0 \equiv p_0^2 \pmod{p_0^3}$, and thus
$$
f(24n+j_0) = f(p_0)^2\left(\frac{1}{p_0}\right) = +1.
$$ 
Suppose then that $\nu = 2$, so that $N_0$ must be a quadratic non-residue modulo $p_0$. This time, we shift $n_0$ to $n = n_0 + N_1p_0^2$, where $N_0 + 24N_1 \equiv 1 \pmod{p_0}$. Thus, $f(24n+j) = +1$ for $j \neq j_0$ by~\eqref{eq:24n} and $24n+j_0 \equiv p_0^2 \pmod{p_0^3}$, once again. Thus, in this case too, we obtain 
$$
f(24n+j_0) = f(p_0)^2\left(\frac{1}{p_0}\right) = +1.
$$ 
In either case, we find $n$ such that
$$
f(24n+j) = +1 \text{ for all } 1 \leq j \leq 4,
$$
and thus $(f(24n+1),f(24n+2),f(24n+3),f(24n+4)) = (+1,+1,+1,+1)$, contradicting $f \in F_{2,3}$. 
\end{proof}

\subsection{Exceptional examples} 
In the Mathematica notebook found alongside the arXiv submission of this paper, a verification of~\eqref{eq:prodCharsExp} for modifications of $\chi = \left(\frac{\cdot}{p_0}\right)$ at primes dividing $6p_0$ is made for all $3 \leq  p_0 < 197$.  The examples $f_p^{\pm}$ with $p \in \{5,7,11,13,53\}$ from Conjecture~\ref{4pattern} are identified there, along with the following exceptional cases \emph{not} included in Hudson's classification (bolded in the Mathematica file), but for which~\eqref{eq:prodCharsExp} does not hold for any $n$ in the range $24 n + 4 < p_0^3$ (with either choice $f(p_0) = \pm 1$):
\begin{enumerate}[(i)]
\item $p_0 = 3$, and\footnote{The cases $p_0 =3$, $(f(2),f(3)) \in \{(-1,+1), (-1,-1)\}$ correspond to $f_3^{\pm}$, which belong instead to $F_{2,2}$ by Lemma~\ref{lem-3pattern}, and may thus be ignored.} $(f(2),f(3)) = (+1,+1)$ or $(+1,-1)$ 
\item $p_0 = 5$, $(f\chi(2),f\chi(3)) =(+1,-1)$
\item $p_0 = 5$,$(f\chi(2),f\chi(3)) =(-1,+1)$
\item $p_0 = 7$, $(f\chi(2),f\chi(3)) = (-1,+1)$
\item $p_0 = 13$, $(f\chi(2),f\chi(3)) =(-1,-1)$
\item $p_0 = 17$, $(f\chi(2),f\chi(3)) =(-1,-1)$.
\end{enumerate}
For each of these cases one can, by hand, recover $n$ for which $f(n+j) = +1$ for each $0 \leq j \leq 3$:
\begin{enumerate}[(i)]
\item $n =1$ for $(+1,+1)$, and $n = 13$ for $(+1,-1)$:
\begin{align*}
&f(1) = +1, \, f(2) = +1, \, f(3) = +1, \, f(4) = f(2)^2 = +1 \\
&f(13) = \chi(13) = +1,  \, f(14) = f(2)\chi(7) = +1, \, f(15) = f(3)\chi(5) = +1, f(16) = f(4)^2 = +1.
\end{align*}
\item $n = 99$:
\begin{align*}
&f(99) = f(3)^2\chi(11) = +1, \,f(100) = f(10)^2 = +1, \, f(101) = \chi(101) = +1, \\
&f(102) = -\chi(2)\chi(3)\chi(17) = +1.
\end{align*}
\item $n = 8$ if $f(5) = +1$ and $n = 29$ if $f(5) = -1$:
\begin{align*}
&f(8) = -\chi(2) = +1, \,f(9) = f(3)^2 = +1, \, f(10) = -\chi(2)f(5) = +1, \, f(11) = \chi(1) = +1 \\
&f(29) = \chi(4) = +1, \, f(30) = -\chi(2)\chi(3)f(5) = +1, f(31) = \chi(31) = + 1,\\
&f(32)= (-\chi(2))^5 = +1.
\end{align*}
\item $n=23$:
\begin{align*}
f(23) &= \chi(2) = +1, \,f(24) = (-\chi(2))^3\chi(3) = +1, \, f(25) = f(5)^2 = +1\\ f(26) &= -\chi(2)\chi(13) = +1.
\end{align*}
\item $n = 15$:
$$
f(15) = -\chi(3)\chi(5) = +1, \,f(16) = f(4)^2 = +1, \, f(17) = \chi(4) = +1, \, f(18) = -\chi(2) = +1.
$$
\item $n = 46$: 
$$
f(46) = -\chi(2)\chi(6) = +1, \,f(47) = \chi(-4) = +1, \, f(48) = -\chi(3) = +1, \, f(49) = f(7)^2 = +1.
$$
\end{enumerate}
To complete the classification, we must verify that, of the modifications of the primitive characters modulo $4$ and $8$, i.e.\footnote{Here $\chi_8$ is the primitive \emph{odd} real character modulo $8$ with $\chi_8(3) = +1,$ $\chi_8(5) = -1$, and $\psi_8$ is the primitive \emph{even} real character modulo $8$ with $\psi_8(3) = -1 = \psi_8(5)$.} $\chi_4, \chi_8$ and $\psi_8$, any functions other than\footnote{It is straightforward to check that the functions $g_1,g_2$ mentioned in Conjecture~\ref{4pattern} coincide with $\chi_4^{\pm}$.} $\chi_4^{\pm}$ (i.e., $f(3) = -1$, $f(2) = \pm 1$) do not belong to $F_{2,3}$.
This is summarized as follows (the interested reader may verify that the pattern $(+1,+1,+1,+1)$ arises in each case):
\begin{enumerate}[(i)]
\item $m = 4$, $(f(2),f(3)) = (+1,+1):$ $n =1$.
\item $m = 4$, $(f(2),f(3)) = (-1,+1):$ $n =13$.
\item $m = 8$ ($\chi_8$ case), $(f(2),f(3)) = (+1,+1)$: $n = 1$. 
\item $m = 8$ ($\chi_8$ case), $(f(2),f(3)) = (+1,-1)$: $n = 15$.
\item $m = 8$ ($\chi_8$ case), $(f(2),f(3)) = (-1,+1)$: $n = 9$.
\item $m = 8$ ($\chi_8$ case), $(f(2),f(3)) = (-1,-1)$: $n = 14$.
\item $m = 8$ ($\psi_8$ case), $(f(2),f(3)) = (+1,+1)$: $n = 1$. 
\item $m = 8$ ($\psi_8$ case), $(f(2),f(3)) = (+1,-1)$: $n = 14$.
\item $m = 8$ ($\psi_8$ case), $(f(2),f(3)) = (-1,+1)$: $n = 25$.
\item $m = 8$ ($\psi_8$ case), $(f(2),f(3)) = (-1,-1)$: $n = 22$.
\end{enumerate}

\section{Density version of the Erd\H{o}s discrepancy theorem}\label{proof_edp}

We now turn to the proof of Theorem~\ref{thm_edp}. We consider the cases of non-pretentious and pretentious functions separately.

\subsection{Non-pretentious case}

\begin{proof}[Proof of Theorem~\ref{thm_edp}(1)] We may assume that $M$ is a large enough integer. Let 
$$S(x)=\sum_{n\leq x}f(n).$$ 
Let $\mathcal{E}_M:=\mathbb{N}\setminus \mathcal{X}_M$. Suppose that $\delta_{\log}^{+}(\mathcal{X}_M)<M^{-3}$. Then $\delta_{\log}^{-}(\mathcal{E}_M)>1-M^{-3}$. Let $H=1000M^2$ and $\mathcal{E}_M'=\mathcal{E}_M\cap (\mathcal{E}_M+H)$. Then, by the union bound, we have $\delta_{\log}^{-}(\mathcal{E}_M')>1-2M^{-3}$, and for $n\in \mathcal{E}_M'$ we have
\begin{align}\label{eq4}
|S(n+H)-S(n)|\leq 2M.    
\end{align}
Since~\eqref{eq4} holds for all $n$ in a set of logarithmic lower density $>1-2M^{-3}$, we have
\begin{align*}
 \frac{1}{\log x}\sum_{n\leq x}\frac{|S(n+H)-S(n)|^2}{n}\leq \left(1-2M^{-3}\right)\cdot (2M)^2+2M^{-3}\cdot H^2+o(1)\leq \frac{H}{10},   
\end{align*}

On the other hand, we have 
\begin{align}\label{eq11}\begin{split}
\frac{1}{\log x}\sum_{n\leq x}\frac{|S(n+H)-S(n)|^2}{n}&=\sum_{0\leq h\leq |H|}(H-|h|)\frac{1}{\log  x}\sum_{n\leq x}\frac{f(n)f(n+h)}{n}\\
&= H-o(1)+\sum_{1\leq h\leq |H|}(H-|h|)\frac{1}{\log x}\sum_{n\leq x}\frac{f(n)f(n+h)}{n}. 
\end{split}
\end{align}
By Theorem~\ref{thm_elliott_2point}, there exists a set $\mathcal{Y}_M\subset \mathbb{N}$ with $\delta_{\log}^{+}(\mathcal{Y}_M)=1$ such that for $x\in \mathcal{Y}_M$  each of the sums over $n$ is $\leq H^{-2}(\log x)$, say. We conclude that for large enough $x$ we have
\begin{align*}
 \frac{1}{\log x}\sum_{n\leq x}\frac{|S(n+H)-S(n)|^2}{n}\geq \frac{H}{2},   
\end{align*}
say. This contradicts~\eqref{eq11}, so the theorem follows. 
\end{proof}

\subsection{Pretentious case}

\begin{proof}[Proof of Theorem~\ref{thm_edp}(2)] Let $$S(x)=\sum_{n\leq x}f(n),$$
and let $M\geq 1$. By the Erd\H{o}s discrepancy theorem of Tao~\cite{tao-edp}, we know that there is an integer $H$ such that $|S(H)|\geq 2M$. Now, by Proposition~\ref{prop_pretentious_density}, there exists a set $\mathcal{Y}_M$ of positive asymptotic lower density such that for $n\in \mathcal{Y}_M$ we have $f(n+j)=f(j)$ for all $1\leq j\leq H$. Then $|S(n+H)-S(n)|=|S(H)|\geq 2M$. It follows that for every $n\in \mathcal{Y}_M$ either $|S(n)|\geq M$ or $|S(n+H)|\geq M$, so $|S(m)|\geq M$ for a positive asymptotic lower density of $m\in \mathbb{N}$. 
\end{proof}

\bibliography{refs}

\begin{thebibliography}{10}

\bibitem{AIM}
{American Institute of Mathematics, workshop on Sarnak’s Conjecture, December
  2018}.
\newblock {\em \url{http://aimpl.org/sarnakconjecture/}}.

\bibitem{helfgott-radziwill}
H.~{A. Helfgott} and M.~{Radziwi{\l}{\l}}.
\newblock {Expansion, divisibility and parity}.
\newblock {\em arXiv e-prints}, page arXiv:2103.06853, March 2021.

\bibitem{aymone}
M.~Aymone.
\newblock The {E}rd{\H{o}}s discrepancy problem over the squarefree and
  cubefree integers.
\newblock {\em Mathematika}, 68(1):51--73, 2022.

\bibitem{RAP}
V.~Bergelson, J.~Ku{\l}aga-Przymus, M.~Lema\'{n}czyk, and F.~K. Richter.
\newblock Rationally almost periodic sequences, polynomial multiple recurrence
  and symbolic dynamics.
\newblock {\em Ergodic Theory Dynam. Systems}, 39(9):2332--2383, 2019.

\bibitem{chowla}
S.~Chowla.
\newblock {\em The {R}iemann hypothesis and {H}ilbert's tenth problem}.
\newblock Mathematics and Its Applications, Vol. 4. Gordon and Breach Science
  Publishers, New York-London-Paris, 1965.

\bibitem{daboussi-delange}
H.~Daboussi and H.~Delange.
\newblock On multiplicative arithmetical functions whose modulus does not
  exceed one.
\newblock {\em J. London Math. Soc. (2)}, 26(2):245--264, 1982.

\bibitem{delange1983}
H.~Delange.
\newblock Sur les fonctions arithm\'{e}tiques multiplicatives de module {$\leq
  1$}.
\newblock {\em Acta Arith.}, 42(2):121--151, 1983.

\bibitem{HKPLR}
E.~H. El~Abdalaoui, J.~Ku{\l}aga-Przymus, M.~Lema\'{n}czyk, and T.~de~la Rue.
\newblock The {C}howla and the {S}arnak conjectures from ergodic theory point
  of view.
\newblock {\em Discrete Contin. Dyn. Syst.}, 37(6):2899--2944, 2017.

\bibitem{elliott-book}
P.~D. T.~A. Elliott.
\newblock On the correlation of multiplicative and the sum of additive
  arithmetic functions.
\newblock {\em Mem. Amer. Math. Soc.}, 112(538):viii+88, 1994.

\bibitem{erdos-edp}
P.~Erd\H{o}s.
\newblock Some unsolved problems.
\newblock {\em Michigan Math. J.}, 4:291--300, 1957.

\bibitem{f-erg-descrete}
N.~Frantzikinakis.
\newblock Ergodicity of the {L}iouville system implies the {C}howla conjecture.
\newblock {\em Discrete Anal.}, pages Paper No. 19, 41, 2017.

\bibitem{f-descrete}
N.~Frantzikinakis.
\newblock Good weights for the {E}rd{\H{o}}s discrepancy problem.
\newblock {\em Discrete Anal.}, pages Paper No. 8, 23, 2020.

\bibitem{fh-annals}
N.~Frantzikinakis and B.~Host.
\newblock The logarithmic {S}arnak conjecture for ergodic weights.
\newblock {\em Ann. of Math. (2)}, 187(3):869--931, 2018.

\bibitem{fh-IMRN}
N.~Frantzikinakis and B.~Host.
\newblock Furstenberg systems of bounded multiplicative functions and
  applications.
\newblock {\em Int. Math. Res. Not. IMRN}, (8):6077--6107, 2021.

\bibitem{flr}
N.~{Frantzikinakis}, M.~{Lema{\'n}czyk}, and T.~{de la Rue}.
\newblock {Furstenberg systems of pretentious and MRT multiplicative
  functions}.
\newblock {\em arXiv e-prints}, page arXiv:2304.03121, April 2023.

\bibitem{friedlander-smooth}
J.~B. Friedlander.
\newblock Integers without large prime factors. {II}.
\newblock {\em Acta Arith.}, 39(1):53--57, 1981.

\bibitem{lemanczyk-MRT}
A.~Gomilko, M.~Lema\'{n}czyk, and T.~de~la Rue.
\newblock On {F}urstenberg systems of aperiodic multiplicative functions of
  {M}atom\"{a}ki, {R}adziwi\l \l , and {T}ao.
\newblock {\em J. Mod. Dyn.}, 17:529--555, 2021.

\bibitem{GS}
A.~Granville and K.~Soundararajan.
\newblock Large character sums: pretentious characters and the
  {P}\'{o}lya-{V}inogradov theorem.
\newblock {\em J. Amer. Math. Soc.}, 20(2):357--384, 2007.

\bibitem{GS-zeta}
A.~Granville and K.~Soundararajan.
\newblock Pretentious multiplicative functions and an inequality for the
  zeta-function.
\newblock In {\em Anatomy of integers}, volume~46 of {\em CRM Proc. Lecture
  Notes}, pages 191--197. Amer. Math. Soc., Providence, RI, 2008.

\bibitem{Hildcons}
A.~Hildebrand.
\newblock On consecutive values of the {L}iouville function.
\newblock {\em Enseign. Math. (2)}, 32(3-4):219--226, 1986.

\bibitem{hildebrand-tenenbaum}
A.~Hildebrand and G.~Tenenbaum.
\newblock Integers without large prime factors.
\newblock {\em J. Th\'{e}or. Nombres Bordeaux}, 5(2):411--484, 1993.

\bibitem{hudson}
R.~H. Hudson.
\newblock Totally multiplicative sequences with values {$\pm 1$} which exclude
  four consecutive values of {$1$}.
\newblock {\em J. Reine Angew. Math.}, 271:218--220, 1974.

\bibitem{iwaniec-kowalski}
H.~Iwaniec and E.~Kowalski.
\newblock {\em Analytic number theory}, volume~53 of {\em American Mathematical
  Society Colloquium Publications}.
\newblock American Mathematical Society, Providence, RI, 2004.

\bibitem{klurman}
O.~Klurman.
\newblock Correlations of multiplicative functions and applications.
\newblock {\em Compos. Math.}, 153(8):1622--1657, 2017.

\bibitem{km-effective}
O.~{Klurman} and A.~P. {Mangerel}.
\newblock {Effective Asymptotic Formulae for Multilinear Averages of
  Multiplicative Functions}.
\newblock {\em arXiv e-prints}, page arXiv:1708.03176, August 2017.

\bibitem{klurman-mangerel-rigidity}
O.~Klurman and A.~P. Mangerel.
\newblock Rigidity theorems for multiplicative functions.
\newblock {\em Math. Ann.}, 372(1-2):651--697, 2018.

\bibitem{ant-km}
O.~Klurman and A.~P. Mangerel.
\newblock On the orbits of multiplicative pairs.
\newblock {\em Algebra Number Theory}, 14(1):155--189, 2020.

\bibitem{tams-kmpt}
O.~Klurman, A.~P. Mangerel, C.~Pohoata, and J.~Ter\"{a}v\"{a}inen.
\newblock Multiplicative functions that are close to their mean.
\newblock {\em Trans. Amer. Math. Soc.}, 374(11):7967--7990, 2021.

\bibitem{kmt-shortAP}
O.~{Klurman}, A.~P. {Mangerel}, and J.~{Ter{\"a}v{\"a}inen}.
\newblock {Multiplicative functions in short arithmetic progressions}.
\newblock {\em arXiv e-prints}, page arXiv:1909.12280, September 2019.

\bibitem{KMT-EDP}
O.~{Klurman}, A.~P. {Mangerel}, and J.~{Ter{\"a}v{\"a}inen}.
\newblock {Beyond the Erd{\H{o}}s discrepancy problem in function fields}.
\newblock {\em arXiv e-prints}, page arXiv:2202.10370, February 2022.

\bibitem{Lehmer}
D.~H. Lehmer and E.~Lehmer.
\newblock On runs of residues.
\newblock {\em Proc. Amer. Math. Soc.}, 13:102--106, 1962.

\bibitem{MRT}
K.~Matom\"{a}ki, M.~Radziwi{\l}{\l}, and T.~Tao.
\newblock An averaged form of {C}howla's conjecture.
\newblock {\em Algebra Number Theory}, 9(9):2167--2196, 2015.

\bibitem{MRT-sign}
K.~Matom\"aki, M.~Radziwi\l\l, and T.~Tao.
\newblock Sign patterns of the {L}iouville and {M}\"obius functions.
\newblock {\em Forum Math. Sigma}, 4:e14, 44, 2016.

\bibitem{MRTTZ}
K.~Matom\"{a}ki, M.~Radziwi{\l}{\l}, T.~Tao, J.~Ter\"{a}v\"{a}inen, and
  T.~Ziegler.
\newblock Higher uniformity of bounded multiplicative functions in short
  intervals on average.
\newblock {\em Ann. of Math. (2)}, 197(2):739--857, 2023.

\bibitem{mcnamara}
R.~McNamara.
\newblock Sarnak's conjecture for sequences of almost quadratic word growth.
\newblock {\em Ergodic Theory Dynam. Systems}, 41(10):3060--3115, 2021.

\bibitem{Mills}
W.~H. Mills.
\newblock Bounded consecutive residues and related problems.
\newblock In {\em Proc. {S}ympos. {P}ure {M}ath., {V}ol. {VIII}}, pages
  170--174. Amer. Math. Soc., Providence, R.I., 1965.

\bibitem{Sar-dis}
P.~Sarnak.
\newblock Mobius randomness and dynamics.
\newblock {\em Not. S. Afr. Math. Soc.}, 43(2):89--97, 2012.

\bibitem{Schur}
I.~Schur.
\newblock {\em Gesammelte {A}bhandlungen. {B}and {III}}.
\newblock Springer-Verlag, Berlin-New York, 1973.
\newblock Herausgegeben von Alfred Brauer und Hans Rohrbach.

\bibitem{tao-blog-sarnak}
T.~Tao.
\newblock The {C}howla conjecture and the {S}arnak conjecture.
\newblock \textnormal{\texttt{terrytao.wordpress.com/2012/10/14}}.

\bibitem{tao-blog}
T.~Tao.
\newblock A small remark on the {Elliott} conjecture.
\newblock \textnormal{\texttt{terrytao.wordpress.com/2015/09/18}}.

\bibitem{tao-edp}
T.~Tao.
\newblock The {E}rd{\H{o}}s discrepancy problem.
\newblock {\em Discrete Anal.}, pages Paper No. 1, 29, 2016.

\bibitem{tao-chowla}
T.~Tao.
\newblock The logarithmically averaged {C}howla and {E}lliott conjectures for
  two-point correlations.
\newblock {\em Forum Math. Pi}, 4:e8, 36, 2016.

\bibitem{tt-ant}
T.~Tao and J.~Ter\"{a}v\"{a}inen.
\newblock The structure of correlations of multiplicative functions at almost
  all scales, with applications to the {C}howla and {E}lliott conjectures.
\newblock {\em Algebra Number Theory}, 13(9):2103--2150, 2019.

\bibitem{tt-duke}
T.~Tao and J.~Ter\"{a}v\"{a}inen.
\newblock The structure of logarithmically averaged correlations of
  multiplicative functions, with applications to the {C}howla and {E}lliott
  conjectures.
\newblock {\em Duke Math. J.}, 168(11):1977--2027, 2019.

\bibitem{tt-FMS}
T.~Tao and J.~Ter\"{a}v\"{a}inen.
\newblock Value patterns of multiplicative functions and related sequences.
\newblock {\em Forum Math. Sigma}, 7:Paper No. e33, 55, 2019.

\bibitem{Tenenbaum}
G.~Tenenbaum.
\newblock {\em Introduction to analytic and probabilistic number theory},
  volume 163 of {\em Graduate Studies in Mathematics}.
\newblock American Mathematical Society, Providence, RI, third edition, 2015.
\newblock Translated from the 2008 French edition by Patrick D. F. Ion.

\bibitem{thouvenot}
J.-P. Thouvenot.
\newblock Quelques propri\'{e}t\'{e}s des syst\`emes dynamiques qui se
  d\'{e}composent en un produit de deux syst\`emes dont l'un est un sch\'{e}ma
  de {B}ernoulli.
\newblock {\em Israel J. Math.}, 21(2-3):177--207, 1975.

\end{thebibliography}
\bibliographystyle{plain}

\end{document}